\documentclass[leqno,11pt]{amsart}
\usepackage{amssymb,amsmath,latexsym,amsfonts,amsbsy,amsthm,mathtools,graphicx,color,subeqnarray,cases,tikz} 
\usepackage{amsfonts}
\usepackage{color}
\usepackage{amssymb,amsthm,amsmath,hyperref}
\usepackage{epsfig}
\usepackage{graphicx,float}
\usepackage{CJK}
\usepackage{setspace}
\usepackage{cancel}
\usepackage{verbatim}
\usepackage{ulem}
\newtheorem{thm}{Theorem}[section]
\newtheorem{coro}[thm]{Corollary}
\newtheorem{lem}[thm]{Lemma}
\newtheorem{rem}[thm]{Remark}
\newtheorem{prop}[thm]{Proposition}

\newcommand{\beq}{\begin{eqnarray}}
\newcommand{\eeq}{\end{eqnarray}}
\newcommand{\beqno}{\begin{eqnarray*}}
\newcommand{\eeqno}{\end{eqnarray*}}
\newcommand{\be}{\begin{equation}}
\newcommand{\ee}{\end{equation}}
\newcommand{\beno}{\begin{equation*}}
\newcommand{\eeno}{\end{equation*}}

\newcommand{\D}{\displaystyle}

\newcommand{\R}{\mathbb{R}}
\newcommand\Z{\mathbb{Z}}

\newcommand\eps{\varepsilon}

\newcommand\fr{\frac}

\def\eps{\varepsilon}
\def\e{{\rm e}}
\def\dd{{\rm d}}

\def\R {\mathbb{R}}

\def\T {{\mathbb T}}
\def\D {{\mathbb D}}

\newcommand\les{\lesssim}

\newcommand\Om{\Omega}
\newcommand\om{\omega}

\newcommand\pr{\partial}

\numberwithin{equation}{section}  % 使 equation 环境的编号随 section 递增
\allowdisplaybreaks[4]

\begin{document}
\title[2D Navier-Stokes Boussinesq equations around the Couette flow]{Stability threshold of the Couette flow in 2D Navier-Stokes Boussinesq equations on $\mathbb{T} \times [-1, 1]$ and $\mathbb{R} \times [-1, 1]$}

\author{Nangao Zhang }
\address[N. Zhang]{Research Center for Applied Mathematics and Interdisciplinary Sciences,  School of Mathematics and Statistics, Wuhan Textile University, Wuhan 430200, P. R. China.}
\email{mathngzhang@163.com}

\author{Gengwei Zhong}
\address[G. Zhong]{School of Mathematics and Statistics,  Central China Normal University, Wuhan,  430079,  P. R. China.}
\email{gwzhong@mails.ccnu.edu.cn}

\author{Ruizhao Zi}
\address[R. Zi]{School of Mathematics and Statistics, and Key Laboratory of Nonlinear Analysis \& Applications (Ministry of Education), Central China Normal University, Wuhan,  430079,  P. R. China.}
\email{rzz@ccnu.edu.cn}

\begin{abstract}
In this paper, we study the transition threshold problem for the 2D Navier-Stokes Boussinesq equations around the Couette flow with a constant background temperature in two domains: $\mathbb{T} \times [-1, 1]$ and $\mathbb{R} \times [-1, 1]$. 
 It was proved that if the sizes of initial perturbations of the vorticity and the temperature in Sobolev spaces are $\nu^{\fr13}$ and $\nu^{\fr56}$, respectively,  then this system admits a globally stable solution. 
 Besides, we derive the enhanced dissipation estimate for the
vorticity and temperature, as well as the inviscid damping estimate for the velocity.
The coupling in the equations poses some difficulties to the analysis. We overcome these challenges and establish the results
via the quasi-linear method and energy estimates.

\end{abstract}  

\subjclass[2020]{35Q35, 35B40, 35Q30.}
    \keywords{transition threshold, Boussinesq equations, Couette flow.}
    
\maketitle

\tableofcontents

\section{Introduction and main results}

In this paper, we consider the two-dimensional Navier-Stokes Boussinesq equations in the domain $\T \times [-1,1]$ or $\R \times [-1,1]$:
\begin{equation}\label{1.1}
    \begin{cases}
        \partial_t v+v \cdot \nabla v+\nabla \Pi-\nu \Delta v=\vartheta e_2, \\
        \partial_t \vartheta+v \cdot \nabla \vartheta-\mu\Delta \vartheta=0, \\
        \nabla \cdot v=0, \\ 
         v^{(2)}(t,x,\pm1)=0,\ \ \partial_yv^{(1)}(t,x,\pm1)=1, \ \ \vartheta(t,x,\pm1)=1,\\
    v(0,x,y)=v_{in}(x,y),\ \ \vartheta(0,x,y)=\vartheta_{in}(x,y),
    \end{cases}
\end{equation}
where $v=\left(v^{(1)}(t, x, y), v^{(2)}(t, x, y)\right)^T$is the velocity field, $\vartheta=\vartheta(t, x, y)$ is the temperature, and $\Pi=\Pi(t, x, y)$ is the pressure. 
Moreover, $e_2=(0,1)^{T}$ is the unit vector in the vertical direction. 
In both domains, we impose Navier-slip boundary conditions on the velocity at $y=\pm1$.
The constants $\nu$ and $\mu$ represent the viscosity coefficient and the thermal diffusivity coefficient respectively.
The first two equations in \eqref{1.1} are the incompressible Navier–Stokes equations with the buoyancy forcing term in the vertical direction, and the third equation describes the balance of temperature convection and diffusion.
Throughout this paper, we focus on the case $\nu=\mu$ for simplicity. 

 It is clear that for any $c\in\mathbb{R}$,
\begin{equation}\label{1.2}
v_s = (y, 0), \quad \vartheta_s = 1, \quad p_s = y + c
\end{equation}
is a steady state solution to the system \eqref{1.1}.
Let's introduce the perturbations as $U= v- (y, 0)$, $P=\Pi-p_s$ and $\Theta = \vartheta-1$, then $(U, P, \Theta)$ satisfies
\begin{equation}\label{1.3}
\left\{
\begin{aligned}
&\partial_t U + y\partial_x U + 
\begin{pmatrix}
U^{(2)} \\ 0
\end{pmatrix}
+ U \cdot \nabla U - \nu \Delta U + \nabla P = 
\begin{pmatrix}
0 \\ \Theta
\end{pmatrix}, \\
&\partial_t \Theta + y\partial_x \Theta + U \cdot \nabla \Theta - \nu \Delta \Theta = 0, \quad \nabla \cdot U = 0, \\
&U^{(2)}(t, x, \pm 1)=\partial_y U^{(1)}(t, x, \pm 1) = 0, \; \Theta(t, x, \pm 1) = 0, \\
&U(0, x, y) = U_{in}(x, y), \; \Theta(0, x, y) = \Theta_{in}(x, y).
\end{aligned}
\right.
\end{equation}
Defining the vorticity of $U$ as $\Omega = \nabla \times U = \partial_y U^{(1)} - \partial_x U^{(2)}$, then for $(x,y)\in \T \times [-1,1]$ or $\R \times [-1,1]$, it is possible to remove the pressure term and to rewrite the system \eqref{1.3} as
\begin{equation}\label{eq:1.4}
\left\{
\begin{aligned}
&\partial_t \Omega + y\partial_x \Omega + U \cdot \nabla \Omega - \nu \Delta \Omega = -\partial_x \Theta, \\
&\partial_t \Theta + y\partial_x \Theta + U \cdot \nabla \Theta - \nu \Delta \Theta = 0, \\
&U = \nabla^\perp \Phi = (\partial_y \Phi, -\partial_x \Phi), \quad \Delta \Phi = \Omega,\\
&\Omega(0,x,y)=\Omega_{in}(x,y),\ \ \ \Theta(0,x,y)=\Theta_{in}(x,y),\\ 
&\Omega(t,x,\pm1)=0,\ \ \Theta(t,x,\pm1)=0. 
\end{aligned}
\right.
\end{equation}
 
In this paper, we study the following {\it stability threshold problem} for \eqref{eq:1.4}:

 Given norms $\|\cdot\|_{Y_1}$ and $\|\cdot\|_{Y_2}$, find the smallest $\alpha=\alpha\left(Y_1, Y_2\right)$ and $\beta=\beta\left(Y_1, Y_2\right)$ such that

$$
\begin{aligned}
    & \left\|U_{in}\right\|_{Y_1} \leq \nu^\alpha \text { and }\left\|\Theta_{in}\right\|_{Y_2} \leq \nu^\beta \Rightarrow \text { stability; } \\
    & \left\|U_{in}\right\|_{Y_1} \gg \nu^\alpha \text { or }\left\|\Theta_{in}\right\|_{Y_2} \gg \nu^\beta \Rightarrow \text { instability. }
\end{aligned}
$$
The exponents $\alpha$ and $\beta$ are referred to as the transition thresholds.

\subsection{Background}
The Navier-Stokes Boussinesq (NSB) equations describes the dynamic behavior of incompressible and inhomogeneous fluid flow driven by buoyancy. This system is usually used to simulate atmospheric science, ocean circulation and mantle convection, and to model Rayleigh-Baynard convection in laminar and turbulent state (cf. \cite{Doering1996}). 
In addition, the 2D NSB system retains some key characteristics of the three-dimensional Euler equation and Navier-Stokes equations, such as the vortex stretching mechanism \cite{Majda2002}.
Due to the wide application of the NSB equations in physics and its significance in mathematics, the theory of this equation has received extensive attention. Depending on different research fields, the studies on this equation can be roughly classified into the following categories: 

(I) Constant equilibrium, $v_s =\vartheta_s=\mbox{constant}$. Chae \cite{Chae} obtained
 the global-in-time regularity with either the zero diffusivity or the zero viscosity, Cao and Wu \cite{Cao2013} proved the global regularity of this system  with vertical
dissipation. 
 For other related results, see \cite{Danchin,Doering2018,Li2016,Tao2020}.

 (II) Shear flow with hydrostatic equilibrium, $v_s = (v_s(y),0), \ \vartheta_s=\vartheta_s(y)$.
In the region $\mathbb{T} \times \mathbb{R}$, Tao and Wu~\cite{Tao2019} obtained the linear stability around Couette flow.
Bedrossian et al.~\cite{Bedrossian20231} established the nonlinear shear-buoyancy instability until the nonlinear timescale $t \approx \varepsilon^{-2}$. 
 Ren and Wei~\cite{Ren} proved the stability threshold of $\frac{1}{3}$ near Couette flow with 
Richardson number $\gamma^2>\frac{1}{4}$.
 By constructing some Fourier multipliers, Deng et al.~\cite{Deng2020} proved that if the initial perturbation satisfies that for $b>1$,
\[
\|\Omega_{in}\|_{H^b} \ll \nu^{\frac{1}{2}},\quad \|\Theta_{in}\|_{H^b} \ll \nu,\quad \||D_x|^{\frac{1}{3}}\Theta_{in}\|_{H^b} \ll \nu^{\frac{5}{6}},
\]
the Boussinesq system with full dissipation is nonlinear stability near Couette flow. 
For the case with only vertical dissipation, they also obtained the nonlinear stability results when
\[
\|\Omega_{in}\|_{H^b} \ll \nu^{\frac{2}{3}},\quad \|\Theta_{in}\|_{H^b} \ll \nu,\quad \||D_x|^{\frac{1}{3}}\Theta_{in}\|_{H^b} \ll \nu ,
\] 
with $b>\frac{4}{3}$.
Later, Zhang and Zi \cite{ZhangZi} obtained the nonlinear stability of Couette flow for the Boussinesq equations with vertical dissipation under the condition that
\[
\|U_{in}\|_{H^{b+1}} + \nu^{-\frac{1}{2}}\|\Theta_{in}\|_{H^b} + \nu^{-\frac{1}{3}}\||D_x|^{\frac{1}{3}} \Theta_{in}\|_{H^b} \ll \nu^{\frac{1}{3}}, 
\]
with $b > 7$, thereby improving the results in \cite{Deng2020}.
Recently, by using the idea of quasi-linearization, Niu and Zhao \cite{Niu} obtained the nonlinear stability of Couette flow for the Boussinesq equations with full dissipation provided that
\[
\|U_{in}\|_{H^{b+1}} + \nu^{-\frac{1}{3}}\|\Theta_{in}\|_{H^b} + \nu^{-\frac{1}{3}}\|\left \langle D_x \right \rangle  \Theta_{in}\|_{H^b}\ll \nu^{\frac{1}{3}}, \quad b > 5.
\]
In the region $\mathbb{R}^2$, Chen et al. \cite{Chen-Wang}  investigated the nonlinear stability of the Couette flow, and obtained the stability threshold $\{\frac{1}{3}+,\frac{2}{3}+\}$. 

 For the region $\mathbb{T} \times [-1, 1]$, Liang et al. \cite{Liang2025} recently studied the nonlinear enhanced dissipation and the inviscid damping effect when the steady state solution is near-Couette shear flow. 
When the steady state solution is $v_s = (y, 0)$ and $\vartheta_s = 1$, by using the resolvent estimate, Masmoudi et al.~\cite{Masmoudi2021} proved that if the initial perturbation satisfies
\begin{equation}\notag
\|U_{in}\|_{H^2} \ll \nu^{\frac{1}{2}}, \quad \| \Theta_{in} \|_{H^1} + \left\| \langle \partial_x \rangle^{\frac{1}{6}} \Theta_{in} \right\|_{H^1} \ll \nu^{\frac{11}{12}},
\end{equation}
then this system with  the non-slip boundary condition is asymptotically stable.
When the steady state solution is the monotone shear flow and a constant background temperature, $v_s = (U(t,y), 0)$ and $\vartheta_s = 1$,
Chen and Li \cite{Chen2026} obtained the nonlinear stability of the NSB equations with non-slip boundary condition under the condition
\begin{equation}\notag
\|U_{in}\|_{H^2} \ll \nu^{\frac{1}{2}}, \quad \left\| \langle \partial_x \rangle\Theta_{in} \right\| \ll \nu^{\frac{5}{6}}.
\end{equation}
For other results related to \eqref{1.1}, refer to \cite{Bianchini2025,Knobel,Liang2026,MSZ,zhai2022}.

From the above introduction, it can be seen that some progress has been made regarding the results of this system, but there are still some shortcomings. For instance, in region $\mathbb{T} \times [-1, 1]$ with Navier-slip boundary condition, the optimal transition threshold has not yet been determined,  and there are few results in region $\mathbb{R} \times [-1, 1]$ as far as we know.
This is precisely the issue that this paper is concerned about.
It is worth noting that when we ignore the effect of temperature, \eqref{1.1} reduces to the two-dimensional incompressible Navier-Stokes system. 
The stability threshold of Couette flow for this equation in region $\mathbb{T} \times [-1, 1]$ has made considerable progress. 
For the non-slip boundary condition, Chen et al. \cite{Chen2020} proved the nonlinear stability provided the initial perturbation of velocity satisfies $\|U_{in}\|_{H^2} \ll \nu^{\frac{1}{2}}$. 
For the Navier-slip boundary condition, Bedrossian et al. \cite{BHW} obtained the same threshold of nearly-Couette Shear flows. 
Recently,  Wei and Zhang \cite{wei-zhang2026} proved the
optimal threshold $\alpha=\frac{1}{3}$ by using the quasi-linear method.
For the region $\mathbb{R} \times [-1, 1]$, Arbon and Bedrossian \cite{Arbon2025} studied the stability
problem in three domains: $\mathbb{R} \times \mathbb{R}$, $\mathbb{R} \times [0,\infty)$ and $\mathbb{R} \times [-1, 1]$ with Navier-slip boundary conditions, showing that the threshold is $\alpha\leq\frac{1}{2}+$. Also for Navier-slip boundary conditions, Chen et al. \cite{miao2025} obtained the optimal threshold $\alpha=\frac{1}{3}$ by using the quasi-linear method and dyadic decomposition.    
For other results, see \cite{BPM1,BPM2,BPM,BMV,BVW,Coti, Kelvin, Lihui2026}.

\subsection{Notations and conventions}
\begin{enumerate}
    \item[1.] We define $D:=\mathbb{D} \times I$, where $\mathbb{D}:=\mathbb{T}$ or $\mathbb{R}$, and $I:=[-1,1]$. For a function $f=f(t,x, y): D \rightarrow \mathbb{R}$,  we denote the Fourier transform with respect to the variable $x$ as
\begin{align}\notag
&f_k(t,y)=\frac{1}{2\pi}\int_{\D} f(t,x,y)\e^{-ikx}\dd x,\notag
\end{align}
here $k\in \mathbb{Z}$ or $k\in \mathbb{R}$. In particular, for $k\in \mathbb{Z}$, $f_{\neq}=f-f_0$.

\item[2.] For a given function $f$, when $1\leq p,q \leq \infty$ and $T_0\leq t\leq T_1$, we denote
\begin{align}\notag
& \|f\|_{L^q_{x}}=\|f\|_{L^q_{x}(\mathbb{D})},\ \ \ \ \ \ \  \|f\|_{L^q_{y}}=\|f\|_{L^q_{y}(I)},\notag\\
&\|f\|_{L^q}=\|f\|_{L^q_{x,y}(D)},\ \ \ \ \ \|f\|_{L^pL^q}=\|f\|_{L^p(T_0,T_1)L^q_{x,y}(D)}.\notag
\end{align}
The $H^q$-norm can be defined analogously.
To simplify the notation, we also use $\|f\|=\|f\|_{L^2}=\|f\|_{L^2_{x,y}(D)}$.
\item[3.] We use the notation $a \lesssim b$  to indicate that  there exists a constant $C>0$ (independent of $\nu, t$) such that $a \leq C b$ (we define $a \gtrsim b$ analogously). 
In addition, we use the notation $a \approx b$ when there exists $C>0$ such that $C^{-1} b \leq a \leq C b$.
\item[4.] For given functions $f,g$, $1\leq q \leq \infty$, we denote
\begin{equation}\notag
 \|(f,g)\|_{L^q}=\|f\|_{L^q}+\|g\|_{L^q}, \ \ \ \  \langle f\rangle=\left(1+|f|^2\right)^{\frac{1}{2}},
 \end{equation}
 and use the notation $\langle f,g\rangle$ to denote the inner product of $f$ and $g$ in $L^2(D)$.
\end{enumerate}

\subsection{ Main results}

\begin{thm}\label{thm1.1}
Suppose that $(\Omega_{in},\Theta_{in})$ is the initial data for \eqref{eq:1.4} with $\mathbb{D}=\mathbb{T}$. There exists a small constant $\varepsilon_0>0$ independent of $\nu$ such that if
\begin{equation}\label{1.4}
    \left\|\Omega_{in}\right\|_{H^3_{x,y}}+\nu^{-\frac{1}{2}}\left\|\Theta_{in}\right\|_{H^4_{x,y}}\leq \varepsilon_0\nu^{\fr13},
\end{equation}
then for $0<\delta\ll 1$ (depending only on universal constants), $t\geq 0$ and $0<\nu\leq 1$,  the corresponding solution $(\Omega,\Theta)$ to \eqref{eq:1.4} satisfies
\begin{equation}\label{1.6}
\left\|\Omega_{\neq}(t)\right\|_{L^2_{x,y}}+\left\|\Theta_{\neq}(t)\right\|_{L^2_{x,y}}\lesssim \varepsilon_0{\rm e}^{-\delta\nu^{\frac13} t},   
\end{equation}
\begin{equation}\label{1.7}
    \left\|U_{\neq}(t)\right\|_{L^2_{x,y}}\lesssim \varepsilon_0(1+t)^{-1}{\rm e}^{-\frac{\delta}{2}\nu^{\frac13} t}.
\end{equation}
\end{thm}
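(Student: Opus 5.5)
The proof is a bootstrap argument based on the quasi-linear method of Wei and Zhang \cite{wei-zhang2026}, adapted to the coupling with the temperature. With Navier-slip data $\Omega=\Theta=0$ at $y=\pm1$, both unknowns expand in the sine basis $\sin(n\pi(y+1)/2)$. This makes Fourier analysis in $y$ available, so we can build weighted multipliers in $(k,\eta)$, and also treat the zero modes $\Omega_0,\Theta_0$ by plain heat-equation estimates.

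\textbf{Change of variables.} The first step is the quasi-linear change of variables: we replace $x$ by $x - t\,(y + u(t,y))$ in spirit. Concretely, we introduce a corrected shear $V(t,y)=y+U^{(1)}_0(t,y)$ and let the good derivative $\partial_y+t\partial_yV\,\partial_x$ act on the nonzero modes. This absorbs the main nonlinear interaction $U^{(1)}_0\partial_x\Omega_{\neq}$ into the linear transport. The remaining nonlinear terms then involve only $U_{\neq}$ and zero-mode derivatives, and these decay.

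\textbf{Energy functional.} Next we define an energy with time-dependent Fourier multipliers, in the style of the ghost weights of \cite{BVW,Deng2020}. Schematically,
\[
E(t)=\|\Omega_0\|_{H^3}^2+\nu^{-1}\|\Theta_0\|_{H^4}^2+\big\|\e^{\delta\nu^{1/3}t}\mathcal M\,\Omega_{\neq}\big\|_{H^3}^2+\nu^{-1}\big\|\e^{\delta\nu^{1/3}t}\mathcal M\,\Theta_{\neq}\big\|_{H^4}^2,
\]
with $\mathcal M$ chosen so that its time derivative gives two gains. It gives the enhanced dissipation $\nu^{1/3}$, from $\mathcal M_1$ with symbol depending on $\nu^{1/3}|k|^{-1/3}(\eta/k-t)$. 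It also gives the inviscid damping $\int \frac{|k|^2}{k^2+(\eta-kt)^2}|\widehat\Omega|^2$, from $\mathcal M_2=\arctan(\eta/k-t)$-type weights. The linear coupling $-\partial_x\Theta$ in the vorticity equation is the reason for the weight mismatch $\nu^{-1/2}$ between $\Omega$ and $\Theta$. We bound $\langle \mathcal M\partial_x\Theta,\mathcal M\Omega\rangle$ by Cauchy--Schwarz against the dissipation $\nu\|\nabla \mathcal M\Omega\|^2$ and $\nu\|\nabla\mathcal M\Theta\|^2$. The temperature carries one more derivative and the weight $\nu^{-1/2}$, so this term is controlled by $\nu^{-1}\cdot\nu\|\nabla\Theta\|^2$ type dissipation together with the $\nu^{1/3}$ damping, which requires $\|\Theta\|\lesssim\nu^{1/2}\|\Omega\|$-scale smallness. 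This is exactly the gap $\nu^{5/6}$ versus $\nu^{1/3}$.

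\textbf{Nonlinear terms.} Under the bootstrap hypothesis $E(t)\le 4\varepsilon_0^2\nu^{2/3}$ on $[0,T]$, we aim for $\frac{d}{dt}E+\mathcal D\lesssim \nu^{-1/3}E^{1/2}\mathcal D$. The terms to bound are the reaction term $U_{\neq}\cdot\nabla\Omega_{\neq}$, the zero-mode forcing $(U_{\neq}\cdot\nabla\Omega_{\neq})_0$, and the corresponding temperature terms, each estimated using $\|U_{\neq}\|\lesssim\|\partial_x\Omega_{\neq}\|/\langle t\rangle$-type inviscid damping inside the multiplier. Combined with $E^{1/2}\le 2\varepsilon_0\nu^{1/3}$, this closes the bootstrap as $\frac{d}{dt}E+\frac12\mathcal D\le0$. \textbf{This step is the main obstacle.} It is delicate for the term $U\cdot\nabla\Theta$ with the $\nu^{-1}$ weight, because the loss of $\nu^{-1/2}$ there must be compensated by the extra derivative on $\Theta$ and by the $t^{-1}$ decay of $U^{(2)}_{\neq}$. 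The boundary commutators are a second source of difficulty: the change of variables and the multipliers must be compatible with the odd (sine) extension. We first try to handle them using the fact that Navier-slip with odd extension makes $D$ effectively a periodic strip $\T\times\mathbb T_4$, so no boundary layer appears.

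\textbf{Conclusion.} Once $E$ is controlled, \eqref{1.6} follows directly from the $\e^{\delta\nu^{1/3}t}$ weight. For \eqref{1.7}, apply the elliptic estimate $\|U_{\neq}\|\lesssim\langle t\rangle^{-1}\|\Omega_{\neq}\|_{H^2}$, which holds in the moving frame since $\nabla^\perp\Delta^{-1}$ gives $|k|/(k^2+(\eta-kt)^2)\lesssim \langle\eta\rangle^2/(|k|\langle t\rangle^2)$. Interpolating this with the $H^3$ bound yields the factor $(1+t)^{-1}\e^{-\frac{\delta}{2}\nu^{1/3}t}$.
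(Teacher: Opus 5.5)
\textbf{The proposal has a genuine gap: its central tool is not available on $\mathbb{T}\times[-1,1]$.} Your plan relies on time-dependent Fourier multipliers in $(k,\eta)$ with symbols in $\eta/k-t$. Such multipliers give the enhanced-dissipation and inviscid-damping gains only because, on $\mathbb{T}\times\mathbb{R}$, conjugation by $e^{ikty}$ turns $y\partial_x$ into the shift $\eta\mapsto\eta-kt$. The commutator with the transport is then exactly compensated by $\partial_t\mathcal M$.

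The odd extension does not restore this. If you reflect $\Omega$ oddly across $y=1$, the reflected function solves the equation with transport coefficient $2-y$ on $[1,3]$. So on $\mathbb{T}\times\mathbb{T}_4$ the background shear is a triangle wave with corners at $y=\pm1$, not Couette. Equivalently, multiplication by $y$ is non-diagonal in the sine basis, and $e^{ikty}\sin(n\pi(y+1)/2)$ is not a sine mode. A solution-dependent shear $V=y+U^{(1)}_0$ only makes this worse.

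Consequently $\frac{d}{dt}\|\mathcal M\Omega_{\neq}\|^2$ contains commutator terms $\langle[\mathcal M,y\partial_x]\Omega,\mathcal M\Omega\rangle$. These do not reduce to the frame shift, and nothing in your functional controls them. The boundary has not disappeared; it sits in the corners of the extended shear. Several further problems follow:
\begin{itemize}
\item The same objection applies to the symbol bound you use for \eqref{1.7}.
\item It also applies to the ``good derivative'': energy estimates for $(\partial_y+t\partial_x)^j\Omega$ produce boundary terms, since $(\partial_y+t\partial_x)\Omega\neq0$ on $y=\pm1$.
\item Sine-series $H^3/H^4$ norms are comparable to $\|\Omega_{in}\|_{H^3}$ and $\|\Theta_{in}\|_{H^4}$ only under compatibility conditions ($\partial_y^2\Omega_{in}=\partial_y^2\Theta_{in}=0$ at $y=\pm1$). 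Condition \eqref{1.4} does not impose these, so even $E(0)$ is not controlled.
\end{itemize}
The paper states explicitly that the multiplier method cannot be applied as in the whole-space case. Independently, the closing inequality $\frac{d}{dt}E+\mathcal D\lesssim\nu^{-1/3}E^{1/2}\mathcal D$ is asserted, not derived. The piece $t\,U^{(2)}_{\neq}\partial_x\Omega_{\neq}$ hidden in $U_{\neq}\cdot\nabla\Omega_{\neq}$ is exactly what needs a separate argument at the $\nu^{1/3}$ threshold.

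\textbf{The missing idea is to put all the regularity on an explicitly solvable object and estimate the remainder only at the $L^2$ level.} At that level $\omega=\theta=0$ on $y=\pm1$ removes every boundary term. This is what the Wei--Zhang quasi-linear method actually is; it is not a change of variables.
\begin{itemize}
\item The shears $V_1,V_2$ in \eqref{2.1} come from the heat extension of the \emph{initial} zero modes. They are known a priori and satisfy \eqref{2.2}--\eqref{2.4} without any bootstrap.
\item The approximate system \eqref{2.7} has dissipation $\nu t^2\partial_x^2$ and no $y$-derivatives, and is solved explicitly in \eqref{3.7} and \eqref{3.9}. Proposition \ref{p2.1} then follows mode by mode in $k$, with Lemma \ref{l3.3} replacing your Fourier symbol for inviscid damping.
\item The remainder \eqref{2.9} has zero data. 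On $[0,\nu^{-1/6}]$ it is handled by zeroth-order energy estimates and the hypocoercive functional $\Phi_k$ of Lemma \ref{l4.4}.
\item The semigroup bound \eqref{aa1} from that functional yields the term $\nu^{1/2}\|\partial_x\theta\|_{L^1L^2}$ in $\|\theta\|_{X_1}$. This term absorbs the buoyancy $-\partial_x\theta$, and it is the source of the $\nu^{1/2}$ loss and hence the $\nu^{5/6}$ threshold.
\item For $t\ge\nu^{-1/6}$, the paper uses the space-time estimate of Proposition \ref{p4.8a}. It splits off reaction parts $\omega_{re},\theta_{re}$ driven by $tu^{(2)}\partial_x\omega_a$ and $tu^{(2)}\partial_x\theta_a$.
\end{itemize}
Finally, \eqref{1.6}--\eqref{1.7} follow by adding the bounds for $(\omega_a,\theta_a,u_a)$ and for the remainder. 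For the remainder one needs only $\|u\|\lesssim\|\omega\|$ and the extra factor $\nu^{1/3}$ it carries.
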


\begin{rem}
 Compared with the results in Zhang and Zi \cite{ZhangZi}, we successfully extend the whole-space case ($y\in \mathbb{R}$) to the case with boundaries, and consider two different types of domains
separately.
 According to \cite{wei-zhang2023,wei-zhang2026}, the stability threshold for $\Omega_{in}$ should be optimal. 
Due to the presence of the buoyancy term $ -\partial_x \Theta$ in the vorticity equation, and since the multiplier method cannot be applied as in the whole-space case, the stability threshold for the temperature is larger.
\end{rem}

\begin{thm}\label{thm1.2}
Suppose that $(\Omega_{in},\Theta_{in})$ is the initial data for \eqref{eq:1.4} with $\mathbb{D}=\mathbb{R}$. There exists a small constant $\varepsilon_0>0$ independent of $\nu$ such that if
\begin{equation}\label{1.8}
    \|\Omega_{in}\|_{H^3_{x,y}\cap L^1_xH^3_{y}}+\nu^{-\frac{1}{2}}\|\Theta_{in}\|_{H^4_{x,y}\cap L^1_xH^4_{y}}\leq \varepsilon_0\nu^{\fr13},
\end{equation}
then for $0<\delta\ll 1$, $t\geq 0$ and $0<\nu\leq 1$,  the solution $(\Omega,\Theta)$ to \eqref{eq:1.4} satisfies
\begin{equation}\label{1.9}
\left\|(1+\nu^{\fr13}t\mathcal{M}_1)^\delta\Omega(t)\right\|_{L^2_{x,y}}+\nu^{-\frac{1}{2}}\left\|(1+\nu^{\fr13}t\mathcal{M}_1)^\delta\Theta(t)\right\|_{L^2_{x,y}}\lesssim \varepsilon_0,   
\end{equation}
\begin{equation}\label{1.10}
    \left\|(1+\nu^{\fr13}t\mathcal{M}_1)^\delta \mathcal{M}_2 U(t)\right\|_{L^2_{t}L^2_{x,y}}\lesssim \varepsilon_0.
\end{equation}
Here $\mathcal{M}_1=\mathcal{M}_1(\partial_x)$ and $\mathcal{M}_2(\partial_x)$ are the Fourier multipliers, which are defined as (see \cite{miao2025})
\begin{align}\label{M}
\mathcal{M}_1(k)=|k|^{\frac23}\chi+1-\chi, \quad \mathcal{M}_2 (k)=|k|^{\frac12}+|k|,
\end{align}
where $\chi$ is a cut-off function satisfying $\chi(k)=\begin{cases}
1 & |k|\leq \frac12, \\
-|k|^2-\frac{|k|}{2}+\frac{3}{2} & \frac12<|k|< 1, \\
0 & |k|\geq 1.
\end{cases}$ 
\end{thm}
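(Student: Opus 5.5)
The plan follows the quasi-linear approach of \cite{wei-zhang2026} and \cite{miao2025}, adapted to the coupled system \eqref{eq:1.4} on $\R\times[-1,1]$. Since $k\in\R$ is continuous, there is no zero mode that can be split off. Instead, the weight $(1+\nu^{\fr13}t\mathcal{M}_1(k))^{\delta}$ replaces the exponential decay $\e^{-\delta\nu^{1/3}t}$ of Theorem \ref{thm1.1}. For $|k|\ll1$ the enhanced dissipation rate $\nu^{1/3}|k|^{2/3}$ degenerates, so only polynomial decay in the combined variable $\nu^{1/3}t\mathcal{M}_1(k)$ is available. The $L^1_x$ assumption in \eqref{1.8} controls $\sup_k|\hat f_k|$ near $k=0$. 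This supplies the integrability in $k$ needed for the low-frequency contributions to \eqref{1.10}, where $\mathcal{M}_2(k)=|k|^{1/2}+|k|$ vanishes at $k=0$.

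\textbf{Step 1: linearized estimates with frozen coefficients.} For each frequency $k$, I would study the linear problem
\[
\pr_t\om+iky\om-\nu(\pr_y^2-k^2)\om+\mathcal{V}\cdot\nabla\om=-ik\theta,
\]
with the analogous equation for $\theta$. Here $\mathcal{V}$ is a given small velocity, which is the quasi-linear device. The estimates come from a hypocoercive energy functional combining $\|\om_k\|^2$, $\nu^{1/3}|k|^{-2/3}\|\pr_y\om_k\|^2$ and a cross term $\mathrm{Re}\langle ik\om_k,\pr_y\om_k\rangle$, with the weights chosen so that the Navier-slip boundary terms vanish ($\om_k=\theta_k=0$ at $y=\pm1$). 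The output is decay at rate $\nu^{1/3}\mathcal{M}_1(k)^{1}$ together with the inviscid damping bound $\|\mathcal{M}_2 u_k\|_{L^2_t}\lesssim\|\om_k\|$, derived as in \cite{miao2025} via the space–time estimate $\|k\,\pr_y^{-1}\|$ and the elliptic problem $(\pr_y^2-k^2)\phi_k=\om_k$ with Dirichlet data.

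\textbf{Step 2: treat the buoyancy term as a source.} I would estimate $\Theta$ first, since its equation is not forced. The Duhamel contribution of $-ik\theta_k$ to $\om_k$ costs roughly $|k|\int\|\theta_k\|\,ds$. Enhanced dissipation on the time scale $\nu^{-1/3}|k|^{-2/3}$ gives a loss of $|k|^{1/3}\nu^{-1/3}$ at high frequencies and $\nu^{-1/3}$ at low frequencies, with the low-frequency loss actually softened by the factor $|k|$. To absorb a $\nu^{-1/2}$-type loss, the temperature must therefore be of size $\nu^{1/2}\cdot\nu^{1/3}=\nu^{5/6}$. The derivative loss in $k$ is taken from the extra derivative in $H^4$ versus $H^3$.

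\textbf{Step 3: bootstrap and main obstacle.} I would define an energy $E(t)$ collecting the weighted $H^3$ norms of $\Omega$, $\nu^{-1/2}$ times the weighted $H^4$ norms of $\Theta$, the weighted dissipation, and the weighted inviscid damping norm \eqref{1.10}. Assuming $E\le 2C\eps_0\nu^{1/3}$, the nonlinear terms $U\cdot\nabla\Omega$ and $U\cdot\nabla\Theta$ are split into a quasi-linear part, where the top derivatives fall on $\Omega$ and are absorbed by Step 1, and lower-order parts, bounded by $\|\mathcal{M}_2U\|_{L^2_t}$ times the dissipation. The main obstacle is the low-$|k|$ and low–high interaction in the nonlinear terms. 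There the weight $(1+\nu^{1/3}t\mathcal{M}_1)^\delta$ does not satisfy a clean product inequality in frequency, and $\mathcal{M}_1$ is not submultiplicative. I would first attempt the inequality
\[
\mathcal{M}_1(k)\lesssim\mathcal{M}_1(k-l)+\mathcal{M}_1(l),
\]
which holds since $\mathcal{M}_1$ is comparable to $\min(|k|^{2/3},1)$. Combined with $(1+a+b)^\delta\le(1+a)^\delta+(1+b)^\delta$, this distributes the weight onto one factor, and the $L^1_x$ bound handles the low-frequency factor. Closing the bootstrap gives $E\le C\eps_0\nu^{1/3}$, which yields \eqref{1.9}–\eqref{1.10} after dividing by $\nu^{1/3}$ in the normalization of $\eps_0$.
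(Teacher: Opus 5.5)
There is a genuine gap in Step 3. As you set it up, the bootstrap quantity $E$ contains weighted $H^3$ norms of $\Omega$ and $H^4$ norms of $\Theta$ of the full solution. These are not even bounded for the linearized flow. Writing $\Omega_k\approx\e^{-ikyt}g_k$ with $g_k$ smooth, you get $\|\partial_y^3\Omega_k(t)\|\approx|kt|^3\e^{-\nu k^2t^3/3}\|\Omega_{in,k}\|$, which peaks at size $\sim\nu^{-1}|k|\,\|\Omega_{in,k}\|$. So the hypothesis $E\le 2C\varepsilon_0\nu^{1/3}$ cannot hold.

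The whole-space remedy (measuring regularity with $\partial_y+t\partial_x$ and using Fourier multipliers in $(k,\xi)$) is not available for $y\in[-1,1]$. There is no Fourier transform in $y$, and $\partial_y\Omega$, $\partial_y^3\Omega$ and $(\partial_y+t\partial_x)^j\Omega$ do not vanish at $y=\pm1$. The energy identities therefore generate boundary fluxes such as $\nu\,\partial_y^{j+1}\Omega\,\partial_y^{j}\Omega|_{y=\pm1}$, which nothing controls. Your claim that boundary terms vanish because $\omega_k=\theta_k=0$ at $y=\pm1$ holds only for the first-order hypocoercive functional of Step 1. The paper's introduction flags exactly this obstruction (referring to \eqref{123}--\eqref{456}), and Section \ref{s6} leaves higher-order estimates in the channel open.

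An $L^2$-level bootstrap on the full solution does not close at $\nu^{1/3}$ either. With the bounds of Proposition \ref{ap3.12}, the term $U\cdot\nabla\Omega$ costs $\|U\|_{L^2L^\infty}\|\nabla\Omega\|_{L^2L^2}\lesssim\nu^{-1/8}E\cdot\nu^{-1/2}E=\nu^{-5/8}E^2$. This is $\ll E$ only when $E\ll\nu^{5/8}$, whereas the full solution has size $\mathcal{E}_{in}\sim\nu^{1/3}$.

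What is missing is the quasi-linear decomposition of \cite{wei-zhang2026,miao2025}, which the paper actually uses. One writes $\Omega=\omega_a+\omega$ and $\Theta=\theta_a+\theta$, where $(\omega_a,\theta_a)$ solves the explicitly solvable model \eqref{a3.3} with the full data. All of the $H^3$/$H^4$ regularity is spent only on the formulas \eqref{a3.29}--\eqref{a3.30}, in three ways:
\begin{enumerate}
\item through the conjugated derivatives $(k,\partial_y)^j(\e^{ikyt}\theta_{a,k})$ (Lemma \ref{al3.3});
\item through the buoyancy Duhamel integral, which collapses to $-ikt\Theta_{in,k}$ — this is where your Step 2 count belongs, and it costs only $\nu^{-1/3}$;
\item through the $L^1_x$ hypothesis, which yields the pointwise decay $\|u_a^{(1)}\|_{L^\infty}\lesssim(1+t)^{-1}$ via the sheared heat kernel, see \eqref{omLxy}--\eqref{payphi}.
\end{enumerate}

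The remainder $(\omega,\theta)$ has zero initial data and is estimated only in $L^2$-based space--time norms:
\begin{enumerate}
\item On $[0,\nu^{-1/6}]$ the paper uses the semigroup bound for $\partial_t+y\partial_x+u_a\cdot\nabla-\nu\Delta$ (Corollary \ref{ac3.6}). This is your frozen-velocity device, but with $\mathcal{V}=u_a$ explicit and only for short times. It is used in the norm $X_1$, whose component $\nu^{1/2}\|\partial_x\theta\|_{L^1L^2}$ absorbs the buoyancy term in the $\omega$-energy estimate; this is the real origin of the $\nu^{5/6}$ requirement.
\item For later times it uses Proposition \ref{ap3.9} in $X_2$ and $X_{2,\delta}$.
\end{enumerate}
Since the remainder starts from zero, it is smaller than the data by roughly $\nu^{1/3}$ (up to small losses). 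This is what makes the quadratic terms $\nu^{-5/8}\|\omega\|_{X_2}^2$ affordable.

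The linear term $u\cdot\nabla\omega_a$ (and likewise $u\cdot\nabla\theta_a$), where $\nabla\omega_a$ grows like $t$, is rewritten as $-tu^{(2)}\partial_x\omega_a$ plus the decaying combination $u^{(1)}\partial_x\omega_a+u^{(2)}(\partial_y+t\partial_x)\omega_a$. The resulting reaction parts $\omega_{re},\theta_{re}$ are controlled by the dyadic time decomposition $T_j=2^j\nu^{-1/3}$ of Lemma \ref{al3.14}. Your subadditivity inequality for $\mathcal{M}_1$ is correct but addresses none of these points.
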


Now, let us briefly point out the differences between this paper and previous results, as well as the novel contributions of this work.
In \cite{Masmoudi2021}, the authors established the stability of solutions via resolvent estimates and space-time estimates on $\mathbb{T} \times [-1, 1]$ with the non-slip or Navier-slip boundary condition. 
These methods are effective for dealing with the initial-boundary value problem with boundary conditions, but it seems that the optimal transition threshold of $\Omega$ and $\Theta$ with the Navier-slip boundary condition have not been obtained yet.
Unlike their approach, in the domain $\mathbb{T} \times [-1, 1]$, this paper mainly uses quasi-linearization, energy estimate, operator semigroup estimate and other methods.
More precisely, inspired by \cite{wei-zhang2026}, we first subtract the zero mode of the linearized part of system \eqref{eq:1.4}, and then construct an approximate solution $(\omega_{a},\theta_{a})$ for the remaining equations \eqref{2.6} (see \eqref{2.1}--\eqref{2.8} for more details).
It is worth noting that the initial value of the approximate equations \eqref{2.7} coincides with that of the original equations \eqref{2.6}. 
Due to the relatively simple structure of the approximate equations, we can derive the explicit expression for the solution, thereby obtaining some estimates through direct calculation (see Proposition \ref{p2.1}).
Subsequently, by perturbing the solution of system \eqref{2.6} around the approximate solution $(\omega_{a},\theta_{a})$, we derive a new system \eqref{2.9}.
Although the initial data for this system are zero, the system is more challenging to handle because of the presence of numerous nonlinear terms.
We divide the time interval into two parts, $0<t\leq \nu^{-\frac{1}{6}}$ and $t>\nu^{-\frac{1}{6}}$ , for separate consideration. For $0<t\leq \nu^{-\frac{1}{6}}$, thanks to the favorable transport structure, we can avoid directly estimating the most troublesome nonlinear terms, namely $u_{ob} \cdot \nabla \omega$ and $u_{ob} \cdot \nabla \theta$, by directly establishing the zero-order energy estimate. Unfortunately, there is a buoyancy term $-\partial_x\theta$ in the vorticity equation, this term is the first derivative of temperature.
Raising the regularity of the solution would break the transport structure, and some boundary terms would become difficult to handle. Consequently, the energy estimate would fail to close. (see \eqref{123}--\eqref{456}). 
This presents a significant difference compared to the Navier-Stokes equations.
In order to overcome this difficulty, we first derive some estimates for the solution to a new linear equation (see Proposition \ref{p4.1}), and then use the semigroup estimate to control $ \|\theta\|_{X_1}$ (see Proposition \ref{p4.7}). 
Note that this norm involves the term $\nu^{\frac{1}{2}} \| \partial_x \theta\|_{L^1 L^2}$, which overcomes the difficulty caused by the buoyancy term. 
But it also leads to a loss of $\nu^{\frac{1}{2}}$ in temperature, which seems inevitable.
This is precisely why the scale requirement of temperature is $\nu^{\frac{5}{6}}$.
In view of the above analysis, we can get the energy estimate in the short time scale  (see Proposition \ref{p4.8}).
For the long time scale $t>\nu^{-\frac{1}{6}}$, our strategy is to decompose system \eqref{2.9} into two subsystems, system \eqref{aaa}--\eqref{bbb} and system \eqref{aaaa}--\eqref{bbbb} according to the better estimate for the good derivative $\partial_y+t\partial_x$. 
For system \eqref{aaaa}--\eqref{bbbb}, we first establish the decay properties of the solution to the linearized equation via energy estimates, and then derive estimates for the nonlinear equation by using operator semigroup estimates (see Propositions \ref{p4.8a} and \ref{prop:2.6}). 
For system \eqref{aaa}--\eqref{bbb}, we obtain the desired estimates through a decomposition similar to that in \cite{wei-zhang2026} (see Proposition \ref{p4.12a}). 
Based on the above analysis, we are able to derive estimates on the long-time scale (see \eqref{4.43}--\eqref{4.49}). 
Combining the estimates on the short time scale, we can finally get Theorem \ref{thm1.1} (see \eqref{2.16}--\eqref{2.19}).
For the region $\mathbb{R} \times [-1, 1]$, the decay rate of the perturbation variable changes from exponential to algebraic. Compared with the previous region, we mainly focus on the estimates for $0<|k|<1$ (the case $|k|\geq1$ is similar to before). Inspired by \cite{miao2025}, we finally establish Theorem \ref{thm1.2} via the quasi-linear method and dyadic decomposition.

The rest of this paper is organized as follows. 
In Section \ref{s2}, we give the outline of the proof of the main theorems. In Section \ref{s3}, we establish the estimates for the approximate solutions $\omega_a$ and $\theta_a$, and prove Propositions \ref{p2.1} and \ref{linear}. In Sections \ref{s4} and \ref{s5}, we derive the estimates for the perturbation variables $\omega$ and $\theta$ in the regions 
$\mathbb{T} \times I$ and $\mathbb{R} \times I$, respectively, and prove Propositions \ref{p2.2} and \ref{ap3.2}. In Section \ref{s6}, we discuss further improvements on the threshold for the temperature. The appendix provides the proof of the main results in Section \ref{s6} and a key inequality.

\section{Outline of the proof}\label{s2}

In this section, we give the outline of the proofs of Theorems \ref{thm1.1} and \ref{thm1.2}. 
For convenience, we first introduce some notation regarding the initial data.
For $k\in \mathbb{Z}$ or $k\in \mathbb{R}$, $y\in [-1,1]$, with $\Omega_{in}$ and $\Theta_{in}$ given in \eqref{eq:1.4}, we denote
\begin{align}\label{aaa3.1}
&E_{1,j}(k):=\|(k, \partial_y)^j\Omega_{in,k}\|_{L^2_y}\ (j=0,1,2,3),\notag\\ 
&E_{2,j}(k):=\|(k, \partial_y)^j\Theta_{in,k}\|_{L^2_y}\ (j=0,1,2,3,4),\notag\\
& E_{1}(k):=\sum_{j=0}^3E_{1,j}(k),\ \ \ \ \ \ E_{2}(k):=\sum_{j=0}^4E_{2,j}(k).
\end{align}
In particular, for $k\in \mathbb{R}$, we denote
\begin{align}\label{aaa3.2}
  &\mathcal{E}_{1,in}:=\|\Omega_{in}\|_{H^3_{x,y}}+\|\Omega_{in}\|_{L^1_xH^3_{y}},  \ \ \ \ \ \ \ \mathcal{E}_{2,in}:=\|\Theta_{in}\|_{H^4_{x,y}}+\|\Theta_{in}\|_{L^1_xH^4_{y}},\notag\\ &\mathcal{E}_{in}:=\mathcal{E}_{1,in}+\nu^{-\frac{1}{2}}\mathcal{E}_{2,in}.
\end{align}
Thus, one has that
\begin{align}\label{a3.2}
&\|E_{1}(k)\|_{L^2_k}\lesssim\sum_{j=0}^3\|E_{1,j}(k)\|_{L^2_k} \lesssim \|\Omega_{in}\|_{H^3_{x,y}}\lesssim \mathcal{E}_{1,in},\notag\\ 
&\|E_{2}(k)\|_{L^2_k}\lesssim\sum_{j=0}^4\|E_{2,j}(k)\|_{L^2_k} \lesssim \|\Theta_{in}\|_{H^4_{x,y}}\lesssim\mathcal{E}_{2,in}.
\end{align}

\subsection{For Theorem \ref{thm1.1}}

Note that $k\in \Z$ at this time.
Let $ W_{1,in}(y)=P_0 \Omega_{in}$, $ W_{2,in}(y)=P_0 \Theta_{in}$, then it holds
\[
\Omega_{in}(x,y) = W_{1,in}(y) + P_{\neq}\Omega_{in}(x,y), \ \ \Theta_{in}(x,y) = W_{2,in}(y) + P_{\neq}\Theta_{in}(x,y).
\]
Similar to \cite{wei-zhang2026}, we define the heat extension with homogeneous Dirichlet boundary condition as
\[
W_1(t,y) = {\rm e}^{\nu t\partial_y^2} W_{1,in}(y),\ \ \ \ \ W_2(t,y) =  {\rm e}^{\nu t\partial_y^2} W_{2,in}(y).
\]
Then we define the following shear flows:
\begin{equation}\label{2.1}
V_{1}(t,y) = y - \partial_y(-\partial_y^2)^{-1} W_1(t,y), \ \ \ \ V_{2}(t,y)=1+W_2(t,y).
\end{equation}
Thanks to $W_{1,in}(\pm 1) =W_{2,in}(\pm 1) =0$, it holds for $t > 0$ that
\begin{align}
& \|V_1 - y\|_{C^3} = \|\partial_y(-\partial_y^2)^{-1} W_1\|_{C^3} \lesssim \|W_1\|_{C^2} \leq \|W_{1,in}\|_{C^2} \lesssim \|W_{1,in}\|_{H^3} \leq \|\Omega_{in}\|_{H^3}, \label{2.2}\\
& \|V_{1}' - 1\|_{C^2} + \|V_{1}''\|_{C^1} + \|V_{1}'''\|_{L^\infty} \lesssim \|V_{1} - y\|_{C^3} \lesssim \|\Omega_{in}\|_{H^3},\label{2.3}\\
&\|V_2 - 1\|_{C^2} + \|V_2'\|_{C^1} + \|V_2''\|_{L^\infty} \lesssim \|W_2\|_{C^2} \lesssim \|W_{2,in}\|_{C^2}\lesssim \|\Theta_{in}\|_{H^3}.\label{2.4}
\end{align}
We denote
\begin{align}\label{2.5}
&u_{ob}(t,x,y) = v(t,x,y) - (V_1(t,y), 0),\ \ \ \omega_{ob}(t,x,y) = \Omega(t,x,y) - W_1(t,y), \notag\\
&\theta_{ob}(t,x,y)=\Theta(t,x,y)-W_2(t,y).
\end{align}
Then we have 
\begin{eqnarray}\label{2.6}
\left\{
\begin{aligned}
&\partial_t \omega_{ob} + V_{1} \partial_x \omega_{ob}- \nu \Delta \omega_{ob}  - V_{1}'' \partial_x \phi_{ob} + u_{ob} \cdot \nabla \omega_{ob} = -\partial_x \theta_{ob}, \\
&\partial_t \theta_{ob}  + V_{1} \partial_x \theta_{ob}- \nu \Delta \theta_{ob}-V_{2}' \partial_x \phi_{ob} + u_{ob} \cdot \nabla \theta_{ob} =0, \\
&u_{ob} = (\partial_y, -\partial_x)\phi_{ob}, \quad \phi_{ob} = \Delta^{-1} \omega_{ob}, \\
&\omega_{ob}(t,x,\pm 1) = 0, \quad \theta_{ob}(t,x,\pm 1) = 0, \\ 
&\omega_{ob}|_{t=0} = P_{\neq}\Omega_{in},\ \ \  \theta_{ob}|_{t=0} =P_{\neq}\Theta_{in}.
\end{aligned}
\right.
\end{eqnarray}
Next, we construct approximate solutions $\omega_{a}$ and $\theta_{a}$ of $\omega_{ob}$ and $\theta_{ob}$ respectively. 
\begin{eqnarray}\label{2.7}
\left\{\begin{aligned}
&\partial_t \omega_{a} + V_1 \partial_x \omega_{a}- \nu t^2 \partial_x^2 \omega_{a}  = -\partial_x \theta_{a}, \\
&\partial_t \theta_{a}  + V_1 \partial_x \theta_{a}- \nu t^2 \partial_x^2 \theta_{a} =0, \ \ \ \ \ \ (x,y)\in \mathbb{T}\times [-1,1],\\ 
&\omega_{a}(t,x,\pm 1) = 0, \quad \theta_a(t,x,\pm 1) = 0, \\ 
&\omega_{a}|_{t=0} = P_{\neq}\Omega_{in},\ \ \ \ \theta_{a}|_{t=0} = P_{\neq}\Theta_{in}.
\end{aligned}\right.
\end{eqnarray}
One can verify that $P_0 \omega_{a} =P_0 \theta_{a}= 0$. Let
\[
\phi_{a} = \Delta^{-1} \omega_{a}, \quad u_{a} = (\partial_y, -\partial_x) \phi_{a},
\]
and
\begin{align}\label{2.8}
&Er_{\omega}^1 = \partial_t \omega_{a}  + V_1 \partial_x \omega_{a}- \nu \Delta \omega_{a}+\partial_x\theta_{a}, \quad Er^1 = Er_{\omega}^1 - V_1'' \partial_x \phi_{a} + u_{a} \cdot \nabla \omega_{a},\notag\\ 
&Er_{\theta}^2 = \partial_t \theta_{a} + V_1 \partial_x \theta_{a}- \nu \Delta \theta_{a} , \quad Er^2 = Er_{\theta}^2 -V_2' \partial_x \phi_{a} + u_{a} \cdot \nabla \theta_{a}, 
\end{align}
and define the perturbations as
\[
\omega = \omega_{ob} - \omega_{a}, \quad u = u_{ob} - u_{a}, \quad \phi = \phi_{ob} - \phi_{a},\quad \theta = \theta_{ob} - \theta_{a}, 
\]
we then deduce that
\begin{equation}\label{2.9}
\left\{
\begin{aligned}
&\partial_t \omega + V_1 \partial_x \omega- \nu \Delta \omega  - V_1'' \partial_x \phi + u_{ob} \cdot \nabla \omega + u \cdot \nabla \omega_{a} + Er^1 =-\partial_x \theta , \\
&\partial_t \theta + V_1 \partial_x \theta- \nu \Delta \theta -V_2' \partial_x \phi + u_{ob} \cdot \nabla \theta + u \cdot \nabla \theta_{a} + Er^2 =0, \\
&u = (\partial_y, -\partial_x) \phi, \quad \phi = \Delta^{-1} \omega, \\
&\omega(t,x,\pm 1) =\omega|_{t=0} = 0,\quad \theta(t,x,\pm 1) =\theta|_{t=0} = 0.
\end{aligned}
\right.
\end{equation}

As for system \eqref{2.7}, we have the following conclusions, which will be proved in Section \ref{s3}.
\begin{prop}\label{p2.1}
Under the conditions of Theorem \ref{thm1.1}, the solution $(\omega_{a},\theta_{a})$ of \eqref{2.7}  satisfies that for $t \geq 0$,
\begin{align}
& \|\partial_x \theta_{a}(t)\|_{L^\infty} + \|(\partial_y + t\partial_x)\theta_{a}(t)\|_{L^\infty} \lesssim {\rm e}^{-2\nu^{\frac{1}{3}} t} \|\Theta_{in}\|_{H^4}, \label{2.10}\\
& \|\partial_x \omega_{a}(t)\|_{L^\infty} + \|(\partial_y + t\partial_x)\omega_{a}(t)\|_{L^\infty} \lesssim {\rm e}^{-2\nu^{\frac{1}{3}} t} (\|\Omega_{in}\|_{H^3}+\nu^{-\frac{1}{3}}\|\Theta_{in}\|_{H^4}), \label{2.11}\\
& \|\partial_x^j u_{a}^{(2)}(t)\| + \|\partial_x^j(u_{a}^{(1)} - t u_{a}^{(2)})(t)\| \notag\\ &\ \ \lesssim (1+t)^{-2} {\rm e}^{-2\nu^{\frac{1}{3}} t} (\|\Omega_{in}\|_{H^3}+\nu^{-\frac{1}{3}}\|\Theta_{in}\|_{H^4}),\ j=0,1,2,3, 4,\label{2.12}\\
& (1+t) \|\partial_x^ju_{a}^{(2)}(t)\|_{L^\infty} + \|\partial_x^ju_{a}^{(1)}(t)\|_{L^\infty} \notag\\ 
&\ \ \lesssim (1+t)^{-1} {\rm e}^{-2\nu^{\frac{1}{3}} t}(\|\Omega_{in}\|_{H^3}+\nu^{-\frac{1}{3}}\|\Theta_{in}\|_{H^4}), \ j=0,1,2,\label{2.13}\\
& \|Er^1(t)\| \lesssim \nu (1+t){\rm e}^{-2\nu^{\frac{1}{3}} t} (\|\Omega_{in}\|_{H^3}+\nu^{-\frac{1}{3}}\|\Theta_{in}\|_{H^4}) \notag\\
&\ \ \ \ \ \ \ \ \ \ \ \ \ \ \ \ \ +  (1+t)^{-2} {\rm e}^{-2\nu^{\frac{1}{3}} t} (\|\Omega_{in}\|_{H^3}+\nu^{-\frac{1}{3}}\|\Theta_{in}\|_{H^4})^2,\label{2.14}\\
& \|Er^2(t)\|\lesssim  \nu (1+t) {\rm e}^{-2\nu^{\frac{1}{3}} t} \|\Theta_{in}\|_{H^4} \notag\\
&\ \ \ \ \ \ \ \ \ \ \ \ \ \ \ \ \  +  (1+t)^{-2} {\rm e}^{-2\nu^{\frac{1}{3}} t}\|\Theta_{in}\|_{H^4}(\|\Omega_{in}\|_{H^3}+\nu^{-\frac{1}{3}}\|\Theta_{in}\|_{H^4}).\label{2.15}
\end{align}
\end{prop}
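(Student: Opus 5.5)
The approximate system \eqref{2.7} is explicitly solvable mode by mode, and the plan is to write down that solution and estimate it directly. Fix $k\neq0$. Since $V_1$ depends on $(t,y)$ and not only on $y$, I would first check whether the paper intends $V_1$ frozen or time-dependent. Set $\Lambda(t,y)=\int_0^t V_1(s,y)\,\dd s$; note $\Lambda\approx ty$ because $\|V_1-y\|_{C^3}\lesssim\|\Omega_{in}\|_{H^3}$ by \eqref{2.2}--\eqref{2.3}. The equation for $\theta_a$ has no $y$-derivatives, so
\[
\theta_{a,k}(t,y)=\e^{-ik\Lambda(t,y)-\nu k^2t^3/3}\,\Theta_{in,k}(y).
\]
Then Duhamel gives
\[
\omega_{a,k}(t,y)=\e^{-ik\Lambda-\nu k^2t^3/3}\bigl(\Omega_{in,k}(y)-ik\,t\,\Theta_{in,k}(y)\bigr).
\]
The Dirichlet boundary conditions are inherited from the data. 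Using $\nu k^2t^3/3\ge 3\nu^{1/3}t-C$ for $|k|\ge1$, we have $t\,\e^{-\nu t^3/6}\lesssim\nu^{-1/3}\e^{-\nu^{1/3}t}$. This explains where $\nu^{-1/3}\|\Theta_{in}\|_{H^4}$ enters the $\omega_a$ bounds; the $\theta_a$ bounds contain no such factor.

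For \eqref{2.10}--\eqref{2.11}, note that $(\partial_y+ik t)$ acting on $\e^{-ik\Lambda}$ produces $ik(t-\partial_y\Lambda)$, and $|t-\partial_y\Lambda|\le t\|V_1'-1\|_{L^\infty}\lesssim t\|\Omega_{in}\|_{H^3}$. The extra factor $t$ is again absorbed by the Gaussian-in-time weight, at the price of a power of $\nu^{-1/3}$ that is harmless once multiplied by the small $\|\Omega_{in}\|_{H^3}$. To reach $L^\infty$ I would use $\sum_k\|f_k\|_{L^\infty_y}\lesssim\sum_k\|f_k\|_{H^1_y}$ and the $H^3$/$H^4$ regularity of the data; this is why $H^4$ is needed for $\Theta$.

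For \eqref{2.12}--\eqref{2.13} (inviscid damping), write $\phi_{a,k}=(\partial_y^2-k^2)^{-1}\omega_{a,k}$ with Dirichlet conditions. Since $\omega_{a,k}=\e^{-ik\Lambda}g_k$ with $g_k$ smooth, I would pair against test functions and integrate by parts twice in $y$ using $\partial_y\e^{-ik\Lambda}=-ik\Lambda_y\e^{-ik\Lambda}$ with $\Lambda_y\approx t$. Each integration gains $(kt)^{-1}$, giving $\|u^{(2)}_{a,k}\|\lesssim(1+t)^{-2}$ times two derivatives of $g_k$. Boundary terms vanish for the first integration because $g_k(\pm1)=0$. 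For the second they do not vanish, and I would control them through the boundary trace of $\phi$, or more cleanly by using $\partial_y^2 g_k(\pm1)$ together with an explicit boundary-corrector (the $\sinh$ kernel) contribution. This bookkeeping of boundary terms, needed to get a full $t^{-2}$ in $L^2$ and the combination $u^{(1)}_a-tu^{(2)}_a$, is the main obstacle. For the combination, use $u^{(1)}-tu^{(2)}=(\partial_y+ikt)\phi$ and repeat the argument after commuting. The $L^\infty$ bound \eqref{2.13} follows by one fewer integration by parts plus Gagliardo–Nirenberg in $y$.

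Finally, for \eqref{2.14}--\eqref{2.15}, by construction
\[
Er_\omega^1=-\nu(\Delta-t^2\partial_x^2)\omega_a=-\nu\bigl(\partial_y^2+2t\partial_x\partial_y\bigr)\omega_a+\ldots=-\nu(\partial_y+t\partial_x)(\partial_y-t\partial_x)\omega_a,
\]
up to commutator terms. Each good derivative costs $O(1+t\|V_1'-1\|)$, so $\|Er_\omega^1\|\lesssim\nu(1+t)\e^{-2\nu^{1/3}t}(\cdots)$ after spending part of the exponential weight. The quadratic terms $V_1''\partial_x\phi_a$, $V_2'\partial_x\phi_a$ and $u_a\cdot\nabla\omega_a$ are treated in two ways. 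The terms $V_1''\partial_x\phi_a$ and $V_2'\partial_x\phi_a$ use $\|V_1''\|_{L^\infty}\lesssim\|\Omega_{in}\|_{H^3}$ and $\|V_2'\|_{L^\infty}\lesssim\|\Theta_{in}\|_{H^3}$ together with \eqref{2.12}. For the transport term, write $u_a\cdot\nabla=u_a^{(1)}\partial_x+u_a^{(2)}(\partial_y+t\partial_x)-t u_a^{(2)}\partial_x$, which equals $(u_a^{(1)}-tu_a^{(2)})\partial_x+u_a^{(2)}(\partial_y+t\partial_x)$. Combining \eqref{2.12} in $L^2$ with \eqref{2.10}--\eqref{2.11} in $L^\infty$ then yields the $(1+t)^{-2}$ quadratic bounds.
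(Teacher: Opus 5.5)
Your proposal is essentially the paper's proof. It uses the same explicit formulas (your $\Lambda$ is the paper's $V_1^\star$) and the same absorption of powers of $t$ into $\e^{-\nu k^2t^3/3}$ at the price of $\nu^{-1/3}$. It uses elliptic integration-by-parts bounds with Dirichlet data for $u_a$; the paper simply quotes Lemmas 3.4--3.6 of Wei--Zhang, phrased with the Couette phase $\e^{ikyt}$ and the correction $\e^{-ik(V_1^\star-ty)}$ moved into the amplitude, instead of integrating by parts against $\e^{-ik\Lambda}$. For the quadratic errors it uses the same splitting $u_a\cdot\nabla=(u_a^{(1)}-tu_a^{(2)})\partial_x+u_a^{(2)}(\partial_y+t\partial_x)$.

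A few details need correcting when you write it out:
\begin{itemize}
\item For $\|\phi_{a,k}\|$ the second boundary term in fact vanishes, since it always carries a factor of the amplitude or of the dual Dirichlet solution at $y=\pm1$. The trace $\partial_y\phi_{a,k}(\pm1)$ only enters the bound for $(\partial_y+ikt)\phi_{a,k}$.
\item The $(1+t)^{-2}$ $L^\infty$ bound on $u_a^{(2)}$ is not obtained by one fewer integration by parts. It requires Sobolev applied to $\e^{ikyt}\phi_{a,k}$ together with the good-derivative bound; interpolating $\|\phi_{a,k}\|$ with $\|\partial_y\phi_{a,k}\|$ gives only $t^{-3/2}$.
\item The factor $1+t$ in $Er^1_\omega$ comes from the bad derivative in the exact identity $\Delta-t^2\partial_x^2=(\partial_y+t\partial_x)(\partial_y-t\partial_x)+\partial_x^2$, not from $t\|V_1'-1\|_{L^\infty}$. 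Your intermediate expression $\partial_y^2+2t\partial_x\partial_y$ is not correct as written.
\end{itemize}
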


With the estimates of $(\omega_{a},\theta_{a})$, to prove Theorem~\ref{thm1.1}, we need to derive the estimates for $(\omega,\theta)$, which are given as follows.
\begin{prop}\label{p2.2}
Under the conditions of Theorem \ref{thm1.1}, the solution $(\omega,\theta)$ to \eqref{2.9} satisfies
that for $ t \geq 0 $,
\begin{equation}\label{222}
\|\omega_{\neq}(t)\|+\nu^{-\frac{1}{2}}\|\theta_{\neq}(t)\| \lesssim \e^{-\delta \nu^{\frac{1}{3}} t} (\nu^{\frac{1}{3}} \|\Omega_{in}\|_{H^3}+\nu^{-\frac{1}{6}}\|\Theta_{in}\|_{H^3}).
\end{equation}
\end{prop}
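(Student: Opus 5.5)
We prove Proposition~\ref{p2.2} by a bootstrap (continuity) argument on the perturbation system \eqref{2.9}, which has zero initial data and is forced by the errors $Er^1,Er^2$ of Proposition~\ref{p2.1}. Set $\mathcal{A}:=\nu^{\frac13}\|\Omega_{in}\|_{H^3}+\nu^{-\frac16}\|\Theta_{in}\|_{H^3}$. Under \eqref{1.4} we have $\mathcal{A}\lesssim\varepsilon_0\nu^{\frac23}$, while the approximate solution has size $\varepsilon_a:=\|\Omega_{in}\|_{H^3}+\nu^{-\frac13}\|\Theta_{in}\|_{H^4}\lesssim\varepsilon_0\nu^{\frac13}$.

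Assume on $[0,T]$ that $\|\omega(t)\|+\nu^{-\frac12}\|\theta(t)\|\le C_0\mathcal{A}\e^{-\delta\nu^{\frac13}t}$. Assume also the same bound, with the extra factor, for the dissipative and inviscid-damping quantities: $\nu^{\frac12}\|\nabla\omega\|_{L^2L^2}$, $\|\partial_x\phi\|_{L^2L^2}$, and, for the temperature, the norm $\|\theta\|_{X_1}$. The latter contains $\nu^{\frac12}\|\partial_x\theta\|_{L^1L^2}$, which is needed to control the buoyancy term $-\partial_x\theta$ in the vorticity equation. We then close the bootstrap with $C_0/2$ and split time at $t=\nu^{-\frac16}$.

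\textbf{Short time, $t\le\nu^{-\frac16}$.} We do only zero-order $L^2$ energy estimates on \eqref{2.9}. Since $\nabla\cdot u_{ob}=0$ and $\omega,\theta$ vanish on $y=\pm1$, the transport terms $u_{ob}\cdot\nabla\omega$ and $u_{ob}\cdot\nabla\theta$ drop out. This is why we do not raise regularity. The linear terms are handled as follows.
\begin{itemize}
\item $V_1''\partial_x\phi$ and $V_2'\partial_x\phi$ have small coefficients, by \eqref{2.3}--\eqref{2.4}.
\item $u\cdot\nabla\omega_a$ and $u\cdot\nabla\theta_a$ use \eqref{2.10}--\eqref{2.11}, after writing $\nabla=(\partial_x,\partial_y+t\partial_x-t\partial_x)$ and pairing with $u^{(2)}$.
\item The forcings $Er^1,Er^2$ are controlled by \eqref{2.14}--\eqref{2.15}.
\item The buoyancy term gives $|\langle\partial_x\theta,\omega\rangle|\le\|\partial_x\theta\|\|\omega\|$. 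Its time integral is bounded through $\|\theta\|_{X_1}$.
\end{itemize}
To control $\|\theta\|_{X_1}$, we regard the $\theta$-equation as the linear problem $\partial_t\theta+V_1\partial_x\theta-\nu\Delta\theta=F$ and use a semigroup and space-time estimate. I expect this to go through the linear Proposition~\ref{p4.1}, which gives $\nu^{\frac12}\|\partial_x e^{tL}f\|_{L^1L^2}\lesssim\|f\|$-type bounds from enhanced dissipation $\e^{-c\nu^{\frac13}t}$ combined with $\nu^{-\frac16}$ time integration. A Duhamel argument then produces the $\nu^{\frac12}$ loss that fixes the $\nu^{-\frac12}$ weight on $\theta$.

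\textbf{Long time, $t\ge\nu^{-\frac16}$.} We split \eqref{2.9} into a linearized part and a nonlinear part, using the improved estimate for the good derivative $\partial_y+t\partial_x$. For the linearized system we derive enhanced-dissipation decay $\e^{-c\nu^{\frac13}(t-s)}$ by hypocoercive energy estimates. For the nonlinear part we use Duhamel, with $u_{ob}\cdot\nabla\omega$ bounded by $\|u_{ob}\|_{L^\infty}\|\nabla\omega\|$. The bootstrapped size $\mathcal{A}\lesssim\varepsilon_0\nu^{\frac23}$ absorbs the $\nu^{-\frac12}$ loss from $\nu^{\frac12}\|\nabla\omega\|_{L^2L^2}$, and the remaining smallness is $\varepsilon_0$. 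For the zero mode, $P_0$ of the nonlinearity is handled following \cite{wei-zhang2026}. Matching the estimate at $t=\nu^{-\frac16}$ and choosing $\delta$ small then closes the bootstrap.

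\textbf{Main obstacle.} The hardest step is the buoyancy coupling $-\partial_x\theta$. Energy estimates at higher regularity break the transport cancellation and create uncontrolled boundary terms. So the whole scheme depends on obtaining $\nu^{\frac12}\|\partial_x\theta\|_{L^1L^2}\lesssim\nu^{-\frac12}\cdot\nu^{\frac12}(\text{data})$ purely from the linear $\theta$-semigroup, with the forcing $V_2'\partial_x\phi+u\cdot\nabla\theta_a+Er^2$ of suitably smaller size. I would first attempt this via the resolvent or space-time bound of Proposition~\ref{p4.1}.
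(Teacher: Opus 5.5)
Your short-time scheme is essentially the paper's (Propositions~\ref{p4.1}, \ref{p4.7}, \ref{p4.8}). The long-time part, however, has a genuine gap: it never isolates the reaction term. Write
$u\cdot\nabla\omega_a=u^{(1)}\partial_x\omega_a+u^{(2)}(\partial_y+t\partial_x)\omega_a-t\,u^{(2)}\partial_x\omega_a$. The first two pieces are harmless, but the last has a coefficient growing like $t$.

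On $[0,\nu^{-\frac16}]$ this costs only $\|1+t\|_{L^1(0,\nu^{-1/6})}\varepsilon_a\lesssim\nu^{-\frac13}\varepsilon_a\lesssim\varepsilon_0$, which is why your short-time step works. For $t\ge\nu^{-\frac16}$, Duhamel with Proposition~\ref{p4.8a} and Lemma~\ref{l3.2} gives at best
\[
\|\e^{\delta\nu^{\frac13}t}\,t\,u^{(2)}\partial_x\omega_a\|_{L^1L^2}\lesssim\nu^{-\frac13}\varepsilon_a\,\|\e^{-\nu^{\frac13}t}\|_{L^2_t}\,\|\e^{\delta\nu^{\frac13}t}\nabla u^{(2)}\|_{L^2L^2}\lesssim\nu^{-\frac12}\varepsilon_a\|\omega\|_{X_2}\lesssim\varepsilon_0\nu^{-\frac16}\|\omega\|_{X_2}.
\]
This cannot be absorbed. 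The term $t\,u^{(2)}\partial_x\theta_a$ suffers the same $\nu^{-\frac16}$ loss once the weight $\nu^{-\frac12}$ on $\theta$ is applied. This is the loss visible in Lemma~\ref{l4.16}, where the direct bound is used only for $\partial_x\theta_{re}$ and then multiplied by $\nu^{\frac16}$ in Lemma~\ref{l4.15}.

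The paper removes this loss by writing $\omega=\omega_*+\omega_{re}$, $\theta=\theta_*+\theta_{re}$. Here $\omega_{re},\theta_{re}$ solve \eqref{aaa}--\eqref{bbb}, forced exactly by $t u^{(2)}\partial_x\omega_a$ and $t u^{(2)}\partial_x\theta_a$. They are estimated not by Duhamel but through the auxiliary transport solution $f_1$ and the correctors $f_2,f_3,f_4$ (Lemmas~\ref{l4.13}--\ref{l4.17}, following \cite{wei-zhang2026}). This yields Proposition~\ref{p4.12a}: $\|\omega_{re}\|_{X_2}\lesssim\nu^{-\frac13}\varepsilon_a\|\e^{\delta\nu^{\frac13}t}\nabla u^{(2)}\|_{L^2L^2}$ with $\nu^{-\frac13}\varepsilon_a\lesssim\varepsilon_0$. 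The gain from $\nu^{-\frac12}$ to $\nu^{-\frac13}$ is what makes the argument close at the $\nu^{\frac13}$ threshold. It also requires the long-time norm to contain $\|\e^{\delta\nu^{\frac13}t}\partial_x\nabla\Delta^{-1}\omega\|_{L^2L^2}=\|\e^{\delta\nu^{\frac13}t}\nabla u^{(2)}\|_{L^2L^2}$, one derivative more than your $\|\partial_x\phi\|_{L^2L^2}$.

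There are three smaller points.

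First, in the $X_1$ estimate for $\theta$ the transport term does not cancel; only the $L^2$ energy identity kills it. Your listed forcing $V_2'\partial_x\phi+u\cdot\nabla\theta_a+Er^2$ omits it. The piece $u_a\cdot\nabla\theta$ has to stay in the operator, which is why Proposition~\ref{p4.1} is stated for $\partial_tf-\nu\Delta f+V_1\partial_xf+u_a\cdot\nabla f=0$. As a forcing it costs about $\|u_a\|_{L^2_tL^\infty}\|\nabla\theta\|_{L^2L^2}\lesssim\varepsilon_a\nu^{-\frac12}\|\theta\|_{X_1}\lesssim\varepsilon_0\nu^{-\frac16}\|\theta\|_{X_1}$. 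Only $u\cdot\nabla\theta$ goes to the right-hand side, where it is absorbed using $\|\omega\|_{L^\infty L^2}\lesssim\varepsilon_0\nu^{\frac23}$.

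Second, in the long-time regime the crude bound $\|u_{ob}\|_{L^\infty}\|\nabla\omega\|$ does not suffice. The $u_a$ part must be split as $u_a^{(1)}\partial_x\omega+u_a^{(2)}\partial_y\omega$, using $\|\partial_x\omega\|_{L^1L^2}$ and the faster decay of $u_a^{(2)}$. The $P_{\neq}u$ part needs $\|\e^{\delta\nu^{\frac13}t}P_{\neq}u\|_{L^2L^\infty}\lesssim\nu^{-\frac18}\|\omega\|_{X_2}$, which again comes from the inviscid-damping component.

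Third, your bootstrap hypothesis puts $\e^{-\delta\nu^{\frac13}t}$ on the full $\|\omega(t)\|$. The zero mode $P_0\omega$ is forced by $P_0(u_{ob}\cdot\nabla\omega+u\cdot\nabla\omega_a+Er^1)$ and has no enhanced dissipation. The decaying weight may therefore only be placed on $P_{\neq}$, as in \eqref{4.15aaa}.
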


The detailed proof of Proposition \ref{p2.2} will be given in Section \ref{s4}. 
Based on the above two propositions, we are ready to prove Theorem~\ref{thm1.1}.

\begin{proof}[\textbf{Proof of Theorem~\ref{thm1.1}}]
By using \eqref{2.12}, one has
\begin{align*}
\|u_{a}(t)\| &\leq \|u_{a}^{(1)}(t)\| + \|u_{a}^{(2)}(t)\| \\
&\leq (1 + t) \|u_{a}^{(2)}(t)\| + \|(u_{a}^{(1)} - t u_{a}^{(2)})(t)\| \\
&\lesssim(1 + t)^{-1} \e^{-2\nu^{\frac{1}{3}} t} (\|\Omega_{in}\|_{H^3}+\nu^{-\frac{1}{3}}\|\Theta_{in}\|_{H^4}).
\end{align*}
By applying \eqref{2.10}--\eqref{2.11} and $P_0 \theta_{a}=P_0 \omega_{a} = 0$, we get
\begin{equation}\notag
\|\theta_{a}(t)\| \leq \|\partial_x \theta_{a}(t)\| \lesssim \|\partial_x \theta_{a}(t)\|_{L^\infty} \lesssim \e^{-2\nu^{\frac{1}{3}} t} \|\Theta_{in}\|_{H^4},
\end{equation}
\begin{equation}\notag
\|\omega_{a}(t)\| \leq \|\partial_x \omega_{a}(t)\| \lesssim \|\partial_x \omega_{a}(t)\|_{L^\infty} \lesssim \e^{-2\nu^{\frac{1}{3}} t} (\|\Omega_{in}\|_{H^3}+\nu^{-\frac{1}{3}}\|\Theta_{in}\|_{H^4}).
\end{equation}
Notice that $u = (\partial_y, -\partial_x) \Delta^{-1} \omega$. Then we use Proposition~\ref{p2.2} to get
\begin{equation}\notag
\|\theta_{\neq}(t)\| \lesssim \e^{-\delta \nu^{\frac{1}{3}} t} (\nu^{\frac{1}{3}} \|\Omega_{in}\|_{H^3}+\nu^{-\frac{1}{6}}\|\Theta_{in}\|_{H^3}),
\end{equation}
\begin{equation}\notag
\|u_{\neq}(t)\| \lesssim \|\omega_{\neq}(t)\| \lesssim \e^{-\delta \nu^{\frac{1}{3}} t} (\nu^{\frac{1}{3}} \|\Omega_{in}\|_{H^3}+\nu^{-\frac{1}{6}}\|\Theta_{in}\|_{H^3}).
\end{equation}
Thanks to \eqref{2.5}, we have  
\begin{equation}\label{2.16}
 U_{\neq} = P_{\neq} u_{ob},\   \Omega_{\neq} = P_{\neq} \omega_{ob}, \ \Theta_{\neq} = P_{\neq} \theta_{ob}. 
\end{equation}
Thus, it holds that
\begin{align}\label{2.17}
\|\Theta_{\neq}(t)\| &= \|P_{\neq} \theta_{ob}(t)\| \leq \|\theta_{a}(t)\| + \|P_{\neq} \theta(t)\| \notag\\
&\lesssim \e^{-2\nu^{\frac{1}{3}} t} \|\Theta_{in}\|_{H^4}+\e^{-\delta \nu^{\frac{1}{3}} t} (\nu^{\frac{1}{3}} \|\Omega_{in}\|_{H^3}+\nu^{-\frac{1}{6}}\|\Theta_{in}\|_{H^3})\notag\\
&\lesssim \e^{-\delta \nu^{\frac{1}{3}} t}(\nu^{\frac{1}{3}}\|\Omega_{in}\|_{H^3}+\nu^{-\frac{1}{6}}\|\Theta_{in}\|_{H^4}),
\end{align}
\begin{align}\label{2.18}
\|\Omega_{\neq}(t)\| &= \|P_{\neq} \omega_{ob}(t)\| \leq \|\omega_{a}(t)\| + \|P_{\neq} \omega(t)\| \notag\\
&\lesssim \e^{-2\nu^{\frac{1}{3}} t} (\|\Omega_{in}\|_{H^3}+\nu^{-\frac{1}{3}}\|\Theta_{in}\|_{H^4})+\e^{-\delta \nu^{\frac{1}{3}} t} (\nu^{\frac{1}{3}} \|\Omega_{in}\|_{H^3}+\nu^{-\frac{1}{6}}\|\Theta_{in}\|_{H^3})\notag\\
&\lesssim \e^{-\delta \nu^{\frac{1}{3}} t}(\|\Omega_{in}\|_{H^3}+\nu^{-\frac{1}{3}}\|\Theta_{in}\|_{H^4}),
\end{align}
and
\begin{align}\label{2.19}
\|U_{\neq}(t)\| &= \|P_{\neq} u_{ob}(t)\| \leq \|u_{a}(t)\| + \|P_{\neq} u(t)\| \notag\\
&\lesssim(1 + t)^{-1} \e^{-2\nu^{\frac{1}{3}} t} \|\Omega_{in}\|_{H^3} +  \e^{-\delta \nu^{\frac{1}{3}} t} \nu^{\frac{1}{3}} (\|\Omega_{in}\|_{H^3}+\nu^{-\frac{1}{2}}\|\Theta_{in}\|_{H^4}) \notag\\
&\lesssim(1 + t)^{-1} \e^{-\delta \nu^{\frac{1}{3}} t / 2} (\|\Omega_{in}\|_{H^3}+\nu^{-\frac{1}{2}}\|\Theta_{in}\|_{H^4}).
\end{align}
By combining \eqref{2.17}--\eqref{2.19}, we get \eqref{1.6}--\eqref{1.7}, and thus complete the proof of Theorem~\ref{thm1.1}.
\end{proof}

\subsection{For Theorem \ref{thm1.2}}

Note that $k\in \R$. Unlike before, we directly construct approximate solutions $\omega_{a}$ and $\theta_{a}$ of $\Omega$ and $\Theta$.
Inspired by \cite{miao2025}, let's consider the following system:
\begin{eqnarray}\label{a3.3}
\left\{\begin{aligned}
&\partial_t \omega_{a} + y \partial_x \omega_{a}+\nu (1-t^2\partial_x^2 ) \omega_{a}  = -\partial_x \theta_{a}, \\
&\partial_t \theta_{a}  +y \partial_x \theta_{a}+\nu (1-t^2\partial_x^2 )\theta_{a} =0,\\ 
&\omega_{a}(t,x,\pm 1) = 0, \quad \theta_{a}(t,x,\pm 1) = 0, \\ 
&\omega_{a}(0,x,y)= \Omega_{in}(x,y),\ \ \ \ \theta_{a}(0,x,y)=\Theta_{in}(x,y).
\end{aligned}\right.
\end{eqnarray}
 Let
\[
\phi_{a} = \Delta^{-1} \omega_{a}, \quad u_{a} = (\partial_y, -\partial_x) \phi_{a},
\]
and
\begin{align}\label{a3.4}
&Er_{\omega}^1 = \partial_t \omega_{a}  + y \partial_x \omega_{a}- \nu \Delta \omega_{a}+\partial_x\theta_{a}, \quad Er^1 = Er_{\omega}^1 + u_{a} \cdot \nabla \omega_{a},\notag\\ 
&Er_{\theta}^2 = \partial_t \theta_{a} + y \partial_x \theta_{a}- \nu \Delta \theta_{a} , \quad Er^2 = Er_{\theta}^2  + u_{a} \cdot \nabla \theta_{a},
\end{align}
and denote
\[
\omega = \Omega - \omega_{a}, \quad u = U - u_{a}, \quad \phi = \Phi - \phi_{a},\quad \theta = \Theta - \theta_{a}, 
\]
then we derive
\begin{equation}\label{a3.5}
\left\{
\begin{aligned}
&\partial_t \omega + y \partial_x \omega- \nu \Delta \omega + U \cdot \nabla \omega + u \cdot \nabla \omega_{a} + Er^1 =-\partial_x \theta , \\
&\partial_t \theta + y \partial_x \theta- \nu \Delta \theta + U \cdot \nabla \theta + u \cdot \nabla \theta_{a} + Er^2 =0, \\
&u = (\partial_y, -\partial_x) \phi, \quad \phi = \Delta^{-1} \omega, \\
&\omega|_{y=\pm 1} =\omega|_{t=0} = 0,\quad \theta|_{y=\pm 1} =\theta|_{t=0} = 0.
\end{aligned}
\right.
\end{equation}

For system \eqref{a3.3}, we have the following estimates, which will be proved in Section \ref{s3}.
\begin{prop}\label{linear}
Under the conditions of Theorem \ref{thm1.2}, the solution $(\omega_{a},\theta_{a})$ to \eqref{a3.3}  satisfies for $m\geq 0$ that
\begin{align}
&\|(1+ \nu^{\frac13}t\mathcal{M}_1)^{m}[\theta_{a},(\partial_y+t\partial_x)\theta_{a}](t)\|\lesssim_m (1+\nu t^3)^{-\frac14}{\rm e}^{-\nu t}\mathcal{E}_{2,in}, \label{a3.6}\\
&\|(1+ \nu^{\frac13}t\mathcal{M}_1)^{m}\partial_x\theta_{a}(t)\|\lesssim_m (1+\nu t^3)^{-\frac34}{\rm e}^{-\nu t}\mathcal{E}_{2,in},\label{a3.7}\\
&\|(1+ \nu^{\frac13}t\mathcal{M}_1)^{m}\partial_x\theta_{a}(t)\|_{L^\infty}\lesssim_m (1+\nu t^3)^{-1}{\rm e}^{-\nu t}\mathcal{E}_{2,in}, \label{a3.8}\\
&\|(1+ \nu^{\frac13}t\mathcal{M}_1)^{m}(\partial_y+t\partial_x)\theta_{a}(t)\|_{L^\infty}\lesssim_m (1+\nu t^3)^{-\frac12}{\rm e}^{-\nu t}\mathcal{E}_{2,in}, \label{a3.9}\\
&\|(1+ \nu^{\frac13}t\mathcal{M}_1)^{m}\partial^2_x\theta_{a}(t)\|_{L^\infty}\lesssim_m (\nu t^3)^{-\frac13}(1+\nu t^3)^{-\frac76}{\rm e}^{-\nu t}\mathcal{E}_{2,in}, \label{a3.10}
\end{align}
\begin{align}
&\|(1+ \nu^{\frac13}t\mathcal{M}_1)^{m}[\omega_{a},(\partial_y+t\partial_x)\omega_{a}](t)\|\lesssim_m (1+\nu t^3)^{-\frac14}{\rm e}^{-\nu t}\mathcal{E}_{in}, \label{paxpayomL L2}\\
&\|(1+ \nu^{\frac13}t\mathcal{M}_1)^{m}\partial_x\omega_{a}(t)\|\lesssim_m (1+\nu t^3)^{-\frac34}{\rm e}^{-\nu t}\mathcal{E}_{in},\label{paxomL2}\\
&\|(1+ \nu^{\frac13}t\mathcal{M}_1)^{m}\partial_x\omega_{a}(t)\|_{L^\infty}\lesssim_m (1+\nu t^3)^{-1}{\rm e}^{-\nu t}\mathcal{E}_{in}, \label{omL Linfty}\\
&\|(1+ \nu^{\frac13}t\mathcal{M}_1)^{m}(\partial_y+t\partial_x)\omega_{a}(t)\|_{L^\infty}\lesssim_m (1+\nu t^3)^{-\frac12}{\rm e}^{-\nu t}\mathcal{E}_{in}, \label{paxomL Linfty}\\
&\|(1+ \nu^{\frac13}t\mathcal{M}_1)^{m}\partial^2_x\omega_{a}(t)\|_{L^\infty}\lesssim_m (\nu t^3)^{-\frac13}(1+\nu t^3)^{-\frac76}{\rm e}^{-\nu t}\mathcal{E}_{in}, \label{paxxxomL Linfty}
\end{align}
and 
\begin{align}
&\|(1+ \nu^{\frac13}t\mathcal{M}_1)^{m}[u_a^{(1)},(u_a^{(1)}-tu_a^{(2)})](t)\|_{L^\infty}\lesssim_m (1+t)^{-1}{\rm e}^{-\nu t}\mathcal{E}_{in},\label{u1LLinfty}\\
&\|(1+ \nu^{\frac13}t\mathcal{M}_1)^{m}u_a^{(2)}(t)\|_{L^\infty}\lesssim_m(1+t)^{-2}\ln(1+t){\rm e}^{-\nu t}\mathcal{E}_{in},\label{u12L22}
\end{align}
\begin{align} \label{a3.18}
&\|(1+ \nu^{\frac13}t\mathcal{M}_1)^{m}E_r^2(t)\|\notag\\
\lesssim_m& \Big((1+t)^{-1}(1+\nu t^3)^{-\frac14} \mathcal{E}_{in} +\nu(1+\nu t^3)^{-\frac14}+\nu^{\frac23}(1+\nu t^3)^{-\frac{5}{12}}\Big){\rm e}^{-\nu t}\mathcal{E}_{2,in},
\end{align}
\begin{align} \label{ErL2}
&\|(1+ \nu^{\frac13}t\mathcal{M}_1)^{m}E_r^1(t)\|\notag\\
\lesssim_m& \Big((1+t)^{-1}(1+\nu t^3)^{-\frac14} \mathcal{E}_{in} +\nu(1+\nu t^3)^{-\frac14}+\nu^{\frac23}(1+\nu t^3)^{-\frac{5}{12}}\Big){\rm e}^{-\nu t}\mathcal{E}_{in}.
\end{align}
\end{prop}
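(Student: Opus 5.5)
The plan is to solve \eqref{a3.3} explicitly on the Fourier side in $x$ and then read off each estimate from the explicit formula. For fixed $k\in\mathbb{R}$, the change of variables along characteristics, $\theta_a(t,x,y)=\tilde\theta(t,x-ty,y)$, turns $\partial_t+y\partial_x$ into $\partial_t$. Since $\nu(1-t^2\partial_x^2)$ contains no $y$-derivative, the boundary condition at $y=\pm1$ is automatically preserved. This gives
\[
\theta_{a,k}(t,y)=\e^{-iyt k}\,\e^{-\nu t-\nu k^2 t^3/3}\,\Theta_{in,k}(y).
\]
Duhamel's formula for the vorticity equation then gives
\[
\omega_{a,k}(t,y)=\e^{-iytk}\e^{-\nu t-\nu k^2t^3/3}\bigl(\Omega_{in,k}(y)-ikt\,\Theta_{in,k}(y)\bigr),
\]
since the forcing $-ik\theta_{a,k}$ carries the same propagator. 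In these formulas $(\partial_y+t\partial_x)$ acts only on the initial data, so it costs one $y$-derivative of $\Theta_{in}$ or $\Omega_{in}$.

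The decay estimates \eqref{a3.6}--\eqref{paxxxomL Linfty} then reduce to weighted $k$-integrals of the form
\[
\int |k|^{2a}(1+\nu^{1/3}t\mathcal{M}_1(k))^{2m}\e^{-2\nu k^2t^3/3}|\widehat{f}(k)|^2\,dk,
\]
split into $|k|\ge1$ and $|k|<1$. For $|k|\ge1$ one has $\nu k^2t^3\gtrsim \nu t^3$, and the polynomial weight is absorbed by the Gaussian factor $\e^{-c\nu k^2t^3}$, using $(\nu^{1/3}t)^m\lesssim (\nu t^3)^{m/3}$. For $|k|<1$ one has $\mathcal{M}_1\approx|k|^{2/3}$, so $\nu^{1/3}t|k|^{2/3}=(\nu k^2t^3)^{1/3}$ is again absorbed. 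The algebraic gain comes from bounding $\|\widehat f\|_{L^\infty_k}\lesssim\|f\|_{L^1_xH^s_y}$ and computing $\int_{|k|<1}|k|^{2a}\e^{-c\nu k^2t^3}dk\lesssim(1+\nu t^3)^{-(2a+1)/2}$. This produces the exponents $-\tfrac14$ ($a=0$) and $-\tfrac34$ ($a=1$) in $L^2$. For the $L^\infty$ bounds I use $\|\cdot\|_{L^\infty}\le\|\widehat{\cdot}\|_{L^1_k L^\infty_y}$ and Sobolev in $y$, obtaining $(1+\nu t^3)^{-(a+1)/2}$; this gives $-1$ for $\partial_x$ and $-\tfrac12$ for $(\partial_y+t\partial_x)$. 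For $\partial_x^2$ I obtain $(\nu t^3)^{-1/3}(1+\nu t^3)^{-7/6}$ after interpolating with the $|k|\ge1$ piece. The extra factor $kt$ in $\omega_a$ coming from $\Theta_{in}$ is handled by $|kt|\e^{-c\nu k^2t^3}\lesssim\nu^{-1/3}$ (maximize over $k$). Combined with $\nu^{-1/2}\mathcal E_{2,in}\le\mathcal E_{in}$, this yields bounds in terms of $\mathcal E_{in}$.

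For the velocity bounds \eqref{u1LLinfty}--\eqref{u12L22} I use inviscid damping from the oscillating phase $\e^{-iytk}$. I solve $(\partial_y^2-k^2)\phi_k=\omega_{a,k}$ with Dirichlet data and integrate by parts twice in $y$ in the Green's function representation, writing $\e^{-iytk}=(-ikt)^{-1}\partial_y\e^{-iytk}$. Each integration gains $(1+|k|t)^{-1}$ at the cost of $y$-derivatives of the data, which is why three derivatives of $\Omega_{in}$ and four of $\Theta_{in}$ are needed (the latter because of the extra $kt$ factor). Boundary terms at $y=\pm1$ vanish at first order since $\Omega_{in}=\Theta_{in}=0$ there. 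At second order they do not vanish, and they are what limits $u^{(2)}$ to $(1+t)^{-2}\ln(1+t)$ after integrating $\min(1,(|k|t)^{-2})|k|$-type symbols over $|k|<1$. For $u^{(1)}-tu^{(2)}$ I use $(\partial_y+t\partial_x)\phi$, which is controlled the same way.

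For the error terms I compute directly
\[
Er^2_\theta=-\nu\partial_y^2\theta_a+\nu(1-t^2\partial_x^2+\partial_x^2)\theta_a-\nu(\ldots).
\]
Precisely, $-\nu\Delta+\nu(1-t^2\partial_x^2)$ applied to $\theta_a$ gives $-\nu[(\partial_y+t\partial_x)^2-2t\partial_x(\partial_y+t\partial_x)]\theta_a+\nu\theta_a-\nu\partial_x^2\theta_a$. The terms bounded by $\nu(1+\nu t^3)^{-1/4}$ come from $\nu(\partial_y+t\partial_x)^2\theta_a$ and $\nu\theta_a$. The term $\nu t\,\partial_x(\partial_y+t\partial_x)\theta_a$ gives $\nu t|k|\e^{-c\nu k^2t^3}\lesssim\nu^{2/3}(\nu k^2t^3)^{1/2}\e^{-c\nu k^2t^3}$, hence $\nu^{2/3}(1+\nu t^3)^{-5/12}$ after the $k$-integration. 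The nonlinear part $u_a\cdot\nabla\theta_a$ is written as $u_a^{(1)}\partial_x\theta_a+u_a^{(2)}(\partial_y+t\partial_x)\theta_a-(u_a^{(1)}-tu_a^{(2)})\partial_x\theta_a$. Each piece is bounded by the velocity $L^\infty$ bounds times the $L^2$ bounds above, giving $(1+t)^{-1}(1+\nu t^3)^{-1/4}\mathcal E_{in}\mathcal E_{2,in}$; $Er^1$ is treated identically. The $(1+\nu^{1/3}t\mathcal M_1)^m$ weight is distributed over the products by $\mathcal M_1(k)\lesssim\mathcal M_1(k-l)+\mathcal M_1(l)$. \textbf{The main obstacle} is the low-frequency inviscid damping of $u_a$, specifically controlling the non-vanishing boundary terms from the second integration by parts uniformly for small $|k|$, including the logarithm. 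I would attack it first via the explicit Green's function of $\partial_y^2-k^2$ on $[-1,1]$.
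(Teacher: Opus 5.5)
Your explicit Fourier solution and your derivations of \eqref{a3.6}--\eqref{paxxxomL Linfty}, of the bound on $u_a^{(2)}$, and of the error terms follow the route the paper takes through \cite{miao2025} and Lemmas \ref{al3.3}--\ref{al3.4}. Your bound on $u_a^{(1)}-tu_a^{(2)}$ also goes through, because $(\partial_y+ikt)\phi_{a,k}$ decays like $(1+|kt|)^{-3/2}$ in $L^\infty_y$, and this is integrable over $|k|<1$.

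\textbf{The gap.} It is the bound on $u_a^{(1)}$ itself in \eqref{u1LLinfty}, which is exactly the part the paper proves in detail. Any argument of the form $\|u_a^{(1)}\|_{L^\infty}\le\int_{\mathbb{R}}\|\partial_y\phi_{a,k}\|_{L^\infty_y}\,dk$ loses a logarithm.
\begin{itemize}
\item Per mode, $\partial_y\phi_{a,k}$ decays in $L^\infty_y$ only like $(1+|k|t)^{-1}$, and this is sharp. The kernel $\partial_yG(y,y')$ jumps at $y'=y$, so the first integration by parts leaves a local term of size $|\omega_{a,k}(y)|/(|k|t)$ that does not cancel. Already for $k\to0$ one has $\int_{-1}^y {\rm e}^{-ikty'}g(y')\,dy'={\rm e}^{-ikty}g(y)/(-ikt)+O((kt)^{-2})$.
\item For generic $L^1_x$ data the integrand does not vanish at $k=0$. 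Hence $\int_{|k|<1}\min\{1,(|k|t)^{-1}\}\,dk\approx t^{-1}(1+\ln t)$, which gives $(1+t)^{-1}\ln(1+t)$ rather than \eqref{u1LLinfty}.
\item Splitting $u_a^{(1)}=(u_a^{(1)}-tu_a^{(2)})+tu_a^{(2)}$ does not help, since $t\|u_a^{(2)}\|_{L^\infty}$ is again of size $(1+t)^{-1}\ln(1+t)$.
\end{itemize}
This is a low-frequency $L^1_k$-versus-$L^\infty_x$ loss, not a boundary effect. Likewise, the logarithm in \eqref{u12L22} comes from $\int_{|k|<1}|k|(1+|k|t)^{-2}\,dk$. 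The per-mode rate $(1+|kt|)^{-2}$, boundary terms included, is already supplied by Lemma \ref{al3.4}. So the main obstacle you announced is misplaced.

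\textbf{The missing idea: use $L^1_x$ integrability in physical space along the shear.}
\begin{itemize}
\item The paper writes $\phi_a=\phi_{a,1}+(\phi_a-\phi_{a,1})$, where for each fixed $x$, $\partial_y^2\phi_{a,1}=\omega_a$ with $\phi_{a,1}(\pm1)=0$. Then $\|\partial_y\phi_{a,1}(x,\cdot)\|_{L^\infty_y}\le\|\omega_a(x,\cdot)\|_{L^1_y}$.
\item Since $\omega_a(t,x,y)=\tilde\omega(t,x-ty,y)$, with $\tilde\omega$ given explicitly by the heat semigroup ${\rm e}^{\frac{\nu t^3}{3}\partial_x^2}$, the substitution $s=x-ty$ gives $\int_{-1}^1|\tilde\omega(t,x-ty,y)|\,dy\le t^{-1}\|\tilde\omega(t)\|_{L^1_xL^\infty_y}$.
\item Heat-kernel bounds, with the weight $(1+\nu^{\frac13}t\mathcal{M}_1)^m$ times the Gaussian acting as an $L^1$-bounded multiplier, control this by ${\rm e}^{-\nu t}\bigl(\|\Omega_{in}\|_{L^1_xL^\infty_y}+\nu^{-\frac12}t^{-\frac12}\|\Theta_{in}\|_{L^1_xL^\infty_y}\bigr)$.
\item The remainder solves $\partial_y^2(\phi_{a,1}-\phi_a)=\partial_x^2\phi_a$. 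There the extra factor $|k|^2$ makes your Fourier argument log-free, since $\int_{|k|<1}|k|^2(1+|k|t)^{-2}\,dk\lesssim(1+t)^{-2}$.
\end{itemize}

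\textbf{Smaller slips to fix.}
\begin{itemize}
\item Your three-term expression for $u_a\cdot\nabla\theta_a$ is not an identity. The correct splitting is $(u_a^{(1)}-tu_a^{(2)})\partial_x\theta_a+u_a^{(2)}(\partial_y+t\partial_x)\theta_a$.
\item The bound $|kt|{\rm e}^{-c\nu k^2t^3}\lesssim\nu^{-\frac13}|k|^{\frac13}$ comes from maximizing in $t$, not in $k$. The supremum in $k$ is $(\nu t)^{-\frac12}$, so for $|k|\ge1$ you must spend the extra derivative on $\Theta_{in}$.
\item The exponent $-\frac{5}{12}$ in \eqref{a3.18} comes from $\nu t|k|=\nu^{\frac23}|k|^{\frac13}(\nu k^2t^3)^{\frac13}$. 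Your bound $\nu t|k|\lesssim\nu^{\frac23}(\nu k^2t^3)^{\frac12}$ fails for $t\ll\nu^{-\frac13}$, and it would only yield $(1+\nu t^3)^{-\frac14}$.
\end{itemize}
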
 

With the estimates of $\omega_{a}$ and $\theta_{a}$ in hand, the next step is to investigate the estimates of $\omega$ and $\theta$, which are given as follows.
\begin{prop}\label{ap3.2}
Under the conditions of Theorem \ref{thm1.2}, the solution $(\omega,\theta)$ to \eqref{a3.4}--\eqref{a3.5}  satisfies for $0\leq t\leq \nu^{-\frac16}$,
\begin{align}\label{a3.20}
\|(1+ \nu^{\frac13}t\mathcal{M}_1)^{\delta}\om(t)\|+\nu^{-\frac12}\|(1+ \nu^{\frac13}t\mathcal{M}_1)^{\delta}\theta(t)\|\lesssim \nu^{\frac13}\ln(1+\nu^{-1})\mathcal{E}_{in}.
\end{align}
For $t\geq \nu^{-\frac16}$,
\begin{equation}\label{a3.20a}
    \|(1+ \nu^{\frac13}t\mathcal{M}_1)^{\delta}\om(t)\|+\nu^{-\frac12}\|(1+ \nu^{\frac13}t\mathcal{M}_1)^{\delta}\theta(t)\|+\|(1+ \nu^{\frac13}t\mathcal{M}_1)^{\delta}\mathcal{M}_2u(t)\|_{L^2L^2}\lesssim \mathcal{E}_{in}.
\end{equation}
\end{prop}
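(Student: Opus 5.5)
The plan is to work frequency by frequency in $k\in\mathbb{R}$, writing $\omega_k,\theta_k$ for the Fourier transforms of the perturbation in \eqref{a3.5}. The multiplier weight $(1+\nu^{\frac13}t\mathcal{M}_1(k))^{\delta}$ commutes with $\partial_y$ and with $y\partial_x$. When the time derivative hits it, it produces a \emph{positive} term $\delta\nu^{\frac13}\mathcal{M}_1(1+\nu^{\frac13}t\mathcal{M}_1)^{-1}$, and this has to be absorbed by enhanced dissipation. For $|k|\geq 1$ we have $\mathcal{M}_1=1$, and the argument runs exactly as in the periodic case (Section \ref{s4}). For $0<|k|<1$ the enhanced dissipation rate is $\nu^{\frac13}|k|^{\frac23}$, which matches $\mathcal{M}_1$. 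I would therefore split the high and low frequencies with the cutoff $\chi$, and within the low frequencies use a dyadic decomposition in $|k|\sim 2^{-j}$ as in \cite{miao2025}. On each dyadic block the time scale $(\nu^{\frac13}|k|^{\frac23})^{-1}$ plays the role that $\nu^{-\frac13}$ plays in the periodic case. The norm controlled is $\mathcal{N}(t):=\|W\omega\|+\nu^{-\frac12}\|W\theta\|$ with $W=(1+\nu^{\frac13}t\mathcal{M}_1)^{\delta}$, together with the space-time quantities $\nu^{\frac12}\|W\nabla(\omega,\nu^{-\frac12}\theta)\|_{L^2L^2}$, $\|W\mathcal{M}_2u\|_{L^2L^2}$, and the $X_1$-type piece $\nu^{\frac12}\|W\partial_x\theta\|_{L^1L^2}$.

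\textbf{Short times $0\le t\le\nu^{-\frac16}$.} On this range $W\approx 1$, since $\nu^{\frac13}t\le\nu^{\frac16}$. I would do a zero-order $L^2$ energy estimate on \eqref{a3.5}. The transport terms $U\cdot\nabla\omega$ and $U\cdot\nabla\theta$ vanish because $\mathrm{div}\,U=0$ and $U^{(2)}|_{y=\pm1}=0$. The forcing $Er^1,Er^2$ is bounded using \eqref{ErL2} and \eqref{a3.18}, integrated up to $\nu^{-\frac16}$; the term $(1+t)^{-1}\mathcal{E}_{in}^2$ is what generates the factor $\ln(1+\nu^{-1})$. The terms $u\cdot\nabla\omega_a$ and $u\cdot\nabla\theta_a$ are bounded by $\|u\|\,\|\nabla\omega_a\|_{L^\infty}$ via \eqref{omL Linfty}--\eqref{paxomL Linfty}, using $\partial_y=(\partial_y+t\partial_x)-t\partial_x$ and $t\le\nu^{-\frac16}$, then closed by Gronwall since $\mathcal{E}_{in}\le\varepsilon_0\nu^{\frac13}$. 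The buoyancy term $\langle\partial_x\theta,\omega\rangle$ is bounded by $\|\omega\|\,\|\partial_x\theta\|$. I would handle it through the $L^1L^2$ control $\nu^{\frac12}\|\partial_x\theta\|_{L^1L^2}$, which follows from the semigroup estimate for the linear operator $\partial_t+y\partial_x-\nu\Delta$ applied to the $\theta$ equation (the analogue of Proposition \ref{p4.7}). This costs $\nu^{-\frac12}$, and that loss is exactly what the weight $\nu^{-\frac12}$ on $\theta$ in \eqref{a3.20} accounts for. Together these give \eqref{a3.20}.

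\textbf{Long times $t\ge\nu^{-\frac16}$.} Here I restart from the data at $t=\nu^{-\frac16}$, which is bounded by \eqref{a3.20}, and use weighted energy estimates. I split $(\omega,\theta)$ into a part driven by $Er^i$ and $u\cdot\nabla(\omega_a,\theta_a)$, handled by a linear Duhamel or semigroup estimate, and a quasi-linear part with zero data. The dissipation $\nu\|\nabla\,\cdot\|^2$, combined with the hypocoercive gain from the good derivative $\partial_y+t\partial_x$, gives decay at rate $\nu^{\frac13}\mathcal{M}_1$ uniformly on dyadic blocks. This dominates the commutator from $W$ once $\delta$ is small. The space-time bound $\|W\mathcal{M}_2u\|_{L^2L^2}$ comes from inviscid damping: an $H^{-1}$-type estimate on $\omega$ with Navier-slip boundary conditions, as in \cite{miao2025}, where $|k|^{\frac12}$ arises from the low-frequency part of the Biot--Savart law on $[-1,1]$. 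I would treat the nonlinear terms $U\cdot\nabla\omega$ and $U\cdot\nabla\theta$ in the weighted norm by splitting $U=u_a+u$. The $u_a$ pieces are handled with the decay estimates \eqref{u1LLinfty}--\eqref{u12L22}. The $u$ pieces are handled with $\|\mathcal{M}_2u\|_{L^2L^2}$ and a bootstrap, which also absorbs the weight commutator since $\mathcal{M}_1(k)\lesssim\mathcal{M}_1(k-l)+\mathcal{M}_1(l)$.

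\textbf{Main obstacle.} I expect the main difficulty to be the buoyancy term $-\partial_x\theta$ at low frequencies in the long-time regime. There the dissipation of $\partial_x\theta$ is weak, and the $L^1_t$ bound on $\nu^{\frac12}W\partial_x\theta$ must be proved blockwise with constants summable over the dyadic pieces. My first attempt would be to run the semigroup bound $\|\mathrm{e}^{-tL_k}\|\lesssim \mathrm{e}^{-c\nu^{\frac13}|k|^{\frac23}t}$ on each block and sum with the weight, using the $L^1_xH^4_y$ part of $\mathcal{E}_{2,in}$ to control $\sup_k$ of the data.
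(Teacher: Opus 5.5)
\paragraph{The gap: the reaction term.} In your long-time step you put $u\cdot\nabla(\omega_a,\theta_a)$ into the part ``handled by a linear Duhamel or semigroup estimate''. That step fails. Writing $\partial_y=(\partial_y+t\partial_x)-t\partial_x$, this term contains the reaction terms $t\,u^{(2)}\partial_x\omega_a$ and $t\,u^{(2)}\partial_x\theta_a$. The bounds \eqref{omL Linfty} and \eqref{a3.8} only give $\|t\,\partial_x\omega_a\|_{L^\infty}\lesssim t(1+\nu t^3)^{-1}\mathcal{E}_{in}$, whose time integral is of order $\nu^{-\frac23}$. Hence
\[
\|t\,u^{(2)}\partial_x\omega_a\|_{L^1L^2}\lesssim \nu^{-\frac23}\mathcal{E}_{in}\|\omega\|_{L^\infty L^2}\le \varepsilon_0\nu^{-\frac13}\|\omega\|_{L^\infty L^2}.
\]
In the same way, $\nu^{-\frac12}\|t\,u^{(2)}\partial_x\theta_a\|_{L^1L^2}\lesssim \nu^{-\frac76}\mathcal{E}_{2,in}\|\omega\|_{L^\infty L^2}\le\varepsilon_0\nu^{-\frac13}\|\omega\|_{L^\infty L^2}$. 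Trading $L^1_t$ for the $L^2_t$ inviscid damping of $u^{(2)}$, together with the enhanced-dissipation gain, still loses at least $\nu^{-\frac16}$. So the bootstrap does not close along your route, and neither hypocoercivity nor the weight bookkeeping addresses this.

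The missing idea is the reaction decomposition of \cite{wei-zhang2026,miao2025}. The paper writes $\omega=\omega_*+\omega_{re}$ and $\theta=\theta_*+\theta_{re}$, where $\omega_{re},\theta_{re}$ solve $\partial_t f+y\partial_x f-\nu\Delta f=t\,u^{(2)}\partial_x(\omega_a,\theta_a)$ with zero data at $T_0$. Only the $t$-free pieces $u^{(1)}\partial_x\omega_a+u^{(2)}(\partial_y+t\partial_x)\omega_a$ (and their $\theta$ analogues) enter the Duhamel sources $h_1,h_2$. The reaction parts are split dyadically in \emph{time}, $T_j=2^j\nu^{-\frac13}$ (not in $|k|$). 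Lemma \ref{al3.14} gives a factor $2^{-\frac j4}$ on each block, whence $\|\theta_{re}\|_{X_2}\lesssim\nu^{-\frac13}\mathcal{E}_{2,in}\|\omega\|_{X_2}$ (Proposition \ref{pro: wre}). This is exactly the missing factor $\nu^{\frac13}$.

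\paragraph{Secondary corrections.} First, as written, your short-time $X_1$ bound for $\theta$ uses $\partial_t+y\partial_x-\nu\Delta$; it must use $\partial_t+y\partial_x+u_a\cdot\nabla-\nu\Delta$ (Proposition \ref{ap3.5}). If $u_a\cdot\nabla\theta$ is left as a source, it costs $\mathcal{E}_{in}\nu^{-\frac12}\|\theta\|_{X_1}\le\varepsilon_0\nu^{-\frac16}\|\theta\|_{X_1}$, which cannot be absorbed. This is also why the splitting time is $\nu^{-\frac16}$: after it, $\|u_a^{(1)}\|_{L^\infty}\lesssim T_0^{-1}\mathcal{E}_{in}$ makes the term small.

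Second, the buoyancy term, which you flag as the main obstacle, is routine in the paper. Since $\nu^{\frac12}\|\partial_x f\|_{L^1L^2}$ is part of the $X_2$ norm (Proposition \ref{ap3.9}), one has $\|\partial_x\theta\|_{L^1L^2}\le\nu^{-\frac12}\|\theta\|_{X_2}$. This is paid for by the weight $\nu^{-\frac12}$ on $\theta$ and by $\nu^{-\frac56}\mathcal{E}_{2,in}\le\varepsilon_0$, with no frequency-blockwise summation.

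Third, the paper does not absorb the weight commutator by enhanced dissipation. The weighted bounds \eqref{fX large time}--\eqref{esti: nu1/3M f} are used only for $t\ge T_{j_0}=\nu^{-1-\varepsilon}$. There, $\delta\nu^{\frac13}\mathcal{M}_1(1+\nu^{\frac13}t\mathcal{M}_1)^{-1}\le\delta t^{-1}\le\delta\nu$, which is absorbed by the Poincar\'e dissipation. On $[T_0,T_{j_0}]$ one works unweighted and pays $\nu^{-\frac23\delta-\varepsilon(1+\delta)}$ at $T_{j_0}$ (see \eqref{ome small}--\eqref{a3.125}). Your alternative would require proving the full weighted space-time estimate, including the inviscid-damping and $L^1_t$ components, yourself.
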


The detailed proof of Proposition \ref{ap3.2} will be given in Section \ref{s5}. 
Based on the above two propositions, we are going to prove Theorem \ref{thm1.2}.

\begin{proof}[\textbf{Proof of Theorem~\ref{thm1.2}}]
Due to the fact that $\Om=\om_a+\om$ and $\Theta=\theta_{a}+\theta$, we can conclude
\begin{align*}
\|(1+ \nu^{\frac13}t\mathcal{M}_1)^{\delta}\Om\|&\leq \|(1+ \nu^{\frac13}t\mathcal{M}_1)^{\delta}\om_a\|+\|(1+ \nu^{\frac13}t\mathcal{M}_1)^{\delta}\om\|,\\
\|(1+ \nu^{\frac13}t\mathcal{M}_1)^{\delta}\Theta\|&\leq \|(1+ \nu^{\frac13}t\mathcal{M}_1)^{\delta}\theta_{a}\|+\|(1+ \nu^{\frac13}t\mathcal{M}_1)^{\delta}\theta\|.
\end{align*}
By taking $m=\delta$ in \eqref{a3.6} and \eqref{paxpayomL L2} respectively, we obtain
\begin{align*}
\|(1+ \nu^{\frac13}t\mathcal{M}_1)^{\delta}\om_a\|\lesssim\mathcal{E}_{in},\ \ \|(1+ \nu^{\frac13}t\mathcal{M}_1)^{\delta}\theta_{a}\|\lesssim\mathcal{E}_{2,in}.
\end{align*}
By Proposition \ref{ap3.2}, we obtain
\begin{align*}
\|(1+ \nu^{\frac13}t\mathcal{M}_1)^{\delta}\om\|+\nu^{-\frac12}\|(1+ \nu^{\frac13}t\mathcal{M}_1)^{\delta}\theta\|\lesssim \mathcal{E}_{in}.
\end{align*}
Notice that $\mathcal{E}_{in}=\mathcal{E}_{1,in}+\nu^{-\frac{1}{2}}\mathcal{E}_{2,in}$, we can deduce \eqref{1.9}.

For the inviscid damping estimate \eqref{1.10}, we first use $U=u_{a}+u$ to obtain
\begin{align}\label{esti u}
&\|(1+ \nu^{\frac13}t\mathcal{M}_1)^{\delta}\mathcal{M}_2U\|_{L^2L^2}\notag\\
\leq &\|(1+ \nu^{\frac13}   t\mathcal{M}_1)^{\delta}\mathcal{M}_2u_{a}\|_{L^2L^2}+\|(1+ \nu^{\frac13} t\mathcal{M}_1)^{\delta}\mathcal{M}_2u\|_{L^2L^2}.
\end{align}
Thanks to Plancherel's formula, $\mathcal{M}_1(k)\leq 3|k|^{\frac23}$, \eqref{k>1, psiL2}--\eqref{xi>1 xi4pay+xitpsiL2}, \eqref{esti4 psiL2}--\eqref{xi<1 pay+xitpsiL2}, \eqref{a3.26}, we get
\begin{align}\label{uLneq}
&\|(1+ \nu^{\frac13}t\mathcal{M}_1)^{\delta}\mathcal{M}_2u_{a}\|^2_{L^2L^2}\notag\\
 \lesssim &\int^{\infty}_0\int_{|k|\geq 1}(1+ \nu^{\frac13}|k|^{\frac23} t)^{2\delta}|k|^2\big((|k|+|k|t)\|\phi_{a}\|_{L^2_y}+\|(\partial_y+ikt)\phi_{a}\|_{L^2_y}\big)^2 \dd k\dd t\notag\\
 &+\int^{\infty}_0\int_{|k|\leq 1}(1+ \nu^{\frac13} |k|^{\frac23} t)^{2\delta}|k|((|k|+|k|t)\|\phi_{a}\|_{L^2_y}+\|(\partial_y+ikt)\phi_{a}\|_{L^2_y})^2 \dd k\dd t\notag\\
\lesssim &\mathcal{E}_{in}^2\int^{\infty}_0 (1+t)^{-\frac32} \dd t\lesssim \mathcal{E}_{in}^2,
\end{align}
where we used
\begin{align*}
\int_{|k|\leq 1}\frac{|k|}{(1+|kt|)^2}\dd k\lesssim (1+t)^{-1}\int_{|k|\leq 1}\frac{1}{1+|kt|}\dd k\lesssim (1+t)^{-2}\ln(1+t)\lesssim(1+t)^{-\frac32}.
\end{align*}
Moreover, it follows from the Poincaré inequality and Proposition \ref{ap3.2} that
\begin{equation}\notag
\begin{aligned}
&\|(1+ \nu^{\frac13}t\mathcal{M}_1)^{\delta}\mathcal{M}_2u\|_{L^2L^2}\\
\leq&\|(1+ \nu^{\frac13}t\mathcal{M}_1)^{\delta}\mathcal{M}_2u\|_{L^2(0,\nu^{-\frac16})L^2}+\|(1+ \nu^{\frac13}t\mathcal{M}_1)^{\delta}\mathcal{M}_2u\|_{L^2(\nu^{-\frac16},\infty)L^2}\\
\lesssim & \nu^{-\frac{1}{12}}\|(1+ \nu^{\frac13}t\mathcal{M}_1)^{\delta}\omega\|_{L^\infty(0,\nu^{-\frac16}) L^2}+ \mathcal{E}_{in}\lesssim \mathcal{E}_{in},
\end{aligned}
\end{equation}
which together with \eqref{esti u}--\eqref{uLneq} implies \eqref{1.10}. The proof of Theorem~\ref{thm1.2} is completed.
\end{proof}

\section{Proof of Propositions \ref{p2.1} and \ref{linear}}\label{s3}

In this section, we are committed to the proof of Propositions \ref{p2.1} and \ref{linear}. 

For Proposition \ref{p2.1},  taking the Fourier transform of \eqref{2.7} with respect to $x$ yields
\begin{equation}\label{3.2}
\left\{
\begin{aligned}
&\partial_t \theta_{a,k} + \nu t^2 k^2 \theta_{a,k} + ik V_1 \theta_{a,k} = 0,\\
&\partial_t \omega_{a,k} + \nu t^2 k^2 \omega_{a,k} + ik V_1 \omega_{a,k} = -ik\theta_{a,k},\ \  k \in \mathbb{Z},\ y\in I, \\
&\omega_{a,k}|_{t=0} = \Omega_{in,k},\ \theta_{a,k}|_{t=0} = \Theta_{in,k},\; k \neq 0; \quad \omega_{a,0}=\theta_{a,0} = 0.
\end{aligned}
\right.
\end{equation}

We present the following lemmas, which will be used to prove the proposition.

\begin{lem}\label{l3.1}
Suppose that $(\omega_{a,k},\theta_{a,k})$ solves system \eqref{3.2} and $V_1$ satisfies \eqref{2.2}--\eqref{2.3}. Then for $k \in \mathbb{Z}$ and $ k\neq 0$, $t \geq 0$, $j = 0,1,2,3$, it holds that
\begin{align}
&\|(k, \partial_y)^j (\e^{ikyt} \theta_{a,k})(t)\|_{L^2_y} \lesssim |k|^{j-3} \e^{-2\nu^{\frac{1}{3}} |k|^{\frac{2}{3}} t} E_{2}(k),\label{3.3}\\
&\|(k, \partial_y)^j (\e^{ikyt} \omega_{a,k})(t)\|_{L^2_y}\lesssim |k|^{j-3}\e^{-2\nu^{\frac{1}{3}} |k|^{\frac{2}{3}} t} \left(E_{1}(k)+ \nu^{-\frac{1}{3}}E_{2}(k)\right).\label{3.4}
\end{align}
For $k \in \mathbb{Z}$ and $ k\neq 0$, $t \geq 0$, $j = 0,1,2$, there holds
\begin{align}
&\|(k, \partial_y)^j (\e^{ikyt} \theta_{a,k})(t)\|_{L^\infty_y} \lesssim |k|^{j-\frac{5}{2}} \e^{-2\nu^{\frac{1}{3}} |k|^{\frac{2}{3}} t} E_{2}(k),\label{3.5}\\
&\|(k, \partial_y)^j (\e^{ikyt} \omega_{a,k})(t)\|_{L^\infty_y} \lesssim |k|^{j - \frac{5}{2}} \e^{-2\nu^{\frac{1}{3}} |k|^{\frac{2}{3}} t}\left(E_{1}(k)+ \nu^{-\frac{1}{3}}E_{2}(k)\right).\label{3.6}
\end{align}
\end{lem}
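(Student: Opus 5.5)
The plan is to solve \eqref{3.2} explicitly, since for each fixed $k$ and $y$ it is only an ODE in $t$. Set
\begin{equation*}
\Phi_k(t,y):=\exp\Big(-\tfrac{\nu k^2t^3}{3}-ik\int_0^t V_1(s,y)\,\dd s\Big).
\end{equation*}
Then $\theta_{a,k}=\Phi_k\Theta_{in,k}$. The forcing $-ik\theta_{a,k}$ is propagated by the same factor, so Duhamel gives $\omega_{a,k}=\Phi_k\big(\Omega_{in,k}-ikt\,\Theta_{in,k}\big)$. Multiplying by $\e^{ikyt}$ removes the large part of the phase:
\begin{equation*}
\e^{ikyt}\theta_{a,k}=\e^{-\frac{\nu k^2t^3}{3}}\,\e^{-ik\Psi}\,\Theta_{in,k},\qquad \Psi(t,y):=\int_0^t\big(V_1(s,y)-y\big)\,\dd s .
\end{equation*}
By \eqref{2.2}--\eqref{2.3}, $\|\partial_y^\ell\Psi\|_{L^\infty}\lesssim t\|\Omega_{in}\|_{H^3}\lesssim \varepsilon_0\nu^{\frac13}t$ for $\ell=1,2,3$.

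\textbf{$L^2$ bounds.} Apply $(k,\partial_y)^j$, $j\le 3$, and expand by Leibniz. A term with $\ell$ derivatives on the oscillating factor $\e^{-ik\Psi}$ is bounded by a sum of products of factors $|k|\,\|\partial_y^{m}\Psi\|_{L^\infty}$. The worst such product is $(|k|\nu^{\frac13}t)^\ell$; the terms with higher derivatives of $\Psi$ are smaller. The remaining $j-\ell$ derivatives fall on the data. Since $|k|\ge1$, we have $\|(k,\partial_y)^{j-\ell}\Theta_{in,k}\|\le |k|^{j-\ell-4}E_{2,4}(k)$.

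The key elementary observations are
\begin{equation*}
\nu^{\frac13}t=|k|^{-\frac23}\big(\nu k^2t^3\big)^{\frac13},
\end{equation*}
and, with $s=\nu^{\frac13}|k|^{\frac23}t$,
\begin{equation*}
\sup_{s\ge0}\ s^{p}\,\e^{-\frac{s^3}{3}+2s}<\infty .
\end{equation*}
Together they give
\begin{equation*}
(|k|\nu^{\frac13}t)^\ell\,\e^{-\frac{\nu k^2t^3}{3}}\lesssim |k|^{\frac{\ell}{3}}\,\e^{-2\nu^{\frac13}|k|^{\frac23}t}.
\end{equation*}
Hence the $\theta$-term is bounded by $|k|^{j-4+\frac{2\ell}{3}}\e^{-2\nu^{\frac13}|k|^{\frac23}t}E_2(k)\le |k|^{j-3}\e^{-2\nu^{\frac13}|k|^{\frac23}t}E_2(k)$, because $\ell\le 3$ and $|k|\ge 1$. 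This proves \eqref{3.3}.

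For \eqref{3.4} there are two pieces.
\begin{itemize}
\item The $\Omega_{in,k}$ piece has only three derivatives of data available: $\|(k,\partial_y)^{j-\ell}\Omega_{in,k}\|\le|k|^{j-\ell-3}E_{1,3}(k)$. I would therefore use the sharper bound $(\nu^{\frac13}t)^\ell\,\e^{-\frac{\nu k^2t^3}{3}}\lesssim |k|^{-\frac{2\ell}{3}}\e^{-2\nu^{\frac13}|k|^{\frac23}t}$. This gives at most $|k|^{j-3}$ with no loss.
\item The term $-ikt\,\Theta_{in,k}$ has an extra factor $|k|t$. Using $|k|t\,\e^{-\frac{\nu k^2t^3}{6}}\lesssim\nu^{-\frac13}|k|^{\frac13}$ and the spare derivative in $E_2$, it is handled exactly as in the $\theta$ case. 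It produces the $\nu^{-\frac13}E_2(k)$ contribution.
\end{itemize}
The point I would check most carefully is this bookkeeping of powers of $|k|$ against the number of data derivatives (three for $\Omega$, four for $\Theta$). The same bookkeeping must also be done for the lower-order Leibniz terms involving $\partial_y^2\Psi$ and $\partial_y^3\Psi$, using $\|V_1''\|_{C^1}$ and $\|V_1'''\|_{L^\infty}$ from \eqref{2.3}.

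\textbf{$L^\infty$ bounds.} For \eqref{3.5}--\eqref{3.6} I would use the one-dimensional inequality
\begin{equation*}
\|f\|_{L^\infty_y}^2\lesssim\|f\|_{L^2_y}^2+\|f\|_{L^2_y}\|\partial_yf\|_{L^2_y}\lesssim\|f\|_{L^2_y}\,\|(k,\partial_y)f\|_{L^2_y}\qquad(|k|\ge1),
\end{equation*}
applied to $f=(k,\partial_y)^j(\e^{ikyt}\theta_{a,k})$ with $j\le2$. The $L^2$ bounds at levels $j$ and $j+1\le3$ then give $|k|^{\frac{(j-3)+(j-2)}{2}}=|k|^{j-\frac52}$, and the same argument works for $\omega_{a,k}$.
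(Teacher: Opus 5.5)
Your proposal follows essentially the same route as the paper: solve \eqref{3.2} explicitly, so that $\e^{ikyt}\theta_{a,k}$ and $\e^{ikyt}\omega_{a,k}=\e^{-\nu k^2t^3/3-ik\Psi}(\Omega_{in,k}-ikt\Theta_{in,k})$; bound the phase derivatives by $\nu^{\frac13}|k|t$ using \eqref{2.2}; absorb the polynomial factors into $\e^{-\nu k^2t^3/3}$; then apply Leibniz in $L^2$ and the one-dimensional interpolation inequality for the $L^\infty$ bounds. There is one arithmetic slip to fix: in the $\theta$-term the exponent is $j-4-\frac{2\ell}{3}$ (from $|k|^{\ell/3}\cdot|k|^{j-\ell-4}$), not $j-4+\frac{2\ell}{3}$ — with the sign as you wrote it, the comparison with $|k|^{j-3}$ fails for $\ell\ge 2$, whereas with the correct sign it is immediate.
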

\begin{proof}
By direct calculation, $\theta_{a,k}$ is given by
\begin{equation}\label{3.7}
\theta_{a,k}(t,y) = \Theta_{in,k}(y) \e^{-ikV_1^{\star}(t,y) - \nu k^2 t^3/3}, 
\end{equation}
where $ V_1^{\star}(t,y) = \int_0^t V_1(\tau,y)\,\dd \tau$.
By applying \eqref{2.2}, we have
\begin{equation}\label{3.8}
\|V_1^{\star} - ty\|_{C^3} \lesssim t \|\Omega_{in}\|_{H^3} \leq \nu^{\frac{1}{3}} t.
\end{equation}
With the expression of $\theta_{a,k}$, by solving non-homogeneous ordinary differential equation \eqref{3.2}$_2$, we can get
\begin{align}\label{3.9}
\e^{ikyt} \omega_{a,k}(t,y) =& \exp\!\left( -\frac{\nu k^2 t^3}{3} - ik (V_1^{\star}(t,y) - yt)\right) \notag\\
&\cdot\left(\Omega_{in,k}(y) - ik\int_0^t \exp\!\left( \frac{\nu k^2s^3}{3}+ik(V_1^{\star}(s,y) - ys)\right) \e^{ikys} \theta_{a,k}(s,y)\dd \tau\right) \notag\\
=&\exp\!\left( -\frac{\nu k^2 t^3}{3} - ik (V_1^{\star}(t,y) - yt)\right)\left(\Omega_{in,k}(y) - ikt\Theta_{in,k}(y)\right).
\end{align}
Let $h(t) = -k(V_1^{\star}(t,y) - ty)$. Thanks to \eqref{3.8}, we deduce $|\partial_y^j h| \lesssim \nu^{\frac{1}{3}} |kt|$ and $|\partial_y^j(\e^{ih})| \lesssim(1+\nu^{\frac{1}{3}} |kt|)^j$ for $j = 0,1,2,3$, then it holds that
\begin{align*}
|\partial_y^j(\e^{ih - \nu k^2 t^3/3})| &\lesssim (1 + \nu^{\frac{1}{3}} |kt|)^j \e^{-\nu k^2 t^3/3} \lesssim |k|^{\frac{j}{3}}(1 + \nu k^2 t^3)^{\frac{j}{3}} \e^{-\nu k^2 t^3/3} \\
&\lesssim |k|^{\frac{j}{3}} \e^{-3\nu^{\frac{1}{3}} |k|^{\frac{2}{3}} t},
\end{align*}
\begin{equation}\label{3.10}
   |(k, \partial_y)^j(\e^{ih - \nu k^2 t^3/3})| \lesssim |k|^j \e^{-3\nu^{\frac{1}{3}} |k|^{\frac{2}{3}} t}. 
\end{equation}
Thus, for $j = 0,1,2,3$, we deduce that
\begin{align}
\|(k, \partial_y)^j (\e^{ikyt} \theta_{a,k})\|_{L^2_y} &= \|(k, \partial_y)^j \big(\Theta_{in,k} \e^{ih - \nu k^2 t^3/3}\big)\|_{L^2_y} \notag\\
&\lesssim \sum_{l=0}^j \|(k, \partial_y)^l (\Theta_{in,k}(y))\|_{L^2_y} \|(k, \partial_y)^{j-l} (\e^{ih - \nu k^2 t^3/3})\|_{L^\infty_y} \notag\\
&\lesssim \sum_{l=0}^j |k|^{l-3}E_{2}(k)  |k|^{j-l} \e^{-3\nu^{\frac{1}{3}} |k|^{\frac{2}{3}} t} \notag\\
&\lesssim |k|^{j-3} \e^{-2\nu^{\frac{1}{3}} |k|^{\frac{2}{3}} t}E_{2}(k).\notag
\end{align}
In a similar way, we derive
\begin{align*}
\|(k, \partial_y)^j (\e^{ikyt} \omega_{a,k})\|_{L^2_y} 
\lesssim& |k|^{j-3}\e^{-3\nu^{\frac{1}{3}} |k|^{\frac{2}{3}} t}(E_{1}(k)+tE_{2}(k))\\
\lesssim& |k|^{j-3}\e^{-2\nu^{\frac{1}{3}} |k|^{\frac{2}{3}} t}(E_{1}(k)+ \nu^{-\frac{1}{3}}E_{2}(k)).
\end{align*}
For $j = 0,1,2$, there holds
\begin{align*}
\|(k, \partial_y)^j (\e^{ikyt} \theta_{a,k})\|_{L^\infty_y} &\lesssim \|(k, \partial_y)^j (\e^{ikyt} \theta_{a,k})\|_{L^2_y}^{\frac{1}{2}}\|(k, \partial_y)^{j+1} (\e^{ikyt} \theta_{a,k})\|_{L^2_y}^{\frac{1}{2}}  \\
&\lesssim |k|^{j - \frac{5}{2}} \e^{-2\nu^{\frac{1}{3}} |k|^{\frac{2}{3}} t}E_{2}(k),
\end{align*}
\begin{align*}
\|(k, \partial_y)^j (\e^{ikyt} \omega_{a,k})\|_{L^\infty_y} &\lesssim \|(k, \partial_y)^j (\e^{ikyt} \omega_{a,k})\|_{L^2_y}^{\frac{1}{2}}\|(k, \partial_y)^{j+1} (\e^{ikyt} \omega_{a,k})\|_{L^2_y}^{\frac{1}{2}}  \\
&\lesssim |k|^{j - \frac{5}{2}}\e^{-2\nu^{\frac{1}{3}} |k|^{\frac{2}{3}} t} \left(E_{1}(k)+ \nu^{-\frac{1}{3}}E_{2}(k)\right).
\end{align*}
The proof of Lemma \ref{l3.1} is completed.
\end{proof}
\begin{lem}\label{l3.2}
  Suppose that $(\omega_{a},\theta_{a})$ solves system \eqref{2.7} and $V_1$ satisfies \eqref{2.2}--\eqref{2.3}, then we have
   \begin{align}
      &t \|\partial_x \theta_{a}(t)\|_{L^\infty_y}+t \|\partial_x^2 \theta_{a}(t)\|_{L^\infty_y}  \lesssim \nu^{-\frac{1}{3}} \e^{-\nu^{\frac{1}{3}} t} \|\Theta_{in}\|_{H^4},\notag \\
      &t \|\partial_x \omega_{a}(t)\|_{L^\infty_y}+t \|\partial_x^2 \omega_{a}(t)\|_{L^\infty_y}  \lesssim \nu^{-\frac{1}{3}} \e^{-\nu^{\frac{1}{3}} t} (\|\Omega_{in}\|_{H^3}+\nu^{-\frac{1}{3}} \|\Theta_{in}\|_{H^4}).\notag
   \end{align}
\end{lem}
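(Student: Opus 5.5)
The plan is to work mode by mode in $x$ and sum over $k\neq0$, using the pointwise-in-$k$ bounds of Lemma \ref{l3.1}. Since $|\e^{ikyt}|=1$, we have $\|\partial_x^m\theta_{a,k}\|_{L^\infty_y}=|k|^m\|\e^{ikyt}\theta_{a,k}\|_{L^\infty_y}$ for $m=1,2$. Applying \eqref{3.5} with $j=0$, and using $|k|^{m}\le |(k,\partial_y)^j|$-type weights only through the factor $|k|^{-5/2}$, gives
\begin{equation*}
t\|\partial_x^m\theta_{a,k}(t)\|_{L^\infty_y}\lesssim t|k|^{m-\frac52}\e^{-2\nu^{\frac13}|k|^{\frac23}t}E_2(k),\qquad m=1,2 .
\end{equation*}
For the vorticity, \eqref{3.6} gives the same bound with $E_2(k)$ replaced by $E_1(k)+\nu^{-\frac13}E_2(k)$.

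The key step is to absorb the factor $t$. Write
\begin{equation*}
t\,\e^{-\nu^{\frac13}|k|^{\frac23}t}=\nu^{-\frac13}|k|^{-\frac23}\,\bigl(\nu^{\frac13}|k|^{\frac23}t\bigr)\e^{-\nu^{\frac13}|k|^{\frac23}t}\lesssim \nu^{-\frac13}|k|^{-\frac23}.
\end{equation*}
The remaining factor satisfies $\e^{-\nu^{\frac13}|k|^{\frac23}t}\le \e^{-\nu^{\frac13}t}$ because $|k|\ge1$. Hence
\begin{equation*}
t\|\partial_x^m\theta_{a,k}\|_{L^\infty_y}\lesssim \nu^{-\frac13}\e^{-\nu^{\frac13}t}|k|^{m-\frac{19}{6}}E_2(k).
\end{equation*}
Then $\|f\|_{L^\infty_{x,y}}\le\sum_k\|f_k\|_{L^\infty_y}$, and Cauchy--Schwarz in $k$ gives
\begin{equation*}
\sum_{k\ne0}|k|^{m-\frac{19}{6}}E_2(k)\le\Bigl(\sum_k|k|^{2m-\frac{19}{3}}\Bigr)^{\frac12}\|E_2\|_{\ell^2_k}.
\end{equation*}
For $m\le2$ the exponent $2m-19/3\le-7/3<-1$, so the series converges, and $\|E_2\|_{\ell^2}\lesssim\|\Theta_{in}\|_{H^4}$ by \eqref{a3.2}. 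The $\omega_a$ case is identical, with $\|E_1\|_{\ell^2}+\nu^{-\frac13}\|E_2\|_{\ell^2}\lesssim\|\Omega_{in}\|_{H^3}+\nu^{-\frac13}\|\Theta_{in}\|_{H^4}$.

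The norm in the statement is written $L^\infty_y$. I read it as $L^\infty_{x,y}$, which is stronger, and the mode sum above covers that. The only delicate points are summability in $k$ and trading $t$ for $\nu^{-1/3}$ while keeping a uniform rate $\e^{-\nu^{1/3}t}$. Both follow from the slack between $\e^{-2\nu^{1/3}|k|^{2/3}t}$ and $\e^{-\nu^{1/3}t}$, together with the $|k|^{-5/2}$ gain in Lemma \ref{l3.1}. I therefore do not expect a real obstacle.
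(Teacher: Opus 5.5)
Your proof is correct and takes essentially the approach the paper intends: the paper omits the proof and defers to Lemma 3.3 of \cite{wei-zhang2026}, which likewise combines the modewise $L^\infty_y$ bounds of Lemma \ref{l3.1} with the trade $t\,\e^{-\nu^{\frac13}|k|^{\frac23}t}\lesssim\nu^{-\frac13}|k|^{-\frac23}$ and Cauchy--Schwarz in $k$. Reading the norm as $L^\infty_{x,y}$ matches how the lemma is used in Lemma \ref{l4.16}, and on $\mathbb{T}$ the bound $\|E_2\|_{\ell^2_k}\lesssim\|\Theta_{in}\|_{H^4}$ follows from Parseval exactly as in \eqref{3.12}.
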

\begin{proof}
The proof of this lemma is similar to Lemma 3.3 in \cite{wei-zhang2026}, and is therefore omitted.
\end{proof}
\begin{lem}[Lemmas 3.4--3.6 in \cite{wei-zhang2026}]\label{l3.3}
If $k \in \mathbb{Z}$ and $k \neq 0$, $\omega \in H^3(-1,1)$, $\phi = -\Delta_k^{-1} \omega$, $\omega(\pm1)=0$. Then it holds for $t \geq 0$ that
\[
k^2 \|(k, \partial_y) \phi(t)\|_{L^2_y} \lesssim (1+t)^{-1} \|(k, \partial_y)(\e^{ikyt} \omega)(t)\|_{L^2_y},
\]
\[
|k|^4 \|\phi(t)\|_{L^2_y} \lesssim (1+t)^{-2} \|(k, \partial_y)^2 (\e^{ikyt} \omega)(t)\|_{L^2_y},
\]
\[
|k|^2 \|(k, \partial_y)(\partial_y \phi + ikt \phi)(t)\|_{L^2_y} \lesssim (1+t)^{-1} \|(k, \partial_y)^2 (\e^{ikyt} \omega)(t)\|_{L^2_y},
\]
\[
|k|^4 \|(\partial_y \phi + ikt \phi)(t)\|_{L^2_y} \lesssim (1+t)^{-2} \|(k, \partial_y)^3 (\e^{ikyt} \omega)(t)\|_{L^2_y}.
\]
\end{lem}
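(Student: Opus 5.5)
The four inequalities in Lemma \ref{l3.3} are elliptic estimates for the one-dimensional operator $\Delta_k=\partial_y^2-k^2$ on $I$ with Dirichlet data $\phi(\pm1)=0$, together with the vanishing $\omega(\pm1)=0$. The plan is to trade powers of $t$ for derivatives of the profile $f:=\e^{ikyt}\omega$. The basic identity is
\[
\omega=\e^{-ikyt}f,\qquad \e^{-ikyt}=\frac{(\partial_y)\e^{-ikyt}}{-ikt},
\]
so every pairing $\langle \omega,g\rangle$ with $g$ vanishing suitably at $y=\pm1$ can be integrated by parts in $y$. Each integration produces a factor $(kt)^{-1}$ and moves one derivative onto $f$ or onto $g$. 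The condition $\omega(\pm1)=f(\pm1)=0$ kills the boundary terms at the first step. For $t\le1$ all four estimates are trivial from the standard elliptic bounds $k^2\|(k,\partial_y)\phi\|\lesssim |k|\|\omega\|$ and $|k|^4\|\phi\|\lesssim k^2\|\omega\|$, so I assume $t\ge1$ throughout.

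\textbf{First inequality.} Test $-\Delta_k\phi=\omega$ against $\bar\phi$:
\[
\|(k,\partial_y)\phi\|^2=\langle \e^{-ikyt}f,\phi\rangle=\frac{1}{ikt}\langle \e^{-ikyt},\partial_y(\bar f\phi)\rangle .
\]
There are no boundary terms since $\phi(\pm1)=0$. The right side is bounded by $(|k|t)^{-1}\big(\|\partial_yf\|\|\phi\|+\|f\|_{L^\infty}\|\partial_y\phi\|_{L^1}\big)$. Using $\|\phi\|\lesssim |k|^{-1}\|(k,\partial_y)\phi\|$ and $|k|\ge1$, one power of $\|(k,\partial_y)\phi\|$ cancels, giving the estimate with the extra $k^2$ weight.

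\textbf{Second inequality.} For $(1+t)^{-2}$ I use duality. For $g\in L^2$, let $\psi=-\Delta_k^{-1}g$, so $\langle\phi,g\rangle=\langle\omega,\psi\rangle$. Integrate by parts twice. The first step has no boundary term because $\psi(\pm1)=0$. The second produces boundary terms of the form $(kt)^{-2}\,\partial_yf(\pm1)\,\psi$-type contributions, which vanish since $\psi(\pm1)=0$. What remains is $\overline{f}\,\partial_y\psi(\pm1)$, and this vanishes because $f(\pm1)=0$. The result is a bound by $(kt)^{-2}\|(k,\partial_y)^2f\|\,\|\psi\|_{H^2}$, and $\|\psi\|_{H^2}\lesssim\|g\|$. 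Tracking powers of $k$ yields the $|k|^4$ weight.

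\textbf{Third and fourth inequalities.} The good derivative satisfies $(\partial_y+ikt)\phi=\e^{-ikyt}\partial_y(\e^{ikyt}\phi)$. Set $\Phi=\e^{ikyt}\phi$. Then $(\partial_y+ikt)^2\Phi-k^2\Phi=-f$ in conjugated form, and $\partial_y\phi+ikt\phi=\e^{-ikyt}\partial_y\Phi$.

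For the third inequality, I differentiate the equation for $\phi$. The function $h:=\partial_y\phi+ikt\phi$ satisfies
\[
-\Delta_k h=\partial_y\omega+ikt\,\omega=\e^{-ikyt}\partial_yf,
\]
but $h$ lacks a homogeneous boundary condition. I would split $h=h_1+h_2$, where $h_1$ solves the Dirichlet problem with right-hand side $\e^{-ikyt}\partial_yf$, and $h_2$ is $\Delta_k$-harmonic with boundary values $\partial_y\phi(\pm1)$. The piece $h_1$ is handled exactly as in the first inequality with $f$ replaced by $\partial_yf$. This needs $\partial_yf(\pm1)$ only through the $L^\infty$ bound, so no boundary vanishing is required. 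For $h_2$ one needs $|\partial_y\phi(\pm1)|\lesssim (|k|t)^{-1}$-small. Writing $\partial_y\phi(\pm1)=\langle\omega,\gamma_\pm\rangle$ with the explicit Green's kernel $\gamma_\pm=\sinh(k(1\pm y))/\sinh(2k)$ and integrating by parts once gives this. The harmonic profile satisfies $\|(k,\partial_y)\gamma_\pm\|\lesssim |k|^{1/2}$, which suffices.

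The fourth inequality iterates this: $h_1$ is treated by the duality argument of the second inequality applied to $\partial_yf$. This requires $\partial_yf(\pm1)$ to be paired only against functions vanishing at the boundary, and the boundary term $\partial_yf\cdot\partial_y\psi$ is now where the difficulty lies.

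\textbf{Main obstacle.} The main obstacle is these boundary contributions in the fourth estimate, and correspondingly the $h_2$ part. Since $\partial_yf(\pm1)\neq0$ in general, I would first try to bound these traces using $\|(k,\partial_y)^3f\|$ via the trace inequality $|\partial_yf(\pm1)|\lesssim\|\partial_yf\|^{1/2}\|\partial_y^2f\|^{1/2}$. I would also expand $\partial_y\phi(\pm1)$ to second order in $(kt)^{-1}$, by integrating against $\gamma_\pm$ twice, so that the $h_2$ piece also carries $(1+t)^{-2}$. This is the step where the third derivative in the hypothesis is consumed.
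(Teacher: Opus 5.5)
The paper does not prove this lemma itself; it quotes Lemmas 3.4--3.6 of \cite{wei-zhang2026}. Your scheme proves all four bounds: integrating by parts against $\e^{-ikyt}=\partial_y(\e^{-ikyt})/(-ikt)$, duality with $\psi=-\Delta_k^{-1}g$ for the $L^2$ bound, and splitting $h=(\partial_y+ikt)\phi$ into a Dirichlet part $h_1$ plus a $\Delta_k$-harmonic corrector $h_2$ fixed by $\partial_y\phi(\pm1)=\mp\langle\omega,\gamma_\pm\rangle$. Two points need repair, both concerning uniformity in $k$.

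\textbf{The $L^\infty$ bound in the first inequality.} Bounding by $\|f\|_{L^\infty}\|\partial_y\phi\|_{L^1}$ is too lossy. It yields $k^2\|(k,\partial_y)\phi\|\lesssim t^{-1}(\|\partial_yf\|+|k|\|f\|_{L^\infty})$, and for a bump $f$ of width $|k|^{-1}$ one has $|k|\|f\|_{L^\infty}\approx |k|^{1/2}\|(k,\partial_y)f\|$. Pair in $L^2$ instead: $|\langle \e^{-ikyt}f,\partial_y\phi\rangle|\le\|f\|\|\partial_y\phi\|$ gives $k^2\|(k,\partial_y)\phi\|\le t^{-1}(\|\partial_yf\|+|k|\|f\|)$ directly. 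The same argument then handles $h_1$ in the third inequality without using any trace of $\partial_yf$.

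Similarly, in the duality step you need the weighted elliptic bound $k^2\|\psi\|+|k|\|\partial_y\psi\|+\|\partial_y^2\psi\|\lesssim\|g\|$. The unweighted $\|\psi\|_{H^2}\lesssim\|g\|$ would leave a term $k^2\|\partial_y^2 f\|$, which is not controlled by $\|(k,\partial_y)^2f\|$. A minor slip: the conjugated equation should read $((\partial_y-ikt)^2-k^2)\Phi=-f$, though you never use it.

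\textbf{The fourth inequality.} You leave this as the ``main obstacle'', but it closes along the lines you indicate. Besides $\|\gamma_\pm\|\lesssim|k|^{-1/2}$, $\|\gamma_\pm'\|\lesssim|k|^{1/2}$ and $\|\gamma_\pm''\|=k^2\|\gamma_\pm\|$, use $|\partial_y\psi(\pm1)|=|\langle g,\gamma_\pm\rangle|\lesssim |k|^{-1/2}\|g\|$.

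After two integrations by parts, the only surviving boundary terms are as follows.
\begin{itemize}
\item For $h_1$: the term $(kt)^{-2}\,\partial_yf(\pm1)\,\overline{\partial_y\psi(\pm1)}$.
\item For $h_2$: the term $(kt)^{-2}\,\partial_yf(\pm1)$, where $|k|^4\|h_2\|\lesssim |k|^{7/2}\max_\pm|\partial_y\phi(\pm1)|$.
\end{itemize}
Both reduce to showing $|k|^{3/2}|\partial_yf(\pm1)|\lesssim\|(k,\partial_y)^3f\|$.

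Your trace inequality $|\partial_yf(\pm1)|^2\lesssim\|\partial_yf\|\|\partial_y^2f\|$ is valid here without a lower-order term. Since $f(\pm1)=0$, Rolle's theorem gives zeros of $\mathrm{Re}\,\partial_yf$ and $\mathrm{Im}\,\partial_yf$ in $(-1,1)$. Then $|k|^{3/2}\|\partial_yf\|^{1/2}\|\partial_y^2f\|^{1/2}=(k^2\|\partial_yf\|)^{1/2}(|k|\|\partial_y^2f\|)^{1/2}\le\|(k,\partial_y)^3f\|$.

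The bulk terms contribute $t^{-2}\big(\|\partial_y^3f\|+|k|\|\partial_y^2f\|+k^2\|\partial_yf\|+|k|^3\|f\|\big)$. So the third derivative enters only through the bulk term of $h_1$, not through the boundary traces.
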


With Lemmas \ref{l3.1}--\ref{l3.3} in hand, we are now ready to prove Proposition \ref{p2.1}.
\begin{proof}[\textbf{Proof of Proposition \ref{p2.1}}]
For $(\theta_{a},\omega_{a})$,
due to the fact that
\[
\theta_{a}(t,x,y) = \sum_{k \neq 0} \theta_{a,k}(t,y) \e^{ikx}, \quad
\partial_x \theta_{a}(t,x,y) = \sum_{k \neq 0} ik \theta_{a,k}(t,y) \e^{ikx},
\]
\[
(\partial_y + t\partial_x)\theta_{a}(t,x,y) = \sum_{k \neq 0} (\partial_y + ikt) \theta_{a,k}(t,y) \e^{ikx}
= \sum_{k \neq 0} \partial_y(\e^{ikyt} \theta_{a,k})(t,y) \e^{ik(x - yt)}.
\]
Then it follows from \eqref{aaa3.1} and Lemma~\ref{l3.1} that
\begin{align}\label{3.12}
&\|\partial_x \theta_{a}(t)\|_{L^\infty} + \|(\partial_y + t\partial_x)\theta_{a}(t)\|_{L^\infty}\notag\\
\lesssim&  \sum_{k \neq 0} \|(k, \partial_y)(\e^{ikyt} \theta_{a,k})(t)\|_{L^\infty_y}
\lesssim \sum_{k \neq 0} |k|^{-\frac{3}{2}} E_{2}(k) \e^{-2\nu^{\frac{1}{3}} |k|^{\frac{2}{3}} t}\notag\\
\lesssim&  \left( \sum_{k \neq 0 } |k|^{-3}\right)^{\frac{1}{2}}\left( \sum_{k \neq 0 } (E_{2}(k))^2 \right)^{\frac{1}{2}} \e^{-2\nu^{\frac{1}{3}} t}
\lesssim \e^{-2\nu^{\frac{1}{3}} t} \|\Theta_{in}\|_{H^4}.
\end{align}
Similarly, we can also prove that
\begin{align}\label{3.13}
&\|\partial_x \omega_{a}(t)\|_{L^\infty} + \|(\partial_y + t\partial_x)\omega_{a}(t)\|_{L^\infty}\notag\\
\lesssim&  \sum_{k \neq 0 } \|(k, \partial_y)(\e^{ikyt} \omega_{a,k})(t)\|_{L^\infty_y}
\lesssim \sum_{k \neq 0} |k|^{-\frac{3}{2}} \e^{-2\nu^{\frac{1}{3}} |k|^{\frac{2}{3}} t}(E_{1}(k)+\nu^{-\frac{1}{3}}E_{2}(k))\notag\\
\lesssim&{\rm e}^{-2\nu^{\frac{1}{3}} t} (\|\Omega_{in}\|_{H^3}+\nu^{-\frac{1}{3}}\|\Theta_{in}\|_{H^4}).
\end{align}
Thus, we obtain \eqref{2.10}--\eqref{2.11}.

 For \eqref{2.12}--\eqref{2.13}, the proof is similar to that of (2.6)--(2.7) in \cite{wei-zhang2026}, here we omit the details.

Finally,
note that
\begin{align}
&Er_{\theta}^2 = \partial_t \theta_{a} - \nu \Delta \theta_{a} + V_1 \partial_x \theta_{a}, \quad Er^2 = Er_{\theta}^2 -V_2' \partial_x \phi_{a} + u_{a} \cdot \nabla \theta_{a},\notag\\ 
&Er_{\omega}^1 = \partial_t \omega_{a}  + V_1 \partial_x \omega_{a}- \nu \Delta \omega_{a}+\partial_x\theta_{a}, \quad Er^1 = Er_{\omega}^1 - V_1'' \partial_x \phi_{a} + u_{a} \cdot \nabla \omega_{a}.\notag
\end{align}
From $\partial_x \phi_{a} = -u_{a}^2$, \eqref{2.4} and \eqref{2.12}, we derive
\[
\|V_2'\partial_x \phi_{a}(t)\| \leq \|V_2'\|_{L^\infty} \|u_{a}^2(t)\| \lesssim(1+t)^{-2} \e^{-2\nu^{\frac{1}{3}} t}\|\Theta_{in}\|_{H^3} (\|\Omega_{in}\|_{H^3}+\nu^{-\frac{1}{3}}\|\Theta_{in}\|_{H^4}).
\]
Moreover, 
\[
u_{a} \cdot \nabla \theta_{a} = u_{a}^1 \partial_x \theta_{a} + u_{a}^2 \partial_y \theta_{a} = (u_{a}^1 - t u_{a}^2) \partial_x \theta_{a} + u_{a}^2 (\partial_y + t \partial_x) \theta_{a}.
\]
Then it follows from \eqref{2.10} and \eqref{2.12} that
\begin{align*}
\|u_{a} \cdot \nabla \theta_{a}(t)\| &\leq \|(u_{a}^1 - t u_{a}^2)(t)\| \|\partial_x \theta_{a}(t)\|_{L^\infty} + \|u_{a}^2(t)\| \|(\partial_y + t \partial_x)\theta_{a}(t)\|_{L^\infty} \\
&\lesssim(1+t)^{-2} \e^{-2\nu^{\frac{1}{3}} t} \|\Theta_{in}\|_{H^4}(\|\Omega_{in}\|_{H^3}+\nu^{-\frac{1}{3}}\|\Theta_{in}\|_{H^4}).
\end{align*}
Notice that $(\partial_t + V_1 \partial_x- \nu t^2 \partial_x^2 )\theta_{a} = 0$ and $Er^2_{\theta} = (\partial_t + V_1 \partial_x- \nu \Delta)\theta_{a}$, we obtain
\[
Er^2_{\theta} = (\nu t^2 \partial_x^2 - \nu \Delta)\theta_{a}= \sum_{k \neq 0} \bar\Theta_k(t,y) \e^{ikx},
\]
here
\[
\bar\Theta_k(t,y) = -\nu (\partial_y^2 - k^2 + k^2 t^2) \theta_{a,k}(t,y).
\]
Thus, we arrive at
\[
\e^{ikyt} \bar\Theta_k = -\nu \left((\partial_y - ikt)^2 - k^2 + k^2 t^2\right)(\e^{ikyt} \theta_{a,k})
= -\nu (\partial_y^2 - 2ikt \partial_y - k^2)(\e^{ikyt} \theta_{a,k}).
\]
We use Lemma~\ref{l3.1} and \eqref{aaa3.1} to obtain 
\begin{align*}
\|\bar\Theta_k(t)\|_{L^2_y} &=\|\e^{ikyt}\bar\Theta_k(t)\|_{L^2_y}= \nu\| (\partial_y^2 - 2ikt \partial_y - k^2)(\e^{ikyt} \theta_{a,k})(t)\|_{L^2_y} \\
&\lesssim \nu (1+t) \|(k, \partial_y)^2 (\e^{ikyt} \theta_{a,k})(t)\|_{L^2_y} \\
&\lesssim \nu (1+t) |k|^{-1} \e^{-2\nu^{\frac{1}{3}} |k|^{\frac{2}{3}} t} E_{2}(k) \lesssim \nu (1+t) \e^{-2\nu^{\frac{1}{3}} t} E_{2}(k),
\end{align*}
and then
\begin{align*}
\|Er^2_{\theta}(t)\|^2 &\lesssim \sum_{k \neq 0} \|\bar\Theta_k(t)\|_{L^2_y}^2\lesssim \sum_{k \neq 0} \nu^2 (1+t)^2 \e^{-4\nu^{\frac{1}{3}} t} (E_{2}(k))^2 \\
&\lesssim \nu^2 (1+t)^2 \e^{-4\nu^{\frac{1}{3}} t} \|\Theta_{in}\|_{H^4}^2.
\end{align*}
Summing up, there holds
\begin{align*}
\|Er^2(t)\| &\leq \|Er^2_{\theta}(t)\| + \|V_2' \partial_x \phi_{a}(t)\| + \|u_{a} \cdot \nabla \theta_{a}(t)\| \\
&\lesssim \nu (1+t) \e^{-2\nu^{\frac{1}{3}} t}\|\Theta_{in}\|_{H^4} +(1+t)^{-2} \e^{-2\nu^{\frac{1}{3}} t}\|\Theta_{in}\|_{H^4}(\|\Omega_{in}\|_{H^3}+\nu^{-\frac{1}{3}}\|\Theta_{in}\|_{H^4}).
\end{align*}
Similarly, we can also prove \eqref{2.14}, and then finish the proof of Proposition \ref{p2.1}.
\end{proof}

We now turn to the proof of Proposition \ref{linear}.
Firstly, by taking the Fourier transform of \eqref{a3.3} with respect to $x$, we obtain
\begin{equation}\label{a3.24}
\left\{
\begin{aligned}
&\partial_t \theta_{a,k} + ik y \theta_{a,k}+ \nu (1+t^2 k^2) \theta_{a,k}  = 0,\\
&\partial_t \omega_{a,k} + iky \omega_{a,k}+ \nu (1+t^2 k^2) \omega_{a,k}  = -ik\theta_{a,k},\ \  k \neq 0,\ y\in I, \\
&\omega_{a,k}|_{t=0} = \Omega_{in,k},\ \theta_{a,k}|_{t=0} = \Theta_{in,k}.
\end{aligned}
\right.
\end{equation}

\begin{lem}\label{al3.3}
For $k \in \mathbb{R} \setminus \{0\}$, $t \geq 0$, $j = 0,1,2,3$, it holds that
\begin{align}
&\|(k, \partial_y)^j (\e^{ikyt} \theta_{a,k})(t)\|_{L^2_y} \lesssim \e^{-\nu t- \nu k^2 t^3/4}E_{2}(k),\label{a3.25}\\
&\|(k, \partial_y)^j (\e^{ikyt} \omega_{a,k})(t)\|_{L^2_y}\lesssim \e^{-\nu t- \nu k^2 t^3/4}\left(E_{1}(k)+\nu^{-\frac{1}{3}}E_{2}(k)\right).\label{a3.26}
\end{align}
For $k \in \mathbb{R} \setminus \{0\}$, $t \geq 0$, $j = 0,1,2$, it holds that
\begin{align}
&\|(k, \partial_y)^j (\e^{ikyt} \theta_{a,k})(t)\|_{L^\infty_y} \lesssim \e^{-\nu t- \nu k^2 t^3/4}E_{2}(k),\label{a3.27}\\
&\|(k, \partial_y)^j (\e^{ikyt} \omega_{a,k})(t)\|_{L^\infty_y} \lesssim \e^{-\nu t- \nu k^2 t^3/4}\left(E_{1}(k)+\nu^{-\frac{1}{3}}E_{2}(k)\right).\label{a3.28}
\end{align}
\end{lem}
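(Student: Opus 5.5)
The system \eqref{a3.24} is an ODE in $t$ for each fixed $(k,y)$, so I will solve it explicitly, as in the proof of Lemma \ref{l3.1}. Since the shear is exactly $y$, the solution is simpler than in that lemma. For $\theta_{a,k}$ one has
\[
\theta_{a,k}(t,y)=\Theta_{in,k}(y)\,\e^{-ikyt-\nu t-\nu k^2t^3/3},
\qquad\text{so}\qquad
\e^{ikyt}\theta_{a,k}(t,y)=\e^{-\nu t-\nu k^2t^3/3}\,\Theta_{in,k}(y).
\]
For $\omega_{a,k}$, Duhamel's formula with the integrating factor $\e^{iky s+\nu s+\nu k^2 s^3/3}$ gives
\[
\e^{ikyt}\omega_{a,k}(t,y)=\e^{-\nu t-\nu k^2t^3/3}\big(\Omega_{in,k}(y)-ikt\,\Theta_{in,k}(y)\big).
\]
The key point is that the multiplier in front no longer depends on $y$. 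Therefore $(k,\partial_y)^j$ falls only on the initial data.

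\textbf{The $L^2_y$ bounds.} From the formula for $\theta_{a,k}$,
\[
\|(k,\partial_y)^j(\e^{ikyt}\theta_{a,k})\|_{L^2_y}=\e^{-\nu t-\nu k^2t^3/3}E_{2,j}(k)\le \e^{-\nu t-\nu k^2t^3/4}E_2(k),
\]
which is \eqref{a3.25}. For $\omega_{a,k}$ we get
\[
\|(k,\partial_y)^j(\e^{ikyt}\omega_{a,k})\|_{L^2_y}\le \e^{-\nu t-\nu k^2t^3/3}\big(E_{1,j}(k)+|k|t\,E_{2,j}(k)\big).
\]
The extra factor $|k|t$ is the only delicate point, since \eqref{a3.26} uses $\e^{-\nu k^2t^3/4}$ with a $k$-independent constant. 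I will absorb it using the gap $\frac13-\frac14=\frac1{12}$ in the exponent:
\[
|k|t\,\e^{-\nu k^2t^3/12}=\nu^{-\frac13}|k|^{\frac13}\,(\nu k^2t^3)^{\frac13}\e^{-\nu k^2t^3/12}\lesssim \nu^{-\frac13}|k|^{\frac13}.
\]
For $|k|\le1$ this gives $\nu^{-1/3}$ directly. For $|k|\ge1$ the leftover $|k|^{1/3}$ is absorbed by borrowing one derivative, since $|k|E_{2,j}(k)\le E_{2,j+1}(k)$ and $E_2$ includes $j\le4$ while here $j\le3$. This yields \eqref{a3.26}. The step is the same as the estimate $tE_2\lesssim \nu^{-1/3}E_2$ in Lemma \ref{l3.1}, just with explicit $k$-bookkeeping.

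\textbf{The $L^\infty_y$ bounds.} I will use the one-dimensional Gagliardo–Nirenberg (Agmon) inequality on $I=[-1,1]$:
\[
\|f\|_{L^\infty_y}\lesssim \|f\|_{L^2_y}+\|f\|_{L^2_y}^{1/2}\|\partial_yf\|_{L^2_y}^{1/2}\lesssim \|f\|_{L^2_y}+\|\partial_y f\|_{L^2_y}.
\]
The $\|f\|_{L^2_y}$ term is needed here because $k$ may be small, so $|k|$ does not control the $L^2$ part. Applying this to $f=(k,\partial_y)^j(\e^{ikyt}\theta_{a,k})$ with $j\le2$ involves at most $j+1\le3$ derivatives, which the $L^2$ bounds above control. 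The same applies to $\omega_{a,k}$, giving \eqref{a3.27}–\eqref{a3.28}.

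I expect no real obstacle beyond tracking the $|k|t$ factor uniformly in $k\in\mathbb R$, including small $|k|$. That case is handled by the $\nu^{-1/3}$ absorption above together with the derivative borrowing for $|k|\ge1$.
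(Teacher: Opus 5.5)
Your proposal is correct and follows the paper's route: solve \eqref{a3.24} explicitly to obtain the $y$-independent factor $\e^{-\nu t-\nu k^2t^3/3}$ multiplying $\Theta_{in,k}$ and $\Omega_{in,k}-ikt\,\Theta_{in,k}$, then estimate directly as in Lemma \ref{l3.1}. Your treatment of the $|k|t$ factor via the gap $\frac13-\frac14$ (borrowing one derivative when $|k|\ge1$) and your inclusion of the $\|f\|_{L^2_y}$ term in the $L^\infty_y$ interpolation for small $|k|$ supply details that the paper leaves to ``similar to Lemma \ref{l3.1}''.
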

\begin{proof}
According to \eqref{a3.24}$_1$, we know that 
\begin{equation}\label{a3.29}
\theta_{a,k}(t,y) = \Theta_{in,k}(y) \e^{-\nu t-ikyt - \nu k^2 t^3/3}. 
\end{equation}
Then, we can deduce 
\begin{align}\label{a3.30}
\e^{ikyt} \omega_{a,k}(t,y) =& \exp\!\left( -\nu t-\frac{\nu k^2 t^3}{3} \right)\left(\Omega_{in,k}(y) - ik\int_0^t \exp\!\left(\nu s+ \frac{\nu k^2s^3}{3}\right) \e^{ikys} \theta_{a,k}(s,y)\dd s\right) \notag\\
=&\exp\!\left(-\nu t -\frac{\nu k^2 t^3}{3} \right)\left(\Omega_{in,k}(y) - ikt\Theta_{in,k}(y)\right).
\end{align}
Thus, similar to Lemma \ref{l3.1}, we can obtain \eqref{a3.25}--\eqref{a3.28}. 
 This completes the proof of this lemma.
\end{proof}

\begin{lem}\label{al3.4}
Assume that $k \in \mathbb{R} \setminus \{0\}$, $\omega \in H^3(-1,1)$, $\phi = -\Delta_k^{-1} \omega$, $\omega(\pm1)=0$. Then it holds that for $|k|\geq 1$,
\begin{align}
|k|^2\|(k, \partial_y)\phi\|_{L^2_y}\lesssim& (1+t)^{-1}\|(k, \partial_y) ({\rm e}^{ik yt} \omega)\|_{L^2_y},, \label{psiL2k>1}\\
|k|^2 \|(k, \partial_y)(\partial_y \phi+ik t \phi)\|_{L^2_y}\lesssim&(1+t)^{-1}\|(k, \partial_y)^2 ({\rm e}^{ik yt} \omega)\|_{L^2_y}, \label{paypay+kpsiL2}\\
|k|^4\|\phi\|_{L^2_y}\lesssim&(1+t)^{-2}\|(k, \partial_y)^2 ({\rm e}^{ik yt} \omega)\|_{L^2_y},\label{k>1, psiL2}\\
| k|^4\|(\partial_y+i k t)\phi\|_{L^2_y}\lesssim&(1+t)^{-2}\|(k, \partial_y)^3 ({\rm e}^{ik yt} \omega)\|_{L^2_y},\label{xi>1 xi4pay+xitpsiL2}
\end{align}
and for $|k|< 1$,
\begin{align}
\|(1, \partial_y)\phi\|_{L^2_y}\lesssim& (1+|kt|)^{-1}\|(1, \partial_y) ({\rm e}^{ik yt} \omega)\|_{L^2_y},  \label{psiL2k<1}\\
\|(1, \partial_y)(\partial_y\phi+ik t\phi)\|_{L^2_y}\lesssim&(1+|kt|)^{-1}\|(1, \partial_y)^2 ({\rm e}^{ik yt} \omega)\|_{L^2_y},\label{esti3 paypaypsi+xiytpsi}\\
\|\phi\|_{L^2_y}\lesssim&(1+|kt|)^{-2}\|(1, \partial_y)^2 ({\rm e}^{ik yt} \omega)\|_{L^2_y}, \label{esti4 psiL2}\\
\|(\partial_y+i k t)\phi\|_{L^2_y}\lesssim&(1+| k t|)^{-2}\|(1, \partial_y)^3 ({\rm e}^{ik yt} \omega)\|_{L^2_y}. \label{xi<1 pay+xitpsiL2}
\end{align}
\end{lem}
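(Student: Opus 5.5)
The plan is to work in the conjugated variables $\psi := \e^{ikyt}\phi$, $w := \e^{ikyt}\omega$. Since $\phi=-\Delta_k^{-1}\omega$ with $\phi(\pm1)=0$ (the Navier-slip condition gives $\phi|_{y=\pm1}=0$, as $U^{(2)}=-\partial_x\Phi$ vanishes there), we have $(\partial_y-ikt)^2\psi - k^2\psi = -w$, $\psi(\pm1)=0$. Also $(\partial_y+ikt)\phi = \e^{-ikyt}\partial_y\psi$, so every quantity in the lemma is a norm of $\psi$ or $\partial_y\psi$ (up to the unimodular factor). The argument follows Lemmas 3.4--3.6 of \cite{wei-zhang2026} (our Lemma \ref{l3.3}), keeping track of the dependence on $k$ so that the two regimes $|k|\ge1$ and $|k|<1$ are separated.

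\textbf{Basic energy identity.} Test $-\phi''+k^2\phi=\omega$ against $\bar\phi$ to get $\|\phi'\|^2+k^2\|\phi\|^2=\langle\omega,\phi\rangle$. Then write $\omega=\e^{-ikyt}w$ and express $\e^{-ikyt}=\frac{1}{-ikt}\partial_y(\e^{-ikyt})$. Integrating by parts in $y$ is allowed because $\phi(\pm1)=0$; moreover $w(\pm1)=0$, so no boundary terms arise when a second integration by parts is needed. This gives
\begin{align*}
\langle\omega,\phi\rangle=\frac{1}{ikt}\int \e^{-ikyt}\,\partial_y(w\bar\phi)\,\dd y,
\end{align*}
and hence $\|(k,\partial_y)\phi\|^2\lesssim |kt|^{-1}\|(\partial_y w,w)\|\,\|(1,\partial_y)\phi\|$.

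\textbf{Case $|k|\ge1$.} Here $\|(1,\partial_y)\phi\|\le\|(k,\partial_y)\phi\|$. Combining this with the trivial bound $\|(k,\partial_y)\phi\|\lesssim|k|^{-1}\|w\|$ yields \eqref{psiL2k>1}, after bookkeeping the powers of $k$ with $(k,\partial_y)$ norms. Iterating once more, i.e.\ integrating by parts twice using $w(\pm1)=0$, gives the $(1+t)^{-2}$ bound \eqref{k>1, psiL2}. For the good-derivative estimates \eqref{paypay+kpsiL2} and \eqref{xi>1 xi4pay+xitpsiL2}, I differentiate the equation: $\partial_y\psi$ satisfies the same conjugated elliptic equation with source $\partial_y w$. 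However, $\partial_y\psi$ does not vanish at $y=\pm1$, so boundary terms appear. These I plan to handle as in \cite{wei-zhang2026} by splitting $\phi=\phi_1+\phi_2$, where $\phi_1$ carries the Dirichlet data and $\phi_2$ is a homogeneous solution $c_\pm\sinh(k(1\pm y))/\sinh(2k)$. The coefficients are bounded via $\langle \omega, \sinh k(1\pm y)\rangle$, which decays like $|kt|^{-1}$ (or $|kt|^{-2}$ after a second integration by parts) through the same oscillation argument.

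\textbf{Case $|k|<1$.} The same steps apply with $|k|$-weights replaced by $1$. Poincaré's inequality on $[-1,1]$ gives $\|\phi\|\lesssim\|\phi'\|$, so $\|(1,\partial_y)\phi\|^2\lesssim\langle\omega,\phi\rangle$, and the integration by parts produces the factor $(1+|kt|)^{-1}$ (respectively $(1+|kt|)^{-2}$) instead of $(1+t)^{-1}$; for $|kt|\le1$ the bounds are trivial. The homogeneous solutions are now $\sinh(k(1\pm y))/\sinh(2k)$, which behave like $(1\pm y)/2$ uniformly as $k\to0$, so their norms are bounded.

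\textbf{Main obstacle.} I expect the hardest step to be the boundary-corrector estimates for the $\partial_y\psi$ bounds (\eqref{paypay+kpsiL2}, \eqref{xi<1 pay+xitpsiL2}). In particular, one must obtain the $|kt|^{-2}$ decay of the boundary coefficients using only $w(\pm1)=0$ and not $\partial_y w(\pm1)=0$. If the second integration by parts leaves a boundary term $\partial_y w(\pm1)/(kt)^2$, I would control it by the trace inequality $|\partial_y w(\pm1)|\lesssim\|(1,\partial_y)^2w\|$, which the right-hand sides (three derivatives of $w$) permit.
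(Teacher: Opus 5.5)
Your architecture (conjugating by $\e^{ikyt}$, one oscillatory integration by parts using $\phi(\pm1)=0$, and splitting $(\partial_y+ikt)\phi$ into a Dirichlet part with source $\e^{-ikyt}\partial_yw$ plus correctors $\sinh k(1\pm y)/\sinh 2k$) is the Wei--Zhang argument behind Lemma \ref{l3.3}. The corrector coefficients are $\partial_y\phi(\pm1)=\mp(\sinh 2k)^{-1}\int_{-1}^1\omega\,\sinh k(1\pm y)\,\dd y$, from Green's identity. The paper has no proof of its own here, only a citation of Lemma A.4 of \cite{miao2025}, so the comparison is with that standard argument.

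\textbf{The $(1+t)^{-2}$ bounds fail as described.} This concerns \eqref{k>1, psiL2} and \eqref{esti4 psiL2}. The only pairing you set up is $\langle\omega,\phi\rangle=\|(k,\partial_y)\phi\|^2$. A second integration by parts there produces $\int_{-1}^1\e^{-ikyt}w\,\partial_y^2\bar\phi\,\dd y$. Since $\partial_y^2\phi=k^2\phi-\omega$ and $\e^{-ikyt}w\,\bar\omega=|w|^2$, this contains the non-oscillatory term $-\|w\|^2$. So you only recover $\|(k,\partial_y)\phi\|\lesssim(|k|t)^{-1}\|w\|$. That bound is sharp, because $\partial_y\phi$ (i.e.\ $u^{(1)}$) really decays only like $t^{-1}$, so no argument on this energy pairing can isolate the faster decay of $\|\phi\|$.

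You need duality. For $g\in L^2_y$ let $\varphi$ solve $(-\partial_y^2+k^2)\varphi=g$, $\varphi(\pm1)=0$, so that $\langle\phi,g\rangle=\langle\omega,\varphi\rangle$. Integrate by parts twice in $\int_{-1}^1\e^{-ikyt}w\,\bar\varphi\,\dd y$; both boundary terms vanish because $w(\pm1)=\varphi(\pm1)=0$. Then use $\|(k,\partial_y)^j\varphi\|\lesssim|k|^{j-2}\|g\|$, $0\le j\le 2$, for $|k|\ge1$, resp.\ $\|\varphi\|_{H^2}\lesssim\|g\|$ for $|k|<1$. Alternatively, once the first-order bounds are proved, \eqref{k>1, psiL2} and \eqref{esti4 psiL2} follow from $ikt\phi=(\partial_y+ikt)\phi-\partial_y\phi$.

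The same duality is what gives the $t^{-2}$ decay of the Dirichlet part in \eqref{xi>1 xi4pay+xitpsiL2} and \eqref{xi<1 pay+xitpsiL2}. There the second integration by parts leaves $(kt)^{-2}\,\partial_yw(\pm1)\,\overline{\partial_y\varphi(\pm1)}$, in addition to the term $\partial_yw(\pm1)/(kt)^2$ you anticipate in the corrector coefficients.

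\textbf{For $|k|\ge1$ your powers of $k$ are too lossy in two places.}

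First, your displayed inequality together with $\|(1,\partial_y)\phi\|\le\|(k,\partial_y)\phi\|$ gives only $|k|^2\|(k,\partial_y)\phi\|\lesssim t^{-1}|k|(\|w\|+\|\partial_yw\|)$. The term $|k|\|\partial_yw\|$ is not controlled by $\|(k,\partial_y)w\|$. You must keep $\|\partial_yw\|\,\|\phi\|\le|k|^{-1}\|\partial_yw\|\,\|(k,\partial_y)\phi\|$ instead of passing to $\|(1,\partial_y)\phi\|$.

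Second, since $\|\sinh k(1\pm y)/\sinh 2k\|_{L^2_y}\approx|k|^{-1/2}$ and $|\partial_y\varphi(\pm1)|\lesssim|k|^{-1/2}\|g\|$, the boundary terms contribute $|k|^{3/2}t^{-2}|\partial_yw(\pm1)|$ to the left side of \eqref{xi>1 xi4pay+xitpsiL2}. Your trace bound $|\partial_yw(\pm1)|\lesssim\|(1,\partial_y)^2w\|$ turns this into $|k|^{3/2}t^{-2}\|\partial_y^2w\|$. That is not dominated by $\|(k,\partial_y)^3w\|$, which contains $\partial_y^2w$ only with weight $|k|$. Use instead the interpolated trace inequality $|\partial_yw(\pm1)|^2\lesssim\|\partial_yw\|\,\|\partial_y^2w\|+\|\partial_yw\|^2$. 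It gives $|k|^{3/2}|\partial_yw(\pm1)|\lesssim k^2\|\partial_yw\|+|k|\,\|\partial_y^2w\|$, which closes the estimate.

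For $|k|<1$ your trace bound is harmless. With these changes the rest of your plan goes through.
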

\begin{proof}
For details of the proof, see Lemma A.4 in  \cite{miao2025}, we omit the proof here.
\end{proof}

With the above preparations, we now proceed to prove Proposition \ref{linear}. 
\begin{proof}[\textbf{Proof of Proposition \ref{linear}}]

We only prove \eqref{u1LLinfty}--\eqref{u12L22}, and the proofs of other formulas are similar to those in \cite{miao2025}. 
 
Define $(\widetilde{\theta},\widetilde{\om})(t,x,y)=(\theta_{a},\omega_{a})(t,x+ty,y)$. We have from \eqref{a3.24} that
\begin{equation}\notag
\left\{
\begin{aligned}
&\partial_t \tilde{\theta} + \nu (1-t^2 \partial_x^2) \tilde{\theta}  = 0,\\
&\partial_t \tilde{\om} + \nu (1-t^2 \partial_x^2) \tilde{\om}  = -\partial_x\tilde{\theta},\ \  x\in \mathbb{R},\ y\in I, \\
&\omega_{a}|_{t=0} = \Omega_{in},\ \theta_{a}|_{t=0} = \Theta_{in},\\ 
&\tilde{\om}(t,x\mp t,\pm1)=\tilde{\theta}(t,x\mp t,\pm1)=0.
\end{aligned}
\right.
\end{equation}
Then we can solve the above system as 
\begin{equation}\notag
\tilde{\theta} = \e^{-\nu t} \e^{\frac{\nu t^3}{3}\partial_x^2} \Theta_{in}, \ \ \ \tilde{\omega} = \e^{-\nu t} \e^{\frac{\nu t^3}{3}\partial_x^2} \Omega_{in} - \int_0^t \e^{-\nu(t-s)} \partial_x \e^{A(t,s)\partial_x^2}  \tilde{\theta}(s)\dd s,
\end{equation}
where $A(t, s) = \nu \int_s^t \tau^2 \dd\tau = \frac{\nu}{3} (t^3 - s^3).$ Using $\mathcal{M}_1 \leq 3|k|^{\frac{2}{3}}$ and the heat kernel estimate, we obtain
\begin{align}\label{omLxy}
\| (1 + \nu^{\frac{1}{3}} t \mathcal{M}_1)^{m} \tilde{\theta} \|_{L_x^1 L_y^\infty} 
\lesssim_m \e^{-\nu t} \| \Theta_{in} \|_{L_x^1 L_y^\infty},
\end{align}
and then by $1 + \nu^{\frac{1}{3}} t \mathcal{M}_1 \leq (1 + \nu^{\frac{1}{3}} s \mathcal{M}_1)(1 + \nu^{\frac{1}{3}} (t-s) \mathcal{M}_1)$ and $\nu^{\frac{1}{3}} (t-s)\leq (\nu(t^3-s^3))^{\frac{1}{3}}$,
\begin{align}\label{omLxy1}
\| (1 + \nu^{\frac{1}{3}} t \mathcal{M}_1)^{m} \tilde{\omega} \|_{L_x^1 L_y^\infty} 
\lesssim_m& \e^{-\nu t} \| \Omega_{in} \|_{L_x^1 L_y^\infty}+ \int_0^t \e^{-\nu(t-s)} A(t,s)^{-\frac{1}{2}} \|(1 + \nu^{\frac{1}{3}} s \mathcal{M}_1)^{m}\tilde{\theta}(s)\|_{L_x^1 L_y^\infty} \dd s\notag\\
\lesssim_m&\e^{-\nu t} \| \Omega_{in} \|_{L_x^1 L_y^\infty}+ \nu^{-\frac{1}{2}}\e^{-\nu t}\| \Theta_{in} \|_{L_x^1 L_y^\infty}\int_0^t (t^3 - s^3)^{-\frac{1}{2}}\dd s\notag\\
\lesssim_m&\e^{-\nu t} \| \Omega_{in} \|_{L_x^1 L_y^\infty}+ \nu^{-\frac{1}{2}}t^{-\frac{1}{2}}\e^{-\nu t}\| \Theta_{in} \|_{L_x^1 L_y^\infty}.
\end{align}
Define $(1+ \nu^{\frac13}t\mathcal{M}_1)^{m}\phi_{a,1}$ be the solution to $\partial^2_y (1+\nu^{\frac13}t\mathcal{M}_1)^{m} \phi_{a,1}=(1+ \nu^{\frac13}t\mathcal{M}_1)^{m}\omega_{a}$ with $\phi_{a,1}(\pm1)=0.$ 
By \eqref{omLxy}--\eqref{omLxy1}, we can deduce that 
\begin{align}\label{a3.43}
&\|\partial_y(1+ \nu^{\frac13}t\mathcal{M}_1)^{m}\phi_{a,1}\|_{L^\infty_y}\leq \|(1+ \nu^{\frac13}t\mathcal{M}_1)^{m}\omega_{a}\|_{L^1_y}\notag\\ 
&= \int^{1}_{-1}|(1+ \nu^{\frac13}t\mathcal{M}_1)^{m}(\widetilde{\om}(t,x-ty,y))|{\rm d}y\lesssim_mt^{-1}\int_{\R}|(1+\nu^{\frac13}t\mathcal{M}_1)^{m}\widetilde{\om}(t,s,y)|{\rm d}s\notag\\
 &\lesssim_m t^{-1}\e^{-\nu t}(\| \Omega_{in} \|_{L_x^1 L_y^\infty}+ \nu^{-\frac{1}{2}}t^{-\frac{1}{2}}\e^{-\nu t}\| \Theta_{in} \|_{L_x^1 L_y^\infty}).
\end{align}
Moreover, it follows from $\Delta \phi_{a}=\omega_{a}$ that
\begin{align*}
\partial^2_y (1+ \nu^{\frac13}t\mathcal{M}_1)^{m}(\phi_{a,1}-\phi_{a})=(1+ \nu^{\frac13}t\mathcal{M}_1)^{m}\partial^2_x\phi_{a}.
\end{align*}
We then apply \eqref{k>1, psiL2}, \eqref{esti4 psiL2}, \eqref{a3.26} and $\mathcal{M}_1(k)\leq 3|k|^{\frac23}$ to obtain
\begin{equation*}
\begin{aligned}
&\|\partial_y(1+ \nu^{\frac13}t\mathcal{M}_1)^{m}(\phi_{a,1}-\phi_{a})\|_{L^\infty_y}
\leq  \|(1+ \nu^{\frac13}t\mathcal{M}_1)^{m}\partial^2_x\phi_{a}\|_{L^1_y}\\
\lesssim &\int^1_{-1}\Big|\int_{\R} (1+ \nu^{\frac13}|k|^{\frac23} t)^{m}{\rm e}^{ikx} k^2\phi_{a,k} {\rm d}k\Big|{\rm d}y\lesssim\int_{\R} (1+ \nu^{\frac13}|k|^{\frac23} t)^{m}|k|^2\|\phi_{a,k}\|_{L^2_y}{\rm d}k\\
\lesssim_m&\int_{|k|\leq 1} \frac{|k|^2{\rm e}^{-\nu t}}{(1+kt)^2}(E_1+\nu^{-\frac13}E_2){\rm d}k+\int_{|k|\geq 1}(1+t)^{-2}{\rm e}^{-\nu t}|k|^{-2}(E_1+\nu^{-\frac13}E_2){\rm d}k\\
\lesssim_m&(1+t)^{-2}{\rm e}^{-\nu t}(\|E_1(k)\|_{L^2_k}+\nu^{-\frac13}\|E_2(k)\|_{L^2_k})\lesssim_m(1+t)^{-2}{\rm e}^{-\nu t}\mathcal{E}_{in},
\end{aligned}
\end{equation*}
which together with \eqref{a3.43} implies
\begin{align}\label{payphiLinfty}
\|(1+ \nu^{\frac13}t\mathcal{M}_1)^{m}\partial_y \phi_{a}\|_{L^\infty_{x,y}}\lesssim_m&t^{-1}\e^{-\nu t}(\mathcal{E}_{in}+\| \Omega_{in} \|_{L_x^1 L_y^\infty}+ \nu^{-\frac{1}{2}}t^{-\frac{1}{2}}\| \Theta_{in} \|_{L_x^1 L_y^\infty})\notag\\ 
\lesssim_m&t^{-1}\e^{-\nu t}(\mathcal{E}_{in}+t^{-\frac{1}{2}}\mathcal{E}_{in}).
\end{align}
In addition, using \eqref{psiL2k>1}, \eqref{paypay+kpsiL2}, \eqref{psiL2k<1}, \eqref{esti3 paypaypsi+xiytpsi} and \eqref{a3.26}, we deduce
\begin{align*}
\|(1+ \nu^{\frac13}t\mathcal{M}_1)^{m}\partial_y\phi_{a}\|_{L^\infty_{x,y}}\lesssim & \int_{\R} (1+ \nu^{\frac13}|k|^{\frac23} t)^{m}\|{\rm e}^{ikty}\partial_y\phi_{a,k}\|_{L^\infty_y}{\rm d}k\\
\lesssim &  \int_{\R} (1+ \nu^{\frac13}|k|^{\frac23} t)^{m}\|\partial_y(\partial_y+ikt)\phi_{a,k}\|^{\frac12}_{L^2_y}\|\partial_y\phi_{a,k}\|^{\frac12}_{L^2_y}{\rm d}k\\
\lesssim_m&\int_{|k|\leq 1}{\rm e}^{-\nu t} (E_1+\nu^{-\frac13}E_2) {\rm d}k+\int_{|k|\geq 1} {\rm e}^{-\nu t}|k|^{-2}(E_1+\nu^{-\frac13}E_2) {\rm d}k\\ 
\lesssim_m&{\rm e}^{-\nu t}(\|E_1(k)\|_{L^2_k}+\nu^{-\frac13}\|E_2(k)\|_{L^2_k})\lesssim_m {\rm e}^{-\nu t}\mathcal{E}_{in},
\end{align*}
which together with \eqref{payphiLinfty} leads to
\begin{align}\label{payphi}
\|(1+ \nu^{\frac13}t\mathcal{M}_1)^{m}\partial_y \phi_{a}\|_{L^\infty}\lesssim_m(1+t)^{-1}{\rm e}^{-\nu t}\mathcal{E}_{in}.
\end{align}
Similarly, by taking $\partial^2_y (1+ \nu^{\frac13}t\mathcal{M}_1)^{m} (\partial_y+t\partial_x)\phi_{a,1}=(1+ \nu^{\frac13}t\mathcal{M}_1)^{m}(\partial_y+t\partial_x)\omega_{a}$, we can obtain the estimate of $\|(1+ \nu^{\frac13}t\mathcal{M}_1)^{m}(\partial_y+t\partial_x)\phi_{a}\|_{L^\infty}$.
Next, 
\begin{align*}
\|(1+ \nu^{\frac13}t\mathcal{M}_1)^{m}\partial_x\phi_{a}\|_{L^\infty_y}
\lesssim& \int_{\R} (1+ \nu^{\frac13}|k|^{\frac23} t)^{m}|k|\|{\rm e}^{ikty}\phi_{a,k}\|_{L^\infty_y}{\rm d}k\\
\lesssim& \int_{\R} (1+ \nu^{\frac13}|k|^{\frac23} t)^{m}|k|\|(1,\partial_y)({\rm e}^{ikty}\phi_{a,k})\|_{L^2_y}{\rm d}k,
\end{align*}
then applying \eqref{xi>1 xi4pay+xitpsiL2}, \eqref{xi<1 pay+xitpsiL2} and \eqref{a3.26} gives
\begin{align*}
 &\int_{\R} (1+ \nu^{\frac13}|k|^{\frac23} t)^{m}|k|\|(1,\partial_y)({\rm e}^{ikty}\phi_{a,k})\|_{L^2_y}{\rm d}k\\
\lesssim_m&{\rm e}^{-\nu t}\Big(\int_{|k|\leq 1} \frac{|k|}{(1+|kt|)^2}(E_1+\nu^{-\frac13}E_2){\rm d}k+\int_{|k|\geq 1}(1+t)^{-2}|k|^{-3}(E_1+\nu^{-\frac13}E_2){\rm d}k\Big)\\
\lesssim_m&(1+t)^{-2}\ln(1+t){\rm e}^{-\nu t}\mathcal{E}_{in},
\end{align*}
which implies
\begin{align}\notag
\|(1+ \nu^{\frac13}t\mathcal{M}_1)^{m}\partial_x\phi_{a}\|_{L^\infty}\lesssim_m(1+t)^{-2}\ln(1+t){\rm e}^{-\nu t}\mathcal{E}_{in}.
\end{align}
Thus, one gets \eqref{u1LLinfty}--\eqref{u12L22}.

\end{proof}

\section{Proof of Proposition \ref{p2.2} }\label{s4}
In this section, we focus on the proof of Proposition \ref{p2.2}. According to \eqref{2.2} and \eqref{1.4}, we deduce that for $t \geq 0$, 
\begin{equation}\label{eq:4.1}
    \|V_1(t,y) - y\|_{C^3} \leq C\varepsilon_0\ll1.
\end{equation}
Then, inspired by \cite{Villani,wei-zhang2026}, we establish corresponding estimates on the short time scale $0\leq t\leq \nu^{-\frac{1}{6}}$ and the long time scale $t\geq \nu^{-\frac{1}{6}}$, respectively, thereby proving Proposition \ref{p2.2}. 
For this purpose, we first provide some estimates, which will be used later.

\subsection{Space-time estimates for linearized equation}

%\subsubsection{Linear enhanced dissipation.}

\begin{prop}\label{p4.1}
Suppose that $V_1(t,y)$ satisfies \eqref{eq:4.1}, $u_{a}$ is defined as in \eqref{2.7}--\eqref{2.8} and $f$ solves
\begin{equation}\label{4.2}
\left\{
\begin{aligned}
&\partial_t f - \nu \Delta f + V_1 \partial_x f+u_{a}\cdot\nabla f = 0, \ \ \ \  t>0,\\
&f|_{t=0}=f_{in}(x,y),\quad f(t,x,\pm 1) = 0, \quad \nabla\cdot u_{a}=0,
\end{aligned}
\right.
\end{equation}
then for $0<t\leq \nu^{-\frac{1}{6}}$, we have
\begin{align}
\|\partial_x f(t)\| &\les (\nu t)^{-\frac{1}{2}} (1+t)^{-1} \|f_{in}\|, \label{aa1}\\
\|f_{\ne}(t)\| &\les (1 + \nu t^3)^{-\frac{1}{2}} \|f_{in}\|.\label{aa2}
\end{align}
\end{prop}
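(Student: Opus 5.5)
We plan to prove both bounds with a time-weighted hypocoercive energy functional of Villani type, adapted to the boundary (cf.\ \cite{Villani,wei-zhang2026}). The zero-order identity is straightforward. Since $f|_{y=\pm1}=0$, $\nabla\cdot u_a=0$ and $u_a^{(2)}=-\partial_x\phi_a=0$ at $y=\pm1$, the transport terms drop out and
\[
\frac{d}{dt}\|f\|^2+2\nu\|\nabla f\|^2=0 .
\]
On top of this we will build
\[
E(t)=\|f\|^2+a\,\nu t\|\nabla f\|^2+b\,\nu t^2\,{\rm Re}\langle \partial_x f,\partial_y f\rangle+c\,\nu t^3\|\partial_x f\|^2 ,
\]
with $1\gg a\gg b\gg c$ and $b^2\ll ac$, so that $E$ is positive definite. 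The goal is
\[
\|f\|^2+\nu t\|\nabla f\|^2+\nu t^3\|\partial_x f\|^2\lesssim E(t)\lesssim E(0)=\|f_{in}\|^2
\quad\text{for } 0<t\le \nu^{-\frac16}.
\]
Once this holds, \eqref{aa1} follows: for $t\le1$ use $\nu t\|\partial_x f\|^2\lesssim\|f_{in}\|^2$, and for $t\ge1$ use $\nu t^3\|\partial_x f\|^2\lesssim\|f_{in}\|^2$. For \eqref{aa2}, note $\|f_{\ne}\|\le\|\partial_x f\|$ because $|k|\ge1$, and combine this with $\|f\|\le\|f_{in}\|$.

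Step 1 (boundary bookkeeping). We must check that every integration by parts in $\frac{d}{dt}E$ produces no boundary terms. Since $f=0$ on $y=\pm1$, we have $\partial_x f=0$ there. Evaluating the equation at the boundary gives $\partial_t f=V_1\partial_x f=0$ and $u_a\cdot\nabla f=u_a^{(2)}\partial_y f=0$, hence $\Delta f=0$ on $y=\pm1$. These two facts kill all boundary contributions of the form $\partial_x f\,\overline{\partial_y f}$ and $\partial_y f\,\overline{\Delta f}$ in the $\|\nabla f\|^2$ and cross-term computations.

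Step 2 (main terms). Differentiating the weighted terms gives the following contributions:
\begin{itemize}
\item dissipative terms $-a\nu^2t\|\Delta f\|^2$, $-c\nu^2t^3\|\nabla\partial_x f\|^2$ and the corresponding mixed-term dissipation;
\item from the cross term, the crucial good term $-b\,\nu t^2\,{\rm Re}\langle \partial_x f,V_1'\partial_x f\rangle\le -\tfrac b2\nu t^2\|\partial_x f\|^2$, coming from $[\partial_y,V_1\partial_x]=V_1'\partial_x$ and $V_1'\approx1$ by \eqref{eq:4.1};
\item from $\|\nabla f\|^2$, the commutator term $-a\nu t\,{\rm Re}\langle V_1'\partial_x f,\partial_y f\rangle$;
\item the time-derivative terms $a\nu\|\nabla f\|^2$, $2b\nu t\langle\partial_xf,\partial_yf\rangle$ and $3c\nu t^2\|\partial_x f\|^2$.
\end{itemize}
The hierarchy $a\gg b\gg c$ absorbs all of these into $2\nu\|\nabla f\|^2$ and $\tfrac b2\nu t^2\|\partial_xf\|^2$, exactly as in the Couette case. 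The only additional terms from $V_1\neq y$ carry $V_1''$ or $V_1'-1$, which are $O(\varepsilon_0)$, and are absorbed the same way.

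Step 3 (drift terms; main obstacle). The hardest part is controlling the contributions of $u_a\cdot\nabla f$ in the higher-order pieces, for example
\[
a\nu t\,\langle \nabla u_a\cdot\nabla f,\nabla f\rangle
\quad\text{and}\quad
c\nu t^3\langle \partial_x u_a\cdot\nabla f,\partial_x f\rangle .
\]
The first is bounded by $\nu t\|\nabla u_a\|_{L^\infty}\|\nabla f\|^2$. The second involves $\partial_x u_a^{(2)}\partial_y f$, and here we use the extra decay of $u_a^{(2)}$ from \eqref{2.13}, namely $\|\partial_x^j u_a^{(2)}\|_{L^\infty}\lesssim (1+t)^{-2}\varepsilon_0\nu^{1/3}$. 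The $\partial_y u_a$ pieces we will handle by writing $\partial_y=(\partial_y+t\partial_x)-t\partial_x$ and using the structure in \eqref{2.12}--\eqref{2.13}.

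After Cauchy--Schwarz, these errors take the form $\varepsilon_0\,g(t)\,E(t)$ plus pieces absorbed into the dissipation. The restriction $t\le\nu^{-\frac16}$ is exactly what keeps the powers of $t$ bounded: for instance $\nu t^3\le\nu^{1/2}$ and $t\,\nu^{1/3}\le\nu^{1/6}$. This should make $\int_0^{\nu^{-1/6}}g\,dt\lesssim1$, and Gronwall then gives $E(t)\lesssim E(0)$. If a $\partial_y u_a$ term refuses to close, our fallback is to replace $\partial_y$ by the good derivative $\partial_y+t\partial_x$ in $E$. This is harmless in the bulk, and the boundary terms still vanish by Step 1.
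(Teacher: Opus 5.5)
Your proposal is essentially the paper's proof: your $E(t)$ is exactly $\sum_k\Phi_k(t)$ from Lemma~\ref{l4.4}, where the paper takes $c=a_1=\frac{1}{900}$, $a=a_2=16a_1$, $b=a_3=6a_1$. The closing argument uses the same ingredients: the good term $-b\nu t^2\int (V_1'+\partial_y u_a^{(1)})|\partial_x f|^2$, the bound $\|\nabla u_a\|_{L^\infty}\lesssim\varepsilon_0\nu^{\frac13}$, the $(1+t)^{-2}$ decay of $\partial_x u_a^{(2)}$, and $\nu^{\frac13}t\le 1$ on $t\le\nu^{-\frac16}$.

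Two small remarks. First, absorbing the commutator term $a\nu t\,{\rm Re}\langle V_1'\partial_x f,\partial_y f\rangle$ also requires $a^2\ll b$; your stated hierarchy does not imply this, though it is compatible with it. Second, working with the full $f$, as you do, is cleaner than the paper's mode-by-mode computation. Since $u_a$ depends on $x$, the term $u_a\cdot\nabla f$ couples Fourier modes, and your extra term $c\nu t^3\langle\partial_x u_a\cdot\nabla f,\partial_x f\rangle$ is exactly what the per-mode version omits.
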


For clarity, we divide the proof of Proposition \ref{p4.1} into several steps. The first step is about some basic energy estimates of \eqref{4.2}.
\begin{lem}\label{lem:4.1}
Under the hypothesis of the Proposition \ref{p4.1}, we have
\begin{align}
&\frac{\rm d}{{\rm d}t} \|f\|^2+2\nu \|\nabla f\|^2=0, \label{4.4}\\
&\frac{\rm d}{{\rm d}t} \|\nabla f\|^2 + 2\nu \|\Delta f\|^2 \leq 3 \|\partial_xf\| \|\partial_y f\|+C\varepsilon_0\nu^{\frac{1}{3}}\|\nabla f\|^2, \label{4.5}\\
&\frac{\rm d}{{\rm d}t} \langle \partial_xf, \partial_y f \rangle + \|\sqrt{V_1'+\partial_yu_{a}^1} \partial_xf\|^2 \leq 2\nu \|\partial_{xy} f\| \|\Delta f\|+C\varepsilon_0\nu^{\frac{1}{3}}(1+t)^{-2}\|\partial_yf\|^2.\label{4.6}
\end{align}
\end{lem}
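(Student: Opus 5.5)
We derive all three identities by testing \eqref{4.2} against suitable multipliers and integrating by parts on $D=\mathbb{T}\times[-1,1]$, using $f(\pm1)=0$, $\nabla\cdot u_a=0$, $u_a^{(2)}(\pm1)=0$ (since $\phi_a(\pm1)=0$), and the bounds \eqref{eq:4.1}, \eqref{2.12}--\eqref{2.13} together with \eqref{1.4}. From the latter we get $\|u_a\|_{W^{1,\infty}}\lesssim \varepsilon_0\nu^{\frac13}(1+t)^{-1}$, which is where the $C\varepsilon_0\nu^{\frac13}$ factors will come from.

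\textbf{Estimate \eqref{4.4}.} Multiply by $f$ and integrate. The transport terms vanish, because $V_1\partial_x$ is skew and $u_a$ is divergence free and tangent at $y=\pm1$. The diffusion term gives $2\nu\|\nabla f\|^2$ since $f$ vanishes on the boundary. This yields the identity.

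\textbf{Estimate \eqref{4.5}.} Test with $-\Delta f$. The time and diffusion terms produce $\frac12\frac{d}{dt}\|\nabla f\|^2+\nu\|\Delta f\|^2$; here we use that $\partial_t f=0$ on the boundary, so $-\langle\partial_t f,\Delta f\rangle=\frac12\frac{d}{dt}\|\nabla f\|^2$. For the shear term, write $-\langle V_1\partial_xf,\Delta f\rangle=\langle\nabla(V_1\partial_x f),\nabla f\rangle$; the boundary term vanishes because $\partial_x f=0$ at $y=\pm1$. Since $\langle V_1\partial_x\nabla f,\nabla f\rangle=0$ by skewness, what remains is $\langle V_1'\partial_xf,\partial_yf\rangle$. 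Using $|V_1'|\le 1+C\varepsilon_0\le\frac32$, this is bounded by $\frac32\|\partial_xf\|\|\partial_yf\|$, and after multiplying by $2$ we obtain the term $3\|\partial_xf\|\|\partial_yf\|$. The $u_a$ term becomes $\langle \nabla u_a\cdot\nabla f,\nabla f\rangle$, again using $\langle u_a\cdot\nabla\nabla f,\nabla f\rangle=0$ and the vanishing of the boundary contribution, since $u_a\cdot\nabla f=u_a^{(1)}\partial_x f=0$ at $y=\pm1$. This term is bounded by $\|\nabla u_a\|_{L^\infty}\|\nabla f\|^2\le C\varepsilon_0\nu^{\frac13}\|\nabla f\|^2$.

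\textbf{Estimate \eqref{4.6}, the main obstacle.} Compute $\frac{d}{dt}\langle\partial_xf,\partial_yf\rangle=\langle\partial_x\partial_tf,\partial_yf\rangle+\langle\partial_xf,\partial_y\partial_tf\rangle$. In the second term integrate by parts in $y$; the boundary term involves $\partial_xf(\pm1)=0$ and therefore vanishes. Then integrate by parts in $x$, which gives $-2\langle\partial_tf,\partial_{xy}f\rangle$. Substituting the equation:

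The diffusion part is $2\nu\langle\Delta f,\partial_{xy}f\rangle\le2\nu\|\partial_{xy}f\|\|\Delta f\|$.

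The shear part is $-2\langle V_1\partial_xf,\partial_{xy}f\rangle$. Integrating by parts in $y$ (the boundary term vanishes) gives $\langle V_1'\partial_xf,\partial_xf\rangle+\langle V_1\partial_y\partial_x f,\partial_xf\rangle\cdot 2$. The last term cancels: the $x$-integral of $\partial_x f\,\partial_{xy}f$ with the weight $V_1(y)$ is $\frac12\partial_y$ of $|\partial_xf|^2$, and combining the two contributions leaves $-\|\sqrt{V_1'}\partial_xf\|^2$ on the right-hand side.

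The $u_a$ part is $-2\langle u_a\cdot\nabla f,\partial_{xy}f\rangle$. Split $u_a\cdot\nabla f=u_a^{(1)}\partial_xf+u_a^{(2)}\partial_yf$. The first piece, treated like the shear term, produces $-\|\sqrt{\partial_yu_a^{(1)}}\partial_xf\|^2$, modulo terms containing $\partial_xu_a^{(1)}$ that the divergence-free condition converts. The second piece, after integrating by parts in $x$, gives $2\langle\partial_xu_a^{(2)}\partial_yf,\partial_yf\rangle+2\langle u_a^{(2)}\partial_{xy}f,\partial_yf\rangle$; the latter is handled via $\partial_y$ and $\partial_yu_a^{(2)}=-\partial_xu_a^{(1)}$, which recombines with the remainder of the first piece. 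Whatever is left is bounded by $\big(\|\partial_xu_a\|_{L^\infty}+\|\partial_yu_a^{(2)}\|_{L^\infty}\big)\|\partial_yf\|^2$, up to terms absorbed into $\sqrt{\cdot}$. By \eqref{2.13}, $\|\partial_x u_a^{(2)}\|_{L^\infty}\lesssim(1+t)^{-2}\varepsilon_0\nu^{\frac13}$.

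The delicate point is to organize this $u_a$ bookkeeping so that only the decaying $(1+t)^{-2}$ components of $u_a$ (those with $u^{(2)}$ or $\partial_x$ structure) multiply $\|\partial_yf\|^2$. In particular, $u_a^{(1)}$, which decays only like $(1+t)^{-1}$, must enter solely through the positive weight $V_1'+\partial_yu_a^{(1)}$. Positivity of that weight follows from $\|\partial_yu_a^{(1)}\|_{L^\infty}\lesssim\varepsilon_0$, which we obtain via the $j=0,1$ bounds and Lemma \ref{l3.3}.
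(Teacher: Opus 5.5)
Your approach matches the paper's: test with $f$ and with $-\Delta f$, then differentiate $\langle\partial_xf,\partial_yf\rangle$ and substitute the equation. Your arguments for \eqref{4.4}--\eqref{4.5} are fine.

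\textbf{The gap.} It is in the $u_a$ part of \eqref{4.6}. You call this the delicate point but never carry it out, and the route you sketch does not give the stated bound. Treating $\langle u_a^{(2)}\partial_{xy}f,\partial_yf\rangle$ through a $y$-integration brings in $\partial_yu_a^{(2)}=-\partial_xu_a^{(1)}$. The remainder you end with, $(\|\partial_xu_a\|_{L^\infty}+\|\partial_yu_a^{(2)}\|_{L^\infty})\|\partial_yf\|^2$, therefore contains $\|\partial_xu_a^{(1)}\|_{L^\infty}$. By \eqref{2.13} this decays only like $(1+t)^{-1}$. A $\partial_x$ does not improve the decay of $u^{(1)}$; only the $u^{(2)}$ component decays like $(1+t)^{-2}$. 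So your argument does not reach $C\varepsilon_0\nu^{\frac13}(1+t)^{-2}\|\partial_yf\|^2$.

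The missing observation is that no bookkeeping is needed, because each piece is an exact derivative:
\begin{align*}
2\langle u_a^{(1)}\partial_xf,\partial_{xy}f\rangle&=\int u_a^{(1)}\partial_y|\partial_xf|^2=-\int\partial_yu_a^{(1)}|\partial_xf|^2,\\
2\langle u_a^{(2)}\partial_yf,\partial_{xy}f\rangle&=\int u_a^{(2)}\partial_x|\partial_yf|^2=-\int\partial_xu_a^{(2)}|\partial_yf|^2.
\end{align*}
The $y$-integration in the first line has no boundary term since $\partial_xf=0$ at $y=\pm1$, and the second line uses only periodicity in $x$. Hence, exactly,
\begin{equation*}
\frac{d}{dt}\langle\partial_xf,\partial_yf\rangle+\|\sqrt{V_1'+\partial_yu_a^{(1)}}\,\partial_xf\|^2=-2\nu\langle\Delta f,\partial_{xy}f\rangle-\int\partial_xu_a^{(2)}|\partial_yf|^2,
\end{equation*}
with no $\partial_xu_a^{(1)}$ term at all. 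Then \eqref{2.13} with $j=1$ and \eqref{1.4} give $\|\partial_xu_a^{(2)}\|_{L^\infty}\lesssim\varepsilon_0\nu^{\frac13}(1+t)^{-2}$. This is in substance the paper's computation: it groups $V_1+u_a^{(1)}$ together and bounds the two $u_a^{(2)}$ terms by $2\|\partial_xu_a^{(2)}\|_{L^\infty}\|\partial_yf\|^2$.

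\textbf{Two smaller corrections.}
\begin{enumerate}
\item The shear contribution is $-2\langle -V_1\partial_xf,\partial_{xy}f\rangle=2\langle V_1\partial_xf,\partial_{xy}f\rangle=-\|\sqrt{V_1'}\,\partial_xf\|^2$. With the sign you wrote, your own integration by parts would produce $+\|\sqrt{V_1'}\,\partial_xf\|^2$.
\item The claim $\|u_a\|_{W^{1,\infty}}\lesssim\varepsilon_0\nu^{\frac13}(1+t)^{-1}$ is false. Indeed $\partial_yu_a^{(1)}=\omega_a+\partial_xu_a^{(2)}$, and $\|\omega_a\|_{L^\infty}$ has no algebraic decay. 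Only the uniform bound $\|\nabla u_a\|_{L^\infty}\lesssim\varepsilon_0\nu^{\frac13}$ holds, via \eqref{2.11}, \eqref{2.13} and \eqref{1.4}. That bound is all \eqref{4.5} and the positivity of $V_1'+\partial_yu_a^{(1)}$ need, so those parts of your argument are unaffected.
\end{enumerate}
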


\begin{proof}
Firstly, by taking the inner product of both sides of \eqref{4.2}$_1$ with $f$ and using $\nabla \cdot u_{a}=0$, we obtain \eqref{4.4}.

Next, taking the inner product of \eqref{4.2}$_1$ with $\Delta f$ yields
\begin{align}
\frac{\rm d}{{\rm d}t} \|\nabla f\|^2 &= -2 \langle \Delta f, \partial_t f \rangle = -2\nu \|\Delta f\|^2 + 2  \langle \Delta f, (V_1+u_{a}^1) \partial_xf+u_{a}^2\partial_yf \rangle.\notag
\end{align}
It can be directly calculated that
\begin{align}
\langle \Delta f,  V_1 \partial_xf \rangle =&  \langle \partial_{xx} f,  V_1\partial_xf \rangle -  \langle \partial_y f, \partial_y (V_1 \partial_xf) \rangle = - \langle \partial_y f,  V_1'\partial_xf  \rangle\notag\\ 
\leq&\|V_1'\|_{L^\infty}\|\partial_xf\| \|\partial_y f\|\leq \frac{5}{4}\|\partial_xf\| \|\partial_y f\|,\notag
\end{align}
\begin{align}
&\langle \Delta f,  u_{a}^1 \partial_xf+u_{a}^2\partial_yf \rangle= \langle \partial_{xx} f+\partial_{yy} f, u_{a}^1 \partial_xf+u_{a}^2\partial_yf \rangle \notag\\ 
=&-\frac{1}{2}\langle |\partial_{x} f|^2, \partial_{x}u_{a}^1\rangle-\frac{1}{2}\langle |\partial_{y} f|^2,  \partial_{y}u_{a}^2\rangle-\langle \partial_{x} f,\partial_{x}u_{a}^2\partial_yf \rangle-\langle \partial_{x} f,u_{a}^2\partial_{xy}f \rangle\notag\\ 
&-\langle \partial_{y} f,\partial_{y}u_{a}^1\partial_xf \rangle- \langle \partial_{y} f,u_{a}^1\partial_{xy}f \rangle  \notag\\
=&\frac{1}{2}\langle |\partial_{y} f|^2-|\partial_{x} f|^2, \partial_{x}u_{a}^1-\partial_{y}u_{a}^2\rangle-\langle \partial_{x} f,\partial_{x}u_{a}^2\partial_yf \rangle-\langle \partial_{y} f,\partial_{y}u_{a}^1\partial_xf \rangle\notag\\
\leq& \|\nabla u_{a}\|_{L^\infty}(\|\partial_xf\|^2+\|\partial_yf\|^2)+2\|\nabla u_{a}\|_{L^\infty}\|\partial_xf\| \|\partial_y f\|\notag\\ 
\leq& \frac{1}{4}\|\partial_xf\| \|\partial_y f\|+C\varepsilon_0\nu^{\frac{1}{3}}\|\nabla f\|^2.\notag
\end{align}
Here we used
\[
\|V_1'\|_{L^\infty} \leq 1 + \|V_1' - 1\|_{L^\infty} \leq 1 + C\varepsilon_0 < \frac{5}{4},\ \ \|\nabla u_{a}\|_{L^\infty}\leq  C\varepsilon_0\nu^{\frac{1}{3}}< \frac{1}{4}.
\]
Combining the above four formulas, we can get \eqref{4.5}.

At last, by direct calculation, we have
\begin{align}
\frac{\rm d}{{\rm d}t} \langle \partial_xf, \partial_y f \rangle =& \langle \partial_x (\nu \Delta f -  (V_1+u_{a}^1) \partial_xf-u_{a}^2\partial_y f), \partial_y f \rangle  \notag\\
&+ \langle \partial_xf, \partial_y (\nu \Delta f -(V_1+u_{a}^1) \partial_xf-u_{a}^2\partial_y f ) \rangle \notag\\
=&-\|\sqrt{V_1'+\partial_yu_{a}^1} \partial_xf\|^2 +\nu \langle   \partial_x\Delta f, \partial_y f \rangle +\nu \langle \partial_xf,  \partial_y \Delta f \rangle\notag\\ 
&- \langle \partial_x (u_{a}^2\partial_y f), \partial_y f \rangle -  \langle \partial_xf, \partial_y (u_{a}^2\partial_y f) \rangle. \notag
\end{align}
By using \eqref{2.12}, we have
\begin{align}
|\nu\langle   \partial_x\Delta f, \partial_y f \rangle +\nu \langle \partial_xf,  \partial_y \Delta f \rangle| \leq 2\nu \|\partial_{xy} f\| \|\Delta f\|,\notag
\end{align}
\begin{align}
| \langle \partial_x (u_{a}^2\partial_y f), \partial_y f \rangle +  \langle \partial_xf, \partial_y (u_{a}^2\partial_y f) \rangle|\leq& 2\|\partial_xu_{a}^2\|_{L^\infty} \|\partial_{y} f\|^2\notag\\\leq &C\varepsilon_0\nu^{\frac{1}{3}}(1+t)^{-2}\|\partial_yf\|^2.\notag
\end{align}
Combining the above three formulas implies \eqref{4.6}. Up to now, the proof of this lemma is completed.
\end{proof}

\begin{lem}\label{l4.4}
Under the hypothesis of Proposition \ref{p4.1}, if we take $a_1 = \frac{1}{900}$, $a_2 = 16a_1$, $a_3 = 6a_1$ and
\[
\Phi_k(t) = (1 + a_1 \nu |k|^2 t^3) \|f_k\|^2 + a_2 \nu t \|(k, \partial_y) f_k\|^2 + a_3 \nu t^2 \mathrm{Re} \langle ik f_k, \partial_y f_k \rangle,
\]
where $f$ satisfies \eqref{4.2} and $f_k$ denotes the Fourier transform of $f$ with respect to $x$, then for $0\leq t\leq \nu^{-\frac{1}{6}}$, we have
\begin{equation}\label{4.7}
  \Phi_k(t) \geq \left(1 + \frac{a_1 \nu |k|^2 t^3}{4} \right) \|f_k\|^2 + \frac{a_2}{4} \nu t \|(k, \partial_y) f_k\|^2,  
\end{equation}
\begin{equation}\label{4.8}
  \Phi_k'(t)+\frac{a_1}{2}\left[\nu |k|^2 t^2 \|f_k\|^2+\nu \|(k, \partial_y) f_k\|^2+\nu^2 t \|\Delta_k f_k\|^2+\nu^2 |k|^4 t^3 \|f_k\|^2\right]\leq 0.  
\end{equation}
\end{lem}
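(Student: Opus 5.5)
The plan is a hypocoercivity argument carried out mode by mode. I will first establish \eqref{4.7} by completing the square, and then prove \eqref{4.8} by differentiating $\Phi_k$ with Fourier-localized versions of \eqref{4.4}--\eqref{4.6} and absorbing every cross term. For each $k$ I would record the analogues of Lemma \ref{lem:4.1} for $f_k$:
\begin{align*}
&\tfrac{\rm d}{{\rm d}t}\|f_k\|^2+2\nu\|(k,\partial_y)f_k\|^2=0,\\
&\tfrac{\rm d}{{\rm d}t}\|(k,\partial_y)f_k\|^2+2\nu\|\Delta_kf_k\|^2\le 3|k|\|f_k\|\|\partial_yf_k\|+C\varepsilon_0\nu^{\frac13}\|(k,\partial_y)f_k\|^2,\\
&\tfrac{\rm d}{{\rm d}t}\mathrm{Re}\langle ikf_k,\partial_yf_k\rangle+\tfrac34|k|^2\|f_k\|^2\le 2\nu|k|\|\partial_yf_k\|\|\Delta_kf_k\|+C\varepsilon_0\nu^{\frac13}(1+t)^{-2}\|\partial_yf_k\|^2.
\end{align*}
These follow exactly as in the proof of Lemma \ref{lem:4.1}. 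The weight in the last line uses $V_1'+\partial_yu_a^1\ge 1-C\varepsilon_0\ge\frac34$, by \eqref{eq:4.1} and $\|\nabla u_a\|_{L^\infty}\lesssim\varepsilon_0\nu^{1/3}$. The $u_a$ contributions are treated purely as perturbative errors, with the same bounds as in Lemma \ref{lem:4.1}.

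For the lower bound \eqref{4.7}, the only indefinite term in $\Phi_k$ is the cross term. By Young's inequality,
\[
a_3\nu t^2|k|\|f_k\|\|\partial_yf_k\|\le \tfrac34a_1\nu|k|^2t^3\|f_k\|^2+\tfrac{a_3^2}{3a_1}\nu t\|\partial_yf_k\|^2 .
\]
With $a_3=6a_1$ and $a_2=16a_1$ we get $\frac{a_3^2}{3a_1}=12a_1=\frac34a_2$. Subtracting this from $\Phi_k$ leaves exactly $(1+\frac{a_1}{4}\nu|k|^2t^3)\|f_k\|^2+\frac{a_2}{4}\nu t\|(k,\partial_y)f_k\|^2$, which is \eqref{4.7}.

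For \eqref{4.8}, I expand $\Phi_k'$ using the three inequalities above, and the dissipative terms come out as follows:
\begin{itemize}
\item the enhanced-dissipation term $-\frac34a_3\nu t^2|k|^2\|f_k\|^2=-\frac92a_1\nu|k|^2t^2\|f_k\|^2$, which beats the production $3a_1\nu|k|^2t^2\|f_k\|^2$ from differentiating the weight and leaves $-\frac32a_1\nu|k|^2t^2\|f_k\|^2$;
\item $-2\nu\|(k,\partial_y)f_k\|^2$ from the constant $1$;
\item $-2a_1\nu^2|k|^2t^3\|(k,\partial_y)f_k\|^2$, which contains $-2a_1\nu^2|k|^4t^3\|f_k\|^2$;
\item $-2a_2\nu^2t\|\Delta_kf_k\|^2$.
\end{itemize}
The positive contribution $+a_2\nu\|(k,\partial_y)f_k\|^2$ from differentiating $t$ is absorbed by $-2\nu\|(k,\partial_y)f_k\|^2$, since $a_2\ll1$.

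The cross terms are absorbed as follows:
\begin{itemize}
\item $3a_2\nu t|k|\|f_k\|\|\partial_yf_k\|$ and $2a_3\nu t|k|\|f_k\|\|\partial_yf_k\|$ are each at most $\eta\nu|k|^2t^2\|f_k\|^2+C_\eta a_1\nu\|\partial_yf_k\|^2$. Since $a_1=\frac1{900}$ is small, they fit under $\frac32a_1\nu|k|^2t^2\|f_k\|^2$ and $\nu\|\partial_yf_k\|^2$.
\item The viscous cross term $2a_3\nu^2t^2|k|\|\partial_yf_k\|\|\Delta_kf_k\|$ is at most $a_2\nu^2t\|\Delta_kf_k\|^2+\frac{a_3^2}{a_2}\nu^2t^3|k|^2\|\partial_yf_k\|^2$. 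The last piece is $\frac94a_1\nu^2t^3|k|^2\|\partial_yf_k\|^2$, and this is the tight spot: it must fit under the $2a_1\nu^2|k|^2t^3\|\partial_yf_k\|^2$ available from the weight. I would therefore split the Young inequality more carefully, weighting toward the $\Delta_k$ side since $\|\Delta_kf_k\|\ge|k|\|\partial_yf_k\|$. This recovers part of $\nu^2t^3|k|^2\|\partial_yf_k\|^2$ from $\nu^2t\|\Delta_kf_k\|^2$ only when $t\lesssim1$. For $t\gtrsim1$ I would fall back on $\nu\|\partial_yf_k\|^2$ via $\nu^2t^3|k|^2\le\nu\cdot\nu t^3|k|^2$ together with the $\nu|k|^2t^2$ budget. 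This careful bookkeeping of constants is the step I expect to be the main obstacle.
\end{itemize}

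Finally, the $\varepsilon_0$ errors are controlled by using the time restriction $t\le\nu^{-1/6}$:
\[
a_2\nu t\cdot C\varepsilon_0\nu^{\frac13}\|(k,\partial_y)f_k\|^2\le C\varepsilon_0\nu^{\frac76}\|(k,\partial_y)f_k\|^2,
\qquad
a_3\nu t^2\cdot C\varepsilon_0\nu^{\frac13}(1+t)^{-2}\|\partial_yf_k\|^2\le C\varepsilon_0\nu^{\frac43}\|\partial_yf_k\|^2 .
\]
Both are absorbed by the remaining $\nu\|(k,\partial_y)f_k\|^2$ dissipation, leaving at least $\frac{a_1}{2}$ of each of the four listed quantities and giving \eqref{4.8}.
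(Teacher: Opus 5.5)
The gap is the viscous cross term $2a_3\nu^2t^2|k|\|\partial_yf_k\|\|\Delta_kf_k\|$. You leave it open, and the patch you sketch for $t\gtrsim1$ fails. Trading $\nu^2t^3|k|^2\|\partial_yf_k\|^2$ for $\nu\|\partial_yf_k\|^2$ costs the factor $\nu t^3|k|^2$. On $t\le\nu^{-1/6}$ this is only bounded by $\nu^{1/2}|k|^2$, so it is unbounded in $k$. The $\nu|k|^2t^2$ budget multiplies $\|f_k\|^2$, not $\|\partial_yf_k\|^2$, so it cannot absorb this term either.

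Everything else matches the paper: \eqref{4.7}, the enhanced-dissipation term, the $\nu t|k|\|f_k\|\|\partial_yf_k\|$ cross terms, and the $\varepsilon_0$ errors. For the cross terms, the total is $60a_1\nu t|k|\|f_k\|\|\partial_yf_k\|$, and Young with weight $a_1$ on $\nu|k|^2t^2\|f_k\|^2$ costs exactly $\frac{(60a_1)^2}{4a_1}\nu\|\partial_yf_k\|^2=\nu\|\partial_yf_k\|^2$, so that part closes.

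No case split in $t$ is needed. Your deficit $\frac94a_1>2a_1$ comes only from putting Young weight $a_2=16a_1$ on the $\Delta_k$ side when $2a_2=32a_1$ is available. Set $X=\nu t^{1/2}\|\Delta_kf_k\|$ and $Y=\nu t^{3/2}|k|\|\partial_yf_k\|$. The relevant group in $\Phi_k'$ is then $-(2a_2X^2-2a_3XY+2a_1Y^2)$. This form is negative definite because $a_3^2=36a_1^2<4a_1a_2=64a_1^2$. Concretely, $2a_3XY\le\epsilon X^2+\frac{a_3^2}{\epsilon}Y^2$ with any $18a_1\le\epsilon\le2a_2-\frac{a_1}{2}$ closes the estimate uniformly in $t$ and $k$. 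The paper takes $\epsilon=18a_1$ and writes this group, together with the $|k|^4$ part of the weight term, as the exact sum of squares
\[
-\bigl[14a_1\nu^2t\|\Delta_kf_k\|^2+2a_1\nu^2t\bigl(3\|\Delta_kf_k\|-|k|t\|\partial_yf_k\|\bigr)^2+2a_1\nu^2|k|^4t^3\|f_k\|^2\bigr],
\]
which supplies the last two terms of \eqref{4.8}.

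One caveat you share with the paper: $u_a$ depends on $x$, so $u_a\cdot\nabla f$ couples $f_k$ to the other Fourier modes. Your mode-wise relations, starting with the energy identity for $\|f_k\|^2$, are therefore not literally true for a single $k$. The paper's proof applies Lemma~\ref{lem:4.1} mode by mode in the same way, so this is not a difference between the two arguments.
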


\begin{proof}
Notice that $a_2 = 16a_1 > 0$, $a_3 = 6a_1$, we have
\begin{align*}
|a_3 \nu t^2 \mathrm{Re} \langle ik f_k, \partial_y f_k \rangle| &\leq \frac{a_3 }{8}\nu (t^3 |k|^2 \|f_k\|^2 + 16t \|\partial_y f_k\|^2) \\
&= \frac{3}{4}\nu (a_1 t^3 |k|^2 \|f_k\|^2 + a_2 t \|\partial_y f_k\|^2),
\end{align*}
which gives \eqref{4.7}.

Next, by applying Lemma~\ref{lem:4.1} and $|\mathrm{Re} \langle ik f_k, \partial_y f_k \rangle| \leq |k| \|f_k\| \|\partial_y f_k\|$, we can prove
\begin{align*}
\Phi_k'(t)&=3a_1 \nu |k|^2 t^2 \|f_k\|^2 + a_2 \nu  \|(k, \partial_y) f_k\|^2 + 2a_3 \nu t \mathrm{Re} \langle ik f_k, \partial_y f_k \rangle\\
&\quad+(1 + a_1 \nu |k|^2 t^3) \frac{\rm d}{{\rm d}t}\|f_k\|^2 + a_2 \nu t \frac{\rm d}{{\rm d}t}\|(k, \partial_y) f_k\|^2 + a_3 \nu t^2 \frac{\rm d}{{\rm d}t}\mathrm{Re} \langle ik f_k, \partial_y f_k \rangle\\ 
&\leq 3a_1 \nu t^2 |k|^2 \|f_k\|^2 - 2\nu (1 + a_1 \nu |k|^2 t^3) \|(k, \partial_y) f_k\|^2 +Ca_2\varepsilon_0\nu t\nu^{\frac{1}{3}}\|(k, \partial_y) f_k\|^2 \\
&\quad+ a_2 \nu \|(k, \partial_y) f_k\|^2 - 2a_2 \nu^2 t \|\Delta_k f_k\|^2 + 3a_2 \nu t |k| \|f_k\| \|\partial_y f_k\| + 2a_3 \nu t |k| \|f_k\| \|\partial_y f_k\| \\
&\quad - a_3 \nu t^2 |k|^2 \|\sqrt{V_1'+\partial_yu_{a}^1} f_k\|^2 + 2a_3 \nu^2 t^2 |k| \|\partial_y f_k\| \|\Delta_k f_k\|+Ca_3 \varepsilon_0\nu^{\frac{1}{3}}\nu\|\partial_yf\|^2 \\
&= \nu |k|^2 t^2 \left(4a_1 \|f_k\|^2 - a_3 \|\sqrt{V_1'+\partial_yu_{a}^1} f_k\|^2\right) \\ 
&\quad- \nu (1 - a_2-Ca_2\varepsilon_0 t\nu^{\frac{1}{3}}-Ca_3\varepsilon_0 \nu^{\frac{1}{3}}) \|(k, \partial_y) f_k\|^2 \\
&\quad - \left[\nu |k|^2 t^2 a_1 \|f_k\|^2 - (3a_2 + 2a_3) \nu t |k| \|f_k\| \|\partial_y f_k\| + \nu \|(k, \partial_y) f_k\|^2\right] \\
&\quad - \left[2a_2 \nu^2 t \|\Delta_k f_k\|^2 - 2a_3 \nu^2 t^2 |k| \|\partial_y f_k\| \|\Delta_k f_k\| + 2a_1 \nu^2 |k|^2 t^3 \|(k, \partial_y) f_k\|^2\right] \\
&:= I_1 + I_2 + I_3 + I_4.
\end{align*}
From Proposition \ref{p2.1}, \eqref{eq:4.1} and $a_3 = 6a_1 > 0$, we have 
\[
V_1'+\partial_yu_{a}^1 \geq 1 - \|V_1' - 1\|_{L^\infty}-\|\partial_yu_{a}^1\|_{L^\infty} \geq 1 - \frac{1}{4} >\frac{3}{4}, 
\]
\begin{equation}\notag
a_3 \|\sqrt{V_1'+\partial_yu_{a}^1} f_k\|^2 \geq \frac{3a_3}{4} \|f_k\|^2 = \frac{9a_1}{2} \|f_k\|^2,
\end{equation}
this implies 
\begin{equation}\notag
 I_1\leq -\frac{a_1}{2}\nu |k|^2 t^2 \|f_k\|^2 .  
\end{equation}
For $I_2$, due to $a_2 = \frac{4}{225}$, $\varepsilon_0\ll 1$ and $t\leq \nu^{-\frac{1}{6}}$, we can deduce 
\begin{equation}\notag
    I_2\leq  -\frac{1}{2}\nu \|(k, \partial_y) f_k\|^2.
\end{equation}
Finally, note that  $a_1 = \frac{1}{900}$, $a_2 = 16a_1$, $a_3 = 6a_1$, then $3a_2 + 2a_3=60a_1 $, and
\begin{align*}
I_3 &= -[\nu |k|^2 t^2 a_1 \|f_k\|^2 - (3a_2 + 2a_3) \nu t |k| \|f_k\| \|\partial_y f_k\| + \nu \|(k, \partial_y) f_k\|^2] \\
&= -\nu a_1 (|k|t \|f_k\| - 30 \|(k, \partial_y) f_k\|)^2 \leq 0,
\end{align*}
\begin{align}
    I_4 =& -[2a_2 \nu^2 t \|\Delta_k f_k\|^2 - 2a_3 \nu^2 t^2 |k| \|\partial_y f_k\| \|\Delta_k f_k\| + 2a_1 \nu^2 |k|^2 t^3 \|(k, \partial_y) f_k\|^2]\notag\\ 
    &-[14a_1 \nu^2 t \|\Delta_k f_k\|^2 + 2a_1 \nu^2 t (3\|\Delta_k f_k\| - |k|t \|\partial_y f_k\|)^2 + 2a_1 \nu^2 |k|^4 t^3 \|f_k\|^2].\notag
\end{align}
Summing up, we arrive at \eqref{4.8}.
\end{proof}

Let's begin to prove Proposition \ref{p4.1} now.

\begin{proof}[\textbf{Proof of Proposition~\ref{p4.1}}]
   According to \eqref{4.7}--\eqref{4.8}, we derive for $k\in \mathbb{Z}$ that
   \begin{align}
      \|f_{in,k}\|^2=& \Phi_k(0)\geq  \Phi_k(t) \geq \left(1 + \frac{a_1 \nu |k|^2 t^3}{4} \right) \|f_k\|^2 + \frac{a_2}{4} \nu t \|(k, \partial_y) f_k\|^2\notag\\
      \gtrsim& \nu t(1+t^2)|k|^2\|f_k\|^2.\notag
   \end{align}
  Then summing over the integer $k$, we obtain \eqref{aa1}. 

  In addition, we can also derive for $k\neq 0$,
  \begin{align}
      \|f_{in,k}\|^2=& \Phi_k(0)\geq  \Phi_k(t) \geq \left(1 + \frac{a_1 \nu |k|^2 t^3}{4} \right) \|f_k\|^2 + \frac{a_2}{4} \nu t \|(k, \partial_y) f_k\|^2\notag\\
      \gtrsim& (1+\nu |k|^2 t^3)\|f_k\|^2\gtrsim (1+\nu t^3)\|f_k\|^2.\notag
   \end{align}
   Summing over the non-zero integer $k$ gives \eqref{aa2}.
\end{proof}

Based on the conclusion in Proposition \ref{p4.1}, we can immediately obtain the following corollary.
\begin{coro}\label{c4.5}
    Under the conditions of Proposition \ref{p4.1}, if $f$ is the solution to \eqref{4.2}, then we have for $t\leq \nu^{-\frac{1}{6}}$ that
  \begin{align*}
&\|f(t)\| + \nu^{\frac{1}{2}} \left(\int_0^t \|\nabla f(\tau)\|^2 \dd \tau\right)^{\frac{1}{2}} + \nu^{\frac{1}{2}}\int_0^t \|\partial_x f(\tau)\|\dd \tau+\nu^{\frac{1}{3}}\int_0^t \|f_{\neq}(\tau)\|\dd \tau \\
&\lesssim \|f_{in}\|.
\end{align*}
\end{coro}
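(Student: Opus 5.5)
We bound the four terms of the corollary one at a time, each by $\|f_{in}\|$, using only the energy identity \eqref{4.4} and the pointwise-in-time decay bounds \eqref{aa1}--\eqref{aa2} of Proposition \ref{p4.1}. The key point is that $t\le\nu^{-\frac16}$ throughout.

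\emph{First two terms.} Integrating \eqref{4.4} in time gives
\[
\|f(t)\|^2+2\nu\int_0^t\|\nabla f(\tau)\|^2\dd\tau=\|f_{in}\|^2 .
\]
This controls both $\|f(t)\|$ and $\nu^{\frac12}\|\nabla f\|_{L^2(0,t;L^2)}$ by $\|f_{in}\|$.

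\emph{Third term.} We use \eqref{aa1}:
\[
\nu^{\frac12}\int_0^t\|\partial_x f(\tau)\|\dd\tau\lesssim \|f_{in}\|\int_0^t\tau^{-\frac12}(1+\tau)^{-1}\dd\tau .
\]
The integrand is integrable near $\tau=0$, where it behaves like $\tau^{-1/2}$, and near $\tau=\infty$, where it behaves like $\tau^{-3/2}$. So the integral is bounded by a universal constant, uniformly in $t$. Note that the factor $\nu^{\frac12}$ cancels exactly against the $(\nu\tau)^{-\frac12}$ in \eqref{aa1}. This is what makes the weight $\nu^{\frac12}$ natural here.

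\emph{Fourth term.} We use \eqref{aa2}:
\[
\nu^{\frac13}\int_0^t\|f_{\neq}(\tau)\|\dd\tau\lesssim\|f_{in}\|\,\nu^{\frac13}\int_0^\infty(1+\nu\tau^3)^{-\frac12}\dd\tau .
\]
Substituting $s=\nu^{\frac13}\tau$ turns this into
\[
\|f_{in}\|\int_0^\infty(1+s^3)^{-\frac12}\dd s .
\]
The last integral converges because the integrand decays like $s^{-3/2}$.

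\emph{Where the time restriction enters.} The only delicate point is that \eqref{aa1}--\eqref{aa2} are valid only for $t\le\nu^{-\frac16}$. Since all four estimates above are taken over $[0,t]$ with $t\le\nu^{-\frac16}$, this suffices. In the fourth term we simply extend the integral to $\infty$ as an upper bound after inserting the pointwise estimate on $[0,t]$.

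There is one more subtlety in the third term. The bound \eqref{aa1} came from summing $\nu\tau(1+\tau^2)|k|^2\|f_k\|^2\lesssim\|f_{in,k}\|^2$ over $k$, which gives $\|\partial_xf\|\lesssim(\nu\tau)^{-\frac12}(1+\tau)^{-1}\|f_{in}\|$. This is exactly the form used above, so no further work is needed. Summing the four bounds yields the corollary.
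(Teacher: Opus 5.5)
Your proof is correct and is exactly the argument the paper has in mind when it says the corollary follows immediately from Proposition \ref{p4.1}: integrate the energy identity \eqref{4.4} for the first two terms, and integrate the pointwise bounds \eqref{aa1} and \eqref{aa2} in time for the last two. (For the fourth term you could even skip \eqref{aa2}, since $\nu^{\frac13}\int_0^t\|f_{\neq}\|\,\dd\tau\le \nu^{\frac13}t\,\|f_{in}\|\le\nu^{\frac16}\|f_{in}\|$ for $t\le\nu^{-\frac16}$.)
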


With the above preparations, we turn to the estimates for $(\omega,\theta)$ on a short time scale.

 \subsection{Short time scale $0\leq t\leq \nu^{-\frac{1}{6}}$}
We define the space-time norm $\|f\|_{X_1}$, where
\begin{align*}
\|f\|_{X_1} &:= \|f\|_{L^\infty L^2} + \nu^{\frac{1}{2}} \|\nabla f\|_{L^2 L^2}+\nu^{\frac{1}{2}} \| \partial_x f\|_{L^1 L^2} + \nu^{\frac{1}{3}} \|P_{\ne} f\|_{L^1 L^2}.
\end{align*}
\begin{prop}\label{p4.7}
Let $(\omega,\theta)$ be the solution to \eqref{2.9}, then for $0\leq t\leq T_0=\nu^{-\frac{1}{6}}$,
\begin{align}\label{4.9}
  \|\theta\|_{X_1}\lesssim& \|\Theta_{in}\|_{H^4}( \nu^{\frac{2}{3}}+\|\Omega_{in}\|_{H^3}+\nu^{-\frac{1}{3}} \|\Theta_{in}\|_{H^4})+\nu^{-\frac{1}{3}}\|\omega\|_{L^\infty L^2}\|\Theta_{in}\|_{H^4}\notag\\ 
  &+\nu^{-\frac{1}{2}} \|\omega\|_{L^\infty L^2}\|\theta\|_{X_1}+\nu^{-\frac{5}{8}} \|\omega\|_{L^\infty L^2}^{\frac{9}{10}}(\nu^{\frac{1}{2}}\|\nabla \omega\|_{L^2 L^2})^{\frac{1}{10}}\|\theta\|_{X_1}.  
\end{align}

\end{prop}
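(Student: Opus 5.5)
We will write the $\theta$-equation of \eqref{2.9} in the form of the linear problem \eqref{4.2} with forcing, and use Duhamel plus Corollary \ref{c4.5}. Since $u_{ob}=u_a+u$, the equation reads
\begin{align*}
\partial_t\theta-\nu\Delta\theta+V_1\partial_x\theta+u_a\cdot\nabla\theta=-u\cdot\nabla\theta+V_2'\partial_x\phi-u\cdot\nabla\theta_a-Er^2=:F,
\end{align*}
with zero data and zero boundary values. Let $S(t,s)$ be the solution operator of \eqref{4.2} from time $s$ to $t$. Corollary \ref{c4.5} holds uniformly in the starting time $s\in[0,T_0]$, since it only uses \eqref{eq:4.1} and the bounds of Proposition \ref{p2.1} on $u_a$, which are uniform in time. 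Hence $\|S(\cdot,s)g\|_{X_1}\lesssim\|g\|$. Minkowski's inequality applied to $\theta(t)=\int_0^tS(t,s)F(s)\,\dd s$ then gives
\begin{align*}
\|\theta\|_{X_1}\lesssim\int_0^{T_0}\|F(s)\|\,\dd s
\end{align*}
for every forcing term that is not in divergence form. For the transport term $u\cdot\nabla\theta=\nabla\cdot(u\theta)$ we will instead use a direct $L^2$ energy estimate: multiply by $\theta$ and use incompressibility. This shows that the transport part contributes nothing to the basic energy. The extra pieces of the $X_1$ norm, namely the $L^1L^2$ terms, will then be recovered from the Duhamel formula, which requires $\|u\cdot\nabla\theta\|_{L^1L^2}$.

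\textbf{Step 1: linear and source terms.} From \eqref{2.15}, $\int_0^{T_0}\|Er^2\|\,\dd s\lesssim \nu T_0^2\|\Theta_{in}\|_{H^4}+\|\Theta_{in}\|_{H^4}(\|\Omega_{in}\|_{H^3}+\nu^{-1/3}\|\Theta_{in}\|_{H^4})$, and $\nu T_0^2=\nu^{2/3}$. This produces the first group on the right of \eqref{4.9}. For $u\cdot\nabla\theta_a=(u^1-tu^2)\partial_x\theta_a+u^2(\partial_y+t\partial_x)\theta_a$, \eqref{2.10} gives
\begin{align*}
\|u\cdot\nabla\theta_a\|\lesssim(1+t)\|u\|\,\|\Theta_{in}\|_{H^4}\lesssim(1+t)\|\omega\|_{L^\infty L^2}\|\Theta_{in}\|_{H^4}.
\end{align*}
Integrating over $[0,T_0]$ yields the factor $T_0^2=\nu^{-1/3}$, which gives the term $\nu^{-1/3}\|\omega\|_{L^\infty L^2}\|\Theta_{in}\|_{H^4}$. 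Finally, $V_2'\partial_x\phi=-V_2'u^2$. By \eqref{2.4} we have $\|V_2'\|_{L^\infty}\lesssim\|\Theta_{in}\|_{H^3}$, so this term contributes at most $T_0\|\omega\|_{L^\infty L^2}\|\Theta_{in}\|_{H^3}$, which is dominated by the previous term.

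\textbf{Step 2: the nonlinear transport term, the main obstacle.} We need to bound $\int_0^{T_0}\|u\cdot\nabla\theta\|\,\dd s$ by splitting into the $x$- and $y$-derivatives.

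For the $x$-derivative, $\|u^1\partial_x\theta\|\le\|u^1\|_{L^\infty}\|\partial_x\theta\|$. We will bound $\|u^1\|_{L^\infty}\lesssim\|\omega\|$, which holds via Gagliardo--Nirenberg on the periodic strip, up to a mild log or interpolation loss. Then
\begin{align*}
\int_0^{T_0}\|u^1\partial_x\theta\|\,\dd s\lesssim\|\omega\|_{L^\infty L^2}\|\partial_x\theta\|_{L^1L^2}\le\nu^{-1/2}\|\omega\|_{L^\infty L^2}\|\theta\|_{X_1}.
\end{align*}
This is exactly where the $\nu^{1/2}\|\partial_x\theta\|_{L^1L^2}$ component of $X_1$ is used.

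For the $y$-derivative, write $u^2\partial_y\theta$ with $u^2=-\partial_x\phi$. We will estimate $\|u^2\|_{L^\infty}$ by interpolating between $\|\omega\|$ and $\|\nabla\omega\|$: an anisotropic Sobolev bound gives $\|u^2\|_{L^\infty}\lesssim\|\omega\|^{1-\epsilon}\|\nabla\omega\|^{\epsilon}$. We pair this with $\|\partial_y\theta\|\in L^2_t$, controlled by $\nu^{-1/2}\|\theta\|_{X_1}$, and apply Hölder in time over $[0,T_0]$. The time-length factors $T_0^{\cdot}$ then have to be collected. Choosing $\epsilon=1/10$ and tracking $T_0=\nu^{-1/6}$ should give the term $\nu^{-5/8}\|\omega\|_{L^\infty L^2}^{9/10}(\nu^{1/2}\|\nabla\omega\|_{L^2L^2})^{1/10}\|\theta\|_{X_1}$.

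The delicate point is the choice of interpolation exponents so that the powers of $\nu$ come out as $\nu^{-5/8}$. It may be necessary to split $u^2$ into its zero and nonzero modes; the zero mode vanishes because $u^2_0=-\partial_x\phi_0=0$. On the nonzero part, Poincaré in $x$ then helps. Combining Steps 1 and 2 gives \eqref{4.9}.
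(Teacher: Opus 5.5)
Your overall scheme matches the paper's: Duhamel plus Corollary \ref{c4.5} reduces \eqref{4.9} to bounding each forcing term in $L^1L^2$. Step 1 ($Er^2$, $u\cdot\nabla\theta_a$, $V_2'\partial_x\phi$) is fine. The energy-estimate detour for $\nabla\cdot(u\theta)$ is unnecessary, since Duhamel needs $\|u\cdot\nabla\theta\|_{L^1L^2}$ anyway.

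The gap is the $x$-derivative part of Step 2. The bound $\|u^1\|_{L^\infty}\lesssim\|\omega\|$ is false on the strip. All you have is $\|u\|_{H^1}\lesssim\|\omega\|$, and $H^1$ does not embed into $L^\infty$ in two dimensions. Your fallback of a ``mild log or interpolation loss'' does not help. Such a loss brings in $\|\nabla\omega(t)\|$ pointwise in time, which is controlled only in $L^2_t$. Pairing with $\|\partial_x\theta\|_{L^1L^2}$, however, requires $\sup_t\|u^1(t)\|_{L^\infty}$. So as written the term $P_{\neq}u^1\partial_x\theta$ is not controlled.

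The missing idea is to split by Fourier mode in $x$, not by velocity component. Since $P_0u^2=0$, write $u\cdot\nabla\theta=P_{\neq}u\cdot\nabla\theta+P_0u^1\partial_x\theta$.
\begin{itemize}
\item The zero mode $P_0u^1(t,y)$ depends only on $y$. The one-dimensional embedding gives $\|P_0u^1\|_{L^\infty}\lesssim\|P_0u\|_{H^1_y}\lesssim\|\omega\|$ uniformly in time. Pairing with $\nu^{\frac12}\|\partial_x\theta\|_{L^1L^2}$ yields exactly $\nu^{-\frac12}\|\omega\|_{L^\infty L^2}\|\theta\|_{X_1}$.
\item The whole nonzero-mode part, including $P_{\neq}u^1\partial_x\theta$, is handled the way you handle $u^2\partial_y\theta$. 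Lemma \ref{lem:A.1} with $\delta_1=\frac15$ gives $\|P_{\neq}u\|_{L^\infty}\lesssim\|\omega\|^{\frac{9}{10}}\|\nabla\omega\|^{\frac{1}{10}}$. Hence on $[0,\nu^{-\frac16}]$ we get $\|P_{\neq}u\|_{L^2L^\infty}\lesssim\nu^{-\frac18}\|\omega\|_{L^\infty L^2}^{\frac{9}{10}}(\nu^{\frac12}\|\nabla\omega\|_{L^2L^2})^{\frac{1}{10}}$. H\"older against $\|\nabla\theta\|_{L^2L^2}\le\nu^{-\frac12}\|\theta\|_{X_1}$ gives the $\nu^{-\frac58}$ term.
\end{itemize}
This split is what produces the precise power in \eqref{4.9}. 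Putting the zero mode in $L^2_tL^\infty$ as well would cost $T_0^{\frac12}$ and give $\nu^{-\frac{7}{12}}$ instead of $\nu^{-\frac12}$, as in Proposition \ref{ap3.7}. That would still close the later bootstrap, but it is not the stated estimate.
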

\begin{proof} Notice that
\begin{equation}\notag
    \partial_t \theta - \nu \Delta \theta + V_1 \partial_x \theta+u_{a}\cdot\nabla\theta=g_1, \ \theta|_{t=0}=0,
\end{equation}
where 
\begin{equation}\notag
    g_1=V_2' \partial_x \phi - u \cdot \nabla \theta_{a}- u \cdot \nabla \theta  -Er^2.
\end{equation}
From Corollary \ref{c4.5} and Duhamel's principle, we have 
\begin{equation}\label{4.10}
\|\theta\|_{X_1} \lesssim \left\| g_1 \right\|_{L^1 L^2}.
\end{equation}
Next, we are going to estimate $\left\| g_1 \right\|_{L^1 L^2}$.  Note that $\phi = \Delta^{-1} \omega$, using \eqref{2.4} gives
\[
\|V_2' \partial_x \phi\| \leq \|V_2' \|_{L^\infty} \|\partial_x \phi\| \lesssim \|\Theta_{in}\|_{H^4} \|\omega\|,
\]
and then,
\begin{align*}
\|V_2' \partial_x \phi\|_{L^1 L^2} \lesssim \|\Theta_{in}\|_{H^4} \|\omega\|_{L^\infty L^2}t \lesssim \nu^{-\frac{1}{6}} \|\Theta_{in}\|_{H^4} \|\omega\|_{L^\infty L^2}.
\end{align*}
It follows from \eqref{2.10} that
\begin{align}
\|u \cdot \nabla \theta_{a}\| \lesssim \|u\| \|\nabla\theta_{a}\|_{L^\infty}\lesssim t\|\omega\|\|\Theta_{in}\|_{H^4},\notag
\end{align}
thus, by $0\leq t\leq \nu^{-\frac{1}{6}}$,
\begin{align}
\|u \cdot \nabla \theta_{a}\|_{L^1 L^2} \lesssim t^2\|\omega\|_{L^\infty L^2}\|\Theta_{in}\|_{H^3}\lesssim \nu^{-\frac{1}{3}}\|\omega\|_{L^\infty L^2}\|\Theta_{in}\|_{H^4}.\notag
\end{align}

Notice that
\[
u \cdot \nabla \theta = P_{\ne} u \cdot \nabla \theta + P_0 u^1 \partial_x \theta.
\]
We have from Lemma~\ref{lem:A.1} and $u = (\partial_y, -\partial_x) \Delta^{-1} \omega$ that
\begin{align*}
\|P_{\ne} u\|_{L^\infty} \lesssim \|\omega\|^{1-\frac{\delta_1}{2}} \|\nabla\omega\|^{\frac{\delta_1}{2}},\ \ \ \|P_0 u\|_{L^\infty} \lesssim \|P_0 u\|_{H^1} \lesssim \|\omega\|,
\end{align*}
and then
\begin{align*}
\|P_{\ne} u\|_{L^2 L^\infty} &\lesssim \|\omega\|_{L^2 L^2}^{1-\frac{\delta_1}{2}} \|\nabla \omega\|_{L^2 L^2}^{\frac{\delta_1}{2}} \lesssim\|\omega\|_{L^\infty L^2}^{1-\frac{\delta_1}{2}}t^{\frac{1}{2}(1-\frac{\delta_1}{2})}\|\nabla \omega\|_{L^2 L^2}^{\frac{\delta_1}{2}}\\
&\lesssim \nu^{-\frac{1}{12}+\frac{\delta_1}{24}}\nu^{-\frac{\delta_1}{4}}\|\omega\|_{L^\infty L^2}^{1-\frac{\delta_1}{2}}(\nu^{\frac{1}{2}}\|\nabla \omega\|_{L^2 L^2})^{\frac{\delta_1}{2}}.
\end{align*}
Let $\delta_1=\frac{1}{5}$, then 
\begin{align*}
\|P_{\ne} u\|_{L^2 L^\infty} \lesssim\nu^{-\frac{1}{8}} \|\omega\|_{L^\infty L^2}^{\frac{9}{10}}(\nu^{\frac{1}{2}}\|\nabla \omega\|_{L^2 L^2})^{\frac{1}{10}}.
\end{align*}
Thus, by H\"older's inequality, we get
\begin{align*}
\|P_{\ne} u \cdot \nabla \theta\|_{L^1 L^2} &\leq \|P_{\ne} u\|_{L^2 L^\infty} \|\nabla \theta\|_{L^2 L^2} \lesssim \nu^{-\frac{5}{8}} \|\omega\|_{L^\infty L^2}^{\frac{9}{10}}(\nu^{\frac{1}{2}}\|\nabla \omega\|_{L^2 L^2})^{\frac{1}{10}}\|\theta\|_{X_1}, \\
\|P_0 u^1 \partial_x \theta\|_{L^1 L^2} &\leq \|\omega\|_{L^\infty L^2} \|\partial_x \theta\|_{L^1 L^2} \lesssim \nu^{-\frac{1}{2}} \|\omega\|_{L^\infty L^2}\|\theta\|_{X_1}.
\end{align*}
Finally, applying \eqref{2.15}and $0 \leq t \leq \nu^{-\frac{1}{6}}$ leads to
\begin{align*}
\|Er^2\|_{L^1 L^2} &\lesssim \|\nu (1+t)\|_{L^1} \|\Theta_{in}\|_{H^4} + \|(1+t)^{-2}\|_{L^1}  \|\Theta_{in}\|_{H^4}(\|\Omega_{in}\|_{H^3}+\nu^{-\frac{1}{3}} \|\Theta_{in}\|_{H^4}) \\
&\lesssim \|\Theta_{in}\|_{H^4}(\nu^{\frac{2}{3}} +\|\Omega_{in}\|_{H^3}+\nu^{-\frac{1}{3}} \|\Theta_{in}\|_{H^4}).
\end{align*}

The proof of this proposition is completed.
\end{proof}

Now, we give the uniform estimate of $(\omega,\theta)(x,t)$ in a short time scale.
\begin{prop}\label{p4.8}
If $0 \leq t \leq T_0 = \nu^{-\frac{1}{6}}$, then
\begin{equation}\label{4.11}
    \|\omega\|_{L^\infty L^2}+\nu^{\frac{1}{2}} \|\nabla \omega\|_{L^2L^2}+\nu^{-\frac{1}{2}}\|\theta\|_{X_1} \lesssim \nu^{\frac{1}{3}}\|\Omega_{in}\|_{H^3}+\nu^{-\frac{1}{6}}\|\Theta_{in}\|_{H^4}.
\end{equation}
\end{prop}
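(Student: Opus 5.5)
We will prove \eqref{4.11} by a bootstrap on $[0,T]$ with $T\le T_0=\nu^{-\frac16}$. Set
\[
\mathcal{A}:=\nu^{\frac13}\|\Omega_{in}\|_{H^3}+\nu^{-\frac16}\|\Theta_{in}\|_{H^4}\lesssim \varepsilon_0\nu^{\frac23}.
\]
The bootstrap hypothesis is that on $[0,T]$ the left-hand side of \eqref{4.11} is bounded by $2C\mathcal{A}$. The aim is to improve the constant to $C\mathcal{A}$; continuity in $T$ and $\omega|_{t=0}=\theta|_{t=0}=0$ then close the argument.

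\textbf{The temperature.} We feed the bootstrap bounds into Proposition \ref{p4.7} and handle the terms of \eqref{4.9} in turn.
\begin{itemize}
\item Since $\|\omega\|_{L^\infty L^2}\lesssim\mathcal{A}\lesssim\varepsilon_0\nu^{\frac23}$, the term $\nu^{-\frac12}\|\omega\|_{L^\infty L^2}\|\theta\|_{X_1}$ is at most $C\varepsilon_0\nu^{\frac16}\|\theta\|_{X_1}$. This is absorbed into the left side.
\item The term $\nu^{-\frac58}\|\omega\|^{\frac9{10}}_{L^\infty L^2}(\nu^{\frac12}\|\nabla\omega\|_{L^2L^2})^{\frac1{10}}\|\theta\|_{X_1}$ is at most $\nu^{-\frac58}\mathcal{A}\|\theta\|_{X_1}\lesssim\varepsilon_0\nu^{\frac1{24}}\|\theta\|_{X_1}$. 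It is also absorbed.
\item The source terms are $\|\Theta_{in}\|_{H^4}(\nu^{\frac23}+\|\Omega_{in}\|_{H^3}+\nu^{-\frac13}\|\Theta_{in}\|_{H^4})\lesssim\varepsilon_0\|\Theta_{in}\|_{H^4}$ and $\nu^{-\frac13}\mathcal{A}\|\Theta_{in}\|_{H^4}\lesssim\varepsilon_0\|\Theta_{in}\|_{H^4}$. Both are $\le \varepsilon_0\nu^{\frac12}\mathcal{A}$.
\end{itemize}
This gives $\nu^{-\frac12}\|\theta\|_{X_1}\le C\varepsilon_0\mathcal{A}$. In particular $\nu^{\frac12}\|\partial_x\theta\|_{L^1L^2}\lesssim\varepsilon_0\nu^{\frac12}\mathcal{A}$, so $\|\partial_x\theta\|_{L^1L^2}\lesssim\varepsilon_0\mathcal{A}$. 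This is exactly the quantity needed for the buoyancy term.

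\textbf{The vorticity.} We do a zero-order energy estimate on \eqref{2.9}$_1$, pairing with $\omega$. The transport terms $V_1\partial_x\omega$ and $u_{ob}\cdot\nabla\omega$ vanish because $\omega|_{y=\pm1}=0$ and both fields are divergence free; this is why we stay at zero order. It gives
\[
\frac12\frac{\rm d}{{\rm d}t}\|\omega\|^2+\nu\|\nabla\omega\|^2\le \|V_1''\|_{L^\infty}\|\partial_x\phi\|\|\omega\|+|\langle u\cdot\nabla\omega_a,\omega\rangle|+\|Er^1\|\|\omega\|+\|\partial_x\theta\|\|\omega\|.
\]
We integrate in time and use $\|\omega\|_{L^\infty L^2}$ as a factor. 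The four terms are bounded as follows.
\begin{itemize}
\item $V_1''$ term: $\|V_1''\|_{L^\infty}\lesssim\|\Omega_{in}\|_{H^3}$ and $T_0=\nu^{-\frac16}$, so it is $\lesssim\nu^{-\frac16}\|\Omega_{in}\|_{H^3}\|\omega\|^2_{L^\infty L^2}\lesssim\varepsilon_0\nu^{\frac16}\|\omega\|^2_{L^\infty L^2}$.
\item $u\cdot\nabla\omega_a$ term: \eqref{2.11} gives $\|\nabla\omega_a\|_{L^\infty}\lesssim(1+t)(\|\Omega_{in}\|_{H^3}+\nu^{-\frac13}\|\Theta_{in}\|_{H^4})\lesssim(1+t)\varepsilon_0\nu^{\frac13}$. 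Integrating over $t\le\nu^{-\frac16}$ yields $\lesssim\varepsilon_0\|\omega\|^2_{L^\infty L^2}$, which is small and absorbed.
\item $Er^1$ term: \eqref{2.14} gives $\|Er^1\|_{L^1L^2}\lesssim\nu^{\frac23}(\|\Omega_{in}\|_{H^3}+\nu^{-\frac13}\|\Theta_{in}\|_{H^4})+(\|\Omega_{in}\|_{H^3}+\nu^{-\frac13}\|\Theta_{in}\|_{H^4})^2$. Using smallness, this is $\lesssim\varepsilon_0\nu^{\frac13}(\|\Omega_{in}\|_{H^3}+\nu^{-\frac13}\|\Theta_{in}\|_{H^4})\lesssim\varepsilon_0\mathcal{A}$.
\item Buoyancy term: it is bounded by $\|\partial_x\theta\|_{L^1L^2}\|\omega\|_{L^\infty L^2}\lesssim\varepsilon_0\mathcal{A}\|\omega\|_{L^\infty L^2}$.
\end{itemize}
Young's inequality then gives $\|\omega\|_{L^\infty L^2}+\nu^{\frac12}\|\nabla\omega\|_{L^2L^2}\le C\varepsilon_0\mathcal{A}$, which improves the bootstrap.

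\textbf{Main obstacle.} The delicate point is the coupling: the buoyancy term $-\partial_x\theta$ needs $L^1L^2$ control of $\partial_x\theta$ with a loss of $\nu^{-\frac12}$. It closes only because $\theta$ carries the extra $\nu^{\frac12}$ weight from the assumption $\|\Theta_{in}\|_{H^4}\le\varepsilon_0\nu^{\frac56}$. One must also check that every power of $\nu$ balances, especially the $\nu^{-\frac58}$ factor in \eqref{4.9} against $\mathcal{A}\sim\nu^{\frac23}$. If the $V_1''\partial_x\phi$ and $u\cdot\nabla\omega_a$ contributions turned out borderline, I would instead use $\|\partial_x\phi\|\lesssim\|\omega\|$ together with the time decay $(1+t)^{-1}$ from Lemma \ref{l3.3}-type bounds.
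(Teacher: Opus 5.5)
Your proposal is correct and follows essentially the paper's argument: you run a bootstrap that feeds the a priori bound on $\omega$ into Proposition~\ref{p4.7}, absorbing the $\nu^{-\frac12}\|\omega\|_{L^\infty L^2}\|\theta\|_{X_1}$ and $\nu^{-\frac58}$ terms, and you close a zero-order energy estimate for $\omega$ in which the buoyancy term is paid for by $\|\partial_x\theta\|_{L^1L^2}\le\nu^{-\frac12}\|\theta\|_{X_1}$. One bookkeeping slip: the source $\nu^{\frac23}\|\Theta_{in}\|_{H^4}$ in \eqref{4.9} and the $\nu(1+t)$ part of $Er^1$ are only $O(\nu^{\frac13})$ relative to $\nu^{\frac12}\mathcal{A}$ and $\mathcal{A}$, not $O(\varepsilon_0)$, and the step $\varepsilon_0\|\Theta_{in}\|_{H^4}\le\varepsilon_0\nu^{\frac12}\mathcal{A}$ fails as written (the quadratic sources are in fact $\lesssim\varepsilon_0\nu^{\frac13}\|\Theta_{in}\|_{H^4}$), so you get $\lesssim\mathcal{A}$ with a universal constant rather than $\lesssim\varepsilon_0\mathcal{A}$, which still closes the bootstrap once its constant is fixed before $\varepsilon_0$.
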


\begin{proof}
Firstly, we multiply  \eqref{2.9}$_1$ and \eqref{2.9}$_2$ by $\omega$ and $\theta$ respectively, and integrate it with respect to $x$ and $y$ to obtain
\[
\frac{1}{2} \frac{\rm d}{{\rm d}t} \|\omega\|^2 + \nu \|\nabla \omega\|^2 = \int_{-1}^1\int_\mathbb{T} (V_1'' \partial_x \phi - u \cdot \nabla \omega_{a} - Er^1-\partial_x \theta) \omega \dd x \dd y,
\]
\[
\frac{1}{2} \frac{\rm d}{{\rm d}t} \|\theta\|^2 + \nu \|\nabla \theta\|^2 = \int_{-1}^1\int_\mathbb{T} (V_2' \partial_x \phi -u \cdot \nabla \theta_{a} - Er^2) \theta \dd x \dd y.
\]
Using $u = (\partial_y, -\partial_x) \phi$ and $\phi = \Delta^{-1} \omega$, we have $\|u\| = \|\nabla \phi\| \leq C \|\omega\|$,
and
\begin{align*}
&\frac{1}{2}\frac{\rm d}{{\rm d}t} \|\omega\|^2_{L^2}+\nu \|\nabla \omega\|^2\notag\\
&\leq \|\omega\|\|V_1'' \partial_x \phi - u \cdot \nabla \omega_{a} - Er^1\|+\|\omega\|\| \partial_x\theta\| \\
&\leq \| \omega\|(\|\partial_x \phi\| \|V_1''\|_{L^\infty} + \|u\| \|\nabla \omega_{a}\|_{L^\infty} + \|Er^1\|) +\|\omega\|\| \partial_x\theta\|\\
&\lesssim\|\omega\|^2( \|V_1''\|_{L^\infty} +\|\nabla \omega_{a}\|_{L^\infty}) + \|\omega\|\|Er^1\| + \|\omega\|\| \partial_x\theta\|,
\end{align*}
\begin{align*}
\frac{1}{2}\frac{\rm d}{{\rm d}t} \|\theta\|^2_{L^2}+\nu \|\nabla \theta\|^2
&\leq \|\theta\|\|V_2' \partial_x \phi - u \cdot \nabla \theta_{a} - Er^2\|\\
&\leq \| \theta\|(\|\partial_x \phi\| \|V_2'\|_{L^\infty} + \|u\| \|\nabla \theta_{a}\|_{L^\infty} + \|Er^2\|) \\
&\lesssim\| \theta\|\|\omega\|( \|V_2'\|_{L^\infty} +\|\nabla \theta_{a}\|_{L^\infty}) + \|\theta\|\|Er^2\| .
\end{align*}
Integrating the above equations with respect to $t$ yields
\begin{align}\label{123}
  &\|\omega\|^2_{L^\infty L^2}+\nu \|\nabla \omega\|_{L^2L^2}^2 \notag\\ 
  &\leq C\|\omega\|_{L^\infty L^2}^2( \|V_1''\|_{L^1L^\infty} +\|\nabla \omega_{a}\|_{L^1 L^\infty}) + C\|\omega\|_{L^\infty L^2}(\|Er^1\|_{L^1L^2}+\|\partial_x\theta\|_{L^1L^2}).
\end{align}
\begin{align}\label{456}
  &\|\theta\|^2_{L^\infty L^2}+\nu \|\nabla \theta\|_{L^2L^2}^2 \notag\\ 
  &\leq C\|\theta\|_{L^\infty L^2}\|\omega\|_{L^\infty L^2}( \|V_2'\|_{L^1L^\infty} +\|\nabla \theta_{a}\|_{L^1 L^\infty}) + C\|\theta\|_{L^\infty L^2}\|Er^2\|_{L^1L^2}. 
\end{align}
From \eqref{2.3}--\eqref{2.4} and Proposition~\ref{p2.1}, we have
\begin{align*}
&\|\nabla \omega_{a}\|_{L^\infty} + \|V_1''\|_{L^\infty}\leq C (1+t) (\|\Omega_{in}\|_{H^3}+\nu^{-\frac{1}{3}}\|\Theta_{in}\|_{H^4}) \leq C\varepsilon_0 (1+t) \nu^{\frac{1}{3}}, \\
&\|\nabla \theta_{a}\|_{L^\infty} + \|V_2'\|_{L^\infty}\leq C (1+t) \|\Theta_{in}\|_{H^4}\leq C\varepsilon_0 (1+t) \nu^{\frac{5}{6}}, \\
&\|Er^1\| \lesssim \nu (1+t) (\|\Omega_{in}\|_{H^3}+\nu^{-\frac{1}{3}}\|\Theta_{in}\|_{H^4}) + (1+t)^{-2} (\|\Omega_{in}\|_{H^3}+\nu^{-\frac{1}{3}}\|\Theta_{in}\|_{H^4})^2,\\
&\|Er^2\|\lesssim  \nu (1+t)\|\Theta_{in}\|_{H^4} +  (1+t)^{-2}\|\Theta_{in}\|_{H^4}(\|\Omega_{in}\|_{H^3}+\nu^{-\frac{1}{3}}\|\Theta_{in}\|_{H^4}).
\end{align*}
 Then, we arrive at 
 \begin{align}
  &\|\omega\|^2_{L^\infty L^2}+\nu \|\nabla \omega\|_{L^2L^2}^2 \notag\\ 
  \leq& C\varepsilon_0(1+t)^2 \nu^{\frac{1}{3}}\|\omega\|_{L^\infty L^2}^2 +\|\omega\|_{L^\infty L^2}\|\partial_x\theta\|_{L^1L^2}\notag\\ 
  &+ C\left[\nu (1+t)^2 (\|\Omega_{in}\|_{H^3}+\nu^{-\frac{1}{3}}\|\Theta_{in}\|_{H^4}) + (\|\Omega_{in}\|_{H^3}+\nu^{-\frac{1}{3}}\|\Theta_{in}\|_{H^4})^2\right]\|\omega\|_{L^\infty L^2} \notag\\ 
  \leq& C\varepsilon_0\|\omega\|_{L^\infty L^2}^2 +C\left[\nu^{-\frac{1}{2}}\|\theta\|_{X_1}+\nu^{\frac{1}{3}}(\|\Omega_{in}\|_{H^3}+\nu^{-\frac{1}{3}}\|\Theta_{in}\|_{H^4})\right]\|\omega\|_{L^\infty L^2}.\notag
\end{align}
\begin{align}
 &\|\theta\|_{L^\infty L^2}^2 + \nu \|\nabla \theta\|_{L^2L^2}^2\notag\\  
 \leq& C\varepsilon_0(1+t)^2 \nu^{\frac{5}{6}}\|\theta\|_{L^\infty L^2}\|\omega\|_{L^\infty L^2}\notag\\ 
 &+ C\left[\nu (1+t)^2\|\Theta_{in}\|_{H^4} + \|\Theta_{in}\|_{H^4}(\|\Omega_{in}\|_{H^3}+\nu^{-\frac{1}{3}}\|\Theta_{in}\|_{H^4})\right]\|\theta\|_{L^\infty L^2}\notag\\
 \leq& C\varepsilon_0\nu^{\frac{1}{2}}\|\theta\|_{L^\infty L^2}\|\omega\|_{L^\infty L^2}+C\|\Theta_{in}\|_{H^4}(\nu^{\frac{2}{3}}+\|\Omega_{in}\|_{H^3}+\nu^{-\frac{1}{3}}\|\Theta_{in}\|_{H^4})\|\theta\|_{L^\infty L^2}.\notag
\end{align}
By using the smallness of $\varepsilon_0$, we obtain 
 \begin{align}\label{4.12}
  \|\omega\|_{L^\infty L^2}+\nu^{\frac{1}{2}} \|\nabla \omega\|_{L^2L^2} 
  \leq C\left[\nu^{-\frac{1}{2}}\|\theta\|_{X_1}+\nu^{\frac{1}{3}}(\|\Omega_{in}\|_{H^3}+\nu^{-\frac{1}{3}}\|\Theta_{in}\|_{H^4})\right],
\end{align}
\begin{align}\label{4.13}
 \|\theta\|_{L^\infty L^2} + \nu^{\frac{1}{2}} \|\nabla \theta\|_{L^2L^2}  
 \leq C\varepsilon_0\nu^{\frac{1}{2}}\|\omega\|_{L^\infty L^2}+C\|\Theta_{in}\|_{H^4}(\nu^{\frac{1}{3}}+\|\Omega_{in}\|_{H^3}+\nu^{-\frac{1}{3}}\|\Theta_{in}\|_{H^4}).
\end{align}
Notice that 
\begin{align}\label{4.14}
  \nu^{-\frac{1}{2}}\|\theta\|_{X_1}\leq& C_1\nu^{-\frac{1}{2}}\|\Theta_{in}\|_{H^4}( \nu^{\frac{2}{3}}+\|\Omega_{in}\|_{H^3}+\nu^{-\frac{1}{3}} \|\Theta_{in}\|_{H^4})+C_1\nu^{-\frac{5}{6}}\|\omega\|_{L^\infty L^2}\|\Theta_{in}\|_{H^4}\notag\\ 
  &+C_1\nu^{-1} \|\omega\|_{L^\infty L^2}\|\theta\|_{X_1}+C_1\nu^{-\frac{9}{8}}\|\omega\|_{L^\infty L^2}^{\frac{9}{10}}(\nu^{\frac{1}{2}}\|\nabla \omega\|_{L^2 L^2})^{\frac{1}{10}}\|\theta\|_{X_1}. \end{align}
  Combining the above three equations gives
\begin{align}\label{4.15a}
&\|\omega\|_{L^\infty L^2}+\nu^{\frac{1}{2}} \|\nabla \omega\|_{L^2L^2}+\nu^{-\frac{1}{2}} \|\theta\|_{L^\infty L^2}+\nu^{-\frac{1}{2}}\|\theta\|_{X_1}\notag\\ 
\leq&C\nu^{\frac{1}{3}}(\|\Omega_{in}\|_{H^3}+\nu^{-\frac{1}{3}}\|\Theta_{in}\|_{H^4})+C\nu^{-\frac{1}{6}}\|\Theta_{in}\|_{H^4}\notag\\ 
&+C\nu^{-1} \|\omega\|_{L^\infty L^2}\|\theta\|_{X_1}+C\nu^{-\frac{9}{8}} \|\omega\|_{L^\infty L^2}^{\frac{9}{10}}(\nu^{\frac{1}{2}}\|\nabla \omega\|_{L^2 L^2})^{\frac{1}{10}}\|\theta\|_{X_1}.
\end{align}
Assume that $\|\omega\|_{L^\infty L^2}+\nu^{\frac{1}{2}} \|\nabla \omega\|_{L^2L^2}+\nu^{-\frac{1}{2}}\|\theta\|_{X_1}\leq \varepsilon_1\nu^{\frac{2}{3}}$ for some small constant $\varepsilon_1$, we conclude
\begin{align}\label{4.15}
\|\omega\|_{L^\infty L^2}+\nu^{\frac{1}{2}} \|\nabla \omega\|_{L^2L^2}+\nu^{-\frac{1}{2}}\|\theta\|_{X_1}\leq C\nu^{\frac{1}{3}}(\|\Omega_{in}\|_{H^3}+\nu^{-\frac{1}{2}}\|\Theta_{in}\|_{H^4}).
\end{align}
Notice that $\|\Omega_{in}\|_{H^3} +\nu^{-\frac{1}{2}}\|\Theta_{in}\|_{H^4}\leq \varepsilon_0 \nu^{\frac{1}{3}},$
then we have
\[
\|\omega\|_{L^\infty L^2}+\nu^{\frac{1}{2}} \|\nabla \omega\|_{L^2L^2}+\nu^{-\frac{1}{2}} \|\theta\|_{L^\infty L^2}+\nu^{-\frac{1}{2}}\|\theta\|_{X_1} \leq C(\nu^{\frac{1}{3}}\|\Omega_{in}\|_{H^3}+\nu^{-\frac{1}{6}}\|\Theta_{in}\|_{H^4}) \leq C\varepsilon_0 \nu^{\frac{2}{3}}.
\]
Taking $\varepsilon_0 = \frac{\varepsilon_1 }{2C}$, then one can get \eqref{4.11} for $0 \leq t \leq\nu^{-\frac{1}{6}}$.
Thus, we get \eqref{4.11}. 
The proof of this proposition is completed.
\end{proof}

\subsubsection{Long time scale $t\geq \nu^{-\frac{1}{6}}=T_0$}

In this subsection, we study the space-time estimate for $(\omega,\theta)$ with $t\geq \nu^{-\frac{1}{6}}$. For this, we first consider the following linearized system:
\begin{equation}\label{4.15aa}
    \partial_t f + V_1 \partial_x f- \nu \Delta f  = g_2, \quad f(t,x,\pm 1) = 0, \quad \text{for }\  t \geq T_0.
\end{equation}
Define the space-time norm $\|f\|_{X_2}$ in $t\geq T_0$, where
\begin{align}\label{4.15aaa}
\|f\|_{X_2} &= \|f\|_{L^\infty L^2} + \nu^{\frac{1}{2}} \|\nabla f\|_{L^2 L^2} + \|\e^{\delta \nu^{\frac{1}{3}} t} P_{\neq} f\|_{L^\infty L^2} + \nu^{\frac{1}{2}} \|\e^{\delta \nu^{\frac{1}{3}} t} \nabla P_{\neq} f\|_{L^2 L^2} \notag\\
&\quad + \|\e^{\delta \nu^{\frac{1}{3}} t} \partial_x \nabla \Delta^{-1} f\|_{L^2 L^2} + \nu^{\frac{1}{2}} \|\e^{\delta \nu^{\frac{1}{3}} t} \partial_x f\|_{L^1 L^2} + \nu^{\frac{1}{3}} \|\e^{\delta \nu^{\frac{1}{3}} t} P_{\neq} f\|_{L^1 L^2}.
\end{align}
We have the following proposition, which provides a bound for the norm $\|\cdot\|_{X_2}$.
\begin{prop}\label{p4.8a}
Suppose that $f$ satisfies \eqref{4.15aa} and $V_1(t,y)$ satisfies \eqref{eq:4.1}, then for $t\geq T_0$, there exists $0 < \delta \ll 1$ such that
\begin{equation}\label{4.15bb}
\|f\|_{X_2} \lesssim \|f(T_0)\| + \|\e^{\delta \nu^{\frac{1}{3}} t} g_2\|_{L^1 L^2}.
\end{equation}
\end{prop}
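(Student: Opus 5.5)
The proof goes mode by mode after a Fourier transform in $x$, followed by a split of $f$ into its zero mode and its nonzero modes. The equation \eqref{4.15aa} has no coupling between $x$-modes, because $V_1=V_1(t,y)$. Writing $f=P_0f+P_{\neq}f$, the zero mode solves the heat equation $\partial_t P_0 f-\nu\partial_y^2P_0f=P_0g_2$ with Dirichlet data. The plain $L^2$ energy estimate then gives
\[
\|P_0 f\|_{L^\infty L^2}+\nu^{\frac12}\|\partial_yP_0f\|_{L^2L^2}\lesssim \|P_0f(T_0)\|+\|P_0g_2\|_{L^1L^2}.
\]
This controls the unweighted part of $\|f\|_{X_2}$. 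All the weighted terms involve only $P_{\neq}f$, because $\partial_x$ and $\partial_x\nabla\Delta^{-1}$ annihilate the zero mode.

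For $k\neq0$ I would establish a linear semigroup bound
\[
\|S_k(t,s)\|_{L^2\to L^2}\lesssim \e^{-c\nu^{1/3}|k|^{2/3}(t-s)},
\]
where $S_k(t,s)$ is the solution operator of $\partial_t+ikV_1-\nu\Delta_k$ with Dirichlet boundary conditions. The proof uses a hypocoercive functional of the type in Lemma~\ref{l4.4}, now localized in time:
\[
\Psi_k=\|f_k\|^2+\alpha\nu^{\frac23}|k|^{-\frac23}\|(k,\partial_y)f_k\|^2+\beta\nu^{\frac13}|k|^{-\frac43}\mathrm{Re}\langle ikf_k,\partial_yf_k\rangle,
\]
with $\beta^2\ll\alpha\ll\beta\ll1$. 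Lemma~\ref{lem:4.1} still applies with $u_a=0$ and $V_1'\ge\frac34$, and the quantity $\mathrm{Re}\langle ikf_k,\partial_yf_k\rangle$ has no boundary terms since $f_k(\pm1)=0$. These ingredients give $\Psi_k'\le -c\nu^{1/3}|k|^{2/3}\Psi_k$ for $|k|\ge1$, which yields the exponential bound. The same differential inequality also gives the time-integrated bounds $\nu\int\e^{2\delta\nu^{1/3}t}\|(k,\partial_y)f_k\|^2$ and $\nu^{1/3}|k|^{2/3}\int \e^{2\delta\nu^{1/3}t}\|f_k\|^2$. Choosing $\delta<c/2$ lets these absorb the weight $\e^{\delta\nu^{1/3}t}$. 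Note that $\nu^{1/3}|k|^{2/3}\ge\nu^{1/3}$ on the torus, so the weight never exceeds the decay rate.

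The weighted terms are then assembled by Duhamel, $f_k(t)=S_k(t,T_0)f_k(T_0)+\int_{T_0}^tS_k(t,s)g_{2,k}(s)\dd s$, together with Minkowski's inequality. The sup-in-time and energy pieces follow directly from the hypocoercive estimate applied with the forcing. The $L^1L^2$ pieces, namely $\nu^{1/3}\|\e^{\delta\nu^{1/3}t}P_{\neq}f\|_{L^1L^2}$ and $\nu^{1/2}\|\e^{\delta\nu^{1/3}t}\partial_xf\|_{L^1L^2}$, need two things. First, integrate the semigroup bound in time: $\int_s^\infty \e^{-(c-\delta)\nu^{1/3}(t-s)}\dd t\lesssim\nu^{-1/3}$. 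Second, for $\partial_xf$, use a bound $\|kS_k(t,s)\|\lesssim(\nu(t-s))^{-1/2}\e^{-c'\nu^{1/3}(t-s)}$, obtained from the $\nu t\|(k,\partial_y)f_k\|^2$ component of the functional as in \eqref{aa1}. Its time integral is $\lesssim\nu^{-1/2}$ once we use $\int_0^\infty\tau^{-1/2}\e^{-\nu^{1/3}\tau}\dd\tau\approx\nu^{-1/6}$ together with the factor $\nu^{-1/2}\cdot\nu^{-1/6}\cdot\nu^{1/6}$ from the weighted $k$-dependence. Summing over $k$ and combining with the zero-mode estimate gives \eqref{4.15bb}.

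The main obstacle is the inviscid damping term $\|\e^{\delta\nu^{1/3}t}\partial_x\nabla\Delta^{-1}f\|_{L^2L^2}$. For this I would follow \cite{wei-zhang2026}. Write $f_k=f_k^{(1)}+f_k^{(2)}$, where $f^{(1)}$ is the homogeneous solution and $f^{(2)}$ the Duhamel part. For each piece, a time-dependent multiplier estimate (an energy identity with weight built from $\partial_y+ikt$, or a duality argument against the adjoint stream function $\Delta_k^{-1}$) shows that $\int\|k\nabla\Delta_k^{-1}f_k\|^2\dd t\lesssim\|f_k(s)\|^2$, with no loss in $\nu$. The boundary makes the Orr mechanism delicate because of the boundary terms of $\partial_y\Delta_k^{-1}f_k$. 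I would control them using $V_1'\approx1$, $\|V_1''\|\lesssim\varepsilon_0$, and the Dirichlet condition $f_k(\pm1)=0$, treating $V_1''$ perturbatively. If the perturbative treatment fails, my fallback is to appeal to the known linearized space-time inviscid damping estimate for near-Couette flow in \cite{wei-zhang2026} and simply add the exponential weight.
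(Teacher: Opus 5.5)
Your zero-mode heat estimate and your hypocoercive enhanced-dissipation argument are fine in outline. The step for $\nu^{\frac12}\|\e^{\delta\nu^{\frac13}t}\partial_xf\|_{L^1L^2}$, however, fails as written. With your kernel bound $\|kS_k(t,s)\|\lesssim(\nu\tau)^{-\frac12}\e^{-c'\nu^{\frac13}\tau}$, $\tau=t-s$, you get $\nu^{\frac12}\int_0^\infty(\nu\tau)^{-\frac12}\e^{-(c'-\delta)\nu^{\frac13}\tau}\,\dd\tau\approx\nu^{-\frac16}$, which is a genuine loss. There is no compensating factor $\nu^{\frac16}$ from the $k$-dependence: for $|k|=1$ the rate is exactly $\nu^{\frac13}$, and using $\nu^{\frac13}|k|^{\frac23}$ only gains $|k|^{-\frac13}$.

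What you need is the full strength of \eqref{aa1}. Its factor $(1+t)^{-1}$ comes from the $a_1\nu|k|^2t^3$ component of $\Phi_k$, not from the $\nu t$ component, and it gives $|k|\|S_k(t,s)\|\lesssim(\nu\tau^3)^{-\frac12}$ for $\tau\ge1$. Since \eqref{4.15aa} contains no $u_a$, the computation of Lemma~\ref{l4.4}, restarted at time $s$, is valid for all $\tau$; the restriction $t\le\nu^{-\frac16}$ came only from the $u_a$ terms. Gluing this with the exponential decay for $\tau\gtrsim\nu^{-\frac13}$ yields $\|kS_k(t,s)\|\lesssim(\nu\tau)^{-\frac12}(1+\tau)^{-1}\e^{-c\nu^{\frac13}\tau}$. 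Its weighted time integral is at most $\nu^{-\frac12}\int_0^\infty\tau^{-\frac12}(1+\tau)^{-1}\dd\tau=\pi\nu^{-\frac12}$, as required.

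The same functional also supplies the initial layer you skipped in passing from $\Psi_k'\le-c\nu^{\frac13}|k|^{\frac23}\Psi_k$ to an $L^2\to L^2$ bound, since $\Psi_k(s)$ is not controlled by $\|f_k(s)\|^2$. You should also route the forcing entirely through Duhamel. Putting $g_2$ into $\Psi_k$ would pair it with $\Delta_kf_k$ and $\partial_yf_k$, and an $L^1L^2$ source cannot control those pairings.

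The second, more serious point is that the inviscid damping bound, which is the real content of the proposition, is not proved. The multiplier and duality options are only named. The usual ghost-weight construction relies on a Fourier multiplier in $y$, which is unavailable on $[-1,1]$. The simple perturbative version does not close either. Under \eqref{eq:4.1} you only have $\|V_1-y\|_{C^3}\le C\varepsilon_0$, so moving $(V_1-y)\partial_xf$ to the right-hand side of a Couette estimate costs $C\varepsilon_0\nu^{-\frac12}\|f\|_{X_2}$, which cannot be absorbed.

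Your argument therefore rests on the fallback: the unweighted linear space-time estimate of \cite{wei-zhang2026} for $\partial_t+V_1\partial_x-\nu\Delta$ with Dirichlet data. That is also all the paper does, since its proof is only a reference to Proposition 2.4 there. Granting that estimate, you insert the weight correctly by applying it to $\tilde f=\e^{\delta\nu^{\frac13}t}P_{\neq}f$. This function solves \eqref{4.15aa} with source $\e^{\delta\nu^{\frac13}t}P_{\neq}g_2+\delta\nu^{\frac13}\tilde f$. The extra term is controlled by your bound on $\nu^{\frac13}\|\e^{\delta\nu^{\frac13}t}P_{\neq}f\|_{L^1L^2}$, so say so explicitly rather than "simply adding" the weight.
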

 \begin{proof}
 The proof of this proposition is similar to Proposition 2.4 in \cite{wei-zhang2026}. For brevity, we
omit the details.
\end{proof}

To obtain the estimates of $(\omega,\theta)$ without explicit expression, we need to use Proposition~\ref{p4.8a}, the main difficulty is the reaction part of $u \cdot \nabla \omega_{a}, \ u \cdot \nabla \theta_{a}$ due to the polynomial growth of $ \nabla \omega_{a}$ and $\nabla \theta_{a}$. Motivated by \cite{wei-zhang2026}, we apply Proposition~\ref{p2.1} to find that
\[
u \cdot \nabla \omega_{a} + t u^{(2)} \partial_x \omega_{a} = u^{(1)} \partial_x \omega_{a} + u^{(2)} (\partial_y + t \partial_x) \omega_{a},
\]
and
\[
u \cdot \nabla \theta_{a} + t u^{(2)} \partial_x \theta_{a} = u^{(1)} \partial_x \theta_{a} + u^{(2)} (\partial_y + t \partial_x) \theta_{a},
\]
are smaller. Thus, we introduce the reaction parts $\omega_{re}$ and $\theta_{re}$, which satisfy
\begin{align}
  &\partial_t \omega_{re} - \nu \Delta \omega_{re} + V_1 \partial_x \omega_{re} - t u^{(2)} \partial_x \omega_{a} = 0, \quad \omega_{re}(t,x,\pm 1) = 0, \quad \omega_{re}|_{t=T_0} = 0,\label{aaa}\\ 
  &\partial_t \theta_{re} - \nu \Delta \theta_{re} + V_1 \partial_x \theta_{re} - t u^{(2)} \partial_x \theta_{a} = 0, \quad \theta_{re}(t,x,\pm 1) = 0, \quad \theta_{re}|_{t=T_0} = 0.\label{bbb}
\end{align}
Let $\omega_* = \omega - \omega_{re}$, $\theta_* = \theta - \theta_{re}$, then we get that
\begin{align}
\partial_t \omega_* - \nu \Delta \omega_* + V_1 \partial_x \omega_* = h_1 , \quad \omega_*(t,x,\pm 1) = 0, \quad \omega_*|_{t=T_0} = \omega|_{t=T_0},\label{aaaa}\\
\partial_t \theta_* - \nu \Delta \theta_* + V_1 \partial_x \theta_*  = h_2, \quad \theta_*(t,x,\pm 1) = 0, \quad \theta_*|_{t=T_0} = \theta|_{t=T_0},\label{bbbb}
\end{align}
where
\begin{align}
h_1 &= V_1'' \partial_x \phi - u_{ob} \cdot \nabla \omega - u^{(1)} \partial_x \omega_{a} - u^{(2)} (\partial_y + t \partial_x) \omega_{a} - Er^1-\partial_x \theta ,\notag\\
    h_2 &=V_2' \partial_x \phi - u_{ob} \cdot \nabla \theta - u^{(1)} \partial_x \theta_{a} - u^{(2)} (\partial_y + t \partial_x)\theta_{a}   - Er^2.\notag
\end{align}

As for $h_1$ and $h_2$, we can deduce the following estimate.

\begin{prop}\label{prop:2.6}
The following estimates hold
\begin{align}\label{h1}
\|\e^{\delta \nu^{\frac{1}{3}} t} h_1\|_{L^1 L^2} \lesssim& \nu^{-\frac{1}{3}} (\|\Omega_{in}\|_{H^3}+\nu^{-\frac{1}{3}}\|\Theta_{in}\|_{H^4}) \|\omega\|_{X_2}+ \nu^{-\frac{5}{8}}\|\omega\|_{X_2}^2 \notag\\
&+\nu^{\frac{1}{3}} \|\Omega_{in}\|_{H^3}+\|\Theta_{in}\|_{H^4}+\nu^{-\frac{1}{2}} \|\theta\|_{X_2},
\end{align}
\begin{align}\label{h2}
\|\e^{\delta \nu^{\frac{1}{3}} t} h_2\|_{L^1 L^2} \lesssim& \nu^{-\frac{1}{3}} \|\Theta_{in}\|_{H^4} \|\omega\|_{X_2}+\nu^{-\frac{1}{3}} (\|\Omega_{in}\|_{H^3}+\nu^{-\frac{1}{3}}\|\Theta_{in}\|_{H^4}) \|\theta\|_{X_2}\notag\\
&+ \nu^{-\frac{5}{8}} \|\omega\|_{X_2}\|\theta\|_{X_2} +\nu^{\frac{1}{3}} \|\Theta_{in}\|_{H^4}.
\end{align}
\end{prop}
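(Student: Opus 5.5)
The plan is to bound each term of $h_1$ and $h_2$ separately in $L^1(T_0,\infty;L^2)$ with the weight $\e^{\delta\nu^{\frac13}t}$, following the proof of the analogous estimate in \cite{wei-zhang2026} (Proposition 2.5 there). The extra work is the buoyancy term $-\partial_x\theta$ in $h_1$ and the temperature-specific terms in $h_2$. Throughout, the velocity is controlled by the vorticity via $u=(\partial_y,-\partial_x)\Delta^{-1}\omega$. Since $\omega_{a},\theta_{a},Er^i$ carry the factor $\e^{-2\nu^{\frac13}t}$ and $\delta\ll1$, the weight is always absorbed.

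\textbf{Linear terms.} For the buoyancy term, the definition \eqref{4.15aaa} of $X_2$ gives
\[
\|\e^{\delta\nu^{\frac13}t}\partial_x\theta\|_{L^1L^2}\le \nu^{-\frac12}\|\theta\|_{X_2}.
\]
This is the source of the term $\nu^{-\frac12}\|\theta\|_{X_2}$ in \eqref{h1}. For $V_1''\partial_x\phi$ and $V_2'\partial_x\phi$, I use $\|V_1''\|_{L^\infty}\lesssim\|\Omega_{in}\|_{H^3}$ and $\|V_2'\|_{L^\infty}\lesssim\|\Theta_{in}\|_{H^3}$ from \eqref{2.3}--\eqref{2.4}. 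Since $\partial_x\phi$ only involves nonzero modes, Poincaré in $x$ gives $\|\partial_x\phi\|\lesssim\|P_{\neq}\omega\|$, and then
\[
\nu^{\frac13}\|\e^{\delta\nu^{\frac13}t}P_{\neq}\omega\|_{L^1L^2}\le\|\omega\|_{X_2}.
\]
This produces $\nu^{-\frac13}\|\Omega_{in}\|_{H^3}\|\omega\|_{X_2}$ and $\nu^{-\frac13}\|\Theta_{in}\|_{H^4}\|\omega\|_{X_2}$.

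\textbf{Reaction terms.} Write the reaction terms as $u^{(1)}\partial_x\omega_{a}+u^{(2)}(\partial_y+t\partial_x)\omega_{a}$, and similarly with $\theta_{a}$. I bound them by
\[
\|u\|\big(\|\partial_x\omega_{a}\|_{L^\infty}+\|(\partial_y+t\partial_x)\omega_{a}\|_{L^\infty}\big).
\]
By \eqref{2.10}--\eqref{2.11}, the $L^\infty$ factor is at most $\e^{-2\nu^{\frac13}t}$ times $\|\Omega_{in}\|_{H^3}+\nu^{-\frac13}\|\Theta_{in}\|_{H^4}$ (respectively $\|\Theta_{in}\|_{H^4}$). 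The time integral of $\e^{-2\nu^{\frac13}t}$ costs $\nu^{-\frac13}$. For $\|u\|$, $P_0u$ is controlled by $\|\omega\|_{L^\infty L^2}$, and $P_{\neq}u$ by $\|\e^{\delta\nu^{\frac13}t}P_{\neq}\omega\|_{L^\infty L^2}$. In the $\theta$ equation the factors are swapped: $u\cdot\nabla\theta_{a}$ gives $\nu^{-\frac13}\|\Theta_{in}\|_{H^4}\|\omega\|_{X_2}$. The term with coefficient $\nu^{-\frac13}(\ldots)\|\theta\|_{X_2}$ in \eqref{h2} is there to absorb any pieces where the roles of the factors are exchanged.

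\textbf{Error terms.} Using \eqref{2.14}--\eqref{2.15}, the part $\nu(1+t)\e^{-2\nu^{\frac13}t}$ integrates to $\nu\cdot\nu^{-\frac23}=\nu^{\frac13}$. The quadratic part $(1+t)^{-2}$ restricted to $t\ge T_0$ integrates to $\nu^{\frac16}$. Combined with the smallness $\|\Omega_{in}\|_{H^3}+\nu^{-\frac13}\|\Theta_{in}\|_{H^4}\lesssim\nu^{\frac16}\varepsilon_0$, this gives the terms $\nu^{\frac13}\|\Omega_{in}\|_{H^3}+\|\Theta_{in}\|_{H^4}$ and $\nu^{\frac13}\|\Theta_{in}\|_{H^4}$.

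\textbf{Main obstacle: the transport terms} $u_{ob}\cdot\nabla\omega$ and $u_{ob}\cdot\nabla\theta$. Split $u_{ob}=u_{a}+u$. The $u_{a}$ part uses \eqref{2.13} in $L^\infty$ against $\nabla\omega$ in $L^2L^2$; the $(1+t)^{-1}$ decay and the exponential factor make it lower order.

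For $u\cdot\nabla\omega$, decompose it as
\[
u\cdot\nabla\omega = P_{\neq}u\cdot\nabla\omega + P_0u^{(1)}\partial_x\omega.
\]
The zero-mode part is bounded by $\|P_0u\|_{L^\infty}\|\e^{\delta\nu^{\frac13}t}\partial_x\omega\|_{L^1L^2}\lesssim\nu^{-\frac12}\|\omega\|_{X_2}^2$. For the nonzero-mode part, Lemma~\ref{lem:A.1} gives
\[
\|P_{\neq}u\|_{L^\infty}\lesssim\|\partial_x\nabla\Delta^{-1}\omega\|^{1-\frac{\delta_1}{2}}\|\nabla P_{\neq}\omega\|^{\frac{\delta_1}{2}},
\]
and the key point is to use the inviscid-damping component $\|\e^{\delta\nu^{\frac13}t}\partial_x\nabla\Delta^{-1}\omega\|_{L^2L^2}$ of $X_2$ here. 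With H\"older in time against $\nu^{\frac12}\|\nabla\omega\|_{L^2L^2}$, taking $\delta_1=\frac15$ gives
\[
\|\e^{\delta\nu^{\frac13}t}P_{\neq}u\cdot\nabla\omega\|_{L^1L^2}\lesssim\nu^{-\frac12-\frac{\delta_1}{4}}\|\omega\|_{X_2}^2\le\nu^{-\frac58}\|\omega\|_{X_2}^2.
\]
The same argument with $\nabla\theta$ in place of $\nabla\omega$ gives the term $\nu^{-\frac58}\|\omega\|_{X_2}\|\theta\|_{X_2}$ in \eqref{h2}.

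The delicate point is that the exponential weight must be placed on the nonzero-mode factor each time. Applying Lemma~\ref{lem:A.1} needs care to avoid losing a $t$ factor.
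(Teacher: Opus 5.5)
Your term-by-term architecture is the same as the paper's: the splittings $u_{ob}=u_a+u$ and $u=P_{\neq}u+P_0u^{(1)}$ match, and your treatment of $V_1''\partial_x\phi$, $V_2'\partial_x\phi$, the reaction terms, $Er^i$ and $\partial_x\theta$ is fine. However, two steps fail quantitatively as written. Write $A=\|\Omega_{in}\|_{H^3}+\nu^{-\frac13}\|\Theta_{in}\|_{H^4}$, so that $A\lesssim\varepsilon_0\nu^{\frac13}$ under \eqref{1.4}.

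\emph{First gap: the $u_a$-transport term is not lower order if you treat it isotropically.} By \eqref{2.13}, $u_a^{(1)}$ decays only like $(1+t)^{-1}$, and $\e^{-2\nu^{\frac13}t}$ gives nothing on $[T_0,\nu^{-\frac13}]$. Putting all of $u_a$ in $L^2_tL^\infty$ against $\nabla\omega$ in $L^2L^2$ therefore yields only
\[
\|\e^{\delta\nu^{\frac13}t}u_a\|_{L^2(T_0,\infty;L^\infty)}\|\nabla\omega\|_{L^2L^2}\lesssim T_0^{-\frac12}A\cdot\nu^{-\frac12}\|\omega\|_{X_2}=\nu^{-\frac{5}{12}}A\|\omega\|_{X_2}\lesssim\varepsilon_0\nu^{-\frac{1}{12}}\|\omega\|_{X_2}.
\]
This is not of the form on the right of \eqref{h1}, and it cannot be absorbed. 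The fix is to use that $u_a^{(1)}$ multiplies only $\partial_x\omega$:
\begin{itemize}
\item bound $\|u_a^{(1)}\|_{L^\infty L^\infty}\lesssim T_0^{-1}A=\nu^{\frac16}A$ against $\|\e^{\delta\nu^{\frac13}t}\partial_x\omega\|_{L^1L^2}\le\nu^{-\frac12}\|\omega\|_{X_2}$;
\item reserve the $L^2_tL^\infty$--$L^2L^2$ pairing for $u_a^{(2)}\partial_y\omega$, where the $(1+t)^{-2}$ decay gives $\|\e^{\delta\nu^{\frac13}t}u_a^{(2)}\|_{L^2L^\infty}\lesssim T_0^{-1}A$.
\end{itemize}
Both pieces give $\nu^{-\frac13}A\|\omega\|_{X_2}$. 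The same computation with $\theta$ in place of $\omega$ is exactly where the term $\nu^{-\frac13}A\|\theta\|_{X_2}$ in \eqref{h2} comes from. It does not come from ``exchanged roles'' in the reaction terms, which only ever produce data times $\|\omega\|_{X_2}$.

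\emph{Second gap: your interpolation inequality for $P_{\neq}u$ is false for $\delta_1=\frac15$.} The first bound of Lemma~\ref{lem:A.1}, applied to $f=P_{\neq}u$, involves $\|\nabla u\|=\|P_{\neq}\omega\|$, not $\|\partial_x\nabla\Delta^{-1}\omega\|$; the latter misses $\partial_y^2\phi$. In fact the inequality you wrote fails for every $\delta_1<\frac12$. Take $\phi=\cos x\,\epsilon G(y/\epsilon)$ with $G\in C_c^\infty(-1,1)$ and $G'(0)=1$. Then $\|u\|_{L^\infty}\ge1$, $\|\partial_xu\|\sim\epsilon^{\frac12}$ and $\|\nabla\omega\|\sim\epsilon^{-\frac32}$, so your right-hand side is $\sim\epsilon^{\frac12-\delta_1}\to0$. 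Scaling in $y$ forces at least the power $\frac14$ on $\|u\|_{H^2}$. So your intermediate bound $\nu^{-\frac{11}{20}}$ is unjustified.

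The correct choice is $\delta_1=\frac12$, which is the second bound of Lemma~\ref{lem:A.1}:
\[
\|\e^{\delta\nu^{\frac13}t}P_{\neq}u\|_{L^2L^\infty}\lesssim\|\e^{\delta\nu^{\frac13}t}\partial_xu\|_{L^2L^2}^{\frac34}\|\e^{\delta\nu^{\frac13}t}\nabla P_{\neq}\omega\|_{L^2L^2}^{\frac14}\le\nu^{-\frac18}\|\omega\|_{X_2}.
\]
Paired with $\|\nabla\omega\|_{L^2L^2}\le\nu^{-\frac12}\|\omega\|_{X_2}$, or with $\|\nabla\theta\|_{L^2L^2}\le\nu^{-\frac12}\|\theta\|_{X_2}$, this gives exactly the $\nu^{-\frac58}$ in the statement; that is where the exponent $\frac58$ comes from. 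By contrast, the first bound of Lemma~\ref{lem:A.1}, correctly applied with $\|\e^{\delta\nu^{\frac13}t}P_{\neq}\omega\|_{L^2L^2}\le\nu^{-\frac16}\|\omega\|_{X_2}$, only gives $\nu^{-\frac23-\frac{\delta_1}{6}}\|\omega\|_{X_2}^2$. That is too weak to close the bootstrap $\|\omega\|_{X_2}\le\varepsilon_1\nu^{\frac23}$.
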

\begin{proof}
Let's first prove \eqref{h2}, which will be divided into several steps.

{\it Step 1. The estimate of $V_2' \partial_x \phi$.} By using \eqref{2.4}, we have
\begin{align*}
\|\e^{\delta \nu^{\frac{1}{3}} t} V_2' \partial_x \phi\|_{L^1 L^2} &\lesssim \|\Theta_{in}\|_{H^4} \|\e^{\delta \nu^{\frac{1}{3}} t} P_{\neq} \omega\|_{L^1 L^2} \lesssim \nu^{-\frac{1}{3}} \|\Theta_{in}\|_{H^4} \|\omega\|_{X_2}\\
&\lesssim\nu^{-\frac{1}{3}} \|\Theta_{in}\|_{H^4} \|\omega\|_{X_2}.
\end{align*}

{\it Step 2. The estimate of $u_{ob} \cdot \nabla \theta$.} Note that $u_{ob}=u +u_{a}$, thus,
\[
u_{ob} \cdot \nabla \theta = u_{a} \cdot \nabla \theta + u \cdot \nabla \theta.
\]

For the first term, 
\[
u_{a} \cdot \nabla \theta = u_{a}^{(1)} \partial_x \theta + u_{a}^{(2)} \partial_y \theta.
\]
It follows from \eqref{2.13} that
\begin{align*}
\|u_{a}^{(1)}\|_{L^\infty L^\infty} &\lesssim T_0^{-1} (\|\Omega_{in}\|_{H^3}+\nu^{-\frac{1}{3}}\|\Theta_{in}\|_{H^4}) \lesssim\nu^{\frac{1}{6}} (\|\Omega_{in}\|_{H^3}+\nu^{-\frac{1}{3}}\|\Theta_{in}\|_{H^4}), \\
\|\e^{\delta \nu^{\frac{1}{3}} t} u_{a}^{(2)}\|_{L^2 L^\infty} &\lesssim \|t^{-2}\|_{L^2(T_0,\infty)} (\|\Omega_{in}\|_{H^3}+\nu^{-\frac{1}{3}}\|\Theta_{in}\|_{H^4})\\& \lesssim T_0^{-1} (\|\Omega_{in}\|_{H^3}+\nu^{-\frac{1}{3}}\|\Theta_{in}\|_{H^4}) \lesssim \nu^{\frac{1}{6}} (\|\Omega_{in}\|_{H^3}+\nu^{-\frac{1}{3}}\|\Theta_{in}\|_{H^4}).
\end{align*}
Thus, by Hölder's inequality, we deduce
\begin{align*}
\|\e^{\delta \nu^{\frac{1}{3}} t} u_{a} \cdot \nabla \theta\|_{L^1 L^2} &\lesssim \|u_{a}^{(1)}\|_{L^\infty L^\infty} \|\e^{\delta \nu^{\frac{1}{3}} t} \partial_x \theta\|_{L^1 L^2} + \|\e^{\delta \nu^{\frac{1}{3}} t} u_{a}^{(2)}\|_{L^2 L^\infty} \|\nabla \theta\|_{L^2 L^2} \\
&\lesssim \nu^{\frac{1}{6}} (\|\Omega_{in}\|_{H^3}+\nu^{-\frac{1}{3}}\|\Theta_{in}\|_{H^4})\nu^{-\frac{1}{2}} \|\theta\|_{X_2}\\
&\lesssim \nu^{-\frac{1}{3}} (\|\Omega_{in}\|_{H^3}+\nu^{-\frac{1}{3}}\|\Theta_{in}\|_{H^4}) \|\theta\|_{X_2}.
\end{align*}
For the second term, 
\[
u \cdot \nabla \theta = P_{\neq} u \cdot \nabla \theta + P_0 u^1 \partial_x \theta.
\]
It follows from Lemma~\ref{lem:A.1} and $u = (\partial_y, -\partial_x) \Delta^{-1} \omega$ that
\begin{align*}
\|\e^{\delta \nu^{\frac{1}{3}} t} P_{\neq} u\|_{L^2 L^\infty} &\lesssim \|\e^{\delta \nu^{\frac{1}{3}} t} \partial_x u\|_{L^2 L^2}^{\frac{3}{4}} \|\e^{\delta \nu^{\frac{1}{3}} t} \nabla \omega\|_{L^2 L^2}^{\frac{1}{4}} \\
&\lesssim \|\omega\|_{X_2}^{\frac{3}{4}} (\nu^{-\frac{1}{2}} \|\omega\|_{X_2})^{\frac{1}{4}} = \nu^{-\frac{1}{8}} \|\omega\|_{X_2}.
\end{align*}
One can infer that
\begin{align*}
\|\e^{\delta \nu^{\frac{1}{3}} t} P_{\neq} u \cdot \nabla \theta\|_{L^1 L^2} &\leq \|\e^{\delta \nu^{\frac{1}{3}} t} P_{\neq} u\|_{L^2 L^\infty} \|\nabla \theta\|_{L^2 L^2} \lesssim \nu^{-\frac{1}{8}} \|\omega\|_{X_2} \cdot \nu^{-\frac{1}{2}} \|\theta\|_{X_2} \lesssim \nu^{-\frac{5}{8}}\|\omega\|_{X_2}\|\theta\|_{X_2}, \\
\|\e^{\delta \nu^{\frac{1}{3}} t} P_0 u^1 \partial_x \theta\|_{L^1 L^2} &\leq \|\omega\|_{L^\infty L^2} \|\e^{\delta \nu^{\frac{1}{3}} t} \partial_x \theta\|_{L^1 L^2} \lesssim \nu^{-\frac{1}{2}} \|\omega\|_{X_2}\|\theta\|_{X_2}\lesssim \nu^{-\frac{1}{2}} \|\omega\|_{X_2}\|\theta\|_{X_2},
\end{align*}
which imply
\[
\|\e^{\delta \nu^{\frac{1}{3}} t} u \cdot \nabla \theta\|_{L^1 L^2} \lesssim \nu^{-\frac{5}{8}} \|\omega\|_{X_2}\|\theta\|_{X_2}.
\]

{\it Step 3. The estimate of $u^{(1)} \partial_x \theta_{a} + u^{(2)} (\partial_y + t \partial_x) \theta_{a}$.}  From Proposition~\ref{p2.1},  one gets
\begin{align*}
\|u^{(2)} (\partial_y + t \partial_x) \theta_{a}\| &\leq \|u^{(2)}\| \|(\partial_y + t \partial_x) \theta_{a}\|_{L^\infty} \lesssim \|\omega\| \e^{-2\nu^{\frac{1}{3}} t} \|\Theta_{in}\|_{H^4}, \\
\|u^{(1)} \partial_x \theta_{a}\| &\leq \|u^{(1)}\| \|\partial_x \theta_{a}\|_{L^\infty} \lesssim \|\omega\| \e^{-2\nu^{\frac{1}{3}} t} \|\Theta_{in}\|_{H^4},
\end{align*}
which means that
\begin{align*}
\|\e^{\delta \nu^{\frac{1}{3}} t} (u^{(1)} \partial_x \theta_{a} + u^{(2)} (\partial_y + t \partial_x) \theta_{a})\|_{L^1 L^2}&\lesssim \|\omega\|_{L^\infty L^2} \|\e^{-\nu^{\frac{1}{3}} t}\|_{L^1(T_0,\infty)} \|\Theta_{in}\|_{H^4} \\
&  \lesssim \nu^{-\frac{1}{3}} \|\Theta_{in}\|_{H^4} \|\omega\|_{X_2}.
\end{align*}

{\it Step 4. The estimate of $Er^2$.} By applying \eqref{2.15}, we obtain
\begin{align*}
\|\e^{\delta \nu^{\frac{1}{3}} t} Er^2\|_{L^1 L^2} \lesssim& \|\nu (1+t) \e^{-\nu^{\frac{1}{3}} t}\|_{L^1(T_0,\infty)} \|\Theta_{in}\|_{H^4} \\ 
&+ \|(1+t)^{-2}\|_{L^1(T_0,\infty)} \|\Theta_{in}\|_{H^4}(\|\Omega_{in}\|_{H^3}+\nu^{-\frac{1}{3}}\|\Theta_{in}\|_{H^4}) \\
\lesssim& \nu^{\frac{1}{3}} \|\Theta_{in}\|_{H^4} +\|\Theta_{in}\|_{H^4}(\|\Omega_{in}\|_{H^3}+\nu^{-\frac{1}{3}}\|\Theta_{in}\|_{H^4})\lesssim \nu^{\frac{1}{3}} \|\Theta_{in}\|_{H^4} .
\end{align*}
By combining the above formulas, we get \eqref{h2}.

Similarly, we can also prove \eqref{h1}.
The proof of this proposition is completed.

\end{proof}

Next, we consider the estimate of the reaction term.

\begin{prop}\label{p4.12a}
Let $\omega_{re}$ and $\theta_{re}$ be the solution of \eqref{aaa} and \eqref{bbb} respectively, we have 
\begin{align}
&\|\omega_{re}\|_{X_2} \lesssim \nu^{-\frac{1}{3}} \|\e^{\delta \nu^{\frac{1}{3}} t} \nabla u^{(2)}\|_{L^2 L^2} (\|\Omega_{in}\|_{H^3}+\nu^{-\frac{1}{3}} \|\Theta_{in}\|_{H^4}),\label{4.34a}\\
&\|\theta_{re}\|_{X_2} \lesssim \nu^{-\frac{1}{3}} \|\e^{\delta \nu^{\frac{1}{3}} t} \nabla u^{(2)}\|_{L^2 L^2} \|\Theta_{in}\|_{H^4}.\label{4.35a}
\end{align}
\end{prop}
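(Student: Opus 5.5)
The plan is to apply Proposition \ref{p4.8a} directly to \eqref{aaa} and \eqref{bbb}. Both are of the form \eqref{4.15aa} with zero data at $T_0$, and the forcings are $g_2=t u^{(2)}\partial_x\omega_{a}$ and $g_2=tu^{(2)}\partial_x\theta_{a}$ respectively. Proposition \ref{p4.8a} then gives
\[
\|\omega_{re}\|_{X_2}\lesssim \|\e^{\delta\nu^{\frac13}t}\,t\,u^{(2)}\partial_x\omega_{a}\|_{L^1L^2},\qquad \|\theta_{re}\|_{X_2}\lesssim \|\e^{\delta\nu^{\frac13}t}\,t\,u^{(2)}\partial_x\theta_{a}\|_{L^1L^2}.
\]
This naive route is not quite enough, because $t\|\partial_x\omega_a\|_{L^\infty}$ decays only like $t\,\e^{-2\nu^{1/3}t}$. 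So we will instead bound the forcing in the dual space $L^2_t H^{-1}$ against the dissipation, or use the $L^1L^2$ bound with a Hardy/Poincaré-type estimate on $u^{(2)}$. I describe the simpler route first.

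\textbf{Step 1 (pointwise bound of the forcing).} By H\"older,
\[
\|t u^{(2)}\partial_x\omega_a\|\le \|u^{(2)}\|\, t\|\partial_x\omega_a\|_{L^\infty}.
\]
Since $u^{(2)}=-\partial_x\phi$ vanishes at $y=\pm1$, the Poincaré inequality in $y$ gives $\|u^{(2)}\|\lesssim\|\partial_y u^{(2)}\|\le\|\nabla u^{(2)}\|$. Lemma \ref{l3.2} gives
\[
t\|\partial_x\omega_a\|_{L^\infty}\lesssim \nu^{-\frac13}\e^{-\nu^{\frac13}t}\big(\|\Omega_{in}\|_{H^3}+\nu^{-\frac13}\|\Theta_{in}\|_{H^4}\big),
\]
and in the same way $t\|\partial_x\theta_a\|_{L^\infty}\lesssim\nu^{-\frac13}\e^{-\nu^{\frac13}t}\|\Theta_{in}\|_{H^4}$.

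\textbf{Step 2 (time integration).} Multiply by $\e^{\delta\nu^{1/3}t}$ and integrate over $t\ge T_0$ using Cauchy--Schwarz:
\[
\|\e^{\delta\nu^{\frac13}t}\,t\,u^{(2)}\partial_x\omega_a\|_{L^1L^2}\lesssim \nu^{-\frac13}\|\e^{\delta\nu^{\frac13}t}\nabla u^{(2)}\|_{L^2L^2}\,\|\e^{-\nu^{\frac13}t}\|_{L^2(T_0,\infty)}\big(\|\Omega_{in}\|_{H^3}+\nu^{-\frac13}\|\Theta_{in}\|_{H^4}\big).
\]
Here $\|\e^{-\nu^{1/3}t}\|_{L^2}\lesssim\nu^{-1/6}$, which costs an extra $\nu^{-1/6}$ compared with \eqref{4.34a}. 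This loss is the main obstacle. To remove it I would not go through the $L^1L^2$ bound. Instead I would redo the energy estimate underlying Proposition \ref{p4.8a}, treating the forcing as a source paired with $\omega_{re}$ and placing the factor $\nu^{-\frac13}\e^{-\nu^{\frac13}t}$ against $\|\e^{\delta\nu^{1/3}t}\omega_{re}\|_{L^\infty L^2}$:
\[
\int \e^{2\delta\nu^{\frac13}t}|\langle tu^{(2)}\partial_x\omega_a,\omega_{re}\rangle|\,dt\le \|\e^{\delta\nu^{\frac13}t}\omega_{re}\|_{L^\infty L^2}\int \e^{\delta\nu^{\frac13}t}\|u^{(2)}\|\,t\|\partial_x\omega_a\|_{L^\infty}\,dt.
\]
If that still leaves the $\nu^{-1/6}$, the better choice is to use the sharper form $t\|\partial_x\omega_a\|_{L^\infty}\lesssim (1+t)^{-1}\cdot t^2\|\partial_x\omega_a\|_{L^\infty}$. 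Alternatively, one can follow \cite{wei-zhang2026} (Proposition 2.5 there) and integrate by parts in $x$, moving $\partial_x$ from $\omega_a$ onto $u^{(2)}$ and $\omega_{re}$. Then $\nabla u^{(2)}$ pairs with $\nu^{1/2}\|\e^{\delta\nu^{1/3}t}\nabla\omega_{re}\|_{L^2L^2}$ or with $\|\e^{\delta\nu^{1/3}t}\partial_x\nabla\Delta^{-1}\omega_{re}\|_{L^2L^2}$ (both are part of $\|\cdot\|_{X_2}$), while $t\,\omega_a$ is estimated in $L^\infty$ via Lemma \ref{l3.1}: $t\|\omega_a\|_{L^\infty}\lesssim t\,\e^{-2\nu^{1/3}t}(\cdots)\lesssim\nu^{-1/3}(\cdots)$. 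This $L^2_t$–$L^2_t$ pairing needs no time integrability of the coefficient, only $\sup_t$, and should produce exactly the factor $\nu^{-\frac13}$ in \eqref{4.34a}. I would check carefully that the $X_2$ components absorb the resulting terms.

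\textbf{Step 3 (temperature).} Repeat the same argument for \eqref{bbb}, with $\theta_a$ in place of $\omega_a$. Lemmas \ref{l3.1}--\ref{l3.2} give the bound with $\|\Theta_{in}\|_{H^4}$ alone, which yields \eqref{4.35a}.
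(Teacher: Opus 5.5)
Your Steps 1--2 are correct, but they give only $\nu^{-\frac12}$ where the statement needs $\nu^{-\frac13}$. None of the repairs you list removes the extra $\nu^{-\frac16}$, so this is a genuine gap.

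\textbf{Why the repairs fail.} The obstruction is time integrability, not spatial derivatives. The velocity $u^{(2)}$ is controlled only in $L^2_t$, through $\|\e^{\delta\nu^{\frac13}t}\nabla u^{(2)}\|_{L^2L^2}$. The coefficient $t\|\partial_x\omega_a\|_{L^\infty}$ is of size $t\,\e^{-2\nu^{\frac13}t}$, since transport gives no decay of $\partial_x\omega_a$ in $L^\infty$. Its $L^\infty_t$ size is $\nu^{-\frac13}$, but its $L^2_t$ size is $\nu^{-\frac12}$. Taking each repair in turn:
\begin{itemize}
\item Pairing against $\|\e^{\delta\nu^{\frac13}t}\omega_{re}\|_{L^\infty L^2}$ reproduces the same $L^1_t$ integral.
\item Writing $t=(1+t)^{-1}t^2$ changes nothing.
\item After your integration by parts in $x$, the term $t\,\omega_a\,\partial_x u^{(2)}\,\omega_{re}$ needs $\|\e^{\delta\nu^{\frac13}t}P_{\neq}\omega_{re}\|_{L^2L^2}$. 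The $X_2$ norm controls this only up to $\nu^{-\frac16}$, by interpolating its $L^\infty L^2$ and $\nu^{\frac13}L^1L^2$ components.
\item The other term, $t\,\omega_a\, u^{(2)}\,\partial_x\omega_{re}$, needs $\|\e^{\delta\nu^{\frac13}t}\partial_x\omega_{re}\|_{L^2L^2}\lesssim \nu^{-\frac12}\|\omega_{re}\|_{X_2}$, which is worse.
\item The only lossless $L^2_t$ component of $X_2$ is $\|\e^{\delta\nu^{\frac13}t}\partial_x\nabla\Delta^{-1}\omega_{re}\|_{L^2L^2}$. It is one derivative smoother than $\omega_{re}$, and moving that derivative onto $t u^{(2)}\omega_a$ produces $t\partial_y\omega_a\approx t^2\partial_x\omega_a$.
\end{itemize}
Independently of this bookkeeping, an $L^2$ energy identity cannot generate the weighted components of $X_2$. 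The weight produces $\delta\nu^{\frac13}\|\e^{\delta\nu^{\frac13}t}\omega_{re}\|^2$, which $\nu\|\nabla\omega_{re}\|^2$ does not absorb. Proposition \ref{p4.8a} is available only for forcing in $L^1L^2$, so a dual-space version would be a new linear estimate that you have not supplied.

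\textbf{The missing idea.} The paper, following \cite{wei-zhang2026}, splits off the free-transport part of the reaction instead of estimating the forcing directly. For \eqref{bbb}:
\begin{itemize}
\item Solve $\partial_tf_1+y\partial_xf_1=t\e^{\nu^{\frac13}t}u^{(2)}\partial_x\theta_a$. Bound $\|\e^{-\nu^{\frac13}t/2}f_1\|_Y+\|\partial_xf_1\|_{L^\infty L^2}$ by $\nu^{-\frac13}\|\e^{\delta\nu^{\frac13}t}\nabla u^{(2)}\|_{L^2L^2}\|\Theta_{in}\|_{H^4}$ using the transport structure (Lemma \ref{l4.17}).
\item Add the viscous corrector $f_2$, with $\|f_2\|_Y\lesssim\|\e^{-\nu^{\frac13}t/2}f_1\|_Y$.
\item Write $\theta_{re}=f_3+f_4$ with $f_3=\e^{-\nu^{\frac13}t/2}f_2+\e^{-\nu^{\frac13}t}f_1$.
\end{itemize}
The forcing of $f_4$ is $\nu^{\frac13}\e^{-\nu^{\frac13}t/2}f_2/2+\nu^{\frac13}\e^{-\nu^{\frac13}t}f_1-\e^{-\nu^{\frac13}t}(V_1-y)\partial_xf_1$. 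Its weighted $L^1L^2$ norm is $O(1)$ times $L^\infty_t$ quantities: the prefactor $\nu^{\frac13}$ pays for the time scale $\nu^{-\frac13}$, and $\|V_1-y\|_{L^\infty}\lesssim\|\Omega_{in}\|_{H^3}\lesssim\varepsilon_0\nu^{\frac13}$ by \eqref{2.2}. So Proposition \ref{p4.8a} applies to $f_4$ without loss (Lemma \ref{l4.14}).

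The one $X_2$ component of $f_3$ not controlled by $\|\e^{\nu^{\frac13}t/2}f_3\|_Y$ (Lemma \ref{l4.13}) is $\nu^{\frac12}\|\e^{\delta\nu^{\frac13}t}\partial_xf_3\|_{L^1L^2}$. This is bounded by $\|f_4\|_{X_2}+\nu^{\frac16}\|\partial_x\theta_{re}\|_{X_2}$. Here your naive estimate is exactly the right tool, applied to $\partial_x\theta_{re}$ rather than $\theta_{re}$, using Lemma \ref{l3.2} for $t\|\partial_x^2\theta_a\|_{L^\infty}$. It gives $\nu^{-\frac12}$, and the prefactor $\nu^{\frac16}$ turns this into the required $\nu^{-\frac13}$ (Lemmas \ref{l4.15}--\ref{l4.16}). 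The same scheme handles \eqref{aaa}.
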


We only prove \eqref{4.35a}, and \eqref{4.34a} can be proved similarly. 
Similar to \cite{wei-zhang2026}, we first assume that $f_1$ satisfies
\[
\partial_t f_1 + y \partial_x f_1 = t \e^{\nu^{\frac{1}{3}} t} u^{(2)} \partial_x \theta_{a}, \quad f_1|_{t=T_0} = 0.
\]
Due to $\theta_{a}|_{y=\pm 1} = 0$, we have $f_1|_{y=\pm 1} = 0$. 
Then assume that $f_2$ satisfies
\[
\partial_t f_2 + V_1 \partial_x f_2- \nu \Delta f_2  = \nu \e^{-\nu^{\frac{1}{3}} t/2} \Delta f_1, \quad f_2(t,x,\pm 1) = f_2|_{t=T_0} = 0.
\]
As in \cite{wei-zhang2026}, we have
\begin{equation}\label{4.36a}
\|f_2\|_Y \lesssim \|\e^{-\nu^{\frac{1}{3}} t/2} f_1\|_Y,
\end{equation}
here the norm $\|\cdot\|_{Y}$ is defined as
\begin{equation}\label{4.37a}
    \|f\|_{Y} = \|f\|_{L^\infty L^2} + \nu^{\frac{1}{2}} \|\nabla f\|_{L^2 L^2} 
+ \|\partial_x \nabla \Delta^{-1} f\|_{L^2 L^2}. 
\end{equation}
Set $f_3 = \e^{-\nu^{\frac{1}{3}} t/2} f_2 + \e^{-\nu^{\frac{1}{3}} t} f_1$, then it is not difficult to verify that  $f_3|_{y=\pm 1} = f_3|_{t=T_0} = 0$, and
\begin{align*}
\partial_t f_3 &- \nu \Delta f_3 + V_1 \partial_x f_3 + \nu^{\frac{1}{3}} \e^{-\nu^{\frac{1}{3}} t/2} f_2/2 + \nu^{\frac{1}{3}} \e^{-\nu^{\frac{1}{3}} t} f_1 \\
&= t u^2 \partial_x \theta_{a} + \e^{-\nu^{\frac{1}{3}} t} (V_1 - y) \partial_x f_1.
\end{align*}
Set $f_4 = \theta_{re} - f_3$, then $f_4$ satisfies
\[
\left\{
\begin{aligned}
&\partial_t f_4 + V_1\partial_x f_4 - \nu \Delta f_4 = \nu^{\frac{1}{3}} \e^{-\nu^{\frac{1}{3}} t/2} f_2 / 2 + \nu^{\frac{1}{3}} \e^{-\nu^{\frac{1}{3}} t} f_1 - \e^{-\nu^{\frac{1}{3}} t} (V_1 - y) \partial_x f_1, \\
&f_4(t,x,\pm 1) = f_4|_{t=T_0} = 0.
\end{aligned}
\right.
\]

We present the following two lemmas concerning bound of the $X_2$ norm. The proofs can be found in \cite{wei-zhang2026}. For the sake of brevity, we omit the details here.
\begin{lem}\label{l4.13}
Suppose that $0<\delta\ll 1$, $f$ is a given function,  there holds 
\[
\|f\|_{X_2} \lesssim \|\e^{\nu^{\frac{1}{3}} t/2} f\|_Y + \nu^{\frac{1}{2}} \|\e^{\delta \nu^{\frac{1}{3}} t} \partial_x f\|_{L^1 L^2}.
\]
\end{lem}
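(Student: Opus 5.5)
We prove Lemma \ref{l4.13} term by term, bounding each piece of the $X_2$ norm \eqref{4.15aaa} either directly by $\|\e^{\nu^{\frac13}t/2}f\|_Y$ or by the extra term $\nu^{\frac12}\|\e^{\delta\nu^{\frac13}t}\partial_xf\|_{L^1L^2}$. Throughout, $t\ge T_0$ and $0<\delta\ll1$, in particular $\delta\le\frac12$, so that $\e^{\delta\nu^{\frac13}t}\le \e^{\nu^{\frac13}t/2}$.

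\textbf{Terms controlled by the $Y$ norm.} The unweighted terms $\|f\|_{L^\infty L^2}$ and $\nu^{\frac12}\|\nabla f\|_{L^2L^2}$ are trivially bounded, since the weight $\e^{\nu^{\frac13}t/2}\ge1$. For the weighted terms $\|\e^{\delta\nu^{\frac13}t}P_{\neq}f\|_{L^\infty L^2}$ and $\nu^{\frac12}\|\e^{\delta\nu^{\frac13}t}\nabla P_{\neq}f\|_{L^2L^2}$, we use that $P_{\neq}$ is an orthogonal projection commuting with $\nabla$. Then $\|P_{\neq}g\|\le\|g\|$ and $\|\nabla P_{\neq} g\|\le\|\nabla g\|$, applied with $g=\e^{\nu^{\frac13}t/2}f$, together with $\e^{\delta\nu^{\frac13}t}\le\e^{\nu^{\frac13}t/2}$, give the bound. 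Likewise, $\|\e^{\delta\nu^{\frac13}t}\partial_x\nabla\Delta^{-1}f\|_{L^2L^2}\le\|\partial_x\nabla\Delta^{-1}(\e^{\nu^{\frac13}t/2}f)\|_{L^2L^2}$, which is part of the $Y$ norm.

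\textbf{The $\partial_x$ term.} The term $\nu^{\frac12}\|\e^{\delta\nu^{\frac13}t}\partial_xf\|_{L^1L^2}$ appears verbatim on the right-hand side of the lemma, so nothing needs to be done.

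\textbf{The $L^1$ dissipation term.} The only nontrivial term is $\nu^{\frac13}\|\e^{\delta\nu^{\frac13}t}P_{\neq}f\|_{L^1L^2}$. Since $k\in\Z$, we have $\|P_{\neq}f\|\le\|\partial_xf\|$. Hence
\[
\nu^{\frac13}\|\e^{\delta\nu^{\frac13}t}P_{\neq}f\|_{L^1L^2}\le \nu^{\frac13}\|\e^{\delta\nu^{\frac13}t}\partial_xf\|_{L^1L^2}\le\nu^{\frac12}\|\e^{\delta\nu^{\frac13}t}\partial_xf\|_{L^1L^2}\quad\text{whenever } \nu^{\frac13}\le\nu^{\frac12},
\]
but this fails for small $\nu$, so this step is the main obstacle. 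Instead, we gain from the weight gap:
\[
\e^{\delta\nu^{\frac13}t}=\e^{-(\frac12-\delta)\nu^{\frac13}t}\,\e^{\nu^{\frac13}t/2}.
\]
By Cauchy--Schwarz in time,
\[
\nu^{\frac13}\|\e^{\delta\nu^{\frac13}t}P_{\neq}f\|_{L^1L^2}\le\nu^{\frac13}\|\e^{-(\frac12-\delta)\nu^{\frac13}t}\|_{L^2_t}\,\|\e^{\nu^{\frac13}t/2}P_{\neq}f\|_{L^2L^2}\lesssim\nu^{\frac16}\|\e^{\nu^{\frac13}t/2}P_{\neq}f\|_{L^2L^2}.
\]
By the Poincaré inequality in $y$ (Dirichlet boundary conditions), $\|P_{\neq}g\|\lesssim\|\nabla g\|$, so the right-hand side is bounded by $\nu^{\frac16}\nu^{-\frac12}\cdot\nu^{\frac12}\|\nabla(\e^{\nu^{\frac13}t/2}f)\|_{L^2L^2}$, which loses a factor $\nu^{-\frac13}$. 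A better route is therefore the $L^\infty_t$ bound:
\[
\nu^{\frac13}\|\e^{\delta\nu^{\frac13}t}P_{\neq}f\|_{L^1L^2}\le\nu^{\frac13}\|\e^{-(\frac12-\delta)\nu^{\frac13}t}\|_{L^1_t}\,\|\e^{\nu^{\frac13}t/2}f\|_{L^\infty L^2}\lesssim\|\e^{\nu^{\frac13}t/2}f\|_{Y},
\]
using $\int_0^\infty\e^{-c\nu^{\frac13}t}\,\dd t=(c\nu^{\frac13})^{-1}$ with $c=\frac12-\delta$. This closes the estimate.

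Summing the bounds for all the terms gives the lemma. The entire argument rests on the weight gap $\frac12-\delta>0$ together with the integrability of $\e^{-c\nu^{\frac13}t}$, which compensates exactly for the factor $\nu^{\frac13}$.
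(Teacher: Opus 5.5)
Your argument is correct and is the elementary comparison the paper has in mind; the paper omits it, citing \cite{wei-zhang2026}. Since $\delta\le\frac12$, every $L^\infty_t$ or $L^2_t$ term of $\|f\|_{X_2}$ is dominated by the corresponding term of $\|\e^{\nu^{\frac13}t/2}f\|_Y$, and the $\partial_x$ term appears verbatim. The remaining term is handled by
\[
\nu^{\frac13}\|\e^{\delta\nu^{\frac13}t}P_{\neq}f\|_{L^1L^2}\le\nu^{\frac13}\|\e^{-(\frac12-\delta)\nu^{\frac13}t}\|_{L^1_t}\,\|\e^{\nu^{\frac13}t/2}f\|_{L^\infty L^2}\lesssim\|\e^{\nu^{\frac13}t/2}f\|_Y ;
\]
your two abandoned attempts (via $\|P_{\neq}f\|\le\|\partial_xf\|$ and via Cauchy--Schwarz plus Poincar\'e) can simply be deleted.
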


\begin{lem}\label{l4.14}
For $f_4$, we can deduce
\[
\|f_4\|_{X_2} \lesssim \|\e^{-\nu^{\frac{1}{3}} t/2} f_1\|_Y + \|\partial_x f_1\|_{L^\infty L^2}.
\]
\end{lem}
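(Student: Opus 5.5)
The plan is to treat $f_4$ as a solution of the linearized problem \eqref{4.15aa} with zero initial datum at $T_0$ and forcing
\[
g_2=\tfrac12\nu^{\frac13}\e^{-\nu^{\frac13}t/2}f_2+\nu^{\frac13}\e^{-\nu^{\frac13}t}f_1-\e^{-\nu^{\frac13}t}(V_1-y)\partial_x f_1 .
\]
Proposition \ref{p4.8a} applies because $V_1$ satisfies \eqref{eq:4.1} and $f_4(t,x,\pm1)=0$. It gives
\[
\|f_4\|_{X_2}\lesssim \|f_4(T_0)\|+\|\e^{\delta\nu^{\frac13}t}g_2\|_{L^1L^2}=\|\e^{\delta\nu^{\frac13}t}g_2\|_{L^1L^2}.
\]
So the proof reduces to bounding the three pieces of $g_2$ in $L^1(T_0,\infty;L^2)$ with the weight $\e^{\delta\nu^{\frac13}t}$. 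In each case we put the weighted function in $L^\infty L^2$ and integrate the remaining exponential in time. Throughout we take $\delta\ll1$, say $\delta\le\frac14$.

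\emph{First term.} We have
\[
\|\e^{\delta\nu^{\frac13}t}\nu^{\frac13}\e^{-\nu^{\frac13}t/2}f_2\|_{L^1L^2}\le \nu^{\frac13}\|\e^{-(\frac12-\delta)\nu^{\frac13}t}\|_{L^1(T_0,\infty)}\|f_2\|_{L^\infty L^2}\lesssim \|f_2\|_{Y}.
\]
By \eqref{4.36a} this is $\lesssim\|\e^{-\nu^{\frac13}t/2}f_1\|_Y$.

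\emph{Second term.} Split $\e^{-\nu^{\frac13}t}=\e^{-\nu^{\frac13}t/2}\cdot\e^{-\nu^{\frac13}t/2}$. Then
\[
\nu^{\frac13}\|\e^{(\delta-\frac12)\nu^{\frac13}t}\|_{L^1}\,\|\e^{-\nu^{\frac13}t/2}f_1\|_{L^\infty L^2}\lesssim\|\e^{-\nu^{\frac13}t/2}f_1\|_Y .
\]

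\emph{Third term.} Here we use \eqref{2.2} together with the smallness assumption \eqref{1.4}, which give $\|V_1-y\|_{L^\infty}\lesssim\|\Omega_{in}\|_{H^3}\le\varepsilon_0\nu^{\frac13}$. Hence
\[
\|\e^{\delta\nu^{\frac13}t}\e^{-\nu^{\frac13}t}(V_1-y)\partial_xf_1\|_{L^1L^2}\lesssim \varepsilon_0\nu^{\frac13}\|\e^{-(1-\delta)\nu^{\frac13}t}\|_{L^1(T_0,\infty)}\|\partial_xf_1\|_{L^\infty L^2}\lesssim\|\partial_xf_1\|_{L^\infty L^2}.
\]
Summing the three bounds gives the lemma.

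The step I expect to require the most care is this third term. The factor $\nu^{\frac13}$ needed to absorb the time integral $\int\e^{-c\nu^{\frac13}t}\,\dd t\sim\nu^{-\frac13}$ has to come from the size of $V_1-y$. This is exactly where the threshold $\|\Omega_{in}\|_{H^3}\lesssim\nu^{\frac13}$ is used; a bound like $\|V_1-y\|\le C\varepsilon_0$ alone would lose $\nu^{-\frac13}$. We should also check that none of the forcing terms needs $P_{\neq}$ or derivative control beyond what $\|\cdot\|_Y$ and $\|\partial_x f_1\|_{L^\infty L^2}$ provide. Since Proposition \ref{p4.8a} only asks for $\|\e^{\delta\nu^{\frac13}t}g_2\|_{L^1L^2}$, plain $L^2$ control suffices.
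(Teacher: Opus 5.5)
Your proposal is correct and is the standard argument the paper relies on (it omits the proof and cites \cite{wei-zhang2026}): apply Proposition \ref{p4.8a} with zero datum at $T_0$ and bound the three forcing terms in weighted $L^1L^2$, using \eqref{4.36a} for the $f_2$ term. For the last term, \eqref{2.2} and \eqref{1.4} give $\|V_1-y\|_{L^\infty}\lesssim\|\Omega_{in}\|_{H^3}\le\varepsilon_0\nu^{\frac13}$, which exactly cancels the $\nu^{-\frac13}$ from the time integral, as you note (the weaker bound $C\varepsilon_0$ in \eqref{eq:4.1} would not suffice).
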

Based on the above two lemmas, we can now give the bound of $\|\theta_{re}\|_{X_2}$.
\begin{lem}\label{l4.15}
   Suppose $\delta$ is defined in Proposition \ref{p4.8a}, there holds
\begin{equation}\label{4.38}
\|\theta_{re}\|_{X_2} \lesssim \|\e^{-\nu^{\frac{1}{3}}t/2}f_1\|_Y + \|\partial_x f_1\|_{L^\infty L^2} + \nu^{\frac{1}{6}}\|\partial_x \theta_{re}\|_{X_2}.
\end{equation}
\end{lem}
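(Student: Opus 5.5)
We write $\theta_{re}=f_3+f_4=\e^{-\nu^{\frac13}t/2}f_2+\e^{-\nu^{\frac13}t}f_1+f_4$ and bound each of the three pieces in $X_2$ separately. Lemma \ref{l4.14} already handles $f_4$:
\[
\|f_4\|_{X_2}\lesssim \|\e^{-\nu^{\frac13}t/2}f_1\|_Y+\|\partial_xf_1\|_{L^\infty L^2}.
\]
It remains to control $f_3$ through Lemma \ref{l4.13}, namely
\[
\|f_3\|_{X_2}\lesssim \|\e^{\nu^{\frac13}t/2}f_3\|_Y+\nu^{\frac12}\|\e^{\delta\nu^{\frac13}t}\partial_xf_3\|_{L^1L^2}.
\]

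For the first term on the right we have $\e^{\nu^{\frac13}t/2}f_3=f_2+\e^{-\nu^{\frac13}t/2}f_1$. The triangle inequality and \eqref{4.36a} then give
\[
\|\e^{\nu^{\frac13}t/2}f_3\|_Y\le \|f_2\|_Y+\|\e^{-\nu^{\frac13}t/2}f_1\|_Y\lesssim \|\e^{-\nu^{\frac13}t/2}f_1\|_Y,
\]
which is already of the desired form.

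The second term, $\nu^{\frac12}\|\e^{\delta\nu^{\frac13}t}\partial_xf_3\|_{L^1L^2}$, is the delicate one. There is no direct control of $\partial_x f_1$ in $L^1L^2$ with an exponential weight beyond the $L^\infty L^2$ bound: a crude estimate $\nu^{\frac12}\int \e^{(\delta-\frac12)\nu^{1/3}t}\|\partial_x f_2\|\,dt$ would only give $\nu^{\frac16}\|\partial_x f_2\|_{L^\infty L^2}$, and $f_2$ itself is not controlled in that norm. So I would avoid estimating $\partial_x f_3$ directly and use the identity $f_3=\theta_{re}-f_4$ instead. This gives
\[
\nu^{\frac12}\|\e^{\delta\nu^{\frac13}t}\partial_xf_3\|_{L^1L^2}\le \nu^{\frac12}\|\e^{\delta\nu^{\frac13}t}\partial_x\theta_{re}\|_{L^1L^2}+\nu^{\frac12}\|\e^{\delta\nu^{\frac13}t}\partial_xf_4\|_{L^1L^2}.
\]
The $f_4$ piece is bounded by $\|f_4\|_{X_2}$ by the definition \eqref{4.15aaa}, and Lemma \ref{l4.14} handles that again. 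For the $\theta_{re}$ piece, $\partial_x\theta_{re}$ has zero $x$-mean, so by the definition of $X_2$,
\[
\nu^{\frac13}\|\e^{\delta\nu^{\frac13}t}P_{\neq}\partial_x\theta_{re}\|_{L^1L^2}\le \|\partial_x\theta_{re}\|_{X_2}.
\]
Multiplying by $\nu^{\frac16}$ gives
\[
\nu^{\frac12}\|\e^{\delta\nu^{\frac13}t}\partial_x\theta_{re}\|_{L^1L^2}\le \nu^{\frac16}\|\partial_x\theta_{re}\|_{X_2},
\]
which is exactly the last term of \eqref{4.38}.

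Collecting the estimates for $f_4$, $\e^{\nu^{\frac13}t/2}f_3$ and the two $\partial_x$ pieces yields \eqref{4.38}. The main step needing care is this rerouting of the $L^1$ term through $\theta_{re}$ and the zero-mode property, since the direct estimate of $\partial_x f_3$ fails; everything else is triangle inequalities on top of the two quoted lemmas and \eqref{4.36a}.
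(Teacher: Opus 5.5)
Your proposal is correct and follows the paper's proof essentially step for step: Lemma \ref{l4.13} for $f_3$, \eqref{4.36a} for the $Y$-term, the rerouting $\partial_x f_3=\partial_x\theta_{re}-\partial_x f_4$ for the weighted $L^1L^2$ term, and Lemma \ref{l4.14} for $f_4$. Your derivation of $\nu^{\frac12}\|\e^{\delta\nu^{\frac13}t}\partial_x\theta_{re}\|_{L^1L^2}\le\nu^{\frac16}\|\partial_x\theta_{re}\|_{X_2}$ from the $P_{\neq}$ component of the $X_2$ norm writes out a step that the paper states without justification.
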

\begin{proof}
 By applying Lemma \ref{l4.13} shows
\[
\|f_3\|_{X_2} \lesssim \|\e^{\nu^{\frac{1}{3}}t/2}f_3\|_Y + \nu^{\frac{1}{2}}\|\e^{\delta\nu^{\frac{1}{3}}t}\partial_x f_3\|_{L^1 L^2}.
\]
It follows from \eqref{4.36a} that
\[
\|\e^{\nu^{\frac{1}{3}}t/2}f_3\|_Y \leq \|f_2\|_Y + \|\e^{-\nu^{\frac{1}{3}}t/2}f_1\|_Y \lesssim \|\e^{-\nu^{\frac{1}{3}}t/2}f_1\|_Y.
\]
Thus, one gets
\[
\nu^{\frac{1}{2}}\|\e^{\delta\nu^{\frac{1}{3}}t}\partial_x f_3\|_{L^1 L^2} \leq \nu^{\frac{1}{2}}\|\e^{\delta\nu^{\frac{1}{3}}t}\partial_x f_4\|_{L^1 L^2} + \nu^{\frac{1}{2}}\|\e^{\delta\nu^{\frac{1}{3}}t}\partial_x \theta_{re}\|_{L^1 L^2} \leq \|f_4\|_{X_2} + \nu^{\frac{1}{6}}\|\partial_x \theta_{re}\|_{X_2},
\]
and then,
\[
\|f_3\|_{X_2} \lesssim \|\e^{-\nu^{\frac{1}{3}}t/2}f_1\|_Y + \|f_4\|_{X_2} + \nu^{\frac{1}{6}}\|\partial_x \theta_{re}\|_{X_2},
\]
which along with Lemma \ref{l4.14} gives \eqref{4.38}.
\end{proof}
\begin{lem}\label{l4.16}
Let $\theta_{re}$ be the solution of \eqref{bbb}, then there holds
\[
\|\partial_x\theta_{re}\|_{X_2} \lesssim \nu^{-\frac{1}{2}} \|\e^{\delta \nu^{\frac{1}{3}} t} \nabla u^{(2)}\|_{L^2 L^2} \|\Theta_{in}\|_{H^4}.
\]
\end{lem}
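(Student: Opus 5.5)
Since $\partial_x$ commutes with every operator on the left of \eqref{bbb} ($V_1=V_1(t,y)$ depends only on $y$, and the Dirichlet condition is preserved), $\partial_x\theta_{re}$ solves
\[
\partial_t(\partial_x\theta_{re})-\nu\Delta(\partial_x\theta_{re})+V_1\partial_x(\partial_x\theta_{re})=t\,\partial_x\big(u^{(2)}\partial_x\theta_a\big),\quad \partial_x\theta_{re}|_{y=\pm1}=0,\quad \partial_x\theta_{re}|_{t=T_0}=0.
\]
So Proposition \ref{p4.8a} applies with $g_2=t\,\partial_x(u^{(2)}\partial_x\theta_a)$ and zero data, giving
\[
\|\partial_x\theta_{re}\|_{X_2}\lesssim \|\e^{\delta\nu^{\frac13}t}\,t\,\partial_x(u^{(2)}\partial_x\theta_a)\|_{L^1L^2}.
\]
The plan is to bound this right-hand side directly. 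We do not need the $f_1,\dots,f_4$ decomposition: losing $\nu^{-\frac12}$ instead of $\nu^{-\frac13}$ is exactly what the lemma allows.

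Expand $\partial_x(u^{(2)}\partial_x\theta_a)=\partial_xu^{(2)}\,\partial_x\theta_a+u^{(2)}\partial_x^2\theta_a$ and use H\"older in space. For the first term, \eqref{2.10} gives
\[
\|\partial_x u^{(2)}\partial_x\theta_a\|\le \|\nabla u^{(2)}\|\,\|\partial_x\theta_a\|_{L^\infty}\lesssim \e^{-2\nu^{\frac13}t}\|\Theta_{in}\|_{H^4}\|\nabla u^{(2)}\|.
\]
For the second term, we use Poincar\'e in $y$ ($u^{(2)}|_{y=\pm1}=0$) to get $\|u^{(2)}\|\lesssim\|\nabla u^{(2)}\|$. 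We then control $t\|\partial_x^2\theta_a\|_{L^\infty}$. If Lemma \ref{l3.2} is read in $L^\infty_{x,y}$, it bounds this by $\nu^{-\frac13}\e^{-\nu^{\frac13}t}\|\Theta_{in}\|_{H^4}$. If not, we rederive it from \eqref{3.5}: $\sum_k |k|^{2-\frac52}E_2(k)\e^{-2\nu^{\frac13}|k|^{\frac23}t}$ times $t$, using $t\e^{-\nu^{\frac13}t}\lesssim\nu^{-\frac13}$ and Cauchy--Schwarz over $k$.

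Combining, we get pointwise in time
\[
\e^{\delta\nu^{\frac13}t}\,t\,\|\partial_x(u^{(2)}\partial_x\theta_a)\|\lesssim \big(t\e^{-\nu^{\frac13}t}+\nu^{-\frac13}\e^{-\frac12\nu^{\frac13}t}\big)\|\Theta_{in}\|_{H^4}\,\e^{\delta\nu^{\frac13}t}\|\nabla u^{(2)}\|,
\]
with $\delta\ll1$ so the exponentials absorb $\e^{\delta\nu^{\frac13}t}$. We then apply Cauchy--Schwarz in time over $(T_0,\infty)$. The prefactor has $L^2_t$ norm
\[
\|t\e^{-\nu^{\frac13}t/2}\|_{L^2_t}+\nu^{-\frac13}\|\e^{-\nu^{\frac13}t/4}\|_{L^2_t}\lesssim \nu^{-\frac12}+\nu^{-\frac13}\nu^{-\frac16}\lesssim\nu^{-\frac12}.
\]
This yields $\|\partial_x\theta_{re}\|_{X_2}\lesssim\nu^{-\frac12}\|\e^{\delta\nu^{\frac13}t}\nabla u^{(2)}\|_{L^2L^2}\|\Theta_{in}\|_{H^4}$, as claimed.

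The main obstacle is the second term: the $L^\infty$ bound of $\partial_x^2\theta_a$ together with its factor $t$, which must be traded against the exponential decay without losing more than $\nu^{-\frac13}$ pointwise. If only the crude bound $t\|\partial_x^2\theta_a\|_{L^\infty}\lesssim \nu^{-\frac13}\e^{-\nu^{\frac13}t}$ is available, the time $L^2$ integration costs an extra $\nu^{-\frac16}$, giving exactly $\nu^{-\frac12}$, so the count is tight but closes. Some care is also needed that $\delta$ is small relative to these decay rates and that Proposition \ref{p4.8a} tolerates the derivative on the source, which it does since the source is simply a given $L^1L^2$ function.
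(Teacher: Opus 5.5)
Your argument is correct and is essentially the paper's proof: differentiate \eqref{bbb} in $x$, apply Proposition \ref{p4.8a} with zero data, bound the source by H\"older, Poincar\'e and Lemma \ref{l3.2}, and close with Cauchy--Schwarz in time. Your use of \eqref{2.10} with the explicit factor $t$ for the first term is an equivalent variant that also gives $\nu^{-\frac12}$.

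One caveat concerns your fallback derivation from \eqref{3.5}. With the $k$-uniform bound $t\e^{-\nu^{\frac13}t}\lesssim\nu^{-\frac13}$, Cauchy--Schwarz over $k$ would require $\sum_{k\neq0}|k|^{-1}<\infty$, which fails. You need $t\e^{-\nu^{\frac13}|k|^{\frac23}t}\lesssim\nu^{-\frac13}|k|^{-\frac23}$ instead, which leaves the square-summable weight $|k|^{-\frac76}$.
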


\begin{proof}
By calculating $\partial_x\eqref{bbb}$, 
\[
\partial_t \partial_x \theta_{re}  + V_1 \partial_x^2 \theta_{re}- \nu \Delta \partial_x \theta_{re} - t \partial_x (u^{(2)} \partial_x \theta_{a}) = 0, \quad \partial_x \theta_{re}|_{y=\pm 1} = \partial_x \theta_{re}|_{t=T_0} = 0.
\]
Using Proposition~\ref{p4.8a} gives
\[
\|\partial_x \theta_{re}\|_{X_2} \lesssim \|\e^{\delta \nu^{\frac{1}{3}} t} t\partial_x (u^{(2)} \partial_x \theta_{a})\|_{L^1 L^2}.
\]
Notice that  $P_0 u^{(2)} = 0$, $\|u^{(2)}\| \leq \|\partial_x u^{(2)}\| \leq \|\nabla u^{(2)}\|$, by Lemma \ref{l3.2}, we deduce
\begin{align*}
\|t \partial_x (u^{(2)} \partial_x \theta_{a})\| &\leq t \|\partial_x^2 \theta_{a}\|_{L^\infty} \|u^{(2)}\| + t \|\partial_x \theta_{a}\|_{L^\infty} \|\partial_x u^{(2)}\| \\
&\lesssim \nu^{-\frac{1}{3}} \e^{-\nu^{\frac{1}{3}} t} \|\Theta_{in}\|_{H^4} (\|u^{(2)}\| + \|\partial_x u^{(2)}\|) \lesssim \nu^{-\frac{1}{3}} \e^{-\nu^{\frac{1}{3}} t} \|\Theta_{in}\|_{H^4} \|\nabla u^{(2)}\|,
\end{align*}
and then,
\begin{align*}
\|\partial_x \theta_{re}\|_{X_2} &\lesssim \|\e^{\delta \nu^{\frac{1}{3}} t} t \partial_x (u^{(2)} \partial_x \theta_{a})\|_{L^1 L^2} \lesssim \nu^{-\frac{1}{3}} \|\Theta_{in}\|_{H^4} \|\e^{(\delta-1)\nu^{\frac{1}{3}} t} \nabla u^{(2)}\|_{L^1 L^2} \\
&\leq \nu^{-\frac{1}{3}} \|\Theta_{in}\|_{H^4} \|\e^{\delta \nu^{\frac{1}{3}} t} \nabla u^{(2)}\|_{L^2 L^2} \|\e^{-\nu^{\frac{1}{3}} t}\|_{L^2(T_0,\infty)} \\
&\leq \nu^{-\frac{1}{2}} \|\Theta_{in}\|_{H^4} \|\e^{\delta \nu^{\frac{1}{3}} t} \nabla u^{(2)}\|_{L^2 L^2}.
\end{align*}
The proof of Lemma \ref{l4.16} is completed.
\end{proof}

\begin{lem}\label{l4.17}
It holds that
\[
\|\e^{-\nu^{\frac{1}{3}} t/2} f_1\|_Y + \|\partial_x f_1\|_{L^\infty L^2} \lesssim \nu^{-\frac{1}{3}} \|\e^{\delta \nu^{\frac{1}{3}} t} \nabla u^{(2)}\|_{L^2 L^2} \|\Theta_{in}\|_{H^4}.
\]
\end{lem}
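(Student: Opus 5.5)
We will solve for $f_1$ explicitly along characteristics and estimate each piece of the $Y$-norm and $\|\partial_x f_1\|_{L^\infty L^2}$ separately, following the scheme of \cite{wei-zhang2026}. Since $\partial_t f_1+y\partial_x f_1=F$ with $F:=t\e^{\nu^{\frac13}t}u^{(2)}\partial_x\theta_a$ and $f_1|_{t=T_0}=0$, we have on the Fourier side
\[
f_{1,k}(t,y)=\int_{T_0}^t \e^{-iky(t-s)}F_k(s,y)\,\dd s .
\]
The key point is that $\e^{iky t}\theta_{a,k}$ is smooth in $y$ with good bounds by Lemma \ref{l3.1}. Moreover, the factor $\e^{\nu^{\frac13}s}$ is compensated by the decay $\e^{-2\nu^{\frac13}|k|^{\frac23}s}$ of $\partial_x\theta_a$. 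Lemma \ref{l3.2} likewise gives $t\|\partial_x\theta_a\|_{L^\infty}+t\|\partial_x^2\theta_a\|_{L^\infty}\lesssim \nu^{-\frac13}\e^{-\nu^{\frac13}t}\|\Theta_{in}\|_{H^4}$.

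\textbf{Step 1: $\partial_x f_1$ and the $L^\infty L^2$ part.} Using Minkowski in time and unitarity of $\e^{-iky(t-s)}$ we get $\|\partial_x f_1(t)\|\le\int_{T_0}^t\|\partial_x F(s)\|\,\dd s$. We expand $\partial_x F=s\e^{\nu^{\frac13}s}(\partial_x u^{(2)}\partial_x\theta_a+u^{(2)}\partial_x^2\theta_a)$ and apply Lemma \ref{l3.2}, which gives $\|\partial_xF(s)\|\lesssim \nu^{-\frac13}\|\Theta_{in}\|_{H^4}\|\nabla u^{(2)}(s)\|$ (using $\|u^{(2)}\|\le\|\partial_xu^{(2)}\|$). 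Cauchy--Schwarz with the weight $\e^{\delta\nu^{\frac13}s}$ then yields
\[
\int_{T_0}^\infty\|\nabla u^{(2)}\|\,\dd s\le \|\e^{\delta\nu^{\frac13}s}\nabla u^{(2)}\|_{L^2L^2}\|\e^{-\delta\nu^{\frac13}s}\|_{L^2}.
\]
This naively loses $\nu^{-\frac16}$. To avoid the loss I would instead keep the exponential $\e^{-\nu^{\frac13}s}$ from Lemma \ref{l3.2} uncancelled: split $\e^{\nu^{\frac13}s}\e^{-2\nu^{\frac13}|k|^{\frac23}s}\le \e^{-\nu^{\frac13}s}$ via Lemma \ref{l3.1} in the $k$-sum, as in the proof of Lemma \ref{l4.16}. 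Since $\|f_1\|\le\|\partial_xf_1\|$ (as $P_0f_1=0$), the $L^\infty L^2$ bound for $\e^{-\nu^{\frac13}t/2}f_1$ follows from this as well.

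\textbf{Step 2: the $\nu^{\frac12}\|\nabla(\e^{-\nu^{\frac13}t/2}f_1)\|_{L^2L^2}$ term.} Here $\partial_y f_{1,k}$ produces a factor $-ik(t-s)$, giving growth in $t$. This growth is absorbed by $\e^{-\nu^{\frac13}t/2}$, which makes the resulting quantity $\nu^{\frac12}(t-s)\e^{-\nu^{\frac13}(t-s)/2}$ integrable in $L^2_t$ with norm $\nu^{\frac12}\nu^{-\frac12}=1$. The remaining $\partial_y F$ term is handled with the good derivative $(\partial_y+s\partial_x)\theta_a$ from \eqref{2.10}, or with Lemma \ref{l3.1} at $j=1$, together with $\nu^{\frac12}\|\nabla u^{(2)}\|$. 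I would then conclude by Young's convolution inequality in time.

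\textbf{Step 3 (main obstacle): the inviscid-damping term $\|\partial_x\nabla\Delta^{-1}(\e^{-\nu^{\frac13}t/2}f_1)\|_{L^2L^2}$.} The plan is to use Lemma \ref{l3.3}-type bounds: $\partial_x\nabla\Delta^{-1}$ applied to $\e^{-iky(t-s)}G(s,y)$ gains $(1+|t-s|)^{-1}$ provided $\|(k,\partial_y)G\|$ is controlled. With $G=F_k(s)$, this requires $\|(k,\partial_y)(u^{(2)}\e^{\cdot}\partial_x\theta_a)\|$. The derivative falling on $\theta_a$ is treated through $\e^{ikys}\theta_{a,k}$ (Lemma \ref{l3.1}), while the derivative falling on $u^{(2)}$ gives $\|\nabla u^{(2)}\|$. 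The factor $s$ is paired with $\e^{-\nu^{\frac13}s}$ as in Step 1. Young's inequality $L^1_t*L^2_t$ with kernel $(1+|t-s|)^{-1}\e^{-\nu^{\frac13}(t-s)/2}$ would lose a $\ln\nu^{-1}$. To avoid this, I would apply Cauchy--Schwarz in $s$ with kernel $(1+|t-s|)^{-1}$ in $L^2$. That bounds the time integral by $\|s\e^{-\nu^{\frac13}s}\cdots\|_{L^2_s}$, and with the explicit decay of $\theta_a$ this gives $\nu^{-\frac13}\|\e^{\delta\nu^{\frac13}t}\nabla u^{(2)}\|_{L^2L^2}\|\Theta_{in}\|_{H^4}$. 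Verifying that the mixed product derivative does not cost an extra power of $s$ is where I expect the real difficulty.
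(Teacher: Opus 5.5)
The paper gives no argument of its own here; it refers to Lemma 5.5 of \cite{wei-zhang2026}. Your proposal therefore has to reach the power $\nu^{-\frac13}$ by itself, and as written it does not. Every step bounds the source pointwise in $s$ and then applies Cauchy--Schwarz in $s$, and that chain is capped at $\nu^{-\frac12}$.

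\textbf{Step 1.} You get $\|\partial_x F(s)\|\lesssim w(s)\|\nabla u^{(2)}(s)\|\,\|\Theta_{in}\|_{H^4}$ with $w(s)\approx s\,\e^{\nu^{\frac13}s}\|(\partial_x\theta_a,\partial_x^2\theta_a)(s)\|_{L^\infty}$. For generic data this weight is of size $\nu^{-\frac13}$ on $[\tfrac12\nu^{-\frac13},\nu^{-\frac13}]$; already the modes $l=\pm1$, which carry $\e^{-\nu s^3/3}$, produce this. Hence $\|w\|_{L^2(T_0,\infty)}\gtrsim\nu^{-\frac12}$, and Cauchy--Schwarz is essentially an equality when $\|\nabla u^{(2)}(s)\|$ follows $w(s)$. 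Keeping the factor $\e^{-\nu^{\frac13}s}$ does not help, because $\|\e^{-c\nu^{\frac13}s}\|_{L^2_s}\approx\nu^{-\frac16}$. Lemma~\ref{l4.16}, your model, is exactly this computation and ends at $\nu^{-\frac12}$; that is why the paper only uses it with the prefactor $\nu^{\frac16}$ in Lemma~\ref{l4.15}.

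\textbf{Step 2.} This inherits the loss: Young's inequality with $K(\tau)=\nu^{\frac12}\tau\e^{-\nu^{\frac13}\tau/2}$ gives $\nu^{-\frac12}$ whether $K$ is put in $L^1$ or in $L^2$.

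\textbf{Step 3.} Your worry is justified. $\partial_y$ of the combined moving-frame profile produces $i(k-l)s\,u^{(2)}_{k-l}$ (or $ils\,\theta_{a,l}$), which raises the weight to $s\,w(s)\approx\nu^{-\frac23}$. Moreover $\|s\e^{-\nu^{\frac13}s}\|_{L^2_s}\approx\nu^{-\frac12}$, not $\nu^{-\frac13}$, and a pointwise-in-$t$ Cauchy--Schwarz must still be followed by the $L^2_t$ norm.

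The loss matters. With $\nu^{-\frac12}$, \eqref{4.35a} would feed $\nu^{-1}\|\Theta_{in}\|_{H^4}\|\omega\|_{X_2}\le\varepsilon_0\nu^{-\frac16}\|\omega\|_{X_2}$ into the bound for $\nu^{-\frac12}\|\theta\|_{X_2}$, and the bootstrap does not close. A smaller slip: $P_0f_1\neq0$ in general, since mode $l$ of $\partial_x\theta_a$ interacts with mode $-l$ of $u^{(2)}$. So $\|f_1\|\le\|\partial_xf_1\|$ is not available, and the zero mode must be estimated directly.

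\textbf{What is missing} is the time oscillation of the source relative to the free transport. This is also the only place where $\partial_y u^{(2)}$ can enter; your chain uses only $u^{(2)}$ and $\partial_x u^{(2)}$. In the near-equality case of Minkowski's inequality, $u^{(2)}_{k-l}(s)\approx\e^{-i(k-l)ys}\psi(y)$, whose $y$-derivative has size $s\approx\nu^{-\frac13}$ on the relevant range. From \eqref{3.7},
\[
\e^{ikyt}f_{1,k}(t,y)=\sum_{l\neq0}il\,\Theta_{in,l}(y)\int_{T_0}^t w_l(s,y)\,\e^{i(k-l)ys}\,u^{(2)}_{k-l}(s,y)\,\dd s,\qquad w_l=s\,\e^{\nu^{\frac13}s-\frac{\nu l^2s^3}{3}}\,\e^{-il(V_1^\star-sy)}.
\]
Here $k-l\neq0$ because $P_0u^{(2)}=0$, and $\|w_l\|_{L^\infty_sW^{1,\infty}_y}\lesssim\nu^{-\frac13}$ by \eqref{3.8}. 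For $m\neq0$, write the $s$-integral as the Fourier transform in $s$ evaluated at $-my$, substitute $\sigma=-my$, and use $\|g\|_{L^\infty(I)}^2\lesssim\|g\|_{L^2(I)}\|g\|_{H^1(I)}$ with Plancherel in $s$:
\[
\Big\|\int_{\mathbb{R}}\e^{imys}\Psi(s,y)\,\dd s\Big\|_{L^2_y}^2\lesssim\frac{1}{|m|}\,\|\Psi\|_{L^2_sL^2_y}\,\|\Psi\|_{L^2_sH^1_y}.
\]
With $\Psi=\mathbf{1}_{[T_0,t]}(s)\,w_l\,u^{(2)}_{k-l}$, this replaces $\|w_l\|_{L^2_s}\approx\nu^{-\frac12}$ by $\|w_l\|_{L^\infty_s}\approx\nu^{-\frac13}$, at the cost of $\|u^{(2)}\|_{L^2_tH^1_y}$. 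Summing over $l$ (using $\Theta_{in}\in H^4$) and over all $k$, including $k=0$, gives both $L^\infty L^2$ terms at the level $\nu^{-\frac13}\|\nabla u^{(2)}\|_{L^2L^2}\|\Theta_{in}\|_{H^4}$.

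The $\nabla$- and $\partial_x\nabla\Delta^{-1}$-parts of $\|\cdot\|_Y$ must likewise be built on this profile and its oscillation, not on $\|F(s)\|$ or $\|(k,\partial_y)F_k(s)\|$ pointwise in $s$. For instance, the factor $-ikt$ from $\partial_y\e^{-ikyt}$ then becomes harmless, since $\nu^{\frac12}\|t\,\e^{-\nu^{\frac13}t/2}\|_{L^2_t}\lesssim1$ multiplies an $L^\infty_t$ bound of size $\nu^{-\frac13}$.
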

\begin{proof}
See Lemma 5.5 in \cite{wei-zhang2026}.
\end{proof}

Combining Lemmas \ref{l4.15}--\ref{l4.17}, we can get \eqref{4.35a}, and the proof of Proposition \ref{p4.12a} is finished.

Next, we are going to prove Proposition~\ref{p2.2}.

\begin{proof}[\textbf{Proof of Proposition~\ref{p2.2}}]
Thanks to \eqref{4.11} in Proposition~\ref{p4.8}, we derive
\[
\|\omega_*(T_0)\| = \|\omega(T_0)\| \lesssim \nu^{\frac{1}{3}} \|\Omega_{in}\|_{H^3}+\nu^{-\frac{1}{6}}\|\Theta_{in}\|_{H^4},
\]
\begin{equation}\label{4.43}
   \|\theta_*(T_0)\| = \|\theta(T_0)\| \lesssim \nu^{\frac{5}{6}} \|\Omega_{in}\|_{H^3}+\nu^{\frac{1}{3}}\|\Theta_{in}\|_{H^4}.
\end{equation}
Then it follows from \eqref{4.43}, Proposition~\ref{p4.8a} and Proposition~\ref{prop:2.6} that
\begin{align}\label{4.44}
\|\omega_*\|_{X_2} \lesssim& \|\omega_*(T_0)\| + \|\e^{\delta \nu^{\frac{1}{3}} t} h_1\|_{L^1 L^2} \notag\\
\lesssim& \nu^{-\frac{1}{3}} (\|\Omega_{in}\|_{H^3}+\nu^{-\frac{1}{3}}\|\Theta_{in}\|_{H^4}) \|\omega\|_{X_2}+ \nu^{-\frac{5}{8}}\|\omega\|_{X_2}^2 \notag\\
&+\nu^{-\frac{1}{2}} \|\theta\|_{X_2}+\nu^{\frac{1}{3}} \|\Omega_{in}\|_{H^3}+\nu^{-\frac{1}{6}}\|\Theta_{in}\|_{H^4},
\end{align}
\begin{align}\label{4.45}
\|\theta_*\|_{X_2} \lesssim& \|\theta_*(T_0)\| + \|\e^{\delta \nu^{\frac{1}{3}} t} h_2\|_{L^1 L^2} \notag\\
\lesssim& \nu^{-\frac{1}{3}} \|\Theta_{in}\|_{H^4} \|\omega\|_{X_2}+\nu^{-\frac{1}{3}} (\|\Omega_{in}\|_{H^3}+\nu^{-\frac{1}{3}}\|\Theta_{in}\|_{H^4}) \|\theta\|_{X_2}+ \nu^{-\frac{5}{8}}\|\omega\|_{X_2}\|\theta\|_{X_2} \notag\\
&+\nu^{\frac{5}{6}} \|\Omega_{in}\|_{H^3}+\nu^{\frac{1}{3}} \|\Theta_{in}\|_{H^4}.
\end{align}
Moreover, thanks to 
\[
\|\e^{\delta \nu^{\frac{1}{3}} t} \nabla u^2\|_{L^2 L^2} = \|\e^{\delta \nu^{\frac{1}{3}} t} \partial_x \nabla \Delta^{-1} \omega\|_{L^2 L^2} \leq \|\omega\|_{X_2},
\]
then using $\omega=\omega_*+\omega_{re}$, Proposition~\ref{p4.12a} and \eqref{4.44}--\eqref{4.45} gives
\begin{align}\label{4.46}
\|\omega\|_{X_2} \leq \|\omega_*\|_{X_2} + \|\omega_{re}\|_{X_2}\lesssim&\nu^{-\frac{1}{3}} (\|\Omega_{in}\|_{H^3}+\nu^{-\frac{1}{3}}\|\Theta_{in}\|_{H^4}) \|\omega\|_{X_2}+ \nu^{-\frac{5}{8}}\|\omega\|_{X_2}^2 \notag\\
&+\nu^{-\frac{1}{2}} \|\theta\|_{X_2}+\nu^{\frac{1}{3}} \|\Omega_{in}\|_{H^3}+\nu^{-\frac{1}{6}}\|\Theta_{in}\|_{H^4}, 
\end{align}
\begin{align}\label{4.47}
\|\theta\|_{X_2} \leq \|\theta_*\|_{X_2} + \|\theta_{re}\|_{X_2} \lesssim& \nu^{-\frac{1}{3}} \|\Theta_{in}\|_{H^4} \|\omega\|_{X_2}+\nu^{-\frac{1}{3}} (\|\Omega_{in}\|_{H^3}+\nu^{-\frac{1}{3}}\|\Theta_{in}\|_{H^4}) \|\theta\|_{X_2}\notag\\
&+ \nu^{-\frac{5}{8}} \|\omega\|_{X_2}\|\theta\|_{X_2} +\nu^{\frac{5}{6}} \|\Omega_{in}\|_{H^3}+\nu^{\frac{1}{3}} \|\Theta_{in}\|_{H^4}. 
\end{align}
By calculating $\nu^{-\frac{1}{2}}\cdot$\eqref{4.47}+$\bar{\varepsilon}\cdot$\eqref{4.46}~($0<\bar{\varepsilon}\ll 1$), and noting that the linear term $\nu^{-\frac{1}{2}} \|\theta\|_{X_2}$ is absorbed due to the smallness of $\bar{\varepsilon}$, we get
\begin{align}
  \|\omega\|_{X_2}+\nu^{-\frac{1}{2}}\|\theta\|_{X_2} \lesssim&\nu^{-\frac{1}{3}} (\|\Omega_{in}\|_{H^3}+\nu^{-\frac{1}{3}}\|\Theta_{in}\|_{H^4}) \|\omega\|_{X_2}+ \nu^{-\frac{5}{8}}\|\omega\|_{X_2}^2 \notag\\
&+\nu^{-\frac{5}{6}} \|\Theta_{in}\|_{H^4} \|\omega\|_{X_2}+\nu^{-\frac{5}{6}} (\|\Omega_{in}\|_{H^3}+\nu^{-\frac{1}{3}}\|\Theta_{in}\|_{H^4}) \|\theta\|_{X_2}\notag\\
&+ \nu^{-\frac{9}{8}} \|\omega\|_{X_2}\|\theta\|_{X_2}+\nu^{\frac{1}{3}} \|\Omega_{in}\|_{H^3}+\nu^{-\frac{1}{6}}\|\Theta_{in}\|_{H^4}  .\notag
\end{align}
Then using the condition \eqref{1.4}, we arrive at
\begin{align}\label{4.48}
  \|\omega\|_{X_2}+\nu^{-\frac{1}{2}}\|\theta\|_{X_2} \leq&C(\nu^{-\frac{5}{8}}\|\omega\|_{X_2}^2 + \nu^{-\frac{9}{8}} \|\omega\|_{X_2}\|\theta\|_{X_2}+\nu^{\frac{1}{3}} \|\Omega_{in}\|_{H^3}+\nu^{-\frac{1}{6}}\|\Theta_{in}\|_{H^4})  .
\end{align}
With the energy estimate \eqref{4.48} at hand, we are ready to set up the bootstrap argument.
For $T_0\leq t\leq T$, we first assume that $\|\omega\|_{X_2} \leq \varepsilon_1 \nu^{\frac{2}{3}}, \|\theta\|_{X_2} \leq \varepsilon_1 \nu^{\frac{7}{6}}$ for a constant $0<\varepsilon_1\ll 1$ which will be determined later. Then by \eqref{4.48}, we have
\begin{equation}\label{4.49}
\|\omega\|_{X_2}+\nu^{-\frac{1}{2}}\|\theta\|_{X_2} \leq C \nu^{\frac{1}{3}} \|\Omega_{in}\|_{H^3}+C\nu^{-\frac{1}{6}} \|\Theta_{in}\|_{H^4} \leq C \varepsilon_0 \nu^{\frac{2}{3}}.
\end{equation}
Taking $\varepsilon_0 = \frac{\varepsilon_1 }{2C}$, then one can get \eqref{4.49} for all $t\geq T_0$. Combining Proposition~\ref{p4.8} and \eqref{4.49}, and noticing the definition of $\|\cdot\|_{X_2} $ in \eqref{4.15aaa}, we obtain \eqref{222} and hence complete the proof of
Proposition \ref{p2.2}.
\end{proof}

\section{Proof of Proposition \ref{ap3.2}}\label{s5}
In this section, we give the proof of the Proposition \ref{ap3.2}. First, let's consider some basic estimates.

\subsection{Space-time estimates for linearized equation}\label{s3.3.1}

\begin{prop}\label{ap3.5}
Suppose that $u_{a}$ is defined as in \eqref{a3.3}, and $f$ solves
\begin{equation}\label{a3.50}
\left\{
\begin{aligned}
&\partial_t f - \nu \Delta f + y \partial_x f+u_{a}\cdot\nabla f = 0, \ \ \ \  t>0,\\
&f|_{t=0}=f_{in}(x,y),\quad f(t,x,\pm 1) = 0, \quad \nabla\cdot u_{a}=0,
\end{aligned}
\right.
\end{equation}
then for $0<t\leq \nu^{-\frac{1}{6}}$, it holds that 
\begin{align*}
&\|k f_k(t)\|_{L^2_y} \les (\nu t)^{-\frac{1}{2}} (1+t)^{-1} \|f_{in,k}\|_{L^2_y}, \\
&\|f_{k}(t)\|_{L^2_y} \les (1 +\nu t+ \nu k^2t^3)^{-\frac{1}{2}} \|f_{in,k}\|_{L^2_y}.
\end{align*}
Here $f_k$ is the Fourier transform of $f$ with respect to the variable $x$ and $k\in\mathbb{R}$.
\end{prop}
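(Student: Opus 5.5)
The plan is to repeat the hypocoercivity argument of Lemmas \ref{lem:4.1}--\ref{l4.4} and Proposition \ref{p4.1}, now with $V_1$ replaced by $y$ and $k\in\mathbb{R}$. The $|k|\ge1$ regime is essentially the earlier computation. The new feature is the low frequencies $|k|<1$, where the weight $\nu k^2t^3$ alone no longer dominates $1$. This is why the factor $\nu t$ appears in the second estimate.

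\emph{Step 1: energy identities.} I would first establish the analogues of \eqref{4.4}--\eqref{4.6} for \eqref{a3.50}. For this I need $\|\nabla u_a\|_{L^\infty}\lesssim \varepsilon_0\nu^{\frac13}$ and $\|\partial_x u_a^{(2)}\|_{L^\infty}\lesssim \varepsilon_0\nu^{\frac13}(1+t)^{-2}$. I would derive both from Proposition \ref{linear}, namely \eqref{u1LLinfty}--\eqref{u12L22} with $m=0$ applied to derivatives, or Lemma \ref{al3.3} combined with Lemma \ref{al3.4}, under the smallness assumption \eqref{1.8}. With $V_1=y$ we have $V_1'=1$, so the commutator terms become $-\langle\partial_yf,\partial_xf\rangle$ and $\|\sqrt{1+\partial_yu_a^{(1)}}\,\partial_xf\|^2\ge\frac34\|\partial_xf\|^2$. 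Since $u_a$ depends on $x$, the energies do not decouple exactly in $k$. I would therefore work with the frequency-localized functional only formally, as the paper does in Lemma \ref{l4.4}, treating the $u_a$ contributions as perturbations of size $\varepsilon_0\nu^{\frac13}$.

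\emph{Step 2: the functional.} I would take the same choice
\[
\Phi_k(t)=(1+a_1\nu k^2t^3)\|f_k\|^2+a_2\nu t\|(k,\partial_y)f_k\|^2+a_3\nu t^2\,\mathrm{Re}\langle ikf_k,\partial_yf_k\rangle,
\]
with $a_1=\frac1{900}$, $a_2=16a_1$, $a_3=6a_1$. The coercivity bound \eqref{4.7} and the dissipation inequality \eqref{4.8} then go through verbatim for $t\le\nu^{-\frac16}$. As before, the restriction on $t$ is used only to absorb $Ca_2\varepsilon_0\nu^{\frac43}t$. Monotonicity gives
\[
\|f_{in,k}\|^2=\Phi_k(0)\ge\Phi_k(t)\gtrsim(1+\nu k^2t^3)\|f_k\|^2+\nu t\|(k,\partial_y)f_k\|^2 .
\]

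\emph{Step 3: reading off the bounds.} The first estimate follows from $\nu t(1+t^2)k^2\|f_k\|^2\lesssim \nu t k^2\|f_k\|^2+\nu k^2t^3\|f_k\|^2\lesssim\|f_{in,k}\|^2$, where I use $\nu t\|(k,\partial_y)f_k\|^2\ge \nu t k^2\|f_k\|^2$. For the second estimate, the Dirichlet condition $f_k(\pm1)=0$ gives the Poincaré inequality $\|\partial_yf_k\|\ge\frac{\pi}{2}\|f_k\|$. Hence $\nu t\|(k,\partial_y)f_k\|^2\gtrsim\nu t\|f_k\|^2$, and adding the three pieces yields $(1+\nu t+\nu k^2t^3)\|f_k\|^2\lesssim\|f_{in,k}\|^2$.

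\emph{Main obstacle.} The delicate point is justifying the Fourier-mode bookkeeping when the transport field $u_a$ is $x$-dependent and $k$ is continuous. Convolution in $k$ couples the modes, so I would actually run the estimate on the full functional, in the same way as Lemma \ref{lem:4.1}. The perturbative terms, which are bounded by $\varepsilon_0\nu^{\frac13}\|\nabla f\|^2$ and $\varepsilon_0\nu^{\frac13}(1+t)^{-2}\|\partial_yf\|^2$, are absorbed using the smallness of $\varepsilon_0$ together with $t\le\nu^{-\frac16}$. The second thing to check is that the $L^\infty$ bounds on $\nabla u_a$ hold uniformly at low frequencies. There I would rely on the $L^1_x$ part of $\mathcal{E}_{in}$ and \eqref{u1LLinfty}--\eqref{u12L22}.
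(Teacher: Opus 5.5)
Your route is the paper's route. Its proof of Proposition~\ref{ap3.5} is just ``similar to Proposition~\ref{p4.1}'', i.e.\ the functional $\Phi_k$ of Lemma~\ref{l4.4} with the same $a_1,a_2,a_3$, and your Poincar\'e step supplies the $\nu t$ factor that the paper leaves implicit. The gap, which the paper shares, is the mode-by-mode monotonicity $\Phi_k(t)\le\Phi_k(0)$. Because $u_a$ depends on $x$, $(u_a\cdot\nabla f)_k$ is a convolution in $k$. Hence $\frac{\rm d}{{\rm d}t}\|f_k\|^2\neq-2\nu\|(k,\partial_y)f_k\|^2$ in general, and the inequalities \eqref{4.4}--\eqref{4.6}, which concern full norms, cannot be inserted into $\Phi_k'$. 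You flag this yourself, but the problem is not merely formal: the displayed per-$k$ bounds are false. Suppose $f_{in,k}=0$ for all $k$ in an interval $J$. Then for $k\in J$ one has $\partial_tf_k|_{t=0}=-(u_a\cdot\nabla f_{in})_k$, which is nonzero for generic $\Omega_{in}$. So $f_k(t)\neq0$ for small $t>0$ while the right-hand side vanishes, and no treatment of the localized functional can produce the statement as written.

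Your fallback, running the argument on the global functional $\Phi(t)=\|f\|^2+a_1\nu t^3\|\partial_xf\|^2+a_2\nu t\|\nabla f\|^2+a_3\nu t^2\langle\partial_xf,\partial_yf\rangle$, is the correct repair, but it yields a different conclusion. It gives $\|f(t)\|^2+\nu t^3\|\partial_xf(t)\|^2+\nu t\|\nabla f(t)\|^2\lesssim\|f_{in}\|^2$, equivalently $\int_{\mathbb{R}}(1+\nu t+\nu k^2t^3)\|f_k(t)\|_{L^2_y}^2\,{\rm d}k\lesssim\|f_{in}\|^2$. This is the $k$-integrated form, as in Proposition~\ref{p4.1}, and it is all that Corollary~\ref{ac3.6} uses, so that is what you should state and prove.

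This functional also needs one identity you did not list: $\frac{\rm d}{{\rm d}t}\|\partial_xf\|^2+2\nu\|\nabla\partial_xf\|^2=-2\langle\partial_xf,\partial_xu_a\cdot\nabla f\rangle$, where the transport terms drop out since $\nabla\cdot u_a=0$ and $u_a^{(2)}|_{y=\pm1}=0$. Multiplied by $a_1\nu t^3$, the right-hand side is not of the form $\varepsilon_0\nu^{\frac13}\|\nabla f\|^2$ that you propose to absorb. It is controlled by $\frac{a_1}{4}\nu t^2\|\partial_xf\|^2+C\nu t^4\|\nabla u_a\|_{L^\infty}^2\|\nabla f\|^2$. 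The second piece is at most $C\varepsilon_0^2\nu\|\nabla f\|^2$ precisely because $t\le\nu^{-\frac16}$.
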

\begin{proof}
  The proof is similar to that of Proposition \ref{p4.1},  we omit the details here.
\end{proof}
Based on the above proposition, we immediately get the following corollary.
\begin{coro}\label{ac3.6}
    Under the conditions of Proposition \ref{ap3.5}, we have for $t\leq \nu^{-\frac{1}{6}}$ that
  \begin{align*}
\|f(t)\| + \nu^{\frac{1}{2}} \left(\int_0^t \|\nabla f(s)\|^2 \dd s\right)^{\frac{1}{2}} + \nu^{\frac{1}{2}}\int_0^t \|\partial_x f(s)\|\dd s\lesssim \|f_{in}\|.
\end{align*}
\end{coro}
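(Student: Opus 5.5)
The plan is to deduce Corollary \ref{ac3.6} from Proposition \ref{ap3.5} together with a basic energy identity, treating each Fourier mode $k\in\mathbb{R}$ separately and then integrating in $k$ by Plancherel.

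\textbf{Step 1: $L^\infty L^2$ and dissipation terms.} Test \eqref{a3.50} against $f$. Since $\nabla\cdot u_{a}=0$, $f|_{y=\pm1}=0$ and $u_a^{(2)}|_{y=\pm1}=0$, the transport terms $y\partial_x f$ and $u_a\cdot\nabla f$ integrate to zero. As in \eqref{4.4} this gives
$$\frac{\rm d}{{\rm d}t}\|f\|^2+2\nu\|\nabla f\|^2=0.$$
Integrating in time yields at once
$$\|f(t)\|^2+2\nu\int_0^t\|\nabla f\|^2\,{\rm d}s=\|f_{in}\|^2,$$
which controls the first two terms of the corollary.

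\textbf{Step 2: the $L^1L^2$ bound on $\partial_x f$.} Split the time integral at $s=1$.
\begin{itemize}
\item On $[0,\min(t,1)]$, use Cauchy--Schwarz together with Step 1:
$$\nu^{\frac12}\int_0^{1}\|\partial_x f\|\,{\rm d}s\le \nu^{\frac12}\Big(\int_0^1\|\nabla f\|^2\,{\rm d}s\Big)^{\frac12}\lesssim\|f_{in}\|.$$
\item On $[1,t]$, use the first bound of Proposition \ref{ap3.5}, squared and integrated over $k$ via Plancherel:
$$\|\partial_x f(s)\|\lesssim (\nu s)^{-\frac12}(1+s)^{-1}\|f_{in}\|.$$
Then
$$\nu^{\frac12}\int_1^t\|\partial_x f\|\,{\rm d}s\lesssim \|f_{in}\|\int_1^\infty s^{-\frac32}\,{\rm d}s\lesssim\|f_{in}\|.$$
\end{itemize}
This interval needs no restriction on $t$ beyond the range $t\le\nu^{-\frac16}$ where Proposition \ref{ap3.5} holds. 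The integrable singularity $s^{-1/2}$ near $s=0$ would also be harmless, so the split is only for cleanliness.

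The main point to check is that the mode-wise estimate in Proposition \ref{ap3.5} indeed applies uniformly in $k$, despite the coupling through $u_a$. For $k\in\mathbb{R}$, $u_a\cdot\nabla f$ mixes frequencies, so the Fourier-mode formulation is really inherited from the energy functional argument of Lemma \ref{l4.4}. As in Proposition \ref{p4.1}, one sums (here integrates) the functional $\Phi_k$ over $k$ and uses $\Phi(0)=\|f_{in}\|^2$. I take Proposition \ref{ap3.5} as given in its integrated form, $\|\partial_x f(t)\|\lesssim(\nu t)^{-1/2}(1+t)^{-1}\|f_{in}\|$, which is exactly what is used above.

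Note that no $\nu^{\frac13}\|f_{\neq}\|_{L^1L^2}$ term appears here, unlike Corollary \ref{c4.5}. This is consistent with the lack of a spectral gap for small $|k|\in\mathbb{R}$.
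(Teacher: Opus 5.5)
Your argument is correct and is exactly the derivation the paper leaves implicit: the energy identity $\frac{\rm d}{{\rm d}t}\|f\|^2+2\nu\|\nabla f\|^2=0$ gives the first two terms, and the first bound of Proposition \ref{ap3.5}, integrated over $k$ by Plancherel and then in time, gives $\nu^{\frac12}\int_0^t\|\partial_x f\|\,{\rm d}s\lesssim\|f_{in}\|\int_0^\infty s^{-\frac12}(1+s)^{-1}\,{\rm d}s\lesssim\|f_{in}\|$. As you observe, the split at $s=1$ is not needed, and your caveat that $u_a\cdot\nabla f$ couples different modes concerns the proof of Proposition \ref{ap3.5} itself, not this corollary.
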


\subsubsection{Short time scale $0\leq t\leq \nu^{-\frac{1}{6}}$}
We define the space-time norm $\|f\|_{X_1} $, where
\begin{align}\label{x1}
\|f\|_{X_1} &= \|f\|_{L^\infty L^2} + \nu^{\frac{1}{2}} \|\nabla f\|_{L^2 L^2}+\nu^{\frac{1}{2}} \| \partial_x f\|_{L^1 L^2} .
\end{align}
\begin{prop}\label{ap3.7}
Suppose that $(\omega,\theta)$ is the solution to \eqref{a3.5}, then for $0\leq t\leq T_0=\nu^{-\frac{1}{6}}$,
\begin{align}\label{a3.51}
  \|\theta\|_{X_1}\lesssim& \nu^{-\frac{1}{3}}\|\omega\|_{L^\infty L^2}\mathcal{E}_{2,in}+\nu^{-\frac{7}{12}}\|\omega\|_{L^\infty L^2}\|\theta\|_{X_1}\notag \\ 
  &+\nu^{-\frac{5}{8}}\|\omega\|_{L^\infty L^2}^{\frac{9}{10}}(\nu^{\frac{1}{2}}\|\nabla \omega\|_{L^2 L^2})^{\frac{1}{10}}\|\theta\|_{X_1}+\nu^{\frac13}\ln(1+\nu^{-1})\mathcal{E}_{2,in}.  
\end{align}

\end{prop}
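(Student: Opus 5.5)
The argument follows the proof of Proposition \ref{p4.7}. We rewrite the temperature equation in \eqref{a3.5} as
\[
\partial_t\theta-\nu\Delta\theta+y\partial_x\theta+u_a\cdot\nabla\theta=g_1,\qquad g_1:=-u\cdot\nabla\theta_a-u\cdot\nabla\theta-Er^2,\qquad \theta|_{t=0}=0 .
\]
Here we used $U=u_a+u$ to move the $u_a$-transport to the left. Duhamel's principle together with Corollary \ref{ac3.6} then gives $\|\theta\|_{X_1}\lesssim\|g_1\|_{L^1L^2}$ on $[0,T_0]$, with $X_1$ as in \eqref{x1}. It remains to bound the three pieces of $g_1$ in $L^1(0,T_0)L^2$.

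\textbf{Linear reaction term.} We use $\|u\|\lesssim\|\omega\|$ (elliptic estimate with Dirichlet data) and $\|\nabla\theta_a\|_{L^\infty}\lesssim(1+t)\mathcal{E}_{2,in}$. The latter follows from \eqref{a3.8}--\eqref{a3.9} with $m=0$, writing $\partial_y=(\partial_y+t\partial_x)-t\partial_x$. Integrating over $t\le\nu^{-1/6}$ gives
\[
\|u\cdot\nabla\theta_a\|_{L^1L^2}\lesssim T_0^2\|\omega\|_{L^\infty L^2}\mathcal{E}_{2,in}=\nu^{-\frac13}\|\omega\|_{L^\infty L^2}\mathcal{E}_{2,in}.
\]

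\textbf{Error term.} We integrate \eqref{a3.18} with $m=0$ over $[0,T_0]$.
\begin{itemize}
\item The piece $(1+t)^{-1}\mathcal{E}_{in}\mathcal{E}_{2,in}$ integrates to $\ln(1+T_0)\,\mathcal{E}_{in}\mathcal{E}_{2,in}\lesssim\nu^{\frac13}\ln(1+\nu^{-1})\mathcal{E}_{2,in}$, since $\mathcal{E}_{in}\le\varepsilon_0\nu^{1/3}$.
\item The piece $\nu\,\mathcal{E}_{2,in}$ integrates to $\nu T_0\,\mathcal{E}_{2,in}$, which is much smaller.
\item The piece $\nu^{\frac23}(1+\nu t^3)^{-5/12}$ integrates to at most $\nu^{\frac23}T_0=\nu^{\frac12}$, also much smaller than $\nu^{1/3}$.
\end{itemize}
Together these produce the last term in \eqref{a3.51}.

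\textbf{Nonlinear term.} Since $k\in\mathbb{R}$, there is no exact zero mode. We therefore split $u$ by a frequency cutoff: a low part $P_{|k|\le1}$ and a high part $P_{|k|>1}$.
\begin{itemize}
\item For the high part, the argument of Proposition \ref{p4.7} carries over. Lemma \ref{lem:A.1} (Gagliardo--Nirenberg with $\delta_1=1/5$) controls $\|P_{|k|>1}u\|_{L^2L^\infty}$, and pairing with $\|\nabla\theta\|_{L^2L^2}\le\nu^{-1/2}\|\theta\|_{X_1}$ gives the term $\nu^{-5/8}\|\omega\|^{9/10}(\nu^{1/2}\|\nabla\omega\|)^{1/10}\|\theta\|_{X_1}$.
\item For the low part, we write $u\cdot\nabla\theta=u^{(1)}\partial_x\theta+u^{(2)}\partial_y\theta$. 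Low frequencies in $x$ give $\|P_{|k|\le1}u\|_{L^\infty}\lesssim\|(1,\partial_y)\nabla\phi_{low}\|\lesssim\|\omega\|$, via Sobolev in $y$ and Bernstein in $x$. The $\partial_x\theta$ factor is then controlled by $\nu^{-1/2}\|\theta\|_{X_1}$.
\item The $u^{(2)}\partial_y\theta$ piece has no good derivative, so we use $\|\nabla\theta\|_{L^1L^2}\le T_0^{1/2}\|\nabla\theta\|_{L^2L^2}\le\nu^{-1/12}\nu^{-1/2}\|\theta\|_{X_1}$. This produces $\nu^{-7/12}\|\omega\|_{L^\infty L^2}\|\theta\|_{X_1}$, and it explains the exponent $7/12$ in \eqref{a3.51}.
\end{itemize}

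\textbf{Main obstacle.} The delicate step is the low-frequency part of the nonlinearity: the $L^\infty$ bound for $P_{|k|\le1}u$ on the unbounded domain $\mathbb{R}\times I$, where the Poincaré inequality in $x$ is lost. We would obtain it through the Bernstein-type bound above, possibly checked against Lemma \ref{al3.4} for $|k|<1$ as a fallback. A second, smaller point is confirming that the logarithm from $\int_0^{T_0}(1+t)^{-1}\,\dd t$ is the only loss in the error estimate.
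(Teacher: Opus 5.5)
Your proposal is correct and follows the paper's proof essentially step by step. It uses the same Duhamel reduction via Corollary \ref{ac3.6}, the same bounds for $u\cdot\nabla\theta_a$ and $Er^2$, and the same $|k|\le 1$ / $|k|\ge 1$ splitting of $\int\|u_k\|_{L^\infty_y}\,\mathrm{d}k$, where the $T_0^{1/2}$ loss on $\partial_y\theta$ produces the $\nu^{-\frac{7}{12}}$ term. One minor point: Lemma \ref{lem:A.1} is stated for $k\in\mathbb{Z}$, so on $\mathbb{R}\times I$ you need its $|k|\ge1$ analogue, using $\int_{|k|\ge1}|k|^{-1-\delta_1}\,\mathrm{d}k<\infty$, which is exactly what the paper carries out inline.
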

\begin{proof} Notice that
\begin{equation}\notag
    \partial_t \theta - \nu \Delta \theta + y \partial_x \theta+u_{a}\cdot\nabla\theta=g_1, \ \theta|_{t=0}=0,
\end{equation}
where 
\begin{equation}\notag
    g_1=- u \cdot \nabla \theta_{a}- u \cdot \nabla \theta  -Er^2.
\end{equation}
From Corollary \ref{ac3.6} and Duhamel's principle, we have 
\begin{equation}\label{a3.52}
\|\theta\|_{X_1} \lesssim \left\| g_1 \right\|_{L^1 L^2}.
\end{equation}
Now we begin to estimate $\left\| g_1 \right\|_{L^1 L^2}$. 
Firstly, it follows from \eqref{a3.8}--\eqref{a3.9} that
\begin{align}
\|u \cdot \nabla \theta_{a}\| \lesssim \|u\| \|\nabla\theta_{a}\|_{L^\infty}\lesssim t\|\omega\|\mathcal{E}_{2,in},\notag
\end{align}
and then,
\begin{align}\label{a3.53}
\|u \cdot \nabla \theta_{a}\|_{L^1 L^2} \lesssim t^2\|\omega\|_{L^\infty L^2}\mathcal{E}_{2,in}\lesssim \nu^{-\frac{1}{3}}\|\omega\|_{L^\infty L^2}\mathcal{E}_{2,in}.
\end{align}
Next, notice that
\[
u \cdot \nabla \theta = u^{(1)} \partial_x \theta + u^{(2)} \partial_y \theta.
\]
Then 
\begin{align}
  \|u^{(1)} \partial_x \theta\|\lesssim \|u^{(1)}\|_{L^\infty}\|\partial_x \theta\|\lesssim \int_{\R}\|u^{(1)}_k\|_{L^\infty_y}\dd k \|\partial_x \theta\|.
\end{align}
For $|k|\leq 1$, we use Sobolev inequality to obtain
\begin{equation}\notag
  \int_{|k|\leq 1}\|u^{(1)}_k\|_{L^\infty_y}\dd k\leq \int_{|k|\leq 1}\|\omega_k\|_{L^2_y}\dd k\lesssim \|\omega_k\|_{L^2_kL^2_y}\lesssim \|\omega\|_{L^2}.  
\end{equation}
For $|k|\geq 1$, 
\begin{align}
  \int_{|k|\geq 1}\|u^{(1)}_k\|_{L^\infty_y}\dd k\leq& \int_{|k|\geq 1}\|u^{(1)}_k\|^{\frac{1}{2}}_{L^2_y}\|\partial_yu^{(1)}_k\|^{\frac{1}{2}}_{L^2_y}\dd k=\int_{|k|\geq 1}|k|^{-\frac{1}{2}-\frac{\delta_1}{2}}\|k^{1+\delta_1}u^{(1)}_k\|^{\frac{1}{2}}_{L^2_y}\|\partial_yu^{(1)}_k\|^{\frac{1}{2}}_{L^2_y}\dd k\notag \\ 
  \lesssim& \||k|^{-\frac{1}{2}-\frac{\delta_1}{2}}\|_{L^2_k(|k|\geq 1)}\|k^{1+\delta_1}u^{(1)}_k\|^{\frac{1}{2}}_{L^2_kL^2_y}\|\partial_yu^{(1)}_k\|^{\frac{1}{2}}_{L^2_kL^2_y}\notag \\ \lesssim& \|ku^{(1)}_k\|_{L^2_kL^2_y}^{\frac{1-\delta_1}{2}}\|k^2u^{(1)}_k\|_{L^2_kL^2_y}^{\frac{\delta_1}{2}}\|(k,\partial_y)u^{(1)}_k\|_{L^2_kL^2_y}^{\frac{1}{2}} 
  \lesssim \|\omega\|_{L^2}^{1-\frac{\delta_1}{2}}\|\nabla \omega\|_{L^2}^{\frac{\delta_1}{2}}. \notag 
\end{align}
Thus, we deduce
\begin{align}
  \|u^{(1)} \partial_x \theta\|_{ L^1L^2}\lesssim& \|\omega\|_{L^\infty L^2}\|\partial_x \theta\|_{L^1 L^2}+\left\|\|\omega\|_{L^2}^{1-\frac{\delta_1}{2}}\|\nabla \omega\|_{L^2}^{\frac{\delta_1}{2}}\right\|_{L^2_t}\|\partial_x \theta\|_{L^2 L^2}\notag\\
  \lesssim&\|\omega\|_{L^\infty L^2}\|\partial_x \theta\|_{L^1 L^2}+\|\omega\|_{L^2 L^2}^{1-\frac{\delta_1}{2}} \|\nabla \omega\|_{L^2 L^2}^{\frac{\delta_1}{2}}\|\nabla \theta\|_{L^2 L^2}\notag\\
  \lesssim&\|\omega\|_{L^\infty L^2}\|\partial_x \theta\|_{L^1 L^2}+\|\omega\|_{L^\infty L^2}^{1-\frac{\delta_1}{2}}t^{\frac{1}{2}(1-\frac{\delta_1}{2})}\|\nabla \omega\|_{L^2 L^2}^{\frac{\delta_1}{2}}\|\nabla \theta\|_{L^2 L^2}\notag\\ 
  \lesssim&\|\omega\|_{L^\infty L^2}\|\partial_x \theta\|_{L^1 L^2}+\nu^{-\frac{1}{12}+\frac{\delta_1}{24}}\|\omega\|_{L^\infty L^2}^{1-\frac{\delta_1}{2}}\|\nabla \omega\|_{L^2 L^2}^{\frac{\delta_1}{2}}\|\nabla \theta\|_{L^2 L^2}.\notag
\end{align}
Let $\delta_1=\frac{1}{5}$, then
\begin{align}\label{a3.55}
  \|u^{(1)} \partial_x \theta\|_{ L^1L^2}\lesssim& \nu^{-\frac{1}{2}}\|\omega\|_{L^\infty L^2}(\nu^{\frac{1}{2}}\|\partial_x \theta\|_{L^1 L^2})+\nu^{-\frac{5}{8}}\|\omega\|_{L^\infty L^2}^{\frac{9}{10}}(\nu^{\frac{1}{2}}\|\nabla \omega\|_{L^2 L^2})^{\frac{1}{10}}(\nu^{\frac{1}{2}}\|\nabla \theta\|_{L^2 L^2})\notag\\ 
  \lesssim& \nu^{-\frac{1}{2}}\|\omega\|_{L^\infty L^2}\|\theta\|_{X_1}+\nu^{-\frac{5}{8}}\|\omega\|_{L^\infty L^2}^{\frac{9}{10}}(\nu^{\frac{1}{2}}\|\nabla \omega\|_{L^2 L^2})^{\frac{1}{10}}\|\theta\|_{X_1}. 
\end{align}
For $\|u^{(2)} \partial_y \theta\|_{ L^1L^2}$, note that when $|k|\leq 1$,
\begin{equation}\notag
   \|\omega\|_{L^\infty L^2}\|\partial_y \theta\|_{L^1 L^2}\lesssim \|\omega\|_{L^\infty L^2} t^{\frac{1}{2}}\|\nabla \theta\|_{L^2 L^2}\lesssim \nu^{-\frac{7}{12}}\|\omega\|_{L^\infty L^2}(\nu^{\frac{1}{2}}\|\nabla \theta\|_{L^2 L^2}), 
\end{equation}
and when $|k|\geq 1$, we use a similar approach as before to calculate, and ultimately obtain
\begin{align}\label{a3.56}
  \|u^{(2)} \partial_y \theta\|_{ L^1L^2}\lesssim& \nu^{-\frac{7}{12}}\|\omega\|_{L^\infty L^2}(\nu^{\frac{1}{2}}\|\nabla \theta\|_{L^2 L^2})+\nu^{-\frac{5}{8}}\|\omega\|_{L^\infty L^2}^{\frac{9}{10}}(\nu^{\frac{1}{2}}\|\nabla \omega\|_{L^2 L^2})^{\frac{1}{10}}(\nu^{\frac{1}{2}}\|\nabla \theta\|_{L^2 L^2})\notag\\ 
  \lesssim& \nu^{-\frac{7}{12}}\|\omega\|_{L^\infty L^2}\|\theta\|_{X_1}+\nu^{-\frac{5}{8}}\|\omega\|_{L^\infty L^2}^{\frac{9}{10}}(\nu^{\frac{1}{2}}\|\nabla \omega\|_{L^2 L^2})^{\frac{1}{10}}\|\theta\|_{X_1}. 
\end{align}
For $\|Er^2\|_{L^1 L^2}$,  
thanks to \eqref{a3.18},
\begin{align}\label{estim Er}
&\int^{T_{0}}_0\|Er^2\|_{L^2}\dd t\lesssim \mathcal{E}_{2,in}\int^{T_{0}}_0(1+t)^{-1}\mathcal{E}_{2,in}+\nu+\nu^{\frac23} (1+\nu t^3)^{-\frac{5}{12}}\dd t\lesssim \nu^{\frac13}\ln(1+\nu^{-1})\mathcal{E}_{2,in}.
\end{align}
Substituting \eqref{a3.53}--\eqref{estim Er} into \eqref{a3.52} yields \eqref{a3.51}, which completes the proof of this proposition.

\end{proof}

Now, we give the uniform estimate of $(\omega,\theta)(x,t)$ in a short time scale.
\begin{prop}\label{ap3.8}
If $0 \leq t \leq T_0 = \nu^{-\frac{1}{6}}$, then
\begin{equation}\label{a3.58}
\|\omega(t)\|_{L^\infty L^2}+\nu^{\frac{1}{2}} \|\nabla \omega(t)\|_{L^2L^2}+\nu^{-\frac{1}{2}}\|\theta(t)\|_{X_1} \lesssim\nu^{\frac13}\ln (1+\nu^{-1})\mathcal{E}_{in},
\end{equation}
\begin{equation}\label{aa3.58}
\|(1+\nu^{\frac{1}{3}}t\mathcal{M}_1)^\delta\omega(t)\|_{L^\infty L^2}+\nu^{-\frac{1}{2}}\|(1+\nu^{\frac{1}{3}}t\mathcal{M}_1)^\delta\theta(t)\|_{L^\infty L^2} \lesssim\nu^{\frac13}\ln (1+\nu^{-1})\mathcal{E}_{in},
\end{equation}
\end{prop}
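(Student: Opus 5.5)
The plan mirrors the proof of Proposition \ref{p4.8} for the torus, with the $\mathbb{T}$-specific bounds from Proposition \ref{p2.1} replaced by those of Proposition \ref{linear}. I will prove \eqref{a3.58} first and then obtain \eqref{aa3.58} from it.

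\textbf{Zero-order energy estimates.} Take the $L^2$ inner product of \eqref{a3.5}$_1$ with $\omega$ and of \eqref{a3.5}$_2$ with $\theta$. The transport terms $y\partial_x$ and $U\cdot\nabla$ drop out, since $U$ is divergence free and $\omega,\theta$ vanish at $y=\pm1$. Integrating over $[0,t]$ with $t\le T_0$ then gives two analogues of \eqref{123}--\eqref{456}, now without the $V_1''$ and $V_2'$ terms:
\begin{align*}
\|\omega\|_{L^\infty L^2}^2+\nu\|\nabla\omega\|_{L^2L^2}^2&\lesssim \|\omega\|_{L^\infty L^2}^2\|\nabla\omega_a\|_{L^1L^\infty}+\|\omega\|_{L^\infty L^2}\big(\|Er^1\|_{L^1L^2}+\|\partial_x\theta\|_{L^1L^2}\big),\\
\|\theta\|_{L^\infty L^2}^2+\nu\|\nabla\theta\|_{L^2L^2}^2&\lesssim \|\theta\|_{L^\infty L^2}\|\omega\|_{L^\infty L^2}\|\nabla\theta_a\|_{L^1L^\infty}+\|\theta\|_{L^\infty L^2}\|Er^2\|_{L^1L^2}.
\end{align*}
These terms are bounded as follows.
\begin{itemize}
\item By \eqref{omL Linfty}--\eqref{paxomL Linfty} we have $\|\nabla\omega_a\|_{L^\infty}\lesssim(1+t)\mathcal{E}_{in}$. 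Hence $\|\nabla\omega_a\|_{L^1L^\infty}\lesssim T_0^2\mathcal{E}_{in}\lesssim\nu^{-1/3}\mathcal{E}_{in}\lesssim\varepsilon_0$, which is absorbed on the left.
\item In the same way \eqref{a3.8}--\eqref{a3.9} give $\|\nabla\theta_a\|_{L^1L^\infty}\lesssim\nu^{-1/3}\mathcal{E}_{2,in}\lesssim\varepsilon_0\nu^{1/2}$.
\item Exactly as in \eqref{estim Er}, the error terms satisfy $\|Er^i\|_{L^1(0,T_0)L^2}\lesssim\nu^{1/3}\ln(1+\nu^{-1})\mathcal{E}_{in}$, using \eqref{ErL2} and \eqref{a3.18}.
\item The buoyancy term is bounded by $\|\partial_x\theta\|_{L^1L^2}\le\nu^{-1/2}\|\theta\|_{X_1}$.
\end{itemize}
This produces the analogues of \eqref{4.12}--\eqref{4.13}:
\[
\|\omega\|_{L^\infty L^2}+\nu^{1/2}\|\nabla\omega\|_{L^2L^2}\lesssim\nu^{-1/2}\|\theta\|_{X_1}+\nu^{1/3}\ln(1+\nu^{-1})\mathcal{E}_{in}.
\]

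\textbf{Closing the bootstrap.} Combine this with Proposition \ref{ap3.7} multiplied by $\nu^{-1/2}$. The linear coupling term $\nu^{-5/6}\|\omega\|_{L^\infty L^2}\mathcal{E}_{2,in}$ is at most $\varepsilon_0\|\omega\|_{L^\infty L^2}$, so it is absorbed. The source term $\nu^{-1/2}\cdot\nu^{1/3}\ln(1+\nu^{-1})\mathcal{E}_{2,in}$ is at most $\nu^{1/3}\ln(1+\nu^{-1})\mathcal{E}_{in}$. The remaining quadratic terms carry the factors $\nu^{-13/12}\|\omega\|\,\|\theta\|_{X_1}$ and $\nu^{-9/8}\|\omega\|^{9/10}(\nu^{1/2}\|\nabla\omega\|)^{1/10}\|\theta\|_{X_1}$. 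To control them, make the bootstrap hypothesis
\[
\|\omega\|_{L^\infty L^2}+\nu^{1/2}\|\nabla\omega\|_{L^2L^2}+\nu^{-1/2}\|\theta\|_{X_1}\le\varepsilon_1\nu^{2/3}.
\]
Under it, $\nu^{-13/12}\|\omega\|\le\varepsilon_1\nu^{-5/12}$ and $\nu^{-9/8}\|\omega\|^{9/10}(\cdots)^{1/10}\le\varepsilon_1\nu^{-11/24}$. These are not small, so the hypothesis as stated is not enough.

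\emph{This is the step I expect to be the main obstacle.} The quadratic terms must be estimated more carefully than a direct insertion of the bootstrap bound. My first attempt is to use the $\nu^{-1/2}$ weight that already sits on $\theta$: the term is really $\nu^{-1/2}\|\omega\|\cdot(\nu^{-1/2}\|\theta\|_{X_1})\lesssim\varepsilon_1\nu^{1/6}\cdot(\nu^{-1/2}\|\theta\|_{X_1})$, which is small. Similarly $\nu^{-5/8}\|\omega\|^{9/10}(\cdot)^{1/10}\lesssim\varepsilon_1\nu^{1/24}$. With the weight placed correctly, both quadratic terms are absorbed, and the bootstrap closes with $\varepsilon_0$ small compared with $\varepsilon_1$. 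The $\ln(1+\nu^{-1})$ loss is harmless because $\nu^{1/3}\ln(1+\nu^{-1})\mathcal{E}_{in}\ll\nu^{2/3}$ is still guaranteed by the smallness $\mathcal{E}_{in}\le\varepsilon_0\nu^{1/3}$ for $\nu\le1$; if needed, $\varepsilon_0$ can absorb $\sup_\nu\nu^{1/3}\ln(1+\nu^{-1})$. This proves \eqref{a3.58}.

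\textbf{Deriving \eqref{aa3.58}.} For $t\le T_0$ we have $\nu^{1/3}t\mathcal{M}_1(k)\lesssim\nu^{1/6}|k|^{2/3}$. For $|k|\le\nu^{-1/4}$ the weight $(1+\nu^{1/3}t\mathcal{M}_1)^\delta$ is therefore $O(1)$, and that part follows from \eqref{a3.58}. For high frequencies, $(1+\nu^{1/3}t\mathcal{M}_1)^\delta\lesssim1+|k|^{2\delta/3}$, and since $\delta\ll1$ this weight is controlled by interpolating between $\|\omega\|_{L^\infty L^2}$ and $\nu^{1/2}\|\nabla\omega\|$-type bounds. A cleaner route is to rerun the energy estimates with $(1+\nu^{1/3}t\mathcal{M}_1)^\delta$ applied to the equations. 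This weight is a Fourier multiplier in $x$, so it commutes with $y\partial_x$, $\Delta$ and the boundary conditions. Its time derivative is $\delta\nu^{1/3}\mathcal{M}_1(1+\nu^{1/3}t\mathcal{M}_1)^{\delta-1}\lesssim\nu^{1/3}|k|^{2/3}$. Over $[0,T_0]$ this contributes at most $\delta\nu^{1/6}$ times the weighted $\nu^{1/2}$-dissipation norm, so it is absorbed. The weighted nonlinear terms are handled with the submultiplicativity $(1+\nu^{1/3}t\mathcal{M}_1(k))\lesssim(1+\nu^{1/3}t\mathcal{M}_1(k-l))(1+\nu^{1/3}t\mathcal{M}_1(l))$, and the weighted bounds on $\omega_a,\theta_a$ come from Proposition \ref{linear} with $m=\delta$.
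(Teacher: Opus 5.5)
Your treatment of \eqref{a3.58} follows the paper's route: zero-order energy estimates, then Proposition \ref{ap3.7} multiplied by $\nu^{-1/2}$, then a bootstrap. The absorption of the quadratic terms is right once the $\nu^{-1/2}$ is moved onto $\theta$. There is a minor slip: the first coefficient is $\nu^{-7/12}\|\omega\|_{L^\infty L^2}$, so it yields $\varepsilon_1\nu^{1/12}$ rather than $\varepsilon_1\nu^{1/6}$, which is still small.

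The genuine problem is the closing step. Under your hypothesis $\|\omega\|_{L^\infty L^2}+\nu^{1/2}\|\nabla\omega\|_{L^2L^2}+\nu^{-1/2}\|\theta\|_{X_1}\le\varepsilon_1\nu^{2/3}$, the estimate only returns $C\nu^{1/3}\ln(1+\nu^{-1})\mathcal{E}_{in}\le C\varepsilon_0\nu^{2/3}\ln(1+\nu^{-1})$. Improving this to $\frac{1}{2}\varepsilon_1\nu^{2/3}$ would require $\varepsilon_0\lesssim\varepsilon_1/\ln(1+\nu^{-1})$, which is a $\nu$-dependent $\varepsilon_0$ and is not allowed. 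Your remark that $\varepsilon_0$ can absorb $\sup_\nu\nu^{1/3}\ln(1+\nu^{-1})$ targets the wrong factor. After using $\mathcal{E}_{in}\le\varepsilon_0\nu^{1/3}$, the leftover factor is $\ln(1+\nu^{-1})$ itself, which is unbounded.

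The fix is what the paper does: bootstrap on $\varepsilon_1\nu^{2/3}\ln(1+\nu^{-1})$. The quadratic coefficients then become $\varepsilon_1\nu^{1/12}\ln(1+\nu^{-1})$ and $\varepsilon_1\nu^{1/24}\ln(1+\nu^{-1})$. These are still $\lesssim\varepsilon_1$ uniformly for $\nu\in(0,1]$, and the argument closes with $\varepsilon_0\le\varepsilon_1/(2C)$. A threshold $\varepsilon_1\nu^{2/3-\eta}$ with $0<\eta\le 1/24$ would work equally well.

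For \eqref{aa3.58}, you missed the observation that makes it immediate: by definition $\mathcal{M}_1(k)\le 1$ for every $k$. It equals $1$ for $|k|\ge1$, equals $|k|^{2/3}$ for $|k|\le\frac{1}{2}$, and lies between these in the transition region. Hence for $t\le T_0$ you have $(1+\nu^{1/3}t\mathcal{M}_1)^\delta\le(1+\nu^{1/6})^\delta\le 2$. Then \eqref{aa3.58} follows at once from \eqref{a3.58} together with $\|\theta\|_{L^\infty L^2}\le\|\theta\|_{X_1}$; this is exactly the paper's one-line argument.

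Your high-frequency discussion is therefore unnecessary, and the interpolation route you suggest first would fail anyway. The bound on $\nu^{1/2}\|\nabla\omega\|_{L^2L^2}$ gives no pointwise-in-time control of $|k|^{2\delta/3}\omega_k$. The weighted energy rerun might be made to work, but it would need weighted versions of Corollary \ref{ac3.6} and Proposition \ref{ap3.7}. That is because the linear operator there contains $u_a\cdot\nabla$, which does not commute with the weight. All of this is avoidable.
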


\begin{proof}
Firstly, we multiply \eqref{a3.5}$_1$ and \eqref{a3.5}$_2$ by $\omega$ and $\theta$ respectively, and integrate it with respect to $x$ and $y$ to obtain
\[
\frac{1}{2} \frac{\rm d}{{\rm d}t} \|\omega\|^2 + \nu \|\nabla \omega\|^2 = \int_{-1}^1\int_\mathbb{R} (- u \cdot \nabla \omega_{a} - Er^1-\partial_x \theta) \omega \dd x \dd y,
\]
\[
\frac{1}{2} \frac{\rm d}{{\rm d}t} \|\theta\|^2 + \nu \|\nabla \theta\|^2 = \int_{-1}^1\int_\mathbb{R} (-u \cdot \nabla \theta_{a} - Er^2) \theta \dd x \dd y.
\]
Using $u = (\partial_y, -\partial_x) \phi$ and $\phi = \Delta^{-1} \omega$, we have $\|u\| = \|\nabla \phi\| \leq C \|\omega\|$,
and
\begin{align*}
\frac{1}{2}\frac{\rm d}{{\rm d}t} \|\omega\|^2+\nu \|\nabla \omega\|^2
&\leq \|\omega\|\|u \cdot \nabla \omega_{a}+Er^1\|+\|\omega\|\| \partial_x\theta\| \\
&\leq \| \omega\|(\|u\|\|\nabla \omega_{a}\|_{L^\infty} + \|Er^1\|) +\|\omega\|\| \partial_x\theta\|\\
&\lesssim\|\omega\|^2\|\nabla \omega_{a}\|_{L^\infty} + \|\omega\|\|Er^1\| + \|\omega\|\| \partial_x\theta\|,
\end{align*}
\begin{align*}
\frac{1}{2}\frac{\rm d}{{\rm d}t} \|\theta\|^2+\nu \|\nabla \theta\|^2
&\leq \|\omega\|\|u \cdot \nabla \theta_{a} +Er^2\| \\
&\leq \| \theta\|(\|u\| \|\nabla \theta_{a}\|_{L^\infty} + \|Er^2\|) \\
&\lesssim\| \theta\|\|\omega\|\|\nabla \theta_{a}\|_{L^\infty} + \|\theta\|\|Er^2\| .
\end{align*}
Integrating the above equations with respect to $t$ yields
\begin{align}
  &\|\omega\|^2_{L^\infty L^2}+\nu \|\nabla \omega\|_{L^2L^2}^2 \notag\\ 
  &\leq C\|\omega\|_{L^\infty L^2}^2\|\nabla \omega_{a}\|_{L^1 L^\infty} + C\|\omega\|_{L^\infty L^2}(\|Er^1\|_{L^1L^2}+\|\partial_x\theta\|_{L^1L^2}),\notag  
\end{align}
\begin{align}
  &\|\theta\|^2_{L^\infty L^2}+\nu \|\nabla \theta\|_{L^2L^2}^2 \notag\\ 
  &\leq C\|\theta\|_{L^\infty L^2}\|\omega\|_{L^\infty L^2}\|\nabla \theta_{a}\|_{L^1 L^\infty} + C\|\theta\|_{L^\infty L^2}\|Er^2\|_{L^1L^2}.\notag  
\end{align}
From \eqref{a3.8}--\eqref{a3.9}, \eqref{omL Linfty}--\eqref{paxomL Linfty} and \eqref{a3.18}--\eqref{ErL2}, we arrive at 
 \begin{align}
  &\|\omega\|^2_{L^\infty L^2}+\nu \|\nabla \omega\|_{L^2L^2}^2 \notag\\ 
  \leq &C\nu^{-\frac13}\mathcal{E}_{in}\|\omega\|_{L^\infty L^2}^2 + C\nu^{\frac13}\ln (1+\nu^{-1}) \mathcal{E}_{in}\|\omega\|_{L^\infty L^2} +\|\omega\|_{L^\infty L^2}\|\partial_x\theta\|_{L^1L^2}   \notag\\
  \leq& \left(C\varepsilon_0+\frac{1}{2}\right)\|\omega\|_{L^\infty L^2}^2 +C\nu^{\frac23}\ln^2 (1+\nu^{-1}) \mathcal{E}_{in}^2+C\|\partial_x\theta\|_{L^1L^2}^2,\notag
\end{align}
\begin{align}
 \|\theta\|_{L^\infty L^2}^2 + \nu \|\nabla \theta\|_{L^2L^2}^2  
 \leq& C \nu^{-\frac13}\mathcal{E}_{2,in}\|\theta\|_{L^\infty L^2}\|\omega\|_{L^\infty L^2}+\nu^{\frac13}\ln (1+\nu^{-1})\mathcal{E}_{2,in}\|\theta\|_{L^\infty L^2}\notag\\ 
 \leq& \left(C\varepsilon_0+\frac{1}{2}\right)\|\theta\|_{L^\infty L^2}^2+C\varepsilon_0\nu\|\omega\|_{L^\infty L^2}^2+C\nu^{\frac23}\ln ^2(1+\nu^{-1})\mathcal{E}_{2,in}^2.\notag
\end{align}
By the smallness of $\varepsilon_0$, we obtain 
 \begin{align}\notag
  \|\omega\|_{L^\infty L^2}+\nu^{\frac{1}{2}} \|\nabla \omega\|_{L^2L^2} 
  \leq C\left[\nu^{-\frac{1}{2}}\|\theta\|_{X_1}+\nu^{\frac13}\ln (1+\nu^{-1}) \mathcal{E}_{in}\right],
\end{align}
\begin{align}\notag
 \|\theta\|_{L^\infty L^2} + \nu^{\frac{1}{2}} \|\nabla \theta\|_{L^2L^2}  
 \leq C\varepsilon_0^{\frac{1}{2}}\nu^{\frac{1}{2}}\|\omega\|_{L^\infty L^2}+C\nu^{\frac13}\ln (1+\nu^{-1})\mathcal{E}_{2,in}.
\end{align}
Notice that 
\begin{align}\notag
  \nu^{-\frac{1}{2}}\|\theta\|_{X_1}\leq& C\nu^{-\frac{5}{6}}\|\omega\|_{L^\infty L^2}\mathcal{E}_{2,in}+C\nu^{-\frac{13}{12}}\|\omega\|_{L^\infty L^2}\|\theta\|_{X_1}\notag \\ 
  &+C\nu^{-\frac{9}{8}}\|\omega\|_{L^\infty L^2}^{\frac{9}{10}}(\nu^{\frac{1}{2}}\|\nabla \omega\|_{L^2 L^2})^{\frac{1}{10}}\|\theta\|_{X_1}+C\nu^{-\frac16}\ln(1+\nu^{-1})\mathcal{E}_{2,in},\end{align}
then  combining the above three equations shows
\begin{align}\notag
&\|\omega\|_{L^\infty L^2}+\nu^{\frac{1}{2}} \|\nabla \omega\|_{L^2L^2}+\nu^{-\frac{1}{2}} \|\theta\|_{L^\infty L^2}+\nu^{-\frac{1}{2}}\|\theta\|_{X_1}\notag\\ 
\lesssim&\nu^{-\frac{13}{12}}\|\omega\|_{L^\infty L^2}\|\theta\|_{X_1}+\nu^{-\frac{9}{8}}\|\omega\|_{L^\infty L^2}^{\frac{9}{10}}(\nu^{\frac{1}{2}}\|\nabla \omega\|_{L^2 L^2})^{\frac{1}{10}}\|\theta\|_{X_1}+\nu^{\frac13}\ln (1+\nu^{-1}) \mathcal{E}_{in}.\notag
\end{align}
Assume that $\|\omega\|_{L^\infty L^2}+\nu^{\frac{1}{2}} \|\nabla \omega\|_{L^2L^2}+\nu^{-\frac{1}{2}}\|\theta\|_{X_1}\leq \varepsilon_1\nu^{\frac{2}{3}}\ln (1+\nu^{-1})$ for some small constant $\varepsilon_1$, then we conclude
\begin{align}\notag
\|\omega\|_{L^\infty L^2}+\nu^{\frac{1}{2}} \|\nabla \omega\|_{L^2L^2}+\nu^{-\frac{1}{2}}\|\theta\|_{X_1}\leq C\nu^{\frac13}\ln (1+\nu^{-1}) \mathcal{E}_{in}.
\end{align}
Notice that $\mathcal{E}_{in}\leq \varepsilon_0 \nu^{\frac{1}{3}},$
Then for $0 \leq t \leq\nu^{-\frac{1}{6}}$,  we have
\[
\|\omega\|_{L^\infty L^2}+\nu^{\frac{1}{2}} \|\nabla \omega\|_{L^2L^2}+\nu^{-\frac{1}{2}}\|\theta\|_{X_1} \leq C\nu^{\frac13}\ln (1+\nu^{-1})\mathcal{E}_{in} \leq C\varepsilon_0\nu^{\frac23}\ln (1+\nu^{-1}).
\]
Assume $\varepsilon_0\leq \frac{\varepsilon_1}{2C}$, we get \eqref{a3.58}. 
Then by $(1+\nu^{\frac{1}{3}}t|\mathcal{M}_1|)^\delta\lesssim 1$ and the definition of $\|\cdot\|$ in \eqref{x1}, we can also get \eqref{aa3.58}
The proof of this proposition is completed.
\end{proof}

\subsubsection{Long time scale $t\geq \nu^{-\frac{1}{6}}$}

With the help of estimates in a short time scale, we only need to establish the uniform estimates of the solution in a long time scale $[T_0, T]$ for every $T > T_0=\nu^{-\frac{1}{6}}$. Firstly, we consider the space-time estimates of the following system:
\begin{equation}\label{a3.61}
    \partial_t f + y \partial_x f- \nu \Delta f  = g_2, \quad f(t,x,\pm 1) = 0, \quad \text{for }\  t \geq T_0.
\end{equation}
Define the space-time norms $\|f\|_{X_2}$ and $\|f\|_{X_{2,\delta}}$, namely
\begin{align}\label{a3.62}
\|f\|_{X_2} := \|f\|_{L^\infty L^2} + \nu^{\frac{1}{2}} \|\nabla f\|_{L^2 L^2} + \|\mathcal{M}_2 \nabla \Delta^{-1} f\|_{L^2 L^2} + \nu^{\frac{1}{2}} \|\partial_x f\|_{L^1 L^2},
\end{align}
\begin{align}\label{a3.63}
\|f\|_{X_{2,\delta}}:= \|(1+\nu^{\frac{1}{3}}t\mathcal{M}_1)^{\delta}f\|_{X_2}.
\end{align}
\begin{prop}\label{ap3.9}
Assume that $f$ is the solution to \eqref{a3.61} for $t\in [t_0,t_1]$, then, there exists a suitable small constant $\varepsilon$ such that
\begin{align}
\|f\|_{X_2}\lesssim &\|f(t_0)\|_{L^2}+\| g_2\|_{L^1L^2},\label{esti: ftildeX} \\
\nu^{\frac13+\varepsilon}\|\mathcal{M}_1 f\|_{L^1 L^2}\lesssim_{\varepsilon} &\|f(t_0)\|_{L^2}+\|g_2\|_{L^1 L^2}.\label{f enhance}
\end{align}
Moreover, if $t_0\geq \nu^{-1-\varepsilon}$, we have
\begin{align}
\|f\|_{X_{2,\delta}}\lesssim &\|(1+\nu^{\frac13} t_0 \mathcal{M}_1)^{\delta}f(t_0)\|_{L^2}+\|(1+ \nu^{\frac13}t\mathcal{M}_1)^{\delta}g_2\|_{L^1 L^2},\label{fX large time}\\
\nu^{\frac13}\|\mathcal{M}_1(1+ \nu^{\frac13}t\mathcal{M}_1)^{\delta}f\|_{L^1L^2}\lesssim &\|(1+\nu^{\frac13} t_0 \mathcal{M}_1)^{\delta}f(t_0)\|_{L^2}+\|(1+ \nu^{\frac13}t\mathcal{M}_1)^{\delta}g_2\|_{L^1 L^2}.\label{esti: nu1/3M f}
\end{align}
\end{prop}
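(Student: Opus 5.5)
Since \eqref{a3.61} is linear with constant coefficient $y$ and $x\in\mathbb{R}$, I will take the Fourier transform in $x$ and work frequency by frequency with $f_k$, which solves $\partial_t f_k+iky f_k-\nu(\partial_y^2-k^2)f_k=g_{2,k}$, $f_k(\pm1)=0$. By Duhamel's principle and Minkowski's inequality in time it suffices to treat the homogeneous problem with data at some $s\in[t_0,t_1]$ and then integrate in $s$; the $L^1_tL^2$ norm of $g_2$ enters exactly because every estimate is of the form ``solution norm on $[s,t_1]$ $\lesssim$ data at $s$''. The energy identity gives directly $\|f\|_{L^\infty L^2}+\nu^{1/2}\|\nabla f\|_{L^2L^2}\lesssim \|f(t_0)\|+\|g_2\|_{L^1L^2}$.

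For the enhanced dissipation part I will use, for each $k$, the hypocoercive/resolvent estimate for $-\nu(\partial_y^2-k^2)+iky$ with Dirichlet data on $[-1,1]$: the semigroup satisfies $\|e^{-tL_k}\|\lesssim e^{-c\nu^{1/3}|k|^{2/3}t-c\nu t}$ (for $|k|\ge1$ from the weighted energy functional of Lemma~\ref{l4.4} type with $V_1=y$, $u_a=0$, extended to all times via the standard resolvent bound $\|(L_k-i\lambda)^{-1}\|\lesssim \nu^{-1/3}|k|^{-2/3}$ and Gearhart--Prüss; for $|k|<1$ the rate is $\nu^{1/3}|k|^{2/3}+\nu$, which is $\gtrsim\nu^{1/3}\mathcal{M}_1(k)$ up to the Poincaré term $\nu$). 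Integrating in time, $\nu^{1/3}\mathcal{M}_1\int e^{-c\nu^{1/3}|k|^{2/3}t}dt\lesssim 1$ for $|k|\ge1$, while for $|k|<1$ the multiplier $\mathcal{M}_1=|k|^{2/3}\chi+1-\chi$ is only comparable to $|k|^{2/3}$ near $0$ and the $L^1_t$ bound degenerates logarithmically; this is why \eqref{f enhance} carries the loss $\nu^{\varepsilon}$: I will bound $\nu^{1/3}\mathcal{M}_1\int_0^\infty e^{-c(\nu^{1/3}|k|^{2/3}+\nu)t}dt\le 1$ and accept an $\nu^{\varepsilon}$ factor to pass from pointwise-in-$k$ to $L^2_k$ via Plancherel with $L^1_t$ outside (Minkowski). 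Similarly $\nu^{1/2}\|\partial_x f\|_{L^1L^2}$ follows from $\nu^{1/2}|k|\int e^{-c\nu^{1/3}|k|^{2/3}t}dt\lesssim \nu^{1/6}|k|^{1/3}$ combined with the $\nu\|k f_k\|^2_{L^2_t}$ dissipation (interpolating as in Corollary~\ref{c4.5}).

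The inviscid damping term $\|\mathcal{M}_2\nabla\Delta^{-1}f\|_{L^2L^2}$ is the main obstacle. I will prove it by a space-time (Kato-smoothing/limiting absorption) estimate: for each $k$, $|k|\,\|\nabla_k\Delta_k^{-1}f_k\|^2_{L^2_tL^2_y}\lesssim \|f_k(t_0)\|^2+(\|g_{2,k}\|_{L^1L^2})^2$, obtained from the uniform bound $\|\nabla_k\Delta_k^{-1}(L_k-i\lambda)^{-1}\Delta_k\nabla_k^*\|\lesssim|k|^{-1}$ for the Couette operator (equivalently the Orr-type energy estimate used in \cite{miao2025}, Lemma~\ref{al3.4}); for $|k|\ge1$ one upgrades $|k|^{1/2}$ to $|k|$ using an extra dissipative gain, matching $\mathcal{M}_2=|k|^{1/2}+|k|$. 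I plan to cite the Couette resolvent bounds from \cite{miao2025,wei-zhang2026} rather than redo them.

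Finally, for \eqref{fX large time}--\eqref{esti: nu1/3M f} with weights $(1+\nu^{1/3}t\mathcal{M}_1)^\delta$: multiply $f_k$ by $w(t)=(1+\nu^{1/3}t\mathcal{M}_1(k))^\delta$; then $w f_k$ solves the same equation with forcing $wg_{2,k}+w'f_k$, and $w'=\delta\nu^{1/3}\mathcal{M}_1 w/(1+\nu^{1/3}t\mathcal{M}_1)\le\delta\nu^{1/3}\mathcal{M}_1 w$. Applying the unweighted estimates, the commutator term contributes $\delta\,\nu^{1/3}\|\mathcal{M}_1 wf\|_{L^1L^2}$, absorbed for small $\delta$ once we have the lossless version of \eqref{f enhance}. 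The hypothesis $t_0\ge\nu^{-1-\varepsilon}$ is used precisely here: on $t\ge t_0$ the factor $e^{-c\nu t}$ from Poincaré gives $\nu\cdot t\ge\nu^{-\varepsilon}$, so the logarithmic/$\nu^\varepsilon$ loss at low frequencies is compensated and $\nu^{1/3}\mathcal{M}_1 w$ integrates to $O(1)$ without loss, closing the bootstrap.
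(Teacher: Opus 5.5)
The paper gives no proof here; it defers to \cite{miao2025}. So I compare your plan with the statement itself. The genuine gap is in the weighted part.

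To absorb the commutator $w'f$, with $|w'|\le\delta\nu^{1/3}\mathcal{M}_1w$, into \eqref{esti: ftildeX}, whose forcing norm is $L^1L^2$, you need the \emph{lossless} bound $\nu^{1/3}\|\mathcal{M}_1wf\|_{L^1L^2}\lesssim\cdots$. Your reason for having it when $t_0\ge\nu^{-1-\varepsilon}$ does not work.
\begin{itemize}
\item Equation \eqref{a3.61} is autonomous. Data given at time $s$ (either $f(t_0)$, or $g_2(s)$ inside Duhamel) carries only the Poincar\'e factor $e^{-c\nu(t-s)}$, never $e^{-c\nu t}$. How late $t_0$ is plays no role.
\item The loss comes from putting $L^1_t$ outside $L^2_k$. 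At fixed $\tau=t-s$ the best available bound is $\sup_k\mathcal{M}_1(k)e^{-c(\nu^{1/3}|k|^{2/3}+\nu)\tau}\approx\min\{1,(\nu^{1/3}\tau)^{-1}\}e^{-c\nu\tau}$. Its $\nu^{1/3}\,d\tau$-integral is $\approx\ln(1/\nu)$, generated by the window $\nu^{-1/3}\lesssim\tau\lesssim\nu^{-1}$ and the frequencies $\nu\lesssim|k|\lesssim 1$, whatever $s$ is.
\item This sup-then-integrate argument is also what actually proves \eqref{f enhance}. Minkowski runs the other way, $\|\cdot\|_{L^2_kL^1_t}\le\|\cdot\|_{L^1_tL^2_k}$, so pointwise-in-$k$ time integrals cannot be upgraded.
\item With only the lossy bound, the absorption forces $\delta\lesssim\nu^{\varepsilon}$ rather than a fixed small $\delta$.
\end{itemize}

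In fact the lossless bound is false, so \eqref{esti: nu1/3M f} as displayed cannot be proved by any route. Let $L_k=-\nu(\partial_y^2-k^2)+iky$ with Dirichlet data; its decay rate $\nu^{1/3}|k|^{2/3}$ is sharp (Airy-type wall modes).
\begin{itemize}
\item Superpose least-damped modes with amplitudes $|a(k)|^2\approx(|k|\ln\frac1\nu)^{-1}$ on $C\nu\le|k|\le\frac12$. This gives $\|\mathcal{M}_1f(t_0+\tau)\|\gtrsim(\ln\frac1\nu)^{-1/2}(\nu^{1/3}\tau)^{-1}$ for $\nu^{-1/3}\lesssim\tau\lesssim\nu^{-1}$. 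Hence $\nu^{1/3}\|\mathcal{M}_1f\|_{L^1L^2}\gtrsim(\ln\frac1\nu)^{1/2}\|f(t_0)\|$.
\item The weight $w$ is increasing in $t$ and commutes with the flow. Replacing $f(t_0)$ by $w(t_0)^{-1}f(t_0)$ therefore shows the weighted inequality fails by the same factor for every $t_0$.
\item This display should be checked against \cite{miao2025}; the paper never invokes it later.
\end{itemize}

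Estimate \eqref{fX large time} survives, and needs neither the lossless bound nor $t_0\ge\nu^{-1-\varepsilon}$. Argue on Duhamel frequency by frequency, using $w(t,k)/w(s,k)\le(1+\nu^{1/3}\mathcal{M}_1(k)(t-s))^{\delta}$.
\begin{itemize}
\item In the $L^\infty L^2$ term this ratio is absorbed by $\|e^{-(t-s)L_k}\|\lesssim e^{-c\nu^{1/3}\mathcal{M}_1(k)(t-s)}$.
\item In the $L^2_t$ terms it is absorbed the same way after restarting on dyadic blocks of length $(\nu^{1/3}\mathcal{M}_1)^{-1}$.
\item In $\nu^{1/2}\|w\partial_xf\|_{L^1L^2}$ use the uniform bound $|k|\,\|e^{-\tau L_k}\|\lesssim(\nu\tau)^{-1/2}(1+\tau)^{-1}$. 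It comes from the functional of Lemma~\ref{l4.4}, valid for all $\tau$ when $V_1=y$, $u_a=0$, and the ratio there is at most $(1+\nu^{1/3}\tau)^{\delta}$.
\item That uniform-in-$k$ bound, not $\nu^{1/2}|k|\int e^{-c\nu^{1/3}|k|^{2/3}t}\,dt$, is also what gives the unweighted $L^1_t$ term in \eqref{esti: ftildeX}.
\end{itemize}

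Smaller points in your inviscid-damping step:
\begin{itemize}
\item The adjoint of $\nabla_k\Delta_k^{-1}$ is $\Delta_k^{-1}\nabla_k^*$, not $\Delta_k\nabla_k^*$.
\item Kato's $\sup_\lambda\|AR(\lambda)A^*\|$ criterion is tied to self-adjoint generators. For the non-normal $L_k$ you must bound $\int\|A(L_k+i\lambda)^{-1}h\|^2\,d\lambda$ directly, as the resolvent estimates in \cite{miao2025,wei-zhang2026} do.
\item The improvement from $|k|^{1/2}$ to $|k|$ for $|k|\ge1$ is elliptic, not dissipative: the Green's function of $\Delta_k$ on $[-1,1]$ has diagonal $\lesssim\min\{1,|k|^{-1}\}$.
\end{itemize}
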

\begin{proof}
See \cite{miao2025} for details.

\end{proof}

To apply Proposition~\ref{ap3.9}, the main difficulty lies in the reaction part of $u \cdot \nabla \theta_{a}$. Motivated by \cite{wei-zhang2026, miao2025}, we introduce the reaction part $\omega_{re}$, which solves
\[
\partial_t \omega_{re} - \nu \Delta \omega_{re} + y \partial_x \omega_{re} - t u^{(2)} \partial_x \omega_{a} = 0, \quad \omega_{re}(t,x,\pm 1) = 0, \quad \omega_{re}|_{t=T_0} = 0,
\]
\[
\partial_t \theta_{re} - \nu \Delta \theta_{re} + y \partial_x \theta_{re} - t u^{(2)} \partial_x \theta_{a} = 0, \quad \theta_{re}(t,x,\pm 1) = 0, \quad \theta_{re}|_{t=T_0} = 0.
\]
Then let $\omega_* = \omega - \omega_{re}$, $\theta_* = \theta - \theta_{re}$, which satisfies
\[
\partial_t \omega_* - \nu \Delta \omega_* + y \partial_x \omega_* = h_1 , \quad \omega_*(t,x,\pm 1) = 0, \quad \omega_*|_{t=T_0} = \omega|_{t=T_0},
\]
\[
\partial_t \theta_* - \nu \Delta \theta_* + y \partial_x \theta_*  = h_2, \quad \theta_*(t,x,\pm 1) = 0, \quad \theta_*|_{t=T_0} = \theta|_{t=T_0},
\]
where
\begin{align}
&h_1 = - U \cdot \nabla \omega - u^{(1)} \partial_x \omega_{a} - u^{(2)} (\partial_y + t \partial_x) \omega_{a} - Er^1-\partial_x \theta ,\label{a3.75}\\
    &h_2 =- U \cdot \nabla \theta - u^{(1)} \partial_x \theta_{a} - u^{(2)} (\partial_y + t \partial_x)\theta_{a}   - Er^2.\label{a3.76}
\end{align}

For $(\omega_*,\theta_*)$, we establish the following uniform estimate.
\begin{prop}\label{ap3.12}
Under the conditions of Theorem \ref{thm1.2}, there exists a constant $0<\varepsilon\ll 1$ such that for $j_0=\log_2 \nu^{-\frac23-\varepsilon}$, 
\begin{align}
\|\omega_*\|_{X_2(T_0,T_{j_0})}\lesssim&  \nu^{-\frac13}\mathcal{E}_{in}\|\omega\|_{X_2(T_0,T_{j_0})}+\nu^{-\frac58} \|\omega\|^2_{X_2(T_0,T_{j_0})} +\nu^{\frac13-\varepsilon}\mathcal{E}_{in}+\nu^{-\frac12}\|\theta\|_{X_2(T_0,T_{j_0})}, \label{esti om1 tildeX}\\
\|\omega_*\|_{X_{2,\delta}(T_{j_0},T)}\lesssim& \|(1+ \nu^{\frac13}T_{j_0} \mathcal{M}_1)^{\delta}\omega_*(T_{j_0})\|_{L^2}+\mathcal{E}_{in}\|\omega\|_{X_{2,\delta}(T_{j_0},T)} \notag\\ 
&+\nu^{-\frac58}\|\omega\|^2_{X_{2,\delta}(T_{j_0},T)}+\nu^{\frac13}\mathcal{E}_{in}+\nu^{-\frac12}\|\theta\|_{X_{2,\delta}(T_{j_0},T)},\label{esti om1 X large}\\
\|\theta_*\|_{X_2(T_0,T_{j_0})}\lesssim&  \nu^{-\frac13}\mathcal{E}_{2,in}\|\omega\|_{X_2(T_0,T_{j_0})}+\nu^{-\frac13}\mathcal{E}_{in}\|\theta\|_{X_2(T_{0},T_{j_0})}+\nu^{-\frac58}\|\omega\|_{X_2(T_0,T_{j_0})}\|\theta\|_{X_2(T_0,T_{j_0})} \notag\\ 
&+\nu^{\frac56-\varepsilon}\mathcal{E}_{in}, \label{a3.79}\\
\|\theta_*\|_{X_{2,\delta}(T_{j_0},T)}\lesssim &\|(1+ \nu^{\frac13}T_{j_0} \mathcal{M}_1)^{\delta}\theta_*(T_{j_0})\|_{L^2}+\mathcal{E}_{2,in}\|\omega\|_{X_{2,\delta}(T_{j_0},T)}+\nu^{\frac12}\mathcal{E}_{in}\|\theta\|_{X_{2,\delta}(T_{j_0},T)}\notag\\ 
&+\nu^{-\frac58}\|\omega\|_{X_{2,\delta}(T_{j_0},T)}\|\theta\|_{X_{2,\delta}(T_{j_0},T)}+\nu^{\frac13}\mathcal{E}_{2,in}.\label{a3.80}
\end{align}
\end{prop}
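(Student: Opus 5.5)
The plan is to apply Proposition \ref{ap3.9} to the equations for $\omega_*$ and $\theta_*$ and then bound the forcings $h_1,h_2$ in \eqref{a3.75}--\eqref{a3.76} term by term, as in Proposition \ref{prop:2.6}. Time is split at $T_{j_0}=2^{j_0}T_0\approx\nu^{-\frac23-\varepsilon}\cdot\nu^{-\frac16}\geq \nu^{-1-\varepsilon}$ (up to adjusting $\varepsilon$). On $[T_0,T_{j_0}]$ we use \eqref{esti: ftildeX} with $t_0=T_0$ and initial data $\omega(T_0),\theta(T_0)$, controlled by Proposition \ref{ap3.8}. On $[T_{j_0},T]$ we use \eqref{fX large time} with $t_0=T_{j_0}$, which is admissible since $T_{j_0}\geq\nu^{-1-\varepsilon}$, and keep $\omega_*(T_{j_0}),\theta_*(T_{j_0})$ as stated on the right-hand side. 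In each interval it then remains to bound $\|h_i\|_{L^1L^2}$, respectively $\|(1+\nu^{\frac13}t\mathcal{M}_1)^\delta h_i\|_{L^1L^2}$. The weight is handled via the submultiplicativity $(1+\nu^{\frac13}t\mathcal{M}_1)\lesssim(1+\nu^{\frac13}t\mathcal{M}_1(k-l))(1+\nu^{\frac13}t\mathcal{M}_1(l))$, which distributes it over the factors of products, together with the weighted bounds of Proposition \ref{linear} for $\omega_a,\theta_a,u_a$.

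The terms are estimated as follows, in this order.

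\textbf{(i) The buoyancy term $-\partial_x\theta$ in $h_1$.} It is bounded by $\nu^{-\frac12}\cdot\nu^{\frac12}\|\partial_x\theta\|_{L^1L^2}\le\nu^{-\frac12}\|\theta\|_{X_2}$.

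\textbf{(ii) The transport terms $U\cdot\nabla\omega$ and $U\cdot\nabla\theta$.} Split $U=u_a+u$.
\begin{itemize}
\item For $u_a\cdot\nabla f$, write it as $u_a^{(1)}\partial_xf+u_a^{(2)}\partial_yf$. Use \eqref{u1LLinfty} to get $\|u_a^{(1)}\|_{L^\infty}\lesssim T_0^{-1}\mathcal{E}_{in}$ paired with $\|\partial_xf\|_{L^1L^2}$. Use \eqref{u12L22} to get $\|u_a^{(2)}\|_{L^2_tL^\infty}\lesssim T_0^{-\frac32}\ln\mathcal{E}_{in}$ paired with $\|\nabla f\|_{L^2L^2}\le\nu^{-\frac12}\|f\|_{X_2}$. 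This gives $\nu^{-\frac13}\mathcal{E}_{in}\|f\|_{X_2}$.
\item For $u\cdot\nabla f$, split frequencies $|k|\ge1$ and $|k|<1$ as in the proof of Proposition \ref{ap3.7}, and control $\|u\|_{L^2_tL^\infty}$ by $\nu^{-\frac18}\|\omega\|_{X_2}$. Here the $\mathcal{M}_2\nabla\Delta^{-1}$ component of $X_2$ replaces the $L^\infty_t$ bound used at short times, since $\mathcal{M}_2\ge|k|^{\frac12}$ at low frequency gives the needed integrability in $k$. This yields $\nu^{-\frac58}\|\omega\|_{X_2}\|f\|_{X_2}$.
\end{itemize}

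\textbf{(iii) The good-derivative terms.} These are $u^{(1)}\partial_x\omega_a+u^{(2)}(\partial_y+t\partial_x)\omega_a$ and the analogous $\theta_a$ terms. Bound them by $\|u\|_{L^\infty L^2}$ times the $L^1_tL^\infty$ norms from \eqref{a3.8}--\eqref{a3.9} and \eqref{omL Linfty}--\eqref{paxomL Linfty}. The $\omega_a$ estimates carry $\mathcal{E}_{in}$ and the $\theta_a$ estimates carry $\mathcal{E}_{2,in}$. Their time integrals behave like $\int(1+\nu t^3)^{-\frac12}dt\approx\nu^{-\frac13}$, which produces the factors $\nu^{-\frac13}\mathcal{E}_{in}$, respectively $\nu^{-\frac13}\mathcal{E}_{2,in}$.

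\textbf{(iv) The error terms $Er^i$.} Integrate \eqref{a3.18}--\eqref{ErL2} in time. Over $[T_0,T_{j_0}]$ the term $\nu\cdot T_{j_0}$ gives $\nu^{\frac13-\varepsilon}$-type losses, which is where $\nu^{\frac13-\varepsilon}\mathcal{E}_{in}$ and, after adding the initial data $\|\theta(T_0)\|\lesssim\nu^{\frac56}\ln\mathcal{E}_{in}$, the term $\nu^{\frac56-\varepsilon}\mathcal{E}_{in}$ come from. On $[T_{j_0},\infty)$ the factor $e^{-\nu t}$ makes $\int\nu e^{-\nu t}\,dt$ finite, giving $\nu^{\frac13}\mathcal{E}$ after combining with the algebraic decay.

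The main obstacle is the large-time weighted regime. There the coefficients must improve: $\nu^{-\frac13}\mathcal{E}_{in}$ becomes $\mathcal{E}_{in}$, and for $\theta$ it becomes $\nu^{\frac12}\mathcal{E}_{in}$. This requires exploiting that for $t\ge T_{j_0}$ the time integrals of $(1+\nu t^3)^{-\frac12}$, or of $(1+t)^{-1}$ from \eqref{u1LLinfty}, are small. The first thing to try is to write $\int_{T_{j_0}}^\infty(1+\nu t^3)^{-\frac12}e^{-\nu t}\,dt\lesssim\nu^{-\frac12}T_{j_0}^{-\frac12}\lesssim\nu^{\frac{\varepsilon}{2}}$ and to use the weight $(1+\nu^{\frac13}t\mathcal{M}_1)^\delta$ with \eqref{esti: nu1/3M f}. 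If the weight distribution over low frequencies causes trouble, the fallback is a dyadic decomposition in time $[T_j,T_{j+1}]$ as in \cite{miao2025}, summing the geometric factors.
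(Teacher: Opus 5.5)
Your plan is the paper's proof. You apply Proposition \ref{ap3.9} with $t_0=T_0$ (data from Proposition \ref{ap3.8}) and with $t_0=T_{j_0}$, then bound $h_1,h_2$ term by term with the same pairings. Your uniform bound $\|u\|_{L^2_tL^\infty}\lesssim\nu^{-\frac18}\|\omega\|_{X_2}$, which uses $\mathcal{M}_2\ge|k|^{\frac12}$ also for low-frequency $u^{(1)}$, is a harmless variant of the paper's pairing $\|\omega\|_{L^\infty L^2}\|\partial_x\theta\|_{L^1L^2}$ there. No fallback is needed: the large-time gains come from the plain pairings, without \eqref{esti: nu1/3M f} or a dyadic splitting. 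The paper uses dyadic splitting only for $\omega_{re},\theta_{re}$ in Proposition \ref{pro: wre}. However, two computations in your write-up are wrong as stated and must be fixed.

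\textbf{The value of $T_{j_0}$.} In the paper $T_j=2^j\nu^{-\frac13}$ for $j\ge1$, with only $T_0=\nu^{-\frac16}$ special, so $T_{j_0}=\nu^{-1-\varepsilon}$ exactly. Your value $2^{j_0}T_0=\nu^{-\frac56-\varepsilon}$ lies below $\nu^{-1-\varepsilon}$ for every small $\varepsilon$, so no adjustment makes \eqref{fX large time} admissible there. With your value, the two large-time gains would also degrade:
the bound $T_{j_0}^{-1}\nu^{-\frac12}\lesssim\nu^{\frac12}$ for $u_a^{(1)}\partial_x\theta$ would become $\nu^{\frac13+\varepsilon}$, and $\int_{T_{j_0}}^\infty(1+\nu t^3)^{-\frac12}\,dt\lesssim\nu^{\frac\varepsilon2}$ would become $\nu^{-\frac1{12}+\frac\varepsilon2}$. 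That destroys the coefficients $\nu^{\frac12}\mathcal{E}_{in}$ and $\mathcal{E}_{2,in}$ in \eqref{a3.80}. For $u_a^{(1)}\partial_x\theta$, the relevant gain is the pointwise decay $\sup_{t\ge T_{j_0}}(1+t)^{-1}\le T_{j_0}^{-1}$ paired with $\nu^{\frac12}\|\partial_x\theta\|_{L^1L^2}$, as in your item (ii), not a time integral of $(1+t)^{-1}$. Your later formula $\nu^{-\frac12}T_{j_0}^{-\frac12}\lesssim\nu^{\frac\varepsilon2}$ already presupposes $T_{j_0}=\nu^{-1-\varepsilon}$, so just use that value throughout.

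\textbf{The error terms in (iv).} The crude bound $\int_{T_0}^{T_{j_0}}\nu\,dt\approx\nu T_{j_0}=\nu^{-\varepsilon}$ is far too large, and it is not the source of $\nu^{\frac13-\varepsilon}$. You must keep the decay in \eqref{a3.18}--\eqref{ErL2}:
$\int_{T_0}^{T_{j_0}}\nu(1+\nu t^3)^{-\frac14}\,dt\lesssim\nu^{\frac23}+\nu^{\frac34}T_{j_0}^{\frac14}\lesssim\nu^{\frac12-\frac\varepsilon4}$, and $\int_{T_0}^{\infty}\nu^{\frac23}(1+\nu t^3)^{-\frac5{12}}\,dt\lesssim\nu^{\frac13}$, and $\mathcal{E}_{in}\int_{T_0}^{T_{j_0}}(1+t)^{-1}(1+\nu t^3)^{-\frac14}\,dt\lesssim\nu^{\frac13}\ln\nu^{-1}$.
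So the factor $\nu^{-\varepsilon}$ in $\nu^{\frac13-\varepsilon}\mathcal{E}_{in}$ and $\nu^{\frac56-\varepsilon}\mathcal{E}_{in}$ only absorbs logarithms: these, and the $\ln(1+\nu^{-1})$ from Proposition \ref{ap3.8} in the data at $T_0$. For $\theta_*$ you then use $\mathcal{E}_{2,in}\le\nu^{\frac12}\mathcal{E}_{in}$. With these two corrections your argument coincides with the paper's.
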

\begin{proof}
We only give the proof of \eqref{a3.79}--\eqref{a3.80}, and the proof of \eqref{esti om1 tildeX}--\eqref{esti om1 X large} can be given similarly. By using \eqref{esti: ftildeX} with $t_0=T_0$ and Proposition \ref{ap3.8}, we infer 
\begin{align*}
\|\theta_*\|_{X_2(T_0,T_{j_0})}\lesssim \|\theta_*(T_0)\|_{L^2}+\|h_2\|_{L^1(T_0,T_{j_0};L^2)}\lesssim  \nu^{\frac56-\varepsilon}\mathcal{E}_{in}+\|h_2\|_{L^1(T_0,T_{j_0};L^2)}.
\end{align*}
Then we use \eqref{fX large time} with $t_0=T_{j_0}=2^{j_0}\nu^{-\frac13}=\nu^{-1-\varepsilon}$ to get
\begin{align*}
\|\theta_*\|_{X_{2,\delta}(T_{j_0},T)}\lesssim \|(1+ \nu^{\frac13}T_{j_0} \mathcal{M}_1)^{\delta}\theta_*(T_{j_0})\|_{L^2}+ \|(1+\nu^{\frac13}t\mathcal{M}_1)^{\delta}h_2\|_{L^1(T_{j_0},T; L^2)}.
\end{align*}
In the following, we only need to show
\begin{align}
\|h_2\|_{L^1(T_0,T_{j_0};L^2)}\lesssim & \nu^{-\frac13}\mathcal{E}_{2,in}\|\omega\|_{X_2(T_0,T_{j_0})}+\nu^{-\frac13}\mathcal{E}_{in}\|\theta\|_{X_2(T_{0},T_{j_0})}\notag\\ 
&+\nu^{-\frac58}\|\omega\|_{X_2(T_0,T_{j_0})}\|\theta\|_{X_2(T_0,T_{j_0})}+\nu^{\frac56-\varepsilon}\mathcal{E}_{in},\label{a3.81}\\
\|(1+\nu^{\frac13}t\mathcal{M}_1)^{\delta}h_2\|_{L^1 (T_{j_0},T; L^2) }\lesssim &\mathcal{E}_{2,in}\|\omega\|_{X_{2,\delta}(T_{j_0},T)}+\nu^{\frac12}\mathcal{E}_{in}\|\theta\|_{X_{2,\delta}(T_{j_0},T)}\notag\\ 
&+\nu^{-\frac58}\|\omega\|_{X_{2,\delta}(T_{j_0},T)}\|\theta\|_{X_{2,\delta}(T_{j_0},T)}+\nu^{\frac13}\mathcal{E}_{2,in}.\label{a3.82}
\end{align}

{\it Step 1. The estimate of $u^{(1)}\partial_x\theta_{a}+u^{(2)}(\partial_y+t\partial_x)\theta_{a}$}.
For $t\in (T_{j_0}, T)$, we use Proposition \ref{linear} and $\|u\|\lesssim \|\omega\|$ to obtain
\begin{equation*}
\begin{aligned}
&\|(1+\nu^{\frac13}t\mathcal{M}_1)^{\delta}(u^{(1)}\partial_x\theta_{a}+u^{(2)}(\partial_y+t\partial_x)\theta_{a})\| \\
\lesssim&\|(1+\nu^{\frac13}t\mathcal{M}_1)^{\delta}u\|(\|(1+\nu^{\frac13}t\mathcal{M}_1)^{\delta}\partial_x\theta_{a}\|_{L^\infty} +\|(1+\nu^{\frac13}t\mathcal{M}_1)^{\delta}(\partial_y+t\partial_x)\theta_{a}\|_{L^\infty}).
\end{aligned}
\end{equation*}
Integrating in time, we deduce
\begin{align}\label{weight-u1paxomL}
&\|(1+\nu^{\frac13}t\mathcal{M}_1)^{\delta}(u^{(1)}\partial_x\theta_{a}+u^{(2)}(\partial_y+t\partial_x)\theta_{a})\|_{L^1(T_{j_0},T; L^2) } \notag\\
\lesssim& \|(1+\nu^{\frac13}t\mathcal{M}_1)^{\delta}\omega\|_{L^\infty(T_{j_0},T; L^2)}\|(1+\nu t^3)^{-\frac12}\|_{L^1(T_{j_0},T)}\mathcal{E}_{2,in}\notag\\
\lesssim& \mathcal{E}_{2,in}\|\omega\|_{X_{2,\delta}(T_{j_0},T)}.
\end{align}

For $t\in (T_0, T_{j_0})$, note that
\begin{align*}
\|(1+\nu t^3)^{-\frac12}\|_{L^1(T_0,T_{j_0})}\lesssim \nu^{-\frac13}.
\end{align*}
Then we get that
\begin{align}\label{u1epaxom+u2payom}
\|u^{(1)}\partial_x\theta_{a}+u^{(2)}(\partial_y+t\partial_x)\theta_{a}\|_{L^1(T_{0},T_{j_0}; L^2)}\lesssim \nu^{-\frac13}\mathcal{E}_{2,in}\|\omega\|_{X_2(T_{0},T_{j_0})}.
\end{align}

{\it Step 2. The estimate of $U\cdot\nabla \theta$}.
Notice that
$$U\cdot\nabla \theta=u\cdot\nabla \theta+u_{a}\cdot\nabla \theta.$$
For $u_{a}\cdot\nabla \theta$, we infer
\begin{align}\label{uLomeL1L2}
&\|(1+\nu^{\frac13}t\mathcal{M}_1)^{\delta}(u_{a}\cdot\nabla \theta)\|_{ L^1(T_{j_0},T; L^2)}\notag\\
\lesssim &\|(1+\nu^{\frac13}t\mathcal{M}_1)^{\delta}(u_{a}^{(1)}\partial_x \theta)\|_{ L^1(T_{j_0},T; L^2)}+\|(1+\nu^{\frac13}t\mathcal{M}_1)^{\delta}(u_{a}^{(2)}\partial_y \theta)\|_{L^1(T_{j_0},T; L^2)}\notag\\
\lesssim &\|(1+\nu^{\frac13}t\mathcal{M}_1)^{\delta}u_{a}^{(1)}\|_{L^\infty(T_{j_0},T; L^\infty)}\|(1+\nu^{\frac13}t\mathcal{M}_1)^{\delta}\partial_x \theta\|_{L^1(T_{j_0},T; L^2)}\notag\\
&+\|(1+\nu^{\frac13}t\mathcal{M}_1)^{\delta}u_{a}^{(2)}\|_{L^2(T_{j_0},T;L^\infty)}\|(1+\nu^{\frac13}t\mathcal{M}_1)^{\delta}\partial_y \theta\|_{L^2(T_{j_0},T; L^2)}.
\end{align}
Using \eqref{u1LLinfty}--\eqref{u12L22}, we obtain
\begin{equation*}
\begin{aligned}
&\|(1+\nu^{\frac13}t\mathcal{M}_1)^{\delta}u_{a}^{(1)}\|_{L^\infty(T_{j_0},T; L^\infty)}\lesssim T^{-1}_{j_0}\mathcal{E}_{in}\lesssim \nu \mathcal{E}_{in},\\
&\|(1+\nu^{\frac13}t\mathcal{M}_1)^{\delta}u_{a}^{(2)}\|_{L^2(T_{j_0},T;L^\infty)}\lesssim \|(1+t)^{-2}\ln(1+t)\|_{L^2(T_{j_0}, T)} \mathcal{E}_{in}\lesssim \nu^{\frac32}\mathcal{E}_{in},
\end{aligned}
\end{equation*}
which together with \eqref{uLomeL1L2} gives
\begin{align}\label{weight-uLnaome}
\|(1+\nu^{\frac13}t\mathcal{M}_1)^{\delta}(u_{a}\cdot\nabla \theta)\|_{ L^1(T_{j_0},T; L^2)}
\lesssim \nu^{\frac12}\mathcal{E}_{in}\|\theta\|_{X_{2,\delta}(T_{j_0},T)}.
\end{align}
When $t\in (T_0,T_{j_0})$, 
\begin{equation*}
\begin{aligned}
&\|u_{a}^{(1)}\|_{L^\infty(T_0,T_{j_0}; L^\infty)}\lesssim T^{-1}_{0}\mathcal{E}_{in}\lesssim \nu^{\frac16} \mathcal{E}_{in},\\
&\|u_{a}^{(2)}\|_{L^2(T_0,T_{j_0};L^\infty)}\lesssim \|(1+t)^{-2}\ln(1+t)\|_{L^2(T_{0}, T_{j_0})} \mathcal{E}_{in}\lesssim \nu^{\frac16}\mathcal{E}_{in},
\end{aligned}
\end{equation*}
and then,
\begin{align}\label{uLnaome}
\|u_{a}\cdot\nabla \theta\|_{ L^1(T_{0},T_{j_0}; L^2)}
\lesssim \nu^{-\frac13}\mathcal{E}_{in}\|\theta\|_{X_2(T_{0},T_{j_0})}.
\end{align}
For the estimate of  $u\cdot\nabla \theta$, we get
\begin{align*}
&\|(1+\nu^{\frac13}t\mathcal{M}_1)^{\delta}(u\cdot\nabla \theta)\|_{L^1(T_{j_0},T; L^2)}\\
\lesssim &\|(1+\nu^{\frac13}t\mathcal{M}_1)^{\delta}(u^{(1)} \partial_x\theta)\|_{L^1(T_{j_0},T; L^2)}+\|(1+\nu^{\frac13}t\mathcal{M}_1)^{\delta}(u^{(2)}\partial_y \theta)\|_{L^1(T_{j_0},T; L^2)}\\
\lesssim &\big\|\|(1+\nu^{\frac13}t\mathcal{M}_1)^{\delta}u^{(1)}\|_{L^\infty} \|(1+\nu^{\frac13}t\mathcal{M}_1)^{\delta}\partial_x\theta\|_{L^2}\big\|_{L^1(T_{j_0},T)}\\
&+\|(1+\nu^{\frac13}t\mathcal{M}_1)^{\delta}u^{(2)}\|_{L^2(T_{j_0},T; L^\infty)}\|(1+\nu^{\frac13}t\mathcal{M}_1)^{\delta}\partial_y \theta\|_{L^2(T_{j_0},T;L^2)}
:=I_1+I_2.
\end{align*}
Notice that
\begin{align*}
\|(1+\nu^{\frac13}t\mathcal{M}_1)^{\delta}u^{(1)}\|_{L^\infty}\lesssim \int_{\R}(1+\nu^{\frac13}t\mathcal{M}_1(k) )^{\delta}\|u^{(1)}_{k}\|_{L^\infty_y} \dd k.
\end{align*}
If $|k|\leq 1$, we apply $\|u^{(1)}_{k}\|_{L^\infty_y}\lesssim \|\omega_{k}\|_{L^2_y}$ to obtain 
\begin{equation}\notag
\begin{aligned}
\int_{|k|\leq1}(1+\nu^{\frac13} t\mathcal{M}_1(k))^{\delta}\|u^{(1)}_{k}\|_{L^\infty_y} \dd k\lesssim \|(1+\nu^{\frac13}t\mathcal{M}_1)^{\delta}\omega\|.
\end{aligned}
\end{equation}
If $|k|\geq 1$, as in \cite{miao2025}, we have 
\begin{equation}\notag
\begin{aligned}
\int_{|k|\geq1}(1+\nu^{\frac13}t\mathcal{M}_1(k))^{\delta}\|u^{(1)}_{k}\|_{L^\infty_y}\dd k
\lesssim  \|(1+\nu^{\frac13}t\mathcal{M}_1(k))^{\delta}k u^{(1)}_{k}\|^{\frac34}\|(1+\nu^{\frac13}t\mathcal{M}_1(k))^{\delta}\partial_y \omega_{k}\|^{\frac14}.
\end{aligned}
\end{equation}
By combining the above three formulas, we can get
\begin{align}\label{esti I}
I_1\lesssim& \|(1+\nu^{\frac13}t\mathcal{M}_1)^{\delta}\omega\|_{L^\infty(T_{j_0},T;L^2)}\|(1+\nu^{\frac13}t\mathcal{M}_1)^{\delta}\partial_x\theta\|_{L^1(T_{j_0},T;L^2)}\notag\\
&+\|(1+\nu^{\frac13}t\mathcal{M}_1)^{\delta}\mathcal{M}_2\nabla \Delta^{-1}\omega\|^{\frac34}_{L^2(T_{j_0},T;L^2)}\|(1+\nu^{\frac13}t\mathcal{M}_1)^{\delta}\nabla  \omega\|^{\frac14}_{L^2(T_{j_0},T;L^2)}\notag\\ 
&\ \ \ \ \times\|(1+\nu^{\frac13}t\mathcal{M}_1)^{\delta}\nabla  \theta\|_{L^2(T_{j_0},T;L^2)}\notag\\
\lesssim &\nu^{-\frac58}\|\omega\|_{X_{2,\delta}(T_{j_0},T)}\|\theta\|_{X_{2,\delta}(T_{j_0},T)}.
\end{align}
For $I_2$, note that $\phi_k=(\partial^2_y-k^2)^{-1}\omega_k$ and
\begin{align*}
\|(1+\nu^{\frac13}t\mathcal{M}_1)^{\delta}u^{(2)}\|_{L^\infty}\lesssim \int_{\R}(1+\nu^{\frac13}t\mathcal{M}_1(k))^{\delta}\|u^{(2)}_{k}\|_{L^\infty_y} \dd k.
\end{align*}
If $|k|\leq 1$,  it holds
\begin{equation}\notag
\begin{aligned}
&\int_{|k|\leq1}(1+\nu^{\frac13}t\mathcal{M}_1(k))^{\delta}\|u^{(2)}_{k}\|_{L^\infty_y}\dd k\\
\lesssim &\int_{|k|\leq 1} |k|(1+\nu^{\frac13}t\mathcal{M}_1(k))^{\delta}\|u_k\|^{\frac34}_{L^2_y}\|\partial_y^2u_k\|^{\frac14}_{L^2_y} \dd k\\
\lesssim &\|(1+\nu^{\frac13}t\mathcal{M}_1(k))^{\delta} |k|^{\frac12} u_k\|^{\frac34}_{L^2_kL^2_y}\|(1+\nu^{\frac13}t\mathcal{M}_1(k))^{\delta}\partial_y \omega_{k}\|^{\frac14}_{L^2_kL^2_y}.
\end{aligned}
\end{equation}
If $|k|\geq 1$, we can also prove that
\begin{equation}
\begin{aligned}\notag
&\int_{|k|\geq1}(1+\nu^{\frac13}t\mathcal{M}_1(k))^{\delta}\|u^{(2)}_{k}\|_{L^\infty_y}dk\\
\lesssim &\|(1+\nu^{\frac13}t\mathcal{M}_1(k))^{\delta}k u^{(2)}_{k}\|^{\frac34}_{L^2_kL^2_y}\|(1+\nu^{\frac13}t\mathcal{M}_1(k))^{\delta}\partial_y \omega_{k}\|^{\frac14}_{L^2_kL^2_y},
\end{aligned}
\end{equation}
then we arrive at
\begin{align}\label{esti II}
I_2\lesssim \nu^{-\frac58}\|\omega\|_{X_{2,\delta}(T_{j_0},T)}\|\theta\|_{X_{2,\delta}(T_{j_0},T)}.
\end{align}
Combining  \eqref{esti I}--\eqref{esti II}, we have
\begin{align}\label{weight-uenaome}
\|(1+\nu^{\frac13}t\mathcal{M}_1)^{\delta}(u\cdot\nabla \theta)\|_{L^1(T_{j_0},T;L^2)}\lesssim\nu^{-\frac58}\|\omega\|_{X_{2,\delta}(T_{j_0},T)}\|\theta\|_{X_{2,\delta}(T_{j_0},T)} .
\end{align}
When $t\in (T_0,T_{j_0})$, similar to \eqref{esti I}--\eqref{esti II}, we derive
\begin{align}\label{uenaome}
\|u\cdot\nabla \theta\|_{L^1(T_0,T_{j_0};L^2)}\lesssim \nu^{-\frac58}\|\omega\|_{X_2(T_0,T_{j_0})}\|\theta\|_{X_2(T_0,T_{j_0})}.
\end{align}
{\it Step 3. The estimate of $E_r^2$.}
We use $(1+\nu t^3)^{-\frac14}\lesssim (\nu^{\frac13}(1+t))^{-\varepsilon}$ and $(\nu(1+t))^{\frac13}{\rm e}^{-\nu t}\lesssim 1$ to get
\begin{align*}
&\|{\rm e}^{-\nu t}(1+t)^{-1}(1+\nu t^3)^{-\frac14}\mathcal{E}_{2,in}\|_{L^1(T_{j_0},T)}+\|\nu {\rm e}^{-\nu t}(1+\nu t^3)^{-\frac14}\|_{L^1(T_{j_0},T)}\\
\lesssim  & \|\nu^{\frac13}(1+t)^{-1}(\nu^{\frac13}(1+t))^{-\varepsilon}+\nu^{\frac23}(1+\nu t^3)^{-\frac14}(1+t)^{-\frac13}\|_{L^1(T_{j_0},T)}
\lesssim \nu^{\frac13},
\end{align*}
\begin{align*}
\|{\rm e}^{-\nu t}(1+t)^{-1}(1+\nu t^3)^{-\frac14}\mathcal{E}_{2,in}\|_{L^1(T_0, T_{j_0})}+\|\nu {\rm e}^{-\nu t}(1+\nu t^3)^{-\frac14}\|_{L^1(T_0, T_{j_0})}\lesssim \nu^{\frac13-\varepsilon},
\end{align*}
and
\begin{align*}
&\int_{T_{0}}^{T} \nu^{\frac23} {\rm e}^{-\nu t}(1+\nu t^3)^{-\frac{5}{12}}\dd t\lesssim \nu^{\frac13}.
\end{align*}
Then by \eqref{a3.18}, we obtain
\begin{align}
&\|(1+\nu^{\frac13}t\mathcal{M}_1)^{\delta}E_r^2\|_{ L^1(T_{j_0},T;L^2)}\lesssim \nu^{\frac13}\mathcal{E}_{2,in},\label{weight-Er}\\
&\|E_r^2\|_{ L^1(T_0,T_{j_0};L^2)}\lesssim \nu^{\frac13-\varepsilon}\mathcal{E}_{2,in}\lesssim\nu^{\frac56-\varepsilon}\mathcal{E}_{in}.\label{estim Er}
\end{align}
By combining \eqref{weight-u1paxomL}--\eqref{estim Er}, we get \eqref{a3.81}--\eqref{a3.82}, and then finish the proof of Proposition \ref{ap3.12}.

\end{proof}

Next, we study the estimates of reaction term.
\begin{prop}\label{pro: wre}
Under the conditions of Theorem \ref{thm1.2}, it holds that for $j_0=\log_2 \nu^{-\frac23-\varepsilon}$, 
\begin{align}
\|\omega_{re}\|_{X_2(T_0,T_{j_0})}\lesssim &\nu^{-\frac13}\|\omega\|_{X_2(T_0,T_{j_0})}\mathcal{E}_{in},\label{main result}\\
\|\omega_{re}\|_{X_{2,\delta}(T_{j_0},T)}\lesssim &\nu^{-\frac13}\|\omega\|_{X_{2,\delta}(T_{j_0},T)}\mathcal{E}_{in}+\|(1+\nu^{\frac13}T_{j_0} \mathcal{M}_1)^{\delta} \omega_{re}(T_{j_0})\|,\label{om2 Xtj}\\ 
\|\theta_{re}\|_{X_2(T_0,T_{j_0})}\lesssim &\nu^{-\frac13}\|\omega\|_{X_2(T_0,T_{j_0})}\mathcal{E}_{2,in},\label{a3.96}\\
\|\theta_{re}\|_{X_{2,\delta}(T_{j_0},T)}\lesssim &\nu^{-\frac13}\|\omega\|_{X_{2,\delta}(T_{j_0},T)}\mathcal{E}_{2,in}+\|(1+\nu^{\frac13}T_{j_0} \mathcal{M}_1)^{\delta} \theta_{re}(T_{j_0})\|.\label{a3.97}
\end{align}
\end{prop}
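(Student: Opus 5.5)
We follow the scheme used for Proposition \ref{p4.12a} on $\mathbb{T}\times I$, now in the $\mathbb{R}\times I$ setting with shear $y$ and the norms $X_2$, $X_{2,\delta}$ of \eqref{a3.62}--\eqref{a3.63}. We treat only $\theta_{re}$; $\omega_{re}$ is identical with $\theta_a$ replaced by $\omega_a$ and $\mathcal{E}_{2,in}$ by $\mathcal{E}_{in}$. The bounds of Proposition \ref{linear} for $\omega_a$ are the same as for $\theta_a$, with $\mathcal{E}_{in}$ in place of $\mathcal{E}_{2,in}$.

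The construction on a time interval $[t_0,t_1]$ is as follows. Let $f_1$ solve the pure transport problem
\[
\partial_t f_1+y\partial_x f_1=t\,\mathrm{e}^{\nu^{\frac13}t}u^{(2)}\partial_x\theta_a,\qquad f_1|_{t=t_0}=0 .
\]
Then $f_1|_{y=\pm1}=0$, since $\theta_a$ vanishes there. Let $f_2$ solve
\[
\partial_t f_2+y\partial_x f_2-\nu\Delta f_2=\nu\mathrm{e}^{-\nu^{\frac13}t/2}\Delta f_1
\]
with zero data. Set $f_3=\mathrm{e}^{-\nu^{\frac13}t/2}f_2+\mathrm{e}^{-\nu^{\frac13}t}f_1$ and $f_4=\theta_{re}-f_3$. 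Because the shear is exactly $y$, the commutator term $(V_1-y)\partial_xf_1$ is absent. The forcing of $f_4$ is therefore only $\tfrac12\nu^{\frac13}\mathrm{e}^{-\nu^{\frac13}t/2}f_2+\nu^{\frac13}\mathrm{e}^{-\nu^{\frac13}t}f_1$. The key input is a Fourier-side bound on $f_1$, the analogue of Lemma \ref{l4.17}: for each $k\in\mathbb{R}$, write $f_{1,k}$ explicitly along characteristics,
\[
\mathrm{e}^{ikyt}f_{1,k}(t)=\int_{t_0}^t s\,\mathrm{e}^{\nu^{\frac13}s}\,\mathrm{e}^{ikys}(u^{(2)}\partial_x\theta_a)_k(s)\,\dd s .
\]
Then use \eqref{a3.8}, \eqref{a3.10}, the decay $(1+\nu t^3)^{-1}\mathrm{e}^{-\nu t}$ of $\partial_x\theta_a$ in $L^\infty$, and $\|\mathcal{M}_2\nabla\Delta^{-1}\omega\|_{L^2L^2}\le\|\omega\|_{X_2}$ to control $u^{(2)}$ in $L^2_t$. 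The target bound is
\[
\|\mathrm{e}^{-\nu^{\frac13}t/2}f_1\|_Y+\|\partial_xf_1\|_{L^\infty L^2}\lesssim \nu^{-\frac13}\mathcal{E}_{2,in}\|\omega\|_{X_2}.
\]
The factor $\nu^{-1/3}$ comes from $\int t(1+\nu t^3)^{-1}\,\dd t\sim\nu^{-2/3}$, paired with the $\nu^{1/3}$-type gain hidden in $Y$. Since $\mathcal{M}_2$ behaves like $|k|^{1/2}$ for $|k|<1$, the low frequencies need the weight $\mathcal{M}_2$ in place of $\partial_x$. For $0<|k|<1$ we would follow the dyadic splitting of \cite{miao2025}.

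On $(T_0,T_{j_0})$, which gives \eqref{a3.96}, we take $t_0=T_0$ and use \eqref{esti: ftildeX}. This yields the analogues of \eqref{4.36a}, Lemma \ref{l4.13} and Lemma \ref{l4.14} with $X_2$ in place of the exponentially weighted norm; here the enhanced-dissipation weights are replaced by \eqref{f enhance}, at a cost of $\nu^{-\varepsilon}$ that must be absorbed by the power of $\nu$ gained in the $f_1$ bound. The leftover term $\nu^{1/2}\|\partial_x\theta_{re}\|_{L^1L^2}$ is handled as in Lemma \ref{l4.16}. We apply Proposition \ref{ap3.9} to $\partial_x\theta_{re}$ with forcing $t\partial_x(u^{(2)}\partial_x\theta_a)$ and estimate that forcing by \eqref{a3.8} and \eqref{a3.10}. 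On $(T_{j_0},T)$ we restart at $t_0=T_{j_0}=\nu^{-1-\varepsilon}$ and use \eqref{fX large time} and \eqref{esti: nu1/3M f} with the weight $(1+\nu^{\frac13}t\mathcal{M}_1)^\delta$. The weight commutes with every operator in the construction, since all are Fourier multipliers in $x$ except multiplication by the $x$-independent factor $y$. It also satisfies the submultiplicativity $1+\nu^{\frac13}t\mathcal{M}_1\le(1+\nu^{\frac13}s\mathcal{M}_1)(1+\nu^{\frac13}(t-s)\mathcal{M}_1)$. Restarting produces the initial-value term $\|(1+\nu^{\frac13}T_{j_0}\mathcal{M}_1)^\delta\theta_{re}(T_{j_0})\|$ in \eqref{a3.97}, and the rest is as before. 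On this interval $\theta_a$ carries the factor $\mathrm{e}^{-\nu t}(1+\nu t^3)^{-1}$, which is far smaller than needed, so the estimate closes.

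\textbf{Main obstacle.} We expect the hard step to be the $f_1$ estimate, which replaces Lemma \ref{l4.17}, at low frequencies $|k|<1$. There $\partial_x\theta_a$ decays only like $(1+\nu t^3)^{-1}$ with no exponential factor, and the inviscid damping of $u^{(2)}$ is merely $(1+|kt|)^{-1}$. We would try to bound $\|\partial_xf_{1,k}\|$ by $|k|^{1/2}$ times an $L^2_t$ norm of $\mathcal{M}_2u^{(2)}$ and sum in $k$ by Cauchy--Schwarz, accepting a $\nu^{-\varepsilon}$ loss that the choice of $j_0$ is designed to absorb.
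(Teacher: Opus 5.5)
You have correctly identified where the difficulty lies, but the outline leaves the key step unproved, and the construction it rests on breaks down on $\mathbb{R}\times I$.

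\textbf{Why the transplanted $f_1$ bound fails.} Your later steps (the analogues of \eqref{4.36a} and Lemmas \ref{l4.14}--\ref{l4.15}) are all controlled through $\|\mathrm{e}^{-\nu^{1/3}t/2}f_1\|_Y$. Along characteristics, $\mathrm{e}^{-\nu^{1/3}t/2}f_1(t)$ is the transport of $\int_{t_0}^t \mathrm{e}^{\nu^{1/3}(s-t/2)}\,s\,(u^{(2)}\partial_x\theta_a)(s)\,\dd s$. For $s$ near $t$ the weight in this integral is about $\mathrm{e}^{\nu^{1/3}t/2}$.

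On $\mathbb{T}$ this growth is paid for by the uniform decay $\mathrm{e}^{-2\nu^{1/3}t}$ of $\partial_x\theta_a$, since all $|k|\ge1$. On $\mathbb{R}$ the frequencies $|k|\ll1$ of $\partial_x\theta_a$ decay only like $\mathrm{e}^{-\nu t}(1+\nu t^3)^{-1}$ (cf. \eqref{a3.8}). This rate is generically sharp, for instance whenever $\int\Theta_{in}\,\dd x\not\equiv0$.

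Your first interval is $(T_0,T_{j_0})$ with $T_{j_0}=\nu^{-1-\varepsilon}$. There the weight reaches $\exp(\tfrac12\nu^{-2/3-\varepsilon})$, which $\mathrm{e}^{-\nu t}(1+\nu t^3)^{-1}$ cannot compensate. So your target bound $\|\mathrm{e}^{-\nu^{1/3}t/2}f_1\|_Y\lesssim\nu^{-1/3}\mathcal{E}_{2,in}\|\omega\|_{X_2}$ is false for general $u^{(2)}$. The loss is exponential, not $\nu^{-\varepsilon}$.

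\textbf{Why even a $\nu^{-\varepsilon}$ loss is fatal.} Replacing the uniform rate $\nu^{1/3}$ by \eqref{f enhance} only controls $\nu^{1/3+\varepsilon}\mathcal{M}_1 f$ with $\mathcal{M}_1\sim|k|^{2/3}$. You would need this, for example, to convert $\nu^{1/2}\|\partial_x\theta_{re}\|_{L^1L^2}$ as in Lemma \ref{l4.15}, and it yields at best $\nu^{-1/3-\varepsilon}$. In the proof of Proposition \ref{ap3.2} the reaction terms are absorbed precisely because $\nu^{-1/3}\mathcal{E}_{in}+\nu^{-1/2}\cdot\nu^{-1/3}\mathcal{E}_{2,in}\lesssim\varepsilon_0$. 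There is no spare power of $\nu$, and no extra gain in the $f_1$ bound is available to absorb a loss.

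\textbf{What the paper does instead.} The missing idea is the dyadic localization in time that the paper imports from \cite{miao2025}; no global damped approximation is built.
\begin{itemize}
\item The paper writes $\theta_{re}=\sum_{j\ge1}\theta_{re,j}$. Each $\theta_{re,j}$ solves the linearized problem with zero data at $T_{j-1}$ and forcing $t\,u^{(2)}\partial_x\theta_a\chi_j$, supported on $(T_{j-1},T_j]$ with $T_j=2^j\nu^{-1/3}$.
\item Lemma \ref{al3.14} (Proposition 3.7 of \cite{miao2025}) bounds each piece on its own window by $2^{-j/4}\nu^{-1/3}\mathcal{E}_{2,in}\|(1+\nu^{1/3}t\mathcal{M}_1)^\delta\mathcal{M}_2\nabla\Delta^{-1}\omega\|_{L^2(T_{j-1},T_j;L^2)}$, with no $\nu^{-\varepsilon}$ loss.
\item After $T_j$ each piece is a free solution. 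It is controlled by $\|\theta_{re,j}(T_j)\|$ through \eqref{esti: ftildeX}, or through \eqref{fX large time} beyond $T_{j_0}$. The factor $2^{-j/4}$ makes the sum over $j$ converge, which gives \eqref{a3.96}.
\item For \eqref{a3.97}, the pieces $j\le j_0$ add up on $t>T_{j_0}$ to the free evolution of $\theta_{re}(T_{j_0})$; this produces the initial-value term. The pieces $j>j_0$ are summed the same way.
\end{itemize}

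Your remark that on $(T_{j_0},T)$ the plain Duhamel bound already closes is correct, since $\|t(1+\nu t^3)^{-1}\|_{L^2(T_{j_0},\infty)}\lesssim\nu^{1/2}$. The real work is on $(T_0,T_{j_0})$, and there you need the per-window estimate of Lemma \ref{al3.14}, not a transplanted Lemma \ref{l4.17}.
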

We only prove \eqref{a3.96}--\eqref{a3.97}, and the proof of \eqref{main result}--\eqref{om2 Xtj} is similar. 
Motivated by \cite{miao2025}, we define $\theta_{re}=\sum_{j\geq 1}\theta_{re,j}$, where $\theta_{re,j}$ satisfies
\begin{equation}\label{equ: om2j}
\left\{
\begin{aligned}
&\partial_t \theta_{re,j}-\nu\Delta\theta_{re,j}+y\partial_x\theta_{re,j}=tu^{(2)}\partial_x\theta_{a}\chi_{j}, \quad t>T_{j-1}, \\
& \theta_{re,j}(t,x,\pm1)=0, \quad \theta_{re,j}|_{t=T_{j-1}}=0,
\end{aligned}
\right.
\end{equation}
with $T_0=\nu^{-\frac16}$, $T_j=\nu^{-\frac13} 2^j$ with $j\geq 1$, and $\chi_j$ solves
\begin{equation}\label{def: chij}
\chi_j(t)=\left\{
\begin{aligned}
&1,\quad  t\in(T_{j-1}, T_j],\\
&0,\quad t\notin (T_{j-1}, T_j].
\end{aligned}
\right.
\end{equation}
We have the following lemma.
\begin{lem}\label{al3.14}
Under the conditions of Theorem \ref{thm1.2}, the solution $\theta_{re,j}$ to \eqref{equ: om2j} satisfies
\begin{align}
\|\theta_{re,j}\|_{X_{2,\delta}(T_{j-1},T_j)}\lesssim2^{-\frac{j}{4}}\nu^{-\frac13}\|(1+\nu^{\frac13}t\mathcal{M}_1)^{\delta}\mathcal{M}_2\nabla\Delta^{-1}\omega\|_{L^2(T_{j-1},T_j;L^2)} \mathcal{E}_{2,in}, \quad j\geq 1. \label{om2j large}
\end{align}
In particular, for $\delta=0$, it holds that
\begin{align}
\|\theta_{re,j}\|_{X_2(T_{j-1},T_j)}\lesssim2^{-\frac{j}{4}}\nu^{-\frac13}\|\mathcal{M}_2\nabla\Delta^{-1}\omega\|_{L^2(T_{j-1},T_j;L^2)} \mathcal{E}_{2,in}, \quad j\geq 1. \label{om2j small}
\end{align}
\end{lem}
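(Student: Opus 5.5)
Lemma \ref{al3.14} is proved by mimicking the proof of Proposition \ref{p4.12a} (i.e.\ Lemmas \ref{l4.13}--\ref{l4.17}), now on the dyadic interval $(T_{j-1},T_j]$ with weights $(1+\nu^{\frac13}t\mathcal{M}_1)^\delta$ and Fourier variable $k\in\mathbb{R}$, as in \cite{miao2025}. The source $tu^{(2)}\partial_x\theta_a\chi_j$ cannot be placed directly in $L^1L^2$: by \eqref{a3.8} we have $t\|\partial_x\theta_a\|_{L^\infty}\sim t(1+\nu t^3)^{-1}$, so the direct bound loses a power of $t$. We therefore exploit the transport structure.

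\emph{Step 1 (transport part).} Let $f_1$ solve
\[
\partial_t f_1+y\partial_x f_1=tu^{(2)}\partial_x\theta_a\chi_j,\qquad f_1|_{t=T_{j-1}}=0.
\]
Since $\theta_a|_{y=\pm1}=0$, we get $f_1|_{y=\pm1}=0$. In the moving frame $x\mapsto x+ty$ the solution is a time integral. Using $\partial_x\theta_a=\partial_x\tilde\theta(t,x-ty,y)$ together with the explicit heat-kernel representation from the proof of Proposition \ref{linear}, we aim to bound
\[
\|(1+\nu^{\frac13}t\mathcal{M}_1)^\delta f_1\|_Y+\|(1+\nu^{\frac13}t\mathcal{M}_1)^\delta\partial_xf_1\|_{L^\infty L^2}
\]
by
\[
\Big(\int_{T_{j-1}}^{T_j}t^2\|(1+\nu t^3)^{-1}\|^2\,dt\Big)^{\frac12}\,\|(1+\nu^{\frac13}t\mathcal{M}_1)^\delta\mathcal{M}_2\nabla\Delta^{-1}\omega\|_{L^2L^2}\,\mathcal{E}_{2,in}.
\]
This is the analogue of Lemma \ref{l4.17}. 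The $L^1_x$ control of $\Theta_{in}$ absorbs the low frequencies $|k|<1$, and the multiplier $\mathcal{M}_2=|k|^{\frac12}+|k|$ pays for the derivative losses when $u^{(2)}$ is paired with $\partial_x\theta_a$. On $t\sim T_j=2^j\nu^{-\frac13}$ we have $t(1+\nu t^3)^{-1}\sim \nu^{\frac13}2^{-2j}$, so the time integral is of size $\nu^{\frac13}2^{-2j}\cdot T_j^{\frac12}\sim \nu^{\frac16}2^{-\frac{3j}{2}}$. This is more than enough to produce the factor $2^{-\frac j4}\nu^{-\frac13}$, even after interpolation losses from the $\mathcal{M}_2$ and $L^\infty_y$ bounds.

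\emph{Step 2 (viscous correction).} Following Lemmas \ref{l4.13}--\ref{l4.15}, introduce the damping factor $e^{-t/T_j}$ in place of $e^{-\nu^{\frac13}t}$; on this interval it is comparable to $1$. Let $f_2$ solve the heat-transport equation with forcing $\nu e^{-t/(2T_j)}\Delta f_1$, set
\[
f_3=e^{-t/(2T_j)}f_2+e^{-t/T_j}f_1,
\]
and let $f_4=\theta_{re,j}-f_3$. Since $V_1=y$ here, the commutator $(V_1-y)\partial_xf_1$ vanishes. The remaining sources for $f_4$ are $T_j^{-1}f_1$ and $T_j^{-1}f_2$, and their $L^1(T_{j-1},T_j)L^2$ norms are bounded by $\|f_1\|_{L^\infty L^2}+\|f_2\|_{L^\infty L^2}$. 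Applying the weighted space–time estimates \eqref{esti: ftildeX} and \eqref{fX large time} of Proposition \ref{ap3.9} (the weighted version holds on any interval, since $(1+\nu^{\frac13}t\mathcal{M}_1)^\delta$ is comparable over a dyadic time range) closes the bound for $\theta_{re,j}$ in $X_{2,\delta}$. We also need the $\|f_2\|_Y\lesssim\|f_1\|_Y$ estimate \eqref{4.36a}, which is an energy estimate.

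\emph{Step 3.} The term $\nu^{\frac12}\|\partial_x\theta_{re,j}\|_{L^1L^2}$ in $X_2$ is handled as in Lemma \ref{l4.16}: differentiate the equation in $x$ and bound $t\partial_x(u^{(2)}\partial_x\theta_a)$ in $L^1L^2$ using \eqref{a3.10}. The $\delta=0$ case \eqref{om2j small} is immediate.

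The main obstacle is Step 1, namely extracting a gain $2^{-j/4}$ summable in $j$ uniformly over low frequencies $|k|<1$, where inviscid damping of $u^{(2)}$ is only $|kt|^{-1}$. I would first try to split $|k|\le T_j^{-1}$ from $|k|>T_j^{-1}$, using the $L^1_x$ bound \eqref{omLxy} on the first region.
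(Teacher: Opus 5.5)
Your outline follows the same route as the argument the paper relies on. The paper simply cites Proposition 3.7 of \cite{miao2025}, which transplants the $f_1$--$f_4$ construction of \cite{wei-zhang2026} to the dyadic intervals $(T_{j-1},T_j]$. Step 3 is fine: the prefactor $\nu^{\frac12}$ saves it, giving $\nu^{\frac12}T_j\cdot\nu^{-\frac12}2^{-\frac{3j}{2}}\sim\nu^{-\frac13}2^{-\frac j2}$.

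\textbf{The gap is Step 1, and it rests on an arithmetic slip.} For $t\sim T_j=2^j\nu^{-\frac13}$ one has $\nu t^3\sim 2^{3j}$, hence
\[
t(1+\nu t^3)^{-1}\sim \nu^{-\frac13}2^{-2j}\ \ (\text{not } \nu^{\frac13}2^{-2j}),\qquad \Big(\int_{T_{j-1}}^{T_j}t^2(1+\nu t^3)^{-2}\,dt\Big)^{\frac12}\sim \nu^{-\frac12}2^{-\frac{3j}{2}} .
\]
This exceeds the required $\nu^{-\frac13}2^{-\frac j4}$ by the factor $\nu^{-\frac16}2^{-\frac{5j}{4}}$. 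So the bound fails for every $j$ with $2^j\lesssim \nu^{-\frac{2}{15}}$, in particular where $t\sim\nu^{-\frac13}$.

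The bound you aim for is just Cauchy--Schwarz applied to $\|tu^{(2)}\partial_x\theta_a\chi_j\|_{L^1L^2}$; it uses nothing of the transport structure. If it sufficed, \eqref{esti: ftildeX} applied directly to $\theta_{re,j}$ would prove the lemma in one line, and the $f_1$--$f_4$ machinery would be pointless.

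The loss is not cosmetic. In the bootstrap for $\nu^{-\frac12}\|\theta\|$, a constant $\nu^{-\frac12}$ in place of $\nu^{-\frac13}$ produces the term $\nu^{-1}\mathcal{E}_{2,in}\|\omega\|\sim\varepsilon_0\nu^{-\frac16}\|\omega\|$, which cannot be absorbed.

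\textbf{What is missing} is the mechanism that recovers the factor $T_j^{1/2}$ lost to Cauchy--Schwarz in time. This is the content of Lemma \ref{l4.17} (Lemma 5.5 of \cite{wei-zhang2026}) and of its dyadic analogue in \cite{miao2025}. It is also why the right-hand side involves $\nabla u^{(2)}$, here $\mathcal{M}_2\nabla\Delta^{-1}\omega$, rather than $u^{(2)}$. In the moving frame,
\[
f_1(t,x+ty,y)=\int_{T_{j-1}}^{t}s\,u^{(2)}(s,x+sy,y)\,\partial_x\tilde\theta(s,x,y)\,ds ,
\]
where $\tilde\theta=e^{-\nu s}e^{\frac{\nu s^3}{3}\partial_x^2}\Theta_{in}$ is non-oscillatory, while the $k$-th mode of $u^{(2)}(s,x+sy,y)$ carries the phase $e^{iksy}$. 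Extend $H(s,\cdot)$ from $I$ to $H^1(\mathbb{R})$ and take the Fourier transform in $y$. Cauchy--Schwarz with weights $\langle\zeta-ks\rangle^{\pm1}$ then gives
\[
\Big\|\int e^{iksy}H(s,y)\,ds\Big\|_{L^2_y(I)}^2\lesssim |k|^{-1}\|H\|_{L^2_sH^1_y}^2 ,
\]
instead of the crude $T_j\|H\|_{L^2_sL^2_y}^2$. This is an $L^2_t$ rather than $L^1_t$ dependence on $u^{(2)}$, paid for by one $y$-derivative, and it handles $|k|\gtrsim T_j^{-1}$.

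For $|k|\lesssim T_j^{-1}$ there is no oscillation, but the weight $|k|^{\frac12}$ in $\mathcal{M}_2$ itself gives
\[
\|u^{(2)}_k\|_{L^2_y}\le |k|^{\frac12}\mathcal{M}_2(k)\|(k,\partial_y)\phi_k\|_{L^2_y}\le T_j^{-\frac12}\mathcal{M}_2(k)\|(k,\partial_y)\phi_k\|_{L^2_y}.
\]
So the frequency split you propose is the right one, but the low region is controlled by $\mathcal{M}_2$, not by \eqref{omLxy}. The $L^1_x$ control of $\Theta_{in}$ enters instead through the $(1+\nu s^3)^{-1}$ decay of $\partial_x\tilde\theta$, when the product is treated as a convolution in $x$-frequency. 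The remaining components of $\|f_1\|_Y$ must be treated in the same spirit, since for a pure transport equation they are not controlled by an $L^1L^2$ bound on the source.

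A minor point in Step 2: for $f_4$ to have only the sources you list, $f_1$ must be forced by $e^{t/T_j}tu^{(2)}\partial_x\theta_a\chi_j$.
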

\begin{proof}
See Proposition 3.7 in \cite{miao2025}.
\end{proof}

Now, we prove \eqref{a3.96}--\eqref{a3.97} in Proposition \ref{pro: wre}.
\begin{proof}
For \eqref{a3.96}, we use $\theta_{re}=\sum_{j}\theta_{re,j}$ to get 
\begin{align}\label{om2XT0Tj0}
\|\theta_{re}\|_{X_2(T_0,T_{j_0})}=&\Big\|\sum_{1\leq j\leq j_0} \theta_{re,j}\Big\|_{X_2(T_0,T_{j_0})}\leq \sum_{1\leq j\leq j_0 }\|\theta_{re,j}\|_{X_2(T_{j-1},T_{j_0})}\notag\\
= &\sum_{1\leq j\leq j_0 }\|\theta_{re,j}\|_{X_2(T_{j-1},T_j)}+\sum_{1\leq j\leq j_0-1}\|\theta_{re,j}\|_{X_2(T_{j},T_{j_0})}.
\end{align}
Then by \eqref{om2j small}, we infer
\begin{align}\label{om2jtildeXj}
\sum_{1\leq j\leq j_0 }\|\theta_{re,j}\|_{X_2(T_{j-1},T_j)}&\lesssim \nu^{-\frac13}\sum_{1\leq j\leq j_0}2^{-\frac{j}{4}}\|\mathcal{M}_2 \nabla\Delta^{-1}\omega\|_{L^2(T_{j-1},T_j;L^2)} \mathcal{E}_{2,in}\notag\\ 
&\lesssim \nu^{-\frac13}\|\omega\|_{X_2(T_0,T_{j_0})}\mathcal{E}_{2,in}.
\end{align}
For $t>T_j$, the system \eqref{equ: om2j} becomes
\begin{equation}\label{equ: om2j free}
\left\{
\begin{aligned}
&\partial_t \theta_{re,j}+y\partial_x\theta_{re,j}-\nu\Delta\theta_{re,j}=0, \quad t>T_{j}, \\
& \theta_{re,j}(t,x,\pm1)=0, \quad \theta_{re,j}|_{t=T_{j}}=\theta_{re,j}(T_j).
\end{aligned}
\right.
\end{equation}
By \eqref{esti: ftildeX} and \eqref{om2j small}, we obtain
\begin{align}\label{om2j Tj-Tj0}
\|\theta_{re,j}\|_{X_2(T_{j},T_{j_0})}\lesssim&\|\theta_{re,j}(T_j)\|_{L^2}\lesssim\|\theta_{re,j}\|_{X_2(T_{j-1},T_j)}\notag\\ 
\lesssim&2^{-\frac{j}{4}}\nu^{-\frac13}\|\mathcal{M}_2 \nabla\Delta^{-1}\omega\|_{L^2(T_{j-1},T_j;L^2)} \mathcal{E}_{2,in}.
\end{align}
Putting \eqref{om2jtildeXj} and \eqref{om2j Tj-Tj0} into \eqref{om2XT0Tj0}, we have
\begin{align}
\|\theta_{re}\|_{X_2(T_0,T_{j_0})}\lesssim\nu^{-\frac13}\|\omega\|_{X_2(T_0,T_{j_0})}\mathcal{E}_{2,in}.
\end{align}

For \eqref{a3.97}, define
$$\theta_{re}=\sum_{1\leq j\leq j_0}\theta_{re,j}+\sum_{j\geq j_0+1}\theta_{re,j}:=\widetilde{\theta}_{re}+\sum_{j\geq j_0+1}\theta_{re,j}.$$
Here $\widetilde{\theta}_{re}$ satisfies
\begin{equation*}
\left\{
\begin{aligned}
&\partial_t \widetilde{\theta}_{re}-\nu\Delta\widetilde{\theta}_{re}+y\partial_x\widetilde{\theta}_{re}=0, \quad t>T_{j_0}, \\
& \widetilde{\theta}_{re}(t,x,\pm1)=0, \quad \widetilde{\theta}_{re}|_{t=T_{j_0}}=\theta_{re}(T_{j_0}).
\end{aligned}
\right.
\end{equation*}
It follows from \eqref{fX large time} that
\begin{align}\label{om2,j small}
\|\widetilde{\theta}_{re}\|_{X_{2,\delta}(T_{j_0},T)}\lesssim &\|(1+\nu^{\frac13}T_{j_0} \mathcal{M}_1)^{\delta} \widetilde{\theta}_{re}(T_{j_0})\|\lesssim\|(1+\nu^{\frac13}T_{j_0} \mathcal{M}_1)^{\delta} \theta_{re}(T_{j_0})\|.
\end{align}
For $j\geq j_0+1$, similar to \eqref{om2jtildeXj}--\eqref{om2j Tj-Tj0}, we apply \eqref{om2j large} to obtain
\begin{align*}
\|\theta_{re,j}\|_{X_{2,\delta}(T_{j-1},T)}\leq &\|\theta_{re,j}\|_{X_{2,\delta}(T_{j-1},T_j)}+\|\theta_{re,j}\|_{X_{2,\delta}(T_j,T)}\\
\lesssim &2^{-\frac{j}{4}}\nu^{-\frac13}\|(1+\nu^{\frac13}t\mathcal{M}_1)^{\delta}\mathcal{M}_2 \nabla\Delta^{-1}\omega\|_{L^2(T_{j-1},T_j;L^2)}\mathcal{E}_{2,in},
\end{align*}
which implies
\begin{align}\label{om2,j large}
\sum_{j\geq j_0+1}\|\theta_{re,j}\|_{X_{2,\delta}(T_{j},T)}\lesssim\nu^{-\frac13}\|(1+\nu^{\frac13}t\mathcal{M}_1)^{\delta}\mathcal{M}_2 \nabla\Delta^{-1}\omega\|_{L^2(T_{j_0},T;L^2)} \mathcal{E}_{2,in}.
\end{align}
Combining \eqref{om2,j small}--\eqref{om2,j large}, we infer
\begin{align*}
\|\theta_{re}\|_{X_{2,\delta}(T_{j_0},T)}\leq &\|\widetilde{\theta}_{re}\|_{X_{2,\delta}(T_{j_0},T)}+\sum_{j\geq j_0+1}\|\theta_{re,j}\|_{X_{2,\delta}(T_{j-1},T)}\\
\lesssim &\nu^{-\frac13}\|\omega\|_{X_{2,\delta}(T_{j_0}, T)}\mathcal{E}_{2,in}+\|(1+\nu^{\frac13}T_{j_0} \mathcal{M}_1)^{\delta} \theta_{re}(T_{j_0})\|.
\end{align*}
Therefore, we have \eqref{a3.96}--\eqref{a3.97}, and the proof of \eqref{main result}--\eqref{om2 Xtj} is similar. The proof of Proposition \ref{pro: wre} is completed.
\end{proof}

Based on these estimates, we are ready to prove Proposition \ref{ap3.2}.
\begin{proof}[Proof of Proposition \ref{ap3.2}]
Using \eqref{esti om1 tildeX}--\eqref{a3.80} and \eqref{main result}--\eqref{a3.97}, we infer
\begin{align*}
\|\omega\|_{X_2(T_0, T_{j_0})}\leq& \|\omega_*\|_{X_2(T_0, T_{j_0})}+\|\omega_{re}\|_{X_2(T_0, T_{j_0})}\\
\lesssim& \nu^{-\frac13}\mathcal{E}_{in}\|\omega\|_{X_2(T_0, T_{j_0})}+\nu^{-\frac58}\|\omega\|^2_{X_2(T_0, T_{j_0})}+\nu^{\frac13-\varepsilon}\mathcal{E}_{in}+\nu^{-\frac12}\|\theta\|_{X_2(T_0,T_{j_0})},\\ 
\|\theta\|_{X_2(T_0, T_{j_0})}\leq& \|\theta_*\|_{X_2(T_0, T_{j_0})}+\|\theta_{re}\|_{X_2(T_0, T_{j_0})}\\
\lesssim& \nu^{-\frac13}\mathcal{E}_{2,in}\|\omega\|_{X_2(T_0,T_{j_0})}+\nu^{-\frac13}\mathcal{E}_{in}\|\theta\|_{X_2(T_{0},T_{j_0})}+\nu^{-\frac58}\|\omega\|_{X_2(T_0,T_{j_0})}\|\theta\|_{X_2(T_0,T_{j_0})} \notag\\ 
& +\nu^{\frac56-\varepsilon}\mathcal{E}_{in}.
\end{align*}
Notice that $\nu^{-\frac12}\mathcal{E}_{2,in}+\mathcal{E}_{in}\leq \varepsilon_0\nu^{\frac13}$ and $\varepsilon_0\ll 1$, 
\begin{align*}
\|\omega\|_{X_2(T_0, T_{j_0})}\leq& C(\nu^{-\frac58} \|\omega\|^2_{X_2(T_0, T_{j_0})}+\nu^{\frac13-\varepsilon}\mathcal{E}_{in}+\nu^{-\frac12}\|\theta\|_{X_2(T_0,T_{j_0})}),\\ 
\|\theta\|_{X_2(T_0, T_{j_0})}\leq&C(\nu^{-\frac13}\mathcal{E}_{2,in}\|\omega\|_{X_2(T_0,T_{j_0})}+\nu^{-\frac58}\|\omega\|_{X_2(T_0,T_{j_0})}\|\theta\|_{X_2(T_0,T_{j_0})}+\nu^{\frac56-\varepsilon}\mathcal{E}_{in}),
\end{align*}
which implies that
\begin{align*}
\|\omega\|_{X_2(T_0, T_{j_0})}+\nu^{-\frac12}\|\theta\|_{X_2(T_0, T_{j_0})}\lesssim& \nu^{-\frac58} \|\omega\|^2_{X_2(T_0, T_{j_0})}+\nu^{-\frac98}\|\omega\|_{X_2(T_0,T_{j_0})}\|\theta\|_{X_2(T_0,T_{j_0})}+\nu^{\frac13-\varepsilon}\mathcal{E}_{in}.
\end{align*}
We use $\mathcal{E}_{in}\leq \varepsilon_0\nu^{\frac13}$ and a priori assumption $\|\omega\|_{X_2(T_0, T_{j_0})}+\nu^{-\frac12}\|\theta\|_{X_2(T_0, T_{j_0})}\leq \varepsilon_1\nu^{\frac23-\varepsilon}$ to get
\begin{align*}
\|\omega\|_{X_2(T_0, T_{j_0})}+\nu^{-\frac12}\|\theta\|_{X_2(T_0, T_{j_0})}\lesssim\nu^{\frac13-\varepsilon}\mathcal{E}_{in}.
\end{align*}
Putting it into \eqref{esti om1 tildeX}, \eqref{a3.79}, \eqref{main result} and \eqref{a3.96} gives
\begin{align*}
\|\omega_*\|_{X_2(T_0, T_{j_0})}+\|\omega_{re}\|_{X_2(T_0, T_{j_0})}+\nu^{-\frac12}(\|\theta_*\|_{X_2(T_0, T_{j_0})}+\|\theta_{re}\|_{X_2(T_0, T_{j_0})})\lesssim\nu^{\frac13-\varepsilon}\mathcal{E}_{in}.
\end{align*}
By taking $0< \varepsilon<(\frac{1}{24}-\frac23\delta)(1+\delta)^{-1}$ and using $T_{j_0}=\nu^{-1-\varepsilon}$, we have
\begin{align}\label{ome small}
&\|(1+ \nu^{\frac13}T_{j_0} \mathcal{M}_1)^{\delta}\theta_*(T_{j_0})\|\leq  \|(1+\nu^{\frac13}t\mathcal{M}_1)^{\delta}\theta_* \|_{X_2(T_0, T_{j_0})}\notag\\
\leq & \|(1+\nu^{\frac13}t)^{\delta}\|_{L^\infty(T_0,T_{j_0})}\|\theta_*\|_{X_2(T_0, T_{j_0})}
\leq  C \nu^{\frac13-\frac23\delta-\varepsilon(1+\delta)}\mathcal{E}_{2,in}\lesssim\nu^{\frac{19}{24}}\mathcal{E}_{in},
\end{align}
\begin{align}\label{om2 small}
&\|(1+ \nu^{\frac13}T_{j_0} \mathcal{M}_1)^{\delta}\theta_{re}(T_{j_0})\|\leq  \|(1+\nu^{\frac13}t\mathcal{M}_1)^{\delta}\theta_{re} \|_{X_2(T_0, T_{j_0})}\notag\\
\leq & \|(1+\nu^{\frac13}t)^{\delta}\|_{L^\infty(T_0,T_{j_0})}\|\theta_{re}\|_{X_2(T_0, T_{j_0})}
\leq  C \nu^{\frac13-\frac23\delta-\varepsilon(1+\delta)}\mathcal{E}_{2,in}\lesssim\nu^{\frac{19}{24}}\mathcal{E}_{in},
\end{align}
and
\begin{align}\label{a3.125}
  &\|(1+ \nu^{\frac13}T_{j_0} \mathcal{M}_1)^{\delta}\omega_*(T_{j_0})\|+\|(1+ \nu^{\frac13}T_{j_0} \mathcal{M}_1)^{\delta}\omega_{re}(T_{j_0})\|\notag\\ 
  \lesssim&\|(1+\nu^{\frac13}t\mathcal{M}_1)^{\delta}\omega_* \|_{X_2(T_0, T_{j_0})}+\|(1+\nu^{\frac13}t\mathcal{M}_1)^{\delta}\omega_{re} \|_{X_2(T_0, T_{j_0})}\lesssim\nu^{\frac{7}{24}}\mathcal{E}_{in}.
\end{align}
Then by \eqref{1.8}, \eqref{esti om1 X large}, \eqref{a3.80}, \eqref{om2 Xtj}, \eqref{a3.97}, \eqref{ome small}--\eqref{a3.125}, we conclude
\begin{equation}\notag
\begin{aligned}
\|\omega\|_{X_{2,\delta}(T_{j_0},T)}\leq& \|\omega_*\|_{X_{2,\delta}(T_{j_0},T)}+\|\omega_{re}\|_{X_{2,\delta}(T_{j_0},T)}\\
\leq &C(\nu^{-\frac13}\mathcal{E}_{in}\|\omega\|_{X_{2,\delta}(T_{j_0},T)}+\nu^{-\frac58}\|\omega\|^2_{X_{2,\delta}(T_{j_0},T)}+\nu^{-\frac12}\|\theta\|_{X_{2,\delta}(T_{j_0},T)}+\nu^{\frac{7}{24}}\mathcal{E}_{in})\\
\leq &C(\nu^{-\frac58}\|\omega\|^2_{X_{2,\delta}(T_{j_0},T)}+\nu^{-\frac12}\|\theta\|_{X_{2,\delta}(T_{j_0},T)}+\nu^{\frac{7}{24}}\mathcal{E}_{in}),\\
\|\theta\|_{X_{2,\delta}(T_{j_0},T)}\leq& \|\theta_*\|_{X_{2,\delta}(T_{j_0},T)}+\|\theta_{re}\|_{X_{2,\delta}(T_{j_0},T)}\\
\leq &C(\nu^{-\frac13}\mathcal{E}_{in}\|\theta\|_{X_{2,\delta}(T_{j_0},T)}+\nu^{-\frac13}\mathcal{E}_{2,in}\|\omega\|_{X_{2,\delta}(T_{j_0},T)}\\ 
&+\nu^{-\frac58}\|\omega\|_{X_{2,\delta}(T_{j_0},T)}\|\theta\|_{X_{2,\delta}(T_{j_0},T)}+\nu^{\frac{19}{24}}\mathcal{E}_{in}).
\end{aligned}
\end{equation}
Therefore,
\begin{align}
 \|\omega\|_{X_{2,\delta}(T_{j_0},T)}+\nu^{-\frac12}\|\theta\|_{X_{2,\delta}(T_{j_0},T)}\lesssim&\nu^{-\frac58}\|\omega\|^2_{X_{2,\delta}(T_{j_0},T)}+\nu^{-\frac98}\|\omega\|_{X_{2,\delta}(T_{j_0},T)}\|\theta\|_{X_{2,\delta}(T_{j_0},T)}\notag\\ 
 &+\nu^{\frac{7}{24}}\mathcal{E}_{in}.   
\end{align}
By assuming $\|\omega\|_{X_{2,\delta}(T_{j_0},T)}+\nu^{-\frac12}\|\theta\|_{X_{2,\delta}(T_{j_0},T)} \leq \varepsilon_1\nu^{\frac58}$, and using $\mathcal{E}_{in}\leq \varepsilon_0\nu^{\frac13}~(\varepsilon_0\ll 1)$, we arrive at
\begin{align}\label{ome X}
\|\omega\|_{X_{2,\delta}(T_{j_0},T)}+\nu^{-\frac12}\|\theta\|_{X_{2,\delta}(T_{j_0},T)}\leq C \nu^{\frac{7}{24}}\mathcal{E}_{in},
\end{align}
which together with Proposition \ref{ap3.8} and \eqref{ome small}--\eqref{a3.125} gives \eqref{a3.20}--\eqref{a3.20a}. This completes the proof of Proposition \ref{ap3.2}.
\end{proof}

\section{Further discussions}\label{s6}
In this work, we have investigated the stability of Boussinesq equations in $\T\times[-1,1]$ and $\R\times[-1,1]$. We impose the Navier-slip boundary conditions on the velocity here. 
For the vorticity, the threshold $\frac{1}{3}$ seems to be optimal \cite{wei-zhang2026}. 
However, regarding the temperature, it remains unknown whether the threshold $\frac{5}{6}$ is optimal. 
It is worth noting that the authors in \cite{Niu} obtained the threshold  $\frac{2}{3}$ for the temperature in $\T\times \R$.
 Through careful analysis of the result in \cite{Niu}, we find that the key technique also lies in the quasi-linear method, i.e., the main system \eqref{eq:1.4} in $\T\times \R$ is decomposed into the following two equations. 
 Let

\[
\omega = \omega^i + \omega^e, \quad \theta = \theta^i + \theta^e,
\]
where \((\omega^i, \theta^i)\) solves

\begin{equation}\label{i}
\begin{cases}
\partial_t \omega^i + y \partial_x \omega^i - \nu \Delta \omega^i + u^i \cdot \nabla \omega^i = -\partial_x \theta^i, \\
\partial_t \theta^i + y \partial_x \theta^i + u_0^{i,(1)} \partial_x \theta^i - \nu \Delta \theta^i = 0, \\
\partial_t u_0^{i,(1)} - \nu \partial_y^2 u_0^{i,(1)} = P_0(u_{\neq}^i \cdot \nabla u_{\neq}^{i,(1)}), \\
\omega^i(0, x, y) = \Omega_{in}(x, y), \quad \theta^i(0, x, y) = \Theta_{in}(x, y),
\end{cases}
\end{equation}
and \((\omega^e, \theta^e)\) solves

\begin{equation}\label{e}
\begin{cases}
\partial_t \omega^e + y \partial_x \omega^e - \nu \Delta \omega^e + u^i \cdot \nabla \omega^e + u^e \cdot \nabla \omega^i + u^e \cdot \nabla \omega^e = -\partial_x \theta^e, \\
\partial_t \theta^e + y \partial_x \theta^e - \nu \Delta \theta^e + u_{\neq}^i \cdot \nabla \theta^e + u^i \cdot \nabla \theta^e + u^e \cdot \nabla \theta^e + u^e \cdot \nabla \theta^i = 0, \\
\omega^e(0, x, y) = 0, \quad \theta^e(0, x, y) = 0.
\end{cases}
\end{equation}
 Here \((\omega^i, \theta^i)\) represents the good part and \((\omega^e, \theta^e)\) denotes the quasi-linear and nonlinear part. The stability result in $\T\times\R$ is achieved by handling these two parts separately and combining their estimates.

An interesting observation from our subsequent research is that, for the good part \((\omega^i, \theta^i)\) in the above decomposition, we can extend the stability result from $\T\times \R$ to $\T\times[-1,1]$. 
This indicates that the boundary effects do not affect the threshold of the good component, which provides a crucial clue for further improving the stability results in bounded domains. 
However, we encounter an significant difficulty:
In estimating \( \om^e \), we need to obtain higher-order estimates for \( \om^i \), rather than just zeroth-order estimates. 
This poses some problems with boundaries, because after taking derivatives with respect to $y$, some boundary information seems to be lost.
Moreover, at this stage we cannot use Fourier multipliers and Plancherel's theorem, which makes boundary value problems significantly different from the whole-space case.
Based on the above analysis, we propose the following problem: Can we get higher-order estimate of \( \om^i \) under Navier-slip boundary conditions, and thereby extend the stability result from the $\T\times \R$ case to the $\T\times[-1,1]$ case for the original equation? 
We believe that solving this open problem will not only improve the stability theory of the considered partial differential equations in bounded domains but also provide new ideas and techniques for handling boundary effects in related PDE problems.

In the following, we present the stability estimates for the good part \( \om^i \) under boundary conditions. This may be the first step to extend the results in the whole space to the problems with boundaries.
\begin{thm}\label{thm4.1}
    There exist $0<\delta, \eps_0\ll 1$ such that for all $0<\nu \le 1$, $(x,y)\in \T\times[-1,1]$, if the initial data $\Om_{in}, \Theta_{in}$ satisfies $\|\Om_{in}\|_{H^4}+\nu^{-\frac{1}{3}}\|\Theta_{in}\|_{H^4}\le \eps_0\nu^{\frac{1}{3}}$, $(\om^i,\theta^i)(t,x,\pm1)=0$, then the solution $(\om^i,\theta^i)(t,x,y)$ of \eqref{i} satisfies the following stability estimates:
    \[\|\om^i_{\neq}\|+\nu^{-\frac{1}{3}}\|\theta^i_{\neq}\|\lesssim \e^{-\delta\nu^{\frac{1}{3}}t}(\|\Om_{in}\|_{H^4}+\nu^{-\frac{1}{3}}\|\Theta_{in}\|_{H^4}),\]
    \[\|u^i_{\neq}\|\lesssim (1+t)^{-1} \e^{-\delta\nu^{\frac{1}{3}}t/2}(\|\Om_{in}\|_{H^4}+\nu^{-\frac{1}{3}}\|\Theta_{in}\|_{H^4}).\]
\end{thm}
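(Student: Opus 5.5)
The plan is to repeat the quasi-linear strategy of Sections \ref{s2}--\ref{s4}, now applied to system \eqref{i}. The point is that the temperature equation in \eqref{i} is transported only by $y+u_0^{i,(1)}$, a shear flow, with no $u^i_{\neq}\cdot\nabla\theta^i$ coupling. This removes the $\nu^{-\frac12}$ loss that forced the $\nu^{\frac56}$ threshold in Theorem \ref{thm1.1}.

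\textbf{Step 1: reduce $\theta^i$ to a linear passive scalar.} Set $V(t,y)=y+u_0^{i,(1)}(t,y)$. Under the bootstrap hypothesis that $\|u^i_0\|_{C^3}\lesssim \varepsilon_0\nu^{\frac13}$, $\theta^i$ solves a linear passive-scalar equation
\[
\partial_t\theta^i+V\partial_x\theta^i-\nu\Delta\theta^i=0
\]
with Dirichlet boundary conditions. I would build an approximate solution $\theta^i_a$ as in \eqref{2.7}, with explicit formula \eqref{3.7}. Subtracting it, the remainder satisfies an equation of the type \eqref{4.15aa} with forcing equal to the error term $Er^2$ only, with no $\phi$-coupling. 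Proposition \ref{p4.1} on $[0,T_0]$ and Proposition \ref{p4.8a} for $t\ge T_0$ should then give
\[
\|\theta^i_{\neq}\|_{X_2}\lesssim \|\Theta_{in}\|_{H^4},
\qquad
\nu^{\frac12}\|e^{\delta\nu^{\frac13}t}\partial_x\theta^i\|_{L^1L^2}\lesssim \|\Theta_{in}\|_{H^4}.
\]
The second bound is the key gain. Here $\|\Theta_{in}\|_{H^4}\le\varepsilon_0\nu^{\frac23}$, whereas in the full system the $\theta$ estimate was contaminated by $\omega$.

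\textbf{Step 2: the vorticity, viewed as Navier--Stokes with a forcing.} The $\omega^i$ equation is the Navier--Stokes vorticity equation of \cite{wei-zhang2026} with source $-\partial_x\theta^i$. I would repeat the proof of Proposition \ref{p2.2}: take the heat extension $W_1$ for the zero mode, construct the approximate solution $\omega_a$ (containing the $-ikt\Theta_{in,k}$ term as in \eqref{3.9}), split off the reaction part $\omega_{re}$, and estimate the remainder in $X_1$ and then $X_2$. The buoyancy term now contributes
\[
\|e^{\delta\nu^{\frac13}t}\partial_x\theta^i\|_{L^1L^2}\lesssim \nu^{-\frac12}\|\Theta_{in}\|_{H^4}\lesssim \nu^{-\frac16}\bigl(\nu^{-\frac13}\|\Theta_{in}\|_{H^4}\bigr),
\]
which is worse than the claimed bound. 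To repair this I would move the linear part of the buoyancy into the approximate solution, so that $\omega_a$ absorbs $-\partial_x\theta^i_a$ explicitly as in \eqref{3.9}. Only $\partial_x$ of the remainder of $\theta^i$ is then left, and that remainder is $O(\nu^{\frac13}\|\Theta_{in}\|_{H^4})$ by Step 1's $Er^2$ estimate. Consequently the forcing in the $X_2$ estimate is
\[
\lesssim \nu^{-\frac12}\nu^{\frac13}\|\Theta_{in}\|_{H^4}\le \nu^{\frac13}\bigl(\nu^{-\frac13}\|\Theta_{in}\|_{H^4}\bigr)\cdot\nu^{-\frac12+\frac13},
\]
which I need to be $\lesssim \nu^{\frac13}\mathcal E$. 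If this is not enough, I would use $H^4$ data to get the stronger error bound $\nu(1+t)$, integrated against $e^{-\nu^{1/3}t}$ to give $\nu^{\frac13}$, and apply the semigroup bound to $\partial_x$ of the remainder directly.

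\textbf{Step 3: the zero mode $u_0^{i,(1)}$.} This mode solves a forced heat equation with Neumann-type boundary data, with forcing $P_0(u^i_{\neq}\cdot\nabla u^{i,(1)}_{\neq})$. Exactly as in \cite{wei-zhang2026}, the inviscid damping of $u^i_{\neq}$ gives $\|u_0^{i,(1)}-W\text{-part}\|_{C^3}\lesssim \varepsilon_0\nu^{\frac13}$. This is where the $H^4$ norm on $\Omega_{in}$ is used, since $C^3$ control of $V$ is needed in \eqref{3.8}. This closes the bootstrap hypothesis of Step 1.

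\textbf{Step 4: conclusion, and the main obstacle.} Combining as in the proof of Theorem \ref{thm1.1}, with $\|u^i_{\neq}\|\le\|u_a\|+\|u\|$ and \eqref{2.12}, yields both estimates of the theorem. The hard step is Step 2: making sure the $\partial_x\theta^i$ forcing in the $\omega$ equation costs only $\nu^{-\frac13}\|\Theta_{in}\|_{H^4}$ rather than $\nu^{-\frac12}$. My first attempt would be to treat $(\omega,\theta)$ jointly through the explicit approximate pair $(\omega_a,\theta_a)$, leaving only higher-order errors.
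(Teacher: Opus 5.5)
There is a genuine gap, located where you put it yourself: Step 2 does not close. After absorbing $-\partial_x\theta_a$ into $\omega_a$, you bound the buoyancy remainder through the component $\nu^{\frac12}\|\e^{\delta\nu^{\frac13}t}\partial_x f\|_{L^1L^2}$ of $X_2$ with $f=\theta$. This gives $\|\e^{\delta\nu^{\frac13}t}\partial_x\theta\|_{L^1L^2}\lesssim\nu^{-\frac16}\|\Theta_{in}\|_{H^4}$. Closing the $\omega$-bootstrap at size $\nu^{\frac13}\mathcal{E}$, with $\mathcal{E}=\|\Omega_{in}\|_{H^4}+\nu^{-\frac13}\|\Theta_{in}\|_{H^4}$, requires $\lesssim\|\Theta_{in}\|_{H^4}$; taking $\Omega_{in}=0$ shows the $\nu^{-\frac16}$ loss is real. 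So the shear structure of the $\theta^i$-equation, as you use it in Steps 1--2, does not remove the loss. Your fallback (``apply the semigroup bound to $\partial_x$ of the remainder'') points the right way but does not name the mechanism.

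The missing step is as follows. The $\theta$-equation in \eqref{i} is transported only by a shear, so $\partial_x$ commutes with $\partial_t+V_1\partial_x-\nu\Delta$ and preserves the Dirichlet condition. Hence $\partial_x\theta$ itself solves \eqref{4.15aa}, with forcing $u_0^{(1)}\partial_x^2\theta+u_0^{(1)}\partial_x^2\theta_a+\partial_x Er^2$. This is exactly what fails for \eqref{eq:1.4}, where $\partial_x$ would fall on $u_{\neq}\cdot\nabla\theta$. You then need two bounds on $\theta_a$, which is where the $H^4$ hypothesis enters via $\|V_1-y\|_{C^4}\lesssim\|\Omega_{in}\|_{H^4}$: $\|\partial_x Er^2\|\lesssim\nu(1+t)\e^{-2\nu^{\frac13}t}\|\Theta_{in}\|_{H^4}$ and $\|\partial_x^2\theta_a\|_{L^\infty}\lesssim\e^{-2\nu^{\frac13}t}\|\Theta_{in}\|_{H^4}$. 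With these, Proposition \ref{p4.8a} gives $\|\partial_x\theta\|_{X_2}\lesssim\nu^{\frac23}\mathcal{E}$ up to absorbable terms. Finally, use the enhanced-dissipation component $\nu^{\frac13}\|\e^{\delta\nu^{\frac13}t}P_{\neq}f\|_{L^1L^2}$ with $f=\partial_x\theta$: this gives $\|\e^{\delta\nu^{\frac13}t}\partial_x\theta\|_{L^1L^2}\le\nu^{-\frac13}\|\partial_x\theta\|_{X_2}\lesssim\nu^{\frac13}\mathcal{E}$, trading the $\nu^{-\frac12}$ loss for $\nu^{-\frac13}$. On $[0,\nu^{-\frac16}]$ the analogue is an energy estimate for $\partial_x\theta$ combined with $\|\partial_x\theta\|_{L^1L^2}\le\nu^{-\frac16}\|\partial_x\theta\|_{L^\infty L^2}$. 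In both regimes, $\nu^{-\frac13}\|\partial_x\theta\|$ must be carried as an extra unknown in the bootstrap.

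A second gap is Step 3. You use the full zero mode $y+u_0^{i,(1)}$ as background shear in $\theta_a$ (formula \eqref{3.7}) and in Propositions \ref{p4.1} and \ref{p4.8a}. That requires uniform-in-time $C^3$ control, and in fact $C^4$ for the $\partial_x^2\theta_a$ bounds above, of the solution-dependent part of $u_0^{i,(1)}$. \cite{wei-zhang2026} does not supply this. In that $L^2$-based scheme the perturbation zero mode is controlled only through $\|P_0u\|_{L^\infty}\lesssim\|P_0u\|_{H^1}\lesssim\|\omega\|$, and higher $y$-regularity up to the boundary is precisely what Section \ref{s6} says is unavailable. The remedy is the paper's setup: take as background only the heat extension $V_1$ of $P_0\Omega_{in}$, and keep $u_0^{(1)}\partial_x\theta$ and $u_0^{(1)}\partial_x\theta_a$ in the remainder equation. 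These terms are harmless, being bilinear or bounded by $\nu^{-\frac13}\|\omega\|_{X_2}\|\Theta_{in}\|_{H^4}$. But they mean the $\theta$-remainder is not forced by $Er^2$ alone, and they reappear, differentiated in $x$, in the forcing for $\partial_x\theta$ above.
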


\section{Appendix}
\subsection{An elementary embedding inequality}
Here we present a crucial embedding inequality used in the nonlinear estimates for the case $D=\mathbb{T}\times\mathbb{R}$.
\begin{lem}\label{lem:A.1}
Suppose that $k\in \Z$ and $P_0 f = 0$, then for a constant $0<\delta_1< 1$, it holds that
\[
\|f\|_{L^\infty} \lesssim \min\left\{\frac{1}{\delta_1}\|\nabla f\|^{1-\frac{\delta_1}{2}} \|\partial_x^2f\|^{\frac{\delta_1}{2}},\ \|\partial_x f\|^{\frac{3}{4}} \|f\|_{H^2}^{\frac{1}{4}}\right\}.
\]
\end{lem}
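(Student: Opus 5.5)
The plan is to work mode by mode in $x$. Since $P_0f=0$, we have $f=\sum_{k\neq0}f_k(y)\e^{ikx}$ with $k\in\Z$, $|k|\ge1$, so
\[
\|f\|_{L^\infty}\le\sum_{k\neq0}\|f_k\|_{L^\infty_y}.
\]
Each bound then reduces to a one-dimensional estimate in $y$ for every fixed $k$, followed by a weighted Cauchy--Schwarz/H\"older summation in $k$. The negative powers of $|k|$ produced in the first step are what make the sum over $k$ converge. No boundary condition in $y$ is needed: we only use one-dimensional Agmon/Gagliardo--Nirenberg inequalities on $I=[-1,1]$, and any lower-order term is absorbed thanks to $|k|\ge1$.

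\textbf{First bound.} Set $a_k=\|(k,\partial_y)f_k\|_{L^2_y}$ and $b_k=k^2\|f_k\|_{L^2_y}$, so that $\sum a_k^2\approx\|\nabla f\|^2$ and $\sum b_k^2\approx\|\partial_x^2 f\|^2$. On $I$, Agmon's inequality gives
\[
\|f_k\|_{L^\infty_y}\lesssim\|f_k\|^{1/2}\|\partial_yf_k\|^{1/2}+\|f_k\|\lesssim\|f_k\|^{1/2}a_k^{1/2},
\]
where the last step uses $\|f_k\|\le|k|^{-1}a_k\le a_k$. Next I interpolate the two available bounds $\|f_k\|\le|k|^{-1}a_k$ and $\|f_k\|=|k|^{-2}b_k$ with weights $1-\delta_1$ and $\delta_1$:
\[
\|f_k\|\le|k|^{-1-\delta_1}a_k^{1-\delta_1}b_k^{\delta_1}.
\]
Substituting this into the Agmon bound yields
\[
\|f_k\|_{L^\infty_y}\lesssim|k|^{-\frac{1+\delta_1}{2}}a_k^{1-\frac{\delta_1}{2}}b_k^{\frac{\delta_1}{2}}.
\]
Summing over $k$ with Cauchy--Schwarz gives
\[
\|f\|_{L^\infty}\lesssim\Big(\sum_{k\ne0}|k|^{-1-\delta_1}\Big)^{\frac12}\Big(\sum_{k} a_k^{2-\delta_1}b_k^{\delta_1}\Big)^{\frac12}.
\]
The first factor is $\lesssim\delta_1^{-1/2}\le\delta_1^{-1}$. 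For the second factor, H\"older with exponents $\frac{2}{2-\delta_1}$ and $\frac{2}{\delta_1}$ gives
\[
\sum_k a_k^{2-\delta_1}b_k^{\delta_1}\le\Big(\sum_k a_k^2\Big)^{1-\frac{\delta_1}{2}}\Big(\sum_k b_k^2\Big)^{\frac{\delta_1}{2}}.
\]
Combining the two factors gives exactly $\delta_1^{-1}\|\nabla f\|^{1-\frac{\delta_1}{2}}\|\partial_x^2f\|^{\frac{\delta_1}{2}}$.

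\textbf{Second bound.} Set $c_k=|k|\|f_k\|$, so $\sum c_k^2\approx\|\partial_xf\|^2$, and $d_k=\|(k,\partial_y)^{\le2}f_k\|_{L^2_y}$, so $\sum d_k^2\approx\|f\|_{H^2}^2$. Combining Agmon with the interpolation $\|\partial_yf_k\|\lesssim\|f_k\|^{1/2}\|f_k\|_{H^2_y}^{1/2}$ (lower-order term again absorbed) gives
\[
\|f_k\|_{L^\infty_y}\lesssim\|f_k\|^{3/4}d_k^{1/4}\le|k|^{-3/4}c_k^{3/4}d_k^{1/4}.
\]
Cauchy--Schwarz in $k$ then gives
\[
\|f\|_{L^\infty}\lesssim\Big(\sum_{k\ne0}|k|^{-3/2}\Big)^{\frac12}\Big(\sum_k c_k^{3/2}d_k^{1/2}\Big)^{\frac12},
\]
and the first factor is a convergent constant. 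H\"older with exponents $\frac43$ and $4$ bounds the second factor by $(\sum c_k^2)^{3/8}(\sum d_k^2)^{1/8}$, which yields $\|\partial_xf\|^{3/4}\|f\|_{H^2}^{1/4}$.

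The only delicate step is tracking the constant in the first bound. The divergence as $\delta_1\to0$ comes solely from $\sum_{k\neq0}|k|^{-1-\delta_1}\sim\delta_1^{-1}$, and its square root is already better than the claimed $\delta_1^{-1}$. Everything else is routine interpolation.
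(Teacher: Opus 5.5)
Your proposal is correct, and for the first bound it is essentially the paper's argument: mode-by-mode Agmon on $I$, interpolating $\|f_k\|_{L^2_y}$ between $|k|^{-1}\|(k,\partial_y)f_k\|_{L^2_y}$ and $|k|^{-2}\|k^2 f_k\|_{L^2_y}$, then Cauchy--Schwarz against $\sum_{k\neq0}|k|^{-1-\delta_1}\lesssim\delta_1^{-1}$ and H\"older in $k$. You interpolate before summing rather than inside the sum, which is cosmetic, and you correctly observe that the constant is really $\delta_1^{-1/2}$. For the second bound the paper gives no proof and only cites Wei--Zhang; your self-contained derivation fills that in correctly, combining the one-dimensional inequality $\|f_k\|_{L^\infty_y}\lesssim\|f_k\|_{L^2_y}^{3/4}\|f_k\|_{H^2_y}^{1/4}$ with Cauchy--Schwarz against $\sum_{k\neq0}|k|^{-3/2}<\infty$ and H\"older with exponents $\frac43$ and $4$.
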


\begin{proof}
Denote $f_k(y) = \frac{1}{2\pi} \int_{\mathbb{T}} f(x,y) \e^{-ikx} \dd x, I=[-1,1]$, then for $k \in \mathbb{Z}$ and $j=1,2$,
\[
f(x,y) = \sum_{k \neq 0} f_k(y) \e^{ikx},
\]
and
\[
\|\partial_x^j f\|_{L^2(\mathbb{T}\times\mathbb{R})}^2 = 2\pi \sum_{k \neq 0} \|(ik)^j f_k\|_{L^2(I)}^2, \quad
\|\nabla f\|_{L^2(\mathbb{T}\times\mathbb{R})}^2 = 2\pi \sum_{k \neq 0} \|(k, \partial_y) f_k\|_{L^2(I)}^2.
\]
Using Sobolev inequality and Hölder inequality gives
\begin{align*}
\|f\|_{L^\infty(\mathbb{T}\times\mathbb{R})} &\leq \sum_{k \neq 0} \|f_k\|_{L^\infty(I)} \lesssim \sum_{k \neq 0} \|f_k\|_{L^2(I)}^{\frac{1}{2}} \|f_k\|_{H^1(I)}^{\frac{1}{2}}\lesssim \sum_{k \neq 0} |k|^{-\frac{1}{2}-\frac{\delta_1}{2}}\||k|^{1+\delta_1}f_k\|_{L^2(I)}^{\frac{1}{2}} \|f_k\|_{H^1(I)}^{\frac{1}{2}} \\
&\lesssim \left( \sum_{k \neq 0} |k|^{-1-\delta_1} \right)^{\frac{1}{2}} \left( \sum_{k \neq 0} \||k|^{1+\delta_1} f_k\|_{L^2(I)}^2 \right)^{\frac{1}{4}} \left( \sum_{k \neq 0} \|(k, \partial_y) f_k\|_{L^2(I)}^2 \right)^{\frac{1}{4}} \\
&\lesssim \frac{1}{\delta_1} \left( \sum_{k \neq 0} \||k|^{2} f_k\|_{L^2(I)}^{2\delta_1}\||k|f_k\|_{L^2(I)}^{2(1-\delta_1)} \right)^{\frac{1}{4}} \left( \sum_{k \neq 0} \|(k, \partial_y) f_k\|_{L^2(I)}^2 \right)^{\frac{1}{4}} \\
&\lesssim \frac{1}{\delta_1} \|\nabla f\|^{1-\frac{\delta_1}{2}} \|\partial_x^2f\|^{\frac{\delta_1}{2}}.
\end{align*}
The proof of $\|f\|_{L^\infty} \lesssim \|\partial_x f\|^{\frac{3}{4}} \|f\|_{H^2}^{\frac{1}{4}}$ was shown in \cite{wei-zhang2026}. The proof of this lemma is completed.
\end{proof}
\subsection{Proof of Theorem \ref{thm4.1}}
 Similar to Theorem \ref{thm1.1}, we also use quasi-linearization method. Let 
\[W_{1,in}=\mathbb{P}_0 \Om_{in},\ W_{2,in}=\mathbb{P}_0 \Theta_{in},\ W_1=\e^{\nu t\pr_y^2}W_{1,in},\ W_2=\e^{\nu t \pr_y^2} W_{2,in},\ V_1=y-\pr_y(-\pr_y^2)^{-1}W_1.\]
Thanks to $W_{1,in}(\pm 1) =W_{2,in}(\pm 1) =0$, it holds for $t > 0$ that
\begin{align}\label{background flow}
& \|V_1 - y\|_{C^4} = \|\partial_y(-\partial_y^2)^{-1} W_1\|_{C^4} \lesssim \|W_1\|_{C^3} \leq \|W_{1,in}\|_{C^3} \lesssim \|W_{1,in}\|_{H^4} \leq \|\Omega_{in}\|_{H^4}, \\
& \|V_{1}' - 1\|_{C^3} + \|V_{1}''\|_{C^2} + \|V_{1}'''\|_{C^1}+\|V_1^{(4)}\|_{L^\infty} \lesssim \|V_{1} - y\|_{C^4} \lesssim \|\Omega_{in}\|_{H^4}.
\end{align}
Denote $\om_{ob}=\om^i-W_1,\ \theta_{ob}=\theta^i-W_2.$ Then we have
\be
\begin{cases}
    \pr_t \om_{ob}+V_1\pr_x\om_{ob}-\nu\Delta\om_{ob}-V_1'' \pr_x\phi+u_{ob}\cdot \nabla \om_{ob}=-\pr_x \theta_{ob},\\[2mm]
    \pr_t \theta_{ob}+V_1\pr_x\theta_{ob}-\nu\Delta\theta_{ob}+u_{ob,0}^1\pr_x\theta_{ob}=0,\\[2mm]
    u_{ob}=(\pr_y,-\pr_x)\Delta^{-1}\om_{ob},\ \phi=\Delta^{-1}\om_{ob},\\[2mm]
    \om_{ob}|_{t=0}=\Om_{in,\neq},\ \theta_{ob}|_{t=0}=\Theta_{in,\neq}.
\end{cases}
\ee
Next, we construct approximate solutions $\omega_a$ and $\theta_a$ of $\omega_{ob}$ and $\theta_{ob}$ respectively:
\be \label{linear-om}
\begin{cases}
    \pr_t \om_a-\nu t^2\pr_x^2 \om_a+V_1\pr_x \om_a=-\pr_x\theta_a,\\[2mm]
    \pr_t\theta_a-\nu t^2\pr_x^2\theta_a+V_1\pr_x\theta_a=0,\\[2mm]
    \om_a|_{t=0}=\Om_{in,\neq},\ \om_a(t,x,\pm 1)=0,\\[2mm]
    \theta_a|_{t=0}=\Theta_{in,\neq},\ \theta_a(t,x,\pm 1)=0.
\end{cases}
\ee
Let 
\[\om=\om_{ob}-\om_a,\ \theta=\theta_{ob}-\theta_a, \phi=\Delta^{-1}\om_a, \ \phi=\phi-\phi_a,\ u=(\pr_y,-\pr_x)\phi_a, u=u_{ob}-u_a.\]
Then
\be
\begin{cases}
    \pr_t\om+V_1\pr_x\om-\nu\Delta\om-V_1''\pr_x\phi+u_{ob}\cdot \nabla\om+u\cdot \nabla \om_a +Er^1=-\pr_x \theta,\\[2mm]
    \pr_t \theta+V_1\pr_x\theta-\nu\Delta\theta+u_{0}^{(1)}\pr_x\theta+u_{0}^{(1)}\pr_x\theta_a+Er^2=0,\\[2mm]
    Er^1=Er_\om^1-V_1''\pr_x\phi_a+u_a\cdot \nabla \om_a,\ Er_\om^1=\pr_t\om_a+V_1\pr_x\om_a-\nu\Delta\om_a+\pr_x\theta_a,\\[2mm]
    Er^2=\pr_t\theta_a+V_1\pr_x\theta_a-\nu\Delta\theta_a,\\[2mm]
    \om(0,x,y)|=\theta(0,x,y)=\om(t,x,\pm 1)=\theta(t,x,\pm 1)=0.
\end{cases}
\ee
In addition to \eqref{2.10}--\eqref{2.15}, we can also prove the following estimates:
\begin{prop}\label{prop:linear}
 the solution $(\om_a,\theta_a)$ of \eqref{linear-om}  satisfies that for $t \geq 0$,
\begin{align}
& \|\partial^2_x \theta_a(t)\|_{L^\infty} + \|(\partial_y + t\partial_x)\pr_x\theta_a(t)\|_{L^\infty} \lesssim {\rm e}^{-2\nu^{\frac{1}{3}} t} \|\Theta_{in}\|_{H^4}, \\
&\|\pr_x Er^2(t)\|\les\ \nu (1+t) \e^{-2\nu^{\frac{1}{3}} t} \|\Theta_{in}\|_{H^4}. 
\end{align}
\end{prop}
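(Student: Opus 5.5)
The plan is to follow the proof of Proposition \ref{p2.1} almost line by line, working mode by mode in $k\in\mathbb{Z}\setminus\{0\}$. We use the explicit solution formulas \eqref{3.7} and \eqref{3.9} from Lemma \ref{l3.1}, but now assume one extra derivative on the data (here $\Omega_{in}\in H^4$, and \eqref{background flow} gives $\|V_1-y\|_{C^4}\lesssim\|\Omega_{in}\|_{H^4}$).

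First, we observe that $\partial_x^2\theta_a$ has Fourier coefficients $-k^2\theta_{a,k}$. Similarly,
\[
(\partial_y+t\partial_x)\partial_x\theta_a=\sum_{k\ne0} ik\,\partial_y(\e^{ikyt}\theta_{a,k})\,\e^{ik(x-yt)} .
\]
It therefore suffices to bound $|k|\,\|(k,\partial_y)(\e^{ikyt}\theta_{a,k})\|_{L^\infty_y}$.

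By \eqref{3.5} with $j=1$, this quantity is $\lesssim |k|\cdot|k|^{-\frac32}\e^{-2\nu^{\frac13}|k|^{\frac23}t}E_2(k)=|k|^{-\frac12}\e^{-2\nu^{\frac13}|k|^{\frac23}t}E_2(k)$. This is not summable against $E_2(k)\in\ell^2$. To fix this, we redo \eqref{3.3} with a sharper power of $k$. From \eqref{3.7}, $\e^{ikyt}\theta_{a,k}=\Theta_{in,k}\,\e^{ih-\nu k^2t^3/3}$, where \eqref{3.10} holds for $j\le 4$ thanks to the $C^4$ bound on $V_1^\star-ty$. Using $\|(k,\partial_y)^l\Theta_{in,k}\|_{L^2_y}\le |k|^{l-4}E_2(k)$ (valid since $E_2$ contains $j\le4$), we get
\[
\|(k,\partial_y)^j(\e^{ikyt}\theta_{a,k})\|_{L^2_y}\lesssim |k|^{j-4}\e^{-2\nu^{\frac13}|k|^{\frac23}t}E_2(k),\qquad j\le 4 .
\]
Gagliardo–Nirenberg in $y$ then gives an $L^\infty_y$ bound with $|k|^{j-\frac72}$ for $j\le 3$. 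Taking $j=1$ and multiplying by $|k|$ yields $|k|^{-\frac32}$. As in \eqref{3.12}, Cauchy–Schwarz with $\sum|k|^{-3}<\infty$ gives the first estimate with $\e^{-2\nu^{\frac13}t}\|\Theta_{in}\|_{H^4}$.

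For the second estimate, $\partial_x Er^2=\sum ik\,\bar\Theta_k\e^{ikx}$, with
\[
\e^{ikyt}\bar\Theta_k=-\nu(\partial_y^2-2ikt\partial_y-k^2)(\e^{ikyt}\theta_{a,k}).
\]
Hence
\[
|k|\,\|\bar\Theta_k\|_{L^2_y}\lesssim\nu(1+t)\,|k|\,\|(k,\partial_y)^2(\e^{ikyt}\theta_{a,k})\|_{L^2_y}\lesssim \nu(1+t)|k|^{-1}\e^{-2\nu^{\frac13}|k|^{\frac23}t}E_2(k),
\]
using the improved $L^2$ bound with $j=2$. Squaring and summing via Plancherel gives $\|\partial_xEr^2\|\lesssim\nu(1+t)\e^{-2\nu^{\frac13}t}\|\Theta_{in}\|_{H^4}$. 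Here $Er^2$ is only the linear error, since there are no $V_2'$ or $u_a\cdot\nabla\theta_a$ terms in this decomposition.

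The main obstacle is the sharpened version of Lemma \ref{l3.1}. We must verify that \eqref{3.10} extends to $j=4$. This requires $|\partial_y^j(\e^{ih})|\lesssim(1+\nu^{\frac13}|kt|)^j$ for $j\le4$, and $(1+\nu^{\frac13}|kt|)^4\e^{-\nu k^2t^3/3}\lesssim|k|^{\frac43}\e^{-3\nu^{\frac13}|k|^{2/3}t}$. This in turn rests on the $C^4$ control of $V_1^\star-ty$ coming from $\|\Omega_{in}\|_{H^4}\le\varepsilon_0\nu^{\frac13}$. The remaining delicate point is the bookkeeping of the Leibniz rule, ensuring that each term carries total weight $|k|^{j-4}E_2(k)$.
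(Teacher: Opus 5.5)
Your proof is correct and is essentially the argument the paper leaves implicit when it says the proof is similar to Proposition \ref{p2.1}; the key step, which you identify, is sharpening \eqref{3.3} to $|k|^{j-4}\e^{-2\nu^{\frac13}|k|^{\frac23}t}E_2(k)$ by letting the fourth derivative fall on $\Theta_{in,k}$. Since only $L^2_y$ bounds with $j\le 2$ are actually used (for the Gagliardo--Nirenberg step and for $\bar\Theta_k$), \eqref{3.10} is needed only for $j\le 2$, so $C^2$ control of $V_1^\star-ty$ suffices and your extension to $j=4$, which you single out as the main obstacle, is unnecessary.
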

\begin{proof}
    The proof is similar to that of Proposition \ref{p2.1} and Lemma \ref{l3.2}, and the specific details are omitted here.
\end{proof}
For $(\om, \theta)$, we have the following estimates.
\begin{prop} \label{thm:error-term}
    Under the conditions of Theorem \ref{thm4.1}, it holds for $t\ge 0$,
    \[\|\om_{\neq}\|+\nu^{-\frac{1}{3}}(\|\theta_{\neq}\|+\|\pr_x\theta_{\neq}\|)\lesssim \e^{-\delta\nu^{\frac{1}{3}}t}\nu^{\frac{1}{3}}(\|\Om_{in}\|_{H^4}+\nu^{-\frac{1}{3}}\|\Theta_{in}\|_{H^4}).\]
\end{prop}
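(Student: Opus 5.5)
We follow the scheme of Section \ref{s4}: split time into the short scale $0\le t\le T_0=\nu^{-\frac16}$ and the long scale $t\ge T_0$. The new features are (i) the temperature equation has only the zero-mode transport $u_0^{(1)}\pr_x\theta$, with no $V_2'\pr_x\phi$ and no $u_{\neq}\cdot\nabla\theta$; and (ii) we must also control $\pr_x\theta$, with weight $\nu^{-\frac13}$ rather than $\nu^{-\frac12}$.

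\textbf{Estimating $\pr_x\theta$.} Since $u_0^{(1)}$, $V_1$ depend only on $(t,y)$, differentiating the $\theta$-equation in $x$ gives
\[
\pr_t\pr_x\theta+V_1\pr_x^2\theta-\nu\Delta\pr_x\theta+u_0^{(1)}\pr_x^2\theta+u_0^{(1)}\pr_x^2\theta_a+\pr_xEr^2=0,
\]
still with Dirichlet data $\pr_x\theta|_{y=\pm1}=0$. So $\theta$ and $\pr_x\theta$ solve the same linear transport–diffusion problem with shear $V_1+u_0^{(1)}$, and the transport structure is not broken by $\pr_x$.

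For this linear operator I would redo the hypocoercivity functional of Lemma \ref{l4.4}, mode by mode in $k$, now with shear $V_1+u_0^{(1)}$. This requires $\|\pr_y u_0^{(1)}\|_{L^\infty}$ to be small. I expect $\|P_0\omega\|_{H^1}\lesssim$ the bootstrap size suffices, or, more cheaply, $\|u_0^{(1)}\|_{L^\infty}$ and $\|\pr_y u_0^{(1)}\|_{L^2}$ combined with an extra commutator term. This yields Corollary \ref{c4.5}-type $X_1$ bounds on $[0,T_0]$ and the $X_2$ bounds of Proposition \ref{p4.8a} on $[T_0,\infty)$.

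The forcings are handled with Proposition \ref{prop:linear} and \eqref{2.15}:
\[
\|u_0^{(1)}\pr_x^j\theta_a\|\lesssim\|\omega\|\,\|\pr_x^j\theta_a\|_{L^\infty}\lesssim \|\omega\|\,\e^{-2\nu^{\frac13}t}\|\Theta_{in}\|_{H^4},\qquad j=1,2,
\]
whose $L^1_t$ norm costs $\nu^{-\frac13}\|\Theta_{in}\|_{H^4}\|\omega\|_{X}$. The error terms give $\|\pr_x^jEr^2\|_{L^1L^2}\lesssim\nu^{\frac13}\|\Theta_{in}\|_{H^4}$. Because there is no $V_2'\pr_x\phi$ or $u\cdot\nabla\theta_a$ reaction term, no $\theta_{re}$ decomposition is needed for $\theta$. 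This gives
\[
\|\theta\|_{X}+\|\pr_x\theta\|_X\lesssim \nu^{\frac13}\|\Theta_{in}\|_{H^4}+\nu^{-\frac13}\|\Theta_{in}\|_{H^4}\|\omega\|_X .
\]

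\textbf{Vorticity.} Treat $\omega$ exactly as in Propositions \ref{p4.8}, \ref{prop:2.6} and \ref{p4.12a}, splitting $\omega=\omega_*+\omega_{re}$ for long times. The buoyancy term is bounded by
\[
\|\e^{\delta\nu^{\frac13}t}\pr_x\theta\|_{L^1L^2}\le\nu^{-\frac13}\|\pr_x\theta\|_{X_2},
\]
using the enhanced-dissipation component $\nu^{\frac13}\|\e^{\delta\nu^{\frac13}t}P_{\neq}f\|_{L^1L^2}$ of $X_2$ applied to $f=\pr_x\theta$. Thus the loss is $\nu^{-\frac13}$ rather than $\nu^{-\frac12}$, which is exactly why the weight $\nu^{-\frac13}$ appears. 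The short-time interval uses the $\nu^{\frac13}\|P_{\neq}\cdot\|_{L^1L^2}$ part of $X_1$ in the same way.

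\textbf{Closing.} Sum $\|\omega\|_{X}+\nu^{-\frac13}(\|\theta\|_X+\|\pr_x\theta\|_X)$. The linear cross terms have coefficients $\nu^{-\frac23}\|\Theta_{in}\|_{H^4}\le\varepsilon_0$ or $\nu^{-\frac13}(\|\Omega_{in}\|_{H^4}+\nu^{-\frac13}\|\Theta_{in}\|_{H^4})\le\varepsilon_0$, so they are absorbed. The quadratic term $\nu^{-\frac58}\|\omega\|_X^2$ closes under the bootstrap hypothesis $\|\omega\|_X\le\varepsilon_1\nu^{\frac23}$, giving the bound $\nu^{\frac13}(\|\Omega_{in}\|_{H^4}+\nu^{-\frac13}\|\Theta_{in}\|_{H^4})$ with decay $\e^{-\delta\nu^{\frac13}t}$.

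\textbf{Main obstacle.} I expect the hard step to be the hypocoercive linear estimate with the time-dependent zero-mode shear $u_0^{(1)}$. This needs a uniform bound on $\|\pr_y u_0^{(1)}\|_{L^\infty}$, which is not directly part of the $L^2$ bootstrap. I would first try the third equation of \eqref{i} together with heat smoothing of $P_0$ and the $H^4$ data. If that fails, I would absorb $u_0^{(1)}$ into the background shear by redefining $V_1$ dynamically, as in the quasi-linear setup.
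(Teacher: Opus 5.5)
\textbf{There is a genuine gap.} Your overall architecture matches the paper's: the short/long time split, the splitting $\omega=\omega_*+\omega_{re}$, no reaction part for $\theta$ (since $\|\partial_x^j\theta_a\|_{L^\infty}$ decays without polynomial growth), and the buoyancy bound $\|\mathrm{e}^{\delta\nu^{1/3}t}\partial_x\theta\|_{L^1L^2}\le\nu^{-1/3}\|\partial_x\theta\|_{X_2}$. The gap is the step you yourself flag as the main obstacle, and as proposed it does not close.

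Putting $u_0^{(1)}$ into the shear creates hypotheses you cannot verify:
\begin{itemize}
\item The analogue of Lemma \ref{l4.4} needs $V_1'+\partial_yu_0^{(1)}=V_1'+\omega_0\ge\frac34$ at every time, i.e.\ $\|\omega_0(t)\|_{L^\infty}$ small pointwise in $t$.
\item More seriously, Proposition \ref{p4.8a} is only available under \eqref{eq:4.1}. For the shear $V_1+u_0^{(1)}$ this would require $\|u_0^{(1)}\|_{C^3}$, hence $\|\omega_0\|_{C^2}$, to be small uniformly for $t\ge T_0$.
\end{itemize}
Your bootstrap controls only $\|\omega\|_{L^\infty L^2}$ and $\nu^{\frac12}\|\nabla\omega\|_{L^2L^2}$. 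Neither remedy you suggest supplies $C^2$ control of $\omega_0$. The $u_0^{(1)}$-equation (third line of \eqref{i}) is forced by $P_0(u^i_{\neq}\cdot\nabla u^{i,(1)}_{\neq})$, so heat smoothing gives higher regularity of $\omega_0$ only if you already have higher-order bounds on the nonzero modes. Section \ref{s6} identifies exactly this as the obstruction in the channel. Redefining $V_1$ dynamically needs the same control.

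\textbf{The missing observation} is that $u_0^{(1)}$ need not enter the linear operator at all.
\begin{itemize}
\item On $[0,T_0]$: since $V_1$ and $u_0^{(1)}$ depend only on $(t,y)$, both transport terms vanish in the plain $L^2$ energy identities for $\theta$ and for $\partial_x\theta$. The buoyancy term is then handled by $\|\partial_x\theta\|_{L^1(0,T_0;L^2)}\le\nu^{-\frac16}\|\partial_x\theta\|_{L^\infty L^2}$; this is Lemma \ref{prop:short-time}.
\item On $[T_0,\infty)$: apply Proposition \ref{p4.8a} with the original shear $V_1$ and put $u_0^{(1)}\partial_x\theta$ and $u_0^{(1)}\partial_x^2\theta$ into the source. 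Using $\|u_0^{(1)}\|_{L^\infty}\lesssim\|u_0^{(1)}\|_{H^1}\lesssim\|\omega\|$ and the component $\nu^{\frac12}\|\mathrm{e}^{\delta\nu^{1/3}t}\partial_xf\|_{L^1L^2}$ of $X_2$, one gets
\[
\|\mathrm{e}^{\delta\nu^{\frac13}t}u_0^{(1)}\partial_x^{j}\theta\|_{L^1L^2}\lesssim\nu^{-\frac12}\|\omega\|_{X_2}\,\|\partial_x^{j-1}\theta\|_{X_2},\qquad j=1,2.
\]
Under the bootstrap $\|\omega\|_{X_2}\le\varepsilon_1\nu^{\frac23}$ the prefactor is $O(\varepsilon_1\nu^{\frac16})$, so these terms are absorbed. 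This is the content of Lemma \ref{prop:source-estimate}.
\end{itemize}
With this replacement the rest of your plan goes through.

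\textbf{A smaller correction in the closing step.} The buoyancy term feeds $\nu^{-\frac13}\|\partial_x\theta\|_X$ into the $\omega$-bound with an $O(1)$ coefficient, so it cannot be absorbed by smallness. Instead, first substitute the $\partial_x\theta$-bound. Its dependence on $\omega$ carries the small factor $\nu^{-\frac23}\|\Theta_{in}\|_{H^4}\le\varepsilon_0$. This is the triangular structure exploited by the weighted sum in the proof of Proposition \ref{p2.2}.
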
 
For Proposition \ref{thm:error-term}, we still divide into two time periods to analyze. For short time, we have
\begin{lem} \label{prop:short-time}
    If $0\le t\le \nu^{-\frac{1}{6}}$, then
    \[\|\om\|+\nu^{-\frac{1}{3}}\|\theta\|+\nu^{-\frac{1}{3}}\|\pr_x\theta\|\lesssim\nu^{\frac{1}{3}}(\|\Om_{in}\|_{H^4}+\nu^{-\frac{1}{3}}\|\Theta_{in}\|_{H^4}).\]
\end{lem}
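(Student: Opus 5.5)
The plan is to run the short-time argument of Proposition~\ref{p4.8}, now for the perturbation system of the good part. The new feature is that the temperature equation carries only the zero-mode transport $u_0^{(1)}\partial_x$, and this structure lets us also propagate $\partial_x\theta$.

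\emph{Step 1: zeroth-order energy estimates.} Multiply the $\omega$-equation by $\omega$ and the $\theta$-equation by $\theta$, and integrate over $\T\times[-1,1]$. The transport terms $V_1\partial_x$, $u_{ob}\cdot\nabla$ and $u^{(1)}_0\partial_x$ drop out: they are divergence free and $u^{(2)}$ vanishes at $y=\pm1$. For the remaining terms I will use:
\begin{itemize}
\item $\|V_1''\|_{L^\infty}\lesssim \|\Omega_{in}\|_{H^4}\le\varepsilon_0\nu^{1/3}$;
\item $\|\nabla\omega_a\|_{L^\infty}\lesssim (1+t)(\|\Omega_{in}\|_{H^4}+\nu^{-1/3}\|\Theta_{in}\|_{H^4})$, from Proposition~\ref{p2.1};
\item the bound \eqref{2.14} for $Er^1$;
\item the term $u_0^{(1)}\partial_x\theta_a$, which is bounded by $\|\omega\|\,\|\partial_x\theta_a\|_{L^\infty}\lesssim \|\omega\|\|\Theta_{in}\|_{H^4}$;
\item $Er^2=(\nu t^2\partial_x^2-\nu\Delta)\theta_a$, bounded by $\nu(1+t)\|\Theta_{in}\|_{H^4}$.
\end{itemize}
The buoyancy term is controlled by $\|\omega\|_{L^\infty L^2}\|\partial_x\theta\|_{L^1L^2}\le t\,\|\omega\|_{L^\infty L^2}\|\partial_x\theta\|_{L^\infty L^2}$. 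Integrating over $0\le t\le\nu^{-1/6}$ gives
\[
\|\omega\|_{L^\infty L^2}\lesssim \nu^{-\frac16}\|\partial_x\theta\|_{L^\infty L^2}+\nu^{\frac13}\bigl(\|\Omega_{in}\|_{H^4}+\nu^{-\frac13}\|\Theta_{in}\|_{H^4}\bigr).
\]
Since $\nu^{-1/6}\cdot\nu^{1/3}\cdot\nu^{1/3}\ll\nu^{1/3}$, this is consistent with the target bound $\|\partial_x\theta\|\lesssim\nu^{2/3}(\cdots)$.

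\emph{Step 2: the estimate for $\partial_x\theta$.} This is the key new point. Because $\partial_x$ commutes with $V_1(t,y)\partial_x$, with $u^{(1)}_0(t,y)\partial_x$, with $\Delta$ and with the Dirichlet condition, $\partial_x\theta$ solves the same type of equation,
\[
\partial_t\partial_x\theta+V_1\partial_x(\partial_x\theta)-\nu\Delta\partial_x\theta+u_0^{(1)}\partial_x(\partial_x\theta)=-u_0^{(1)}\partial_x^2\theta_a-\partial_xEr^2,
\]
with $\partial_x\theta|_{y=\pm1}=0$. No boundary terms appear, in contrast with the $y$-derivative difficulties mentioned around \eqref{123}--\eqref{456}. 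The $L^2$ energy estimate then gives
\[
\|\partial_x\theta\|_{L^\infty L^2}\lesssim \|\omega\|_{L^\infty L^2}\,\|\partial_x^2\theta_a\|_{L^1L^\infty}+\|\partial_xEr^2\|_{L^1L^2}.
\]
Proposition~\ref{prop:linear} supplies both ingredients: $\|\partial_x^2\theta_a\|_{L^\infty}\lesssim\|\Theta_{in}\|_{H^4}$ and $\|\partial_xEr^2\|\lesssim\nu(1+t)\|\Theta_{in}\|_{H^4}$. Over $t\le\nu^{-1/6}$ this yields
\[
\|\partial_x\theta\|_{L^\infty L^2}\lesssim \nu^{-\frac16}\|\Theta_{in}\|_{H^4}\|\omega\|_{L^\infty L^2}+\nu^{\frac23}\|\Theta_{in}\|_{H^4}.
\]
The same estimate holds for $\|\theta\|$, either by Poincaré in $x$ (since $P_0\theta_a=0$; the zero mode of $\theta$ is handled directly) or by repeating the computation.

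\emph{Step 3: closing.} Using $\|\Theta_{in}\|_{H^4}\le\varepsilon_0\nu^{2/3}$, substitute the Step 2 bound into the Step 1 inequality. The $\omega$ contribution returns with the small factor $\nu^{-1/3}\varepsilon_0\nu^{2/3}\ll1$, so it can be absorbed. Then $\nu^{-1/3}(\|\theta\|+\|\partial_x\theta\|)\lesssim \nu^{1/3}\|\Theta_{in}\|_{H^4}+\varepsilon_0\nu^{1/6}\|\omega\|$, and together with Step 1 this closes.

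The only genuinely nonlinear term is $u_{ob}\cdot\nabla\omega$, and it vanishes in the $L^2$ estimate. The $\theta$-equation is linear in $\theta$ apart from $u_0^{(1)}$, which is bounded by $\|\omega\|$. A bootstrap of size $\nu^{2/3}$, as in Proposition~\ref{p4.8}, is therefore enough.

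The main obstacle I expect is the buoyancy coupling, and in particular the power count showing that $t\|\omega\|\|\partial_x\theta\|$ over $t\le\nu^{-1/6}$ stays absorbable. Step 2 rests entirely on Proposition~\ref{prop:linear}, which gives $\partial_x^2\theta_a$ in $L^\infty$ without the $t$-growth that $\partial_y\theta_a$ has. I would check that point carefully first.
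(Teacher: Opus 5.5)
Your proposal is correct and follows essentially the same route as the paper. Both arguments run zeroth-order energy estimates for $\omega$, $\theta$ and $\partial_x\theta$ on $[0,\nu^{-1/6}]$, bound the buoyancy term by $\nu^{-1/6}\|\omega\|_{L^\infty L^2}\|\partial_x\theta\|_{L^\infty L^2}$, control the $\partial_x\theta$ equation through $\|\partial_x^2\theta_a\|_{L^\infty}$ and $\|\partial_x Er^2\|$ from Proposition~\ref{prop:linear}, and close with a bootstrap.

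The differences are cosmetic. You obtain $\|\theta\|$ from $\|\partial_x\theta\|$ by Poincar\'e in $x$, which is valid since $P_0\theta=0$, whereas the paper runs a separate energy estimate. You also keep the sharper bound $\nu^{2/3}\|\Theta_{in}\|_{H^4}$ for $\|\partial_x Er^2\|_{L^1L^2}$, where the paper states $\nu^{1/3}\|\Theta_{in}\|_{H^4}$.
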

\begin{proof}
    By energy estimate, we have 
    \be\notag
\|\om\|_{L^\infty L^2}+\nu^{\frac{1}{2}}\|\nabla \om\|_{L^2 L^2}\lesssim (\|V_1''\|_{L^1L^\infty}+\|\nabla \om_a\|_{L^1 L^\infty})+\|Er^1\|_{L^1L^2}+\nu^{-\frac{1}{6}}\|\pr_x \theta\|_{L^\infty L^2},
    \ee
    \be\notag
\|\theta\|_{L^\infty L^2}+\nu^{\frac{1}{2}}\|\nabla\theta\|_{L^2L^2}\lesssim\|\om\|_{L^\infty L^2}\|\pr_x\theta_a\|_{L^1L^\infty}+\|Er^2\|_{L^1L^2} ,
    \ee
    \be\notag
\|\pr_x\theta\|_{L^\infty L^2}+\nu^{\frac{1}{2}}\|\nabla\pr_x\theta\|_{L^2L^2}\lesssim \|\om\|_{L^\infty L^2}\|\pr^2_x\theta_a\|_{L^1L^\infty}+\|\pr_x Er^2\|_{L^1L^2} .
    \ee
By Propositions \ref{p2.1} and \ref{prop:linear}, and $0\le t\le \nu^{-\frac{1}{6}}$, we have
    \[\|V_1''\|_{L^1 L^\infty}\les \nu^{-\frac{1}{6}}\|\Om_{in}\|_{H^4},\ \|\pr_x\theta_a\|_{L^1 L^\infty}+\|\pr_x^2\theta_a\|_{L^1 L^\infty}\les \nu^{-\frac{1}{6}}\|\Theta_{in}\|_{H^4},\]
    \[\|\nabla\om_a\|_{L^1 L^\infty}\les \nu^{-\frac{1}{3}}(\|\Om_{in}\|_{H^4}+\nu^{-\frac{1}{3}}\|\Theta\|_{H^4}),\  \|Er^1\|_{L^1L^2}\les \nu^{\frac{1}{3}}(\|\Om_{in}\|_{H^4}+\nu^{-\frac{1}{3}}\|\Theta\|_{H^4}),\]
    \[\|Er^2\|_{L^1 L^2}+\|\pr_x Er^2\|_{L^1 L^2}\les \nu^{\frac{1}{3}}\|\Theta_{in}\|_{H^4}.\]
    Then by assuming that
    \be\notag
\|\om\|_{L^\infty L^2}+\nu^{\frac{1}{2}}\|\nabla \om\|_{L^2 L^2}+\nu^{-\frac{1}{3}}\Big( \|\theta\|_{L^\infty L^2}+\nu^{\frac{1}{2}}\|\nabla\theta\|_{L^2L^2} +\|\pr_x\theta\|_{L^\infty L^2}+\nu^{\frac{1}{2}}\|\nabla\pr_x\theta\|_{L^2L^2}\Big)\les \eps_1 \nu^{\frac{2}{3}}
    \ee
    for some small $\eps_1$ , we have for $0<t\le \nu^{-\frac{1}{6}}$ that
    \[\|\om\|_{L^2}+\nu^{-\frac{1}{3}}\|\theta\|_{L^2}+\nu^{-\frac{1}{3}}\|\pr_x\theta\|_{L^2}\le C\nu^{\frac{1}{3}}(\|\Om_{in}\|_{H^4}+\nu^{-\frac{1}{3}}\|\Theta_{in}\|_{H^4}).\]
\end{proof}

 For the long time scale, consider the reaction part $\om_{re}$, which solves 
\be
\begin{cases}
    \pr_t\om_{re}-\nu\Delta\om_{re}+V_1\pr_x \om_{re}-tu_e^2\pr_x\om_{a}=0,\\[2mm]
    \om_{re}(t,x,\pm 1)=0,\ \om_{re}|_{t=\nu^{-\frac{1}{6}}}=0.
\end{cases}
\ee
Then let $\om_*=\om-\om_{re}$, which solves 
\be
\begin{cases}
    \pr_t\om_*+V_1\pr_x\om_*-\nu\Delta\om_*+g=0,\\[2mm]
    g=-V_1''\pr_x\phi+u_{ob}\cdot\nabla\om+u^{(1)}\pr_x\om_a+u^{(2)}(\pr_y+t\pr_x)\om_a+Er^1+\pr_x\theta,\\[2mm]
    \om_*|_{t=\nu^{-\frac{1}{6}}}=\om|_{t=\nu^{-\frac{1}{6}}}.
\end{cases}
\ee
\begin{lem} \label{prop:reaction}
    It holds that
    \[\|\om_{re}\|_{X_2}\les\nu^{-\frac{1}{3}}\|\e^{\delta\nu^{\frac{1}{3}}t}\nabla u^{(2)}\|_{L^2 L^2}(\|\Om_{in}\|_{H^4}+\nu^{-\frac{1}{3}}\|\Theta_{in}\|_{H^4}).\]
   Here the norm $\|\cdot \|_{X_2}$ is defined in \eqref{4.15aaa}.
\end{lem}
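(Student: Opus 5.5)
The plan is to reproduce, for the vorticity reaction term, the same three-layer argument used for $\theta_{re}$ in Proposition \ref{p4.12a}. The only new ingredient is that the forcing is now $t u^{(2)}\partial_x\om_a$ instead of $t u^{(2)}\partial_x\theta_a$. The bounds on $\om_a$ carry the factor $\|\Om_{in}\|_{H^4}+\nu^{-\frac13}\|\Theta_{in}\|_{H^4}$ in place of $\|\Theta_{in}\|_{H^4}$: see \eqref{2.11}, Lemma \ref{l3.2}, and Lemma \ref{l3.1}, where the $-ikt\Theta_{in,k}$ contribution in \eqref{3.9} costs $\nu^{-\frac13}$. Hence that factor is exactly what should appear in the final estimate.

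\emph{Construction.} I first let $f_1$ solve the free transport problem
\[
\pr_t f_1+y\pr_x f_1=t\e^{\nu^{\frac13}t}u^{(2)}\pr_x\om_a,\qquad f_1|_{t=\nu^{-\frac16}}=0.
\]
Since $\om_a|_{y=\pm1}=0$, it follows that $f_1|_{y=\pm1}=0$. Next I let $f_2$ solve
\[
\pr_t f_2+V_1\pr_xf_2-\nu\Delta f_2=\nu\e^{-\nu^{\frac13}t/2}\Delta f_1,
\]
with zero boundary and initial data, and I set $f_3=\e^{-\nu^{\frac13}t/2}f_2+\e^{-\nu^{\frac13}t}f_1$ and $f_4=\om_{re}-f_3$. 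Then $f_4$ solves the linearized equation with forcing
\[
\nu^{\frac13}\e^{-\nu^{\frac13}t/2}f_2/2+\nu^{\frac13}\e^{-\nu^{\frac13}t}f_1-\e^{-\nu^{\frac13}t}(V_1-y)\pr_xf_1.
\]
The $Y$-bound \eqref{4.36a} together with Lemmas \ref{l4.13}--\ref{l4.14} then gives, exactly as in Lemma \ref{l4.15},
\[
\|\om_{re}\|_{X_2}\lesssim \|\e^{-\nu^{\frac13}t/2}f_1\|_Y+\|\pr_xf_1\|_{L^\infty L^2}+\nu^{\frac16}\|\pr_x\om_{re}\|_{X_2}.
\]
Lemma \ref{l4.14} uses $\|V_1-y\|_{C^1}\ll1$, which still holds here by \eqref{background flow}.

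\emph{Estimating the pieces.} For $\pr_x\om_{re}$ I differentiate its equation in $x$ and apply Proposition \ref{p4.8a}. Using $P_0u^{(2)}=0$ and Lemma \ref{l3.2} for $\om_a$, namely $t\|\pr_x^j\om_a\|_{L^\infty}\lesssim \nu^{-\frac13}\e^{-\nu^{\frac13}t}(\|\Om_{in}\|_{H^4}+\nu^{-\frac13}\|\Theta_{in}\|_{H^4})$ for $j=1,2$, and then Cauchy--Schwarz in time against $\|\e^{-\nu^{\frac13}t}\|_{L^2}\lesssim\nu^{-\frac16}$, I obtain
\[
\nu^{\frac16}\|\pr_x\om_{re}\|_{X_2}\lesssim \nu^{-\frac13}\|\e^{\delta\nu^{\frac13}t}\nabla u^{(2)}\|_{L^2L^2}\bigl(\|\Om_{in}\|_{H^4}+\nu^{-\frac13}\|\Theta_{in}\|_{H^4}\bigr).
\]
The remaining term $\|\e^{-\nu^{\frac13}t/2}f_1\|_Y+\|\pr_xf_1\|_{L^\infty L^2}$ is handled by repeating Lemma \ref{l4.17} (Lemma 5.5 of \cite{wei-zhang2026}) with $\om_a$ in place of $\theta_a$.

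\emph{Main obstacle.} The hard step is this last estimate on $f_1$. Its proof runs in the coordinates $x-ty$ and needs pointwise bounds on $(k,\pr_y)^j(\e^{ikyt}\om_{a,k})$ with enough powers of $|k|$ to control $\|f_1\|_Y$, including the $\|\pr_x\nabla\Delta^{-1}f_1\|_{L^2L^2}$ part. It also needs these bounds at one more derivative than in the $\theta$ case, because the $Y$-norm of $f_1$ involves $\nu^{\frac12}\|\nabla f_1\|_{L^2L^2}$ and $\pr_y$ falls on $\om_a$. This is why $H^4$ data are assumed for $\Om_{in}$. I would first redo Lemma \ref{l3.1} with $j\le4$ for $\om_a$, which is possible from the explicit formula \eqref{3.9} since $\|V_1-y\|_{C^4}\lesssim\|\Om_{in}\|_{H^4}$. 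With those bounds the computation of Lemma 5.5 in \cite{wei-zhang2026} carries over verbatim, and combining the three pieces yields the lemma.
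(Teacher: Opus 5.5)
Your proposal is correct and follows the paper's route. The paper just says the proof is the same as that of Proposition \ref{p4.12a}, namely the $f_1$--$f_4$ construction with Lemmas \ref{l4.13}--\ref{l4.17} applied with $\om_a$ in place of $\theta_a$, and that is what you carry out. One remark: extending Lemma \ref{l3.1} to $j\le 4$ is unnecessary, because the term $\nu^{\frac12}\|\nabla f_1\|_{L^2L^2}$ already appears in the $\theta$ case and \eqref{4.34a} already gives the bound with $\|\Om_{in}\|_{H^3}\le\|\Om_{in}\|_{H^4}$, so the $H^4$ assumption on $\Om_{in}$ is not what this lemma needs.
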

\begin{proof}
    The proof is similar to Proposition \ref{p4.12a}, and the specific details are omitted here.
\end{proof}
To estimate $\om_*$, we first have
\begin{lem} \label{prop:source-estimate}
     It holds that 
     \be\notag
\begin{aligned}
\|e^{\delta\nu^{\frac{1}{3}}t}g\|_{L^1 L^2}\les&\nu^{-\frac{1}{3}}(\|\Om_{in}\|_{H^4}+\nu^{-\frac{1}{3}}\|\Theta_{in}\|_{H^4})\|\omega\|_{X_2}+\nu^{-\frac{2}{3}}\|\om\|_{X_2}^2+\nu^{-\frac{5}{6}}\|\om\|_{X_2}\|\pr_x\theta\|_{X_2}\\ 
&+\nu^{\frac{1}{3}}\|\Omega_{in}\|_{H^4}+\|\Theta_{in}\|_{H^4}.
\end{aligned}
\ee
\end{lem}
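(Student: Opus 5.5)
We estimate $g$ term by term, following the proof of Proposition \ref{prop:source-estimate}'s model, Proposition \ref{prop:2.6}. Write
\[
g=-V_1''\partial_x\phi+u_{ob}\cdot\nabla\omega+u^{(1)}\partial_x\omega_a+u^{(2)}(\partial_y+t\partial_x)\omega_a+Er^1+\partial_x\theta .
\]
Every term is weighted by $\e^{\delta\nu^{1/3}t}$ and measured in $L^1(T_0,\infty;L^2)$ with $T_0=\nu^{-1/6}$.

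\textbf{Linear terms.} For $-V_1''\partial_x\phi$ we use \eqref{background flow} together with $\|\partial_x\phi\|\lesssim\|P_{\neq}\omega\|$. The $X_2$ component $\nu^{1/3}\|\e^{\delta\nu^{1/3}t}P_{\neq}\omega\|_{L^1L^2}$ then gives the bound $\nu^{-1/3}\|\Omega_{in}\|_{H^4}\|\omega\|_{X_2}$.

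For $u^{(1)}\partial_x\omega_a+u^{(2)}(\partial_y+t\partial_x)\omega_a$ we place $u$ in $L^\infty L^2$, bounded by $\|\omega\|_{X_2}$. The $\omega_a$ factors go in $L^\infty$ via \eqref{2.11}, which gives decay $\e^{-2\nu^{1/3}t}$. Integrating in time costs $\nu^{-1/3}$, so this pair gives $\nu^{-1/3}(\|\Omega_{in}\|_{H^4}+\nu^{-1/3}\|\Theta_{in}\|_{H^4})\|\omega\|_{X_2}$.

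For $Er^1$ we use \eqref{2.14} exactly as in Step 4 of Proposition \ref{prop:2.6}. The contribution is $\nu^{1/3}(\|\Omega_{in}\|_{H^4}+\nu^{-1/3}\|\Theta_{in}\|_{H^4})$ plus a quadratic part absorbed by smallness. Its $\Theta$ piece is $\nu^{1/3}\cdot\nu^{-1/3}\|\Theta_{in}\|_{H^4}=\|\Theta_{in}\|_{H^4}$, which is the last term of the claim.

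\textbf{Nonlinear terms.} Write $u_{ob}\cdot\nabla\omega=u_a\cdot\nabla\omega+P_{\neq}u\cdot\nabla\omega+P_0u^{(1)}\partial_x\omega$.

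For $u_a\cdot\nabla\omega$ we use \eqref{2.13}, putting $u_a^{(1)}$ in $L^\infty L^\infty$ and $u_a^{(2)}$ in $L^2L^\infty$, each of size $\nu^{1/6}(\cdots)$. Paired with $\nu^{-1/2}\|\omega\|_{X_2}$, this is again of the linear form above.

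For $P_{\neq}u\cdot\nabla\omega$, Lemma \ref{lem:A.1} gives $\|\e^{\delta\nu^{1/3}t}P_{\neq}u\|_{L^2L^\infty}\lesssim\nu^{-1/8}\|\omega\|_{X_2}$. Multiplying by $\|\nabla\omega\|_{L^2L^2}\le\nu^{-1/2}\|\omega\|_{X_2}$ gives $\nu^{-5/8}\|\omega\|_{X_2}^2$. Since $\nu^{-5/8}\le\nu^{-2/3}$, this fits the stated $\nu^{-2/3}\|\omega\|_{X_2}^2$.

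For $P_0u^{(1)}\partial_x\omega$ we use $\|P_0u^{(1)}\|_{L^\infty}\lesssim\|\omega\|$ and $\nu^{1/2}\|\e^{\delta\nu^{1/3}t}\partial_x\omega\|_{L^1L^2}\le\|\omega\|_{X_2}$. This gives $\nu^{-1/2}\|\omega\|_{X_2}^2$.

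\textbf{The buoyancy term (main obstacle).} In Proposition \ref{prop:2.6} the term $\partial_x\theta$ was bounded linearly by $\nu^{-1/2}\|\theta\|_{X_2}$. The claim instead has a product $\nu^{-5/6}\|\omega\|_{X_2}\|\partial_x\theta\|_{X_2}$, with no purely linear $\theta$ term. A bare bound $\|\e^{\delta\nu^{1/3}t}\partial_x\theta\|_{L^1L^2}\le\nu^{-1/3}\|\partial_x\theta\|_{X_2}$, via the $P_{\neq}$ component since $P_0\partial_x\theta=0$, would therefore have to be combined with something else.

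The route I would try first is to treat $\partial_x\theta$ through the $\theta$-equation of the system. In that equation the only coupling is $u_0^{(1)}\partial_x\theta+u_0^{(1)}\partial_x\theta_a+Er^2$, so no $\partial_x\phi$ term appears. I expect $\|\partial_x\theta\|_{X_2}$ to be controlled, via Proposition \ref{p4.8a}, by $\|\partial_x\theta(T_0)\|$ plus $\|\e^{\delta\nu^{1/3}t}\partial_x Er^2\|_{L^1L^2}\lesssim\nu^{1/3}\|\Theta_{in}\|_{H^4}$ (Proposition \ref{prop:linear}). The quasi-linear pieces are then handled as follows:
\begin{itemize}
\item $u_0^{(1)}\partial_x^2\theta$ is bounded by $\|\omega\|_{L^\infty L^2}\,\nu^{-1/2}\cdot\nu^{-1/3}\|\partial_x\theta\|_{X_2}$, using the $\partial_x$ and $P_{\neq}$ components.
\item $u_0^{(1)}\partial_x\theta_a$ is bounded by $\nu^{-1/3}\|\omega\|_{X_2}\|\Theta_{in}\|_{H^4}$.
\end{itemize}
Hence $\partial_x\theta$ contributes $\nu^{-1/3}\|\Theta_{in}\|_{H^4}$ plus terms of order $\|\omega\|_{X_2}$ times a $\theta$-quantity. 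This is the mechanism producing the $\nu^{-5/6}\|\omega\|_{X_2}\|\partial_x\theta\|_{X_2}$ term.

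The remaining linear part $\nu^{-1/3}\|\partial_x\theta\|_{X_2}$ is of size at most $\|\Theta_{in}\|_{H^4}$ under the bootstrap from Lemma \ref{prop:short-time}, and is absorbed into the inhomogeneous terms. Making this absorption consistent with the $\nu^{-1/3}$ weighting is the delicate step. If it fails, I would keep $\nu^{-1/3}\|\partial_x\theta\|_{X_2}$ explicitly and close it in the combined bootstrap for $\|\omega\|_{X_2}+\nu^{-1/3}\|\partial_x\theta\|_{X_2}$.
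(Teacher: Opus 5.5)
Your strategy for every term, including the buoyancy term, is the paper's. The $\omega$-terms are handled as in Proposition \ref{prop:2.6}. For $\partial_x\theta$ you use $\|\e^{\delta\nu^{1/3}t}\partial_x\theta\|_{L^1L^2}\le\nu^{-1/3}\|\partial_x\theta\|_{X_2}$ and then Proposition \ref{p4.8a} on the $x$-differentiated $\theta$-equation, whose source is $u_0^{(1)}\partial_x^2\theta+u_0^{(1)}\partial_x^2\theta_a+\partial_x Er^2$.

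\textbf{The gap is in your last paragraph, where the estimate is never actually closed.} After Proposition \ref{p4.8a} there is no residual linear term $\nu^{-1/3}\|\partial_x\theta\|_{X_2}$ to absorb. The norm $\|\partial_x\theta\|_{X_2}$ is replaced by $\|\partial_x\theta(T_0)\|+\|\e^{\delta\nu^{1/3}t}g_1\|_{L^1L^2}$. It reappears only inside the quadratic term $\nu^{-5/6}\|\omega\|_{X_2}\|\partial_x\theta\|_{X_2}$, which the statement keeps on the right-hand side.

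The only linear leftover is the datum $\nu^{-1/3}\|\partial_x\theta(T_0)\|$. Lemma \ref{prop:short-time} gives $\|\partial_x\theta(T_0)\|\lesssim\nu^{2/3}\|\Omega_{in}\|_{H^4}+\nu^{1/3}\|\Theta_{in}\|_{H^4}$. Hence $\nu^{-1/3}\|\partial_x\theta(T_0)\|\lesssim\nu^{1/3}\|\Omega_{in}\|_{H^4}+\|\Theta_{in}\|_{H^4}$, which is exactly the tail of the claimed bound.

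Lemma \ref{prop:short-time} controls only this time-$T_0$ value, not the long-time norm $\|\partial_x\theta\|_{X_2}$, so your sentence as written is unjustified. Your fallback of keeping $\nu^{-1/3}\|\partial_x\theta\|_{X_2}$ explicitly would prove a different inequality from the one stated.

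\textbf{Your bookkeeping also needs correcting, since one contribution as stated does not fit the claim.} All of the following include the outer factor $\nu^{-1/3}$.
\begin{enumerate}
\item The term $\partial_x Er^2$ contributes $\nu^{-1/3}\cdot\nu^{1/3}\|\Theta_{in}\|_{H^4}=\|\Theta_{in}\|_{H^4}$. It does not contribute $\nu^{-1/3}\|\Theta_{in}\|_{H^4}$, which would be too large for the stated bound.
\item The reaction source is $u_0^{(1)}\partial_x^2\theta_a$, not $u_0^{(1)}\partial_x\theta_a$. It needs $\|\partial_x^2\theta_a\|_{L^\infty}\lesssim\e^{-2\nu^{1/3}t}\|\Theta_{in}\|_{H^4}$ from Proposition \ref{prop:linear}, and gives $\nu^{-2/3}\|\omega\|_{X_2}\|\Theta_{in}\|_{H^4}$, which sits inside the first term of the claim.
\item For $u_0^{(1)}\partial_x^2\theta$ you need only the $\partial_x$-component, $\|\e^{\delta\nu^{1/3}t}\partial_x^2\theta\|_{L^1L^2}\le\nu^{-1/2}\|\partial_x\theta\|_{X_2}$, together with $\|u_0^{(1)}\|_{L^\infty}\lesssim\|\omega\|$. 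The extra $\nu^{-1/3}$ is the outer factor, and the product is $\nu^{-5/6}\|\omega\|_{X_2}\|\partial_x\theta\|_{X_2}$.
\end{enumerate}
With these fixes your argument is the paper's.
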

\begin{proof}
    We only estimate $\|\e^{\delta\nu^{\frac{1}{3}}t}\pr_x\theta\|_{L^1L^2}$; the estimates of other terms are similar to Proposition \ref{prop:2.6}.
    By Proposition \ref{p4.8a}, we have
\[\|\e^{\delta\nu^{\frac{1}{3}}t}\pr_x\theta\|_{L^1L^2}\le \nu^{-\frac{1}{3}}\|\pr_x\theta\|_{X_2}\les \nu^{-\frac{1}{3}}\Big(\|\pr_x\theta(T_0)\|_{L^2}+\|e^{\delta\nu^{\frac{1}{3}}t}g_1\|_{L^1 L^2}\Big),\]
in which
\[g_1=u_{0}^{(1)}\pr^2_x\theta+u_{0}^{(1)}\pr_x^2\theta_a+\pr_x Er^2.\]
We can prove
\be\notag
\begin{aligned}
    \|\e^{\delta\nu^{\frac{1}{3}}t}u_{0}^{(1)}\pr_x^2\theta\|_{L^1 L^2}&\le\int_{T_0}^T \|u^{(1)}_{0}\|_{L^\infty}\|\pr_x^2\theta\|\dd t\les \int_{T_0}^T \|u_{0}^{(1)}\|_{H^1}\|\e^{\delta\nu^{\frac{1}{3}}t}\pr^2_x\theta\|_{L^2}\dd t\\ &\les \int_{T_0}^T \|u_{0}^{(1)}\|_{H^1}\|\e^{\delta\nu^{\frac{1}{3}}t}\pr^2_x\theta\|_{L^2}\dd t\les\|\om\|_{L^\infty L^2}\|e^{\delta\nu^{\frac{1}{3}}t}\pr_x^2\theta\|_{L^1 L^2}\les \nu^{-\frac{1}{2}}\|\om\|_{X_2}\|\pr_x\theta\|_{X_2}.
\end{aligned}
\ee
By Proposition \ref{p2.1} and Lemma \ref{l3.2}, we get 
\be\notag
\begin{aligned}
    \|\e^{\delta\nu^{\frac{1}{3}}t}u_{0}^{(1)}\pr_x^2\theta_a\|_{L^1 L^2}&\le \int_{T_0}^T\e^{\delta\nu^{\frac{1}{3}}t}\|u_{0}^{(1)}\|\|\pr^2_x\theta_a\|_{L^\infty}\dd t\les \|\om\|_{L^\infty L^2}\int_{T_0}^T \e^{\delta\nu^{\frac{1}{3}}t}\e^{-\nu^{\frac{1}{3}}t}\|\Theta_{in}\|_{H^4} \dd t\\&\les \nu^{-\frac{1}{3}}\|\om\|_{X_2}\|\Theta_{in}\|_{H^4},
\end{aligned}
\ee
and
\be\notag
\begin{aligned}
    \|\e^{\delta\nu^{\frac{1}{3}}t}\pr_x Er^2\|_{L^1 L^2}=\int_{T_0}^T \e^{\delta\nu^{\frac{1}{3}}t}\|\pr_x Er^2\|\dd t\les \nu\int_{T_0}^T(1+t)\e^{\delta\nu^{\frac{1}{3}}t}\e^{-2\nu^{\frac{1}{3}}t}\|\Theta_{in}\|_{H^4}\dd t\le \nu^{\frac{1}{3}}\|\Theta_{in}\|_{H^4}.
\end{aligned}
\ee
Above all,
\[\|\e^{\delta\nu^{\frac{1}{3}}t}\pr_x\theta\|_{L^1 L^2}\les \nu^{-\frac{5}{6}}\|\om\|_{X_2}\|\pr_x\theta\|_{X_2}+\nu^{-\frac{2}{3}}\|\om\|_{X_2}\|\Theta_{in}\|_{H^4}+\|\Theta_{in}\|_{H^4}.\]
\end{proof}
Combining Proposition \ref{p4.8a} and Lemma \ref{prop:source-estimate}, we can get the estimate of $\om_*$. 
Then by using Lemma \ref{prop:reaction}, we can get the estimate of $\|\om\|_{X_2}$.
In addition, we can also conclude that
    \[\|\theta\|_{X_2}\les \|\theta(T_0)\|+\nu^{-\frac{2}{3}}\|\om\|_{X_2}\|\theta\|_{X_2}+\nu^{-\frac{1}{3}}\|\om\|_{X_2}\|\Theta_{in}\|_{H^4}+\nu^{\frac{1}{3}}\|\Theta_{in}\|_{H^4},\]
    and
    \[\|\pr_x\theta\|_{X_2}\les \|\pr_x\theta(T_0)\|+\nu^{-\frac{2}{3}}\|\om\|_{X_2}\|\pr_x\theta\|_{X_2}+\nu^{-\frac{1}{3}}\|\om\|_{X_2}\|\Theta_{in}\|_{H^4}+\nu^{\frac{1}{3}}\|\Theta_{in}\|_{H^4}.\]
\begin{proof}[Proof of Proposition \ref{thm:error-term} ]
    The proof is similar to that of Proposition \ref{p2.2}, and the specific details are omitted here.
\end{proof}
\begin{proof}[Proof of Theorem \ref{thm4.1}] 
    Combining Propositions \ref{prop:linear} and \ref{thm:error-term}, we can get the proof of Theorem \ref{thm4.1} immediately.
\end{proof}

\vspace{1cm}

\noindent{\bf Conflict of Interest} The authors declare that they have no conflict of interest.

\vspace{0.5cm}
\noindent{\bf Acknowledgments:} Nangao Zhang is supported by the National Natural Science Foundation of China under  Grants 12401283. Ruizhao Zi is partially  supported by the National Natural Science Foundation of China under  Grants 12571235.


\begin{thebibliography}{999}

\bibitem{Arbon2025}
 Arbon R., Bedrossian J., Quantitative hydro\dd ynamic stability for Couette flow on unbounded domains with
 Navier boundary conditions, {\it Commun. Math. Phys.}, {\bf406}, 57 pp, 2025.

\bibitem{Bedrossian20231}
Bedrossian J., Bianchini R., Coti Zelati M., Dolce M., Nonlinear inviscid damping and shear-buoyancy
instability in the two-dimensional Boussinesq equations, {\it Commun. Pure Appl. Math.}, {\bf76}, 3685--3768, 2023.

\bibitem{BPM1}
Bedrossian J., Germain P., Masmoudi N., \dd ynamics near the subcritical transition of the 3D Couette
 flow I: Below threshold case, \textit{Mem. Amer. Math. Soc.}, \textbf{266}, no. 1294, 2020.

 \bibitem{BPM2}
Bedrossian J.,Germain P., Masmoudi N., \dd ynamics near the subcritical transition of the 3D Couette
 flow II: Above threshold case, \textit{Mem. Amer. Math. Soc.}, \textbf{279}, no. 1377, 2022.

\bibitem{BPM}
Bedrossian J., Germain P., Masmoudi N., On the stability threshold for the 3D Couette flow in Sobolev regularity, \textit{Ann.
Math.}, \textbf{185}, 541--608, 2017.

\bibitem{BHW}
Bedrossian J., He S., Iyer S., Wang F., Stability threshold of nearly-Couette shear flows with Navier boundary conditions in
2D, \textit{Commun.Math.Phys.}, 406:\textbf{28}, 2025.

\bibitem{BMV}
Bedrossian J., Masmoudi N., Vicol V., Enhanced dissipation and inviscid damping in the inviscid limit of the Navier-Stokes
equations near the two dimensional Couette flow, \textit{Arch. Ration. Mech. Anal.}, \textbf{219}, 1087--1159, 2016.

\bibitem{BVW}
Bedrossian J.,Vicol V., Wang F., The Sobolev stability threshold for 2D shear flows near Couette, \textit{J. Nonlinear Sci.}, \textbf{28}, 2051--2075, 2018.


\bibitem{Bianchini2025}
Bianchini, R., Coti Zelati M., Ertzbischoff L., Ill-posedness of the hydrostatic Euler-Boussinesq equations and failure of hydrostatic limit, {\it Commun. Math. Phys.}, {\bf406}, 50 pp, 2025.

\bibitem{Cao2013}
Cao C., Wu J., Global regularity for the two-dimensional anisotropic Boussinesq equations with vertical dissipation,
 {\it Arch. Ration. Mech. Anal.}, {\bf208}, 985--1004, 2013.

\bibitem{Chae}
Chae D., Global regularity for the 2D Boussinesq equations with partial viscosity terms, {\it Adv. Math.}, {\bf 203},
 497--513, 2006.

\bibitem{Chen2020}
Chen Q., Li T., Wei D., Zhang Z., Transition threshold for the 2-D Couette flow in a finite channel,
{\it Arch. Ration. Mech. Anal.}, {\bf 238}, 125--183, 2020.

\bibitem{Chen2026}
Chen Q., Li Z., Quantitative stability of the 2D Monotone shear flow for Boussinesq equation in a finite channel, preprint (arXiv:2603.16268v1), 2026.

\bibitem{miao2025}
Chen Q., Li Z., Miao C.,
The optimal transition threshold for the 2D Couette flow in the infinite channel, preprint (arXiv:2510.18365), 2025.

\bibitem{Chen-Wang}
Chen Y., Wang W., Yang G., Stability threshold of Couette flow for Boussinesq equations in  $\mathbb{R}^2$,
preprint (arXiv:2508.11908), 2025.

\bibitem{Coti}
Coti Zelati M., Del Zotto A.,
Suppression of lift-up effect in the 3D Boussinesq equations around a stably stratified Couette flow,
{\it Q. Appl. Math.}, {\bf 83}, 389--401, 2025.

\bibitem{Danchin}
 Danchin R., Paicu M., Global well-posedness issues for the inviscid Boussinesq system with Yudovich's type data,
 {\it Commun. Math. Phys.}, {\bf290}, 1--14, 2009.

\bibitem{Deng2020}
Deng W., Wu J., Zhang P., Stability of Couette flow for 2D Boussinesq system with vertical dissipation,
{\it J. Funct. Anal.}, {\bf281}, 109255, 2021.

\bibitem{Doering1996}
Doering  C., Constantin P., Variational bounds on energy dissipation in incompressible flows, III Convection, {\it Phys.
 Rev. E}, {\bf 53}, 5957--5981, 1996.

\bibitem{Doering2018}
Doering C., Wu J., Zhao K., Zheng X., Long time behavior of the two-dimensional Boussinesq equations without
 buoyancy diffusion, {\it Phys. D}, {\bf 376}, 144--159, 2018.

\bibitem{Kelvin}
Kelvin L., Stability of fluid motion-rectilinear motion of viscous fluid between two parallel plates, 
{\it Philos. Mag.}, {\bf 24}, 188--196, 1887.

\bibitem{Knobel}
Knobel N., Suppression of Fluid Echoes and Sobolev Stability Threshold for 2D Dissipative Fluid Equations Around Couette Flow, 
preprint (arXiv:2505.23391), 2025.

\bibitem{Liang2025}
Liang T., Li Y., Zhai X., Stability threshold for two-dimensional Boussinesq systems near-Couette shear flow in a finite channel, 
preprint (arXiv:2504.02669), 2025.

\bibitem{Liang2026}
Liang T., Wu J., Zhai X., Stability threshold of the 2D Boussinesq system near Couette flow in an infinite channel, 
preprint (arXiv:2512.01159v1), 2025.

\bibitem{Lihui2026}
Li H., Liu N., Zhao W.,
Stability threshold of the two-dimensional Couette flow in the whole plane,
{\it J. Funct. Anal.},  {\bf 290}, 40 pp. 2026.

\bibitem{Li2016}
 Li J., Titi E., Global well-posedness of the 2D Boussinesq equations with vertical dissipation, {\it Arch. Ration. Mech.
 Anal.}, {\bf 220}, 983--1001, 2016.

\bibitem{MSZ}
Masmoudi N., Said-Houari B., Zhao W., Stability of the Couette flow for a 2D Boussinesq system without thermal
diffusivity, {\it Arch. Ration. Mech. Anal.}, \textbf{245}, 645--752, 2022.

\bibitem{Majda2002}
Majda A., Bertozzi A., Vorticity and Incompressible Flow, Cambridge University Press, Cambridge, 2002.

\bibitem{Masmoudi2021}
Masmoudi N., Zhai C., Zhao W., Asymptotic stability for two-dimensional Boussinesq systems around the Couette flow in a finite channel, {\it J. Funct. Anal.}, {\bf284}, 109736, 2023.



\bibitem{Niu}
Niu B., Zhao W., Improved stability threshold of the Two-Dimensional Couette flow for Navier-Stokes-
Boussinesq Systems via quasi-linearization, preprint (arXiv:2409.16216), 2024.

\bibitem{Ren}
Ren X., Wei D., Transition threshold of Couette flow for 2D 
Boussinesq equations, {\it J. Funct. Anal.}, {\bf290}, 111383, 2026.

\bibitem{Tao2019}
Tao L. , Wu J., The 2D Boussinesq equations with vertical dissipation and linear stability of shear flows, {\it J. Differential
 Equations}, {\bf267}, 1731--1747, 2019.

\bibitem{Tao2020}
Tao L. , Wu J., Zhao K., Zheng X., Stability near hydrostatic equilibrium to the 2D Boussinesq equations without
 thermal diffusion, {\it Arch. Ration. Mech. Anal.}, {\bf 237}, 585--630, 2020.

\bibitem{Villani}
Villani C., Hypocoercivity, {\it Mem. Am. Math. Soc.}, {\bf 202}, iv+141, 2009.


\bibitem{wei-zhang2023}
Wei D., Zhang Z., Nonlinear enhanced dissipation and inviscid damping for the 2D Couette flow, {\it Tunis. J. Math.}, {\bf5}, 573--592, 2023.


\bibitem{wei-zhang2026}
Wei D., Zhang Z., Stability threshold of the 2D Couette flow in a finite channel, {\it Ann. Inst. Henri Poincaré C, Anal. Non Linéaire}, {\bf 43}, 501--540, 2023.

\bibitem{zhai2022}
Zhai C., Zhao W., Stability threshold of the Couette flow for Navier-Stokes Boussinesq system with large Richardson number $\gamma^2>\frac{1}{4}$, {\it SIAM J. Math. Anal.}, {\bf55}, 1284--1318, 2023.

\bibitem{ZhangZi}
Zhang Z., Zi R., Stability threshold of Couette flow for 2D Boussinesq equations in Sobolev spaces, {\it J.
Math. Pures Appl.}, {\bf179}, 123--182, 2023.
    \end{thebibliography}
\end{document}